\documentclass[reqno]{amsart}

\topmargin -0.20in%
\oddsidemargin 0.05in%
\evensidemargin 0.05in%
\textwidth 16.8cm%
\textheight 23.5cm%

\usepackage{amsmath,amsfonts,amssymb,amscd,amsthm,amsbsy,bm,latexsym}
\usepackage{gensymb}
\usepackage[pdftex,bookmarksnumbered=true,bookmarksopen=true,          
            colorlinks=true,pdfborder=001,citecolor=blue,
            linkcolor=red,anchorcolor=green,urlcolor=blue]{hyperref}

\allowdisplaybreaks

\usepackage{color}


\newtheorem{lemma}{Lemma}[section]
\newtheorem{theorem}{Theorem}[section]

\newtheorem{remark}{Remark}[section]

\arraycolsep1.5pt
\numberwithin{equation}{section}

\begin{document}

\title[The Vlasov-Maxwell-Boltzmann system with very soft potentials]{The Vlasov-Maxwell-Boltzmann system near Maxwellians in the whole space with very soft potentials}

\author[R.-J. Duan]{Renjun Duan}
\address[RJD]{Department of Mathematics, The Chinese University of Hong Kong,
Shatin, Hong Kong}
\email{rjduan@math.cuhk.edu.hk}

\author[Y.-J. Lei]{Yuanjie Lei}
\address[YJL]{School of Mathematics and Statistics, Wuhan University,
Wuhan 430072, P.R.~China}
\email{leiyuanjie@whu.edu.cn}

\author[T. Yang]{Tong Yang}
\address[TY]{Department of mathematics, City University of Hong Kong,
Tat Chee Avenue, Kowloon, Hong Kong}
\email{matyang@cityu.edu.hk}

\author[H.-J. Zhao]{Huijiang Zhao}
\address[HJZ]{School of Mathematics and Statistics, Wuhan University,
Wuhan 430072, P.R.~China}
\email{hhjjzhao@whu.edu.cn}


\begin{abstract}
Since the work \cite{Guo-Invent-03} by Guo [Invent. Math. \textbf{153} (2003), no. 3, 593--630], how to establish the global existence of perturbative classical solutions around a global Maxwellian to the Vlasov-Maxwell-Boltzmann  system
with the whole range of soft potentials has been an open problem.  This is mainly due to  the complex structure of the system, in particular, the degenerate
dissipation at large velocity, the velocity-growth of the nonlinear term induced by the Lorentz force, and the regularity-loss of the electromagnetic fields.
This  paper aims to resolve this problem
 in  the whole space provided that initial
perturbation has sufficient  regularity and velocity-integrability.
\end{abstract}

\maketitle
\thispagestyle{empty}
\tableofcontents

\section{Introduction}

\setcounter{equation}{0}
The motion of dilute ionized plasmas consisting of two-species particles (e.g., electrons and ions) under the influence of binary collisions and the self-consistent electromagnetic field can be modelled by the Vlasov-Maxwell-Boltzmann system (cf. \cite[Chapter 19]{Chapman-Cowling-1970} as well as \cite[Chapter 6.6]{Krall-Trivelpiece})
\begin{eqnarray}
 \partial_tF_++ v  \cdot\nabla_xF_++\frac{e_+}{m_+}\left(E+\frac vc\times B\right)\cdot\nabla_{ v  }F_+&=&Q(F_+,F_+)+Q(F_+,F_-),\nonumber\\
 \partial_tF_-+ v  \cdot\nabla_xF_--\frac{e_-}{m_-}\left(E+\frac vc\times B\right)\cdot\nabla_{ v  }F_-&=&Q(F_-,F_+)+Q(F_-,F_-).\label{VMB}
\end{eqnarray}
The  electromagnetic field $[E,B]=[E(t,x), B(t,x)]$ satisfies the  Maxwell equations
\begin{eqnarray}
 \partial_tE-c\nabla_x\times B&=&-4\pi{\displaystyle\int_{\mathbb{R}^3}}v\left(e_+F_+-e_-F_-\right)dv,\nonumber\\
 \partial_tB+c\nabla_x\times E&=&0,\label{Maxwell}\\
 \nabla_x\cdot E&=&4\pi{\displaystyle\int_{\mathbb{R}^3}}\left(e_+F_+-e_-F_-\right)dv,\nonumber\\
 \nabla_x\cdot B&=&0.\nonumber
\end{eqnarray}
Here $\nabla_x=\left(\partial_{x_1}, \partial_{x_2},\partial_{x_3}\right), \nabla_v=\left(\partial_{v_1}, \partial_{v_2},\partial_{v_3}\right)$.  The unknown functions $F_\pm= F_\pm(t,x, v) \geq  0$ are the number density functions for the ions ($+$) and electrons ($-$) with position $x = (x_1, x_2, x_3)\in {\mathbb{R}}^3$ and velocity $ v=( v_1,  v_2,  v_3) \in {\mathbb{R}}^3$ at time $t\geq 0$, respectively, $e_\pm$ and $m_\pm$ the magnitudes of their charges and masses, and $c$ the speed of light.

Let $F(v)$, $G(v)$ be two number density functions for two types of particles with masses $m_\pm$
and diameters $\sigma_\pm$, then $Q(F,G)(v)$ is
defined as (cf.~\cite{Chapman-Cowling-1970})
\begin{equation*}
\begin{split}
  Q(F,G)(v)
  =&\frac{(\sigma_++\sigma_-)^2}{4}\int_{\mathbb{R}^3\times \mathbb{S}^2}|u-v|^{\gamma}{\bf b}\left(\frac{\omega\cdot(v-u)}{|u-v|}\right)\left\{F(v')G(u')-F(v)G(u)\right\}
  d\omega du\\
  \equiv&Q_{gain}(F,G)-Q_{loss}(F,G).
  \end{split}
\end{equation*}
Here $\omega\in\mathbb{S}^2$ and ${\bf b}$, the angular part of the collision kernel, satisfies the Grad cutoff assumption (cf.~\cite{Grad-1963})
\begin{equation}\label{cutoff-assump}
0\leq{\bf b}(\cos \theta)\leq C|\cos\theta|
\end{equation}
for some positive constant $C>0$. The deviation angle $\pi-2\theta$ satisfies $\cos\theta =\omega\cdot(v-u)/{|v-u|}$.
Moreover, for $m_{1}, m_2\in \{m_+,m_-\}$,
\begin{equation*}
v'=v-\frac{2m_2}{m_1+m_2}[(v-u)\cdot\omega]\omega,\quad u'=u+\frac{2m_1}{m_1+m_2}[(v-u)\cdot\omega]\omega,
\end{equation*}
which denote velocities $(v',u')$ after a collision of particles having velocities $(v, u)$ before the collision and vice versa. Notice that the above identities follow from the conservation of momentum $m_1v+m_2u$ and energy $\frac 12
m_1|v|^2+\frac 12m_2|u|^2$.

The exponent $\gamma\in(-3,1]$ in the kinetic part of the collision kernel is determined by the potential of intermolecular force, which is classified into the soft potential case when $-3<\gamma<0$, the Maxwell molecular case when $\gamma=0$, and the hard potential case when $0<\gamma\leq 1$ which includes the hard sphere model with $\gamma=1$ and ${\bf b}(\cos\theta)=C|\cos\theta|$ for some positive constant $C>0$. For the soft potentials, the case $-2\leq \gamma<0$ is called the moderately soft potentials while $-3<\gamma<-2$ is called the very soft potentials, cf. \cite{Villani-02} by Villani. The importance and the difficulty in studying
the very soft potentials can be also found in that review paper.

The main purpose of this work is to construct global classical solutions to the Vlasov-Maxwell-Boltzmann system \eqref{VMB}, \eqref{Maxwell}
for the whole range of soft potentials, in particular, the very soft
case when $-3<\gamma<-2$, near global Maxwellians
\begin{eqnarray*}
\mu_+(v)&=&\frac{n_0}{e_+}\left(\frac{m_+}{2\pi\kappa T_0}\right)^{\frac 32}\exp\left(-\frac{m_+|v|^2}{2\kappa T_0}\right),\\
\mu_-(v)&=&\frac{n_0}{e_-}\left(\frac{m_-}{2\pi\kappa T_0}\right)^{\frac 32}\exp\left(-\frac{m_-|v|^2}{2\kappa T_0}\right),
\end{eqnarray*}
in the whole space $\mathbb{R}^3$ with prescribed initial data
\begin{equation}\label{VMB-in}
  F_\pm (0,x,v)=F_{0,\pm}(v,x), \quad E(0,x)=E_0(x), \quad B(0,x)=B_0(x),
\end{equation}
which satisfy the compatibility conditions
\begin{equation*}
  \nabla_x\cdot E_0=\int_{\mathbb{R}^3}(F_{0,+}-F_{0,-})dv, \quad \nabla_x\cdot B_0=0.
\end{equation*}

We assume in the paper that all the physical constants are chosen to be one. Under such assumption,
accordingly we normalize the above Maxwellians as (with $E(t,x)\equiv B(t,x)\equiv 0$)
$$
\mu=\mu_-(v)=\mu_+(v)=(2\pi)^{-\frac 32}e^{-\frac{|v|^2}{2}}.
$$
To study the stability problem around $\mu$, we define the perturbation $f_\pm=f_\pm(t,x, v)$ by
$$
F_\pm(t, x,  v  ) = \mu+ \mu^{1/2}f_\pm(t, x,  v).
$$
Then, the Cauchy problem (\ref{VMB}), \eqref{Maxwell}, \eqref{VMB-in} is reformulated as
\begin{equation} \label{f}
\begin{cases}
  \partial_tf_\pm+ v  \cdot\nabla_xf_\pm\pm(E+v\times B)\cdot\nabla_{ v  }f_\pm\mp E \cdot v \mu^{1/2}\mp \frac12 E\cdot v f_\pm+{ L}_\pm f={\Gamma}_\pm(f,f),\\
\partial_tE-\nabla_x\times B=-{\displaystyle\int_{\mathbb{R}^3}}v\mu^{1/2}(f_+-f_-)dv,\\
\partial_tB+\nabla_x\times E=0,\\
\nabla_x\cdot E={\displaystyle\int_{\mathbb{R}^3}}\mu^{1/2}(f_+-f_-)dv,\quad \nabla_x\cdot B=0
\end{cases}
\end{equation}
with initial data
\begin{equation}\label{f-initial}
f_\pm(0,x,v)=f_{0,\pm}(x,v),  \quad E(0,x)=E_0(x), \quad B(0,x)=B_0(x)
\end{equation}
satisfying the compatibility conditions
\begin{equation}\label{compatibility conditions}
  \nabla_x\cdot E_0=\int_{\mathbb{R}^3}\mu^{1/2}(f_{0,+}-f_{0,-})dv, \quad \nabla_x\cdot B_0=0.
\end{equation}
Here, as in \cite{Guo-Invent-03}, for later use, setting $f=\left[f_+,f_-\right]$, the first equation of $(\ref{f})$ can be also written as
\begin{equation}\label{f gn}
\partial_t f+v\cdot\nabla_xf+q_0(E+v\times B)\cdot\nabla_v f-E\cdot v\mu^{1/2}q_1+Lf=\frac{q_0}{2}E\cdot vf+\Gamma(f,f),
\end{equation}
where $q_0={\textrm{diag}}(1,-1)$, $q_1=[1,-1]$, and the linearized collision operator $L=[L_+,L_-]$ and the nonlinear collision operator $\Gamma=[\Gamma_+,\Gamma_-]$ are to be given later on.

We are now ready to state the main theorem in this paper.

\begin{theorem}\label{thm.mr}
Let $-3<\gamma<-1$ and \eqref{cutoff-assump} hold. Assume $F_0(x,v)=\mu+\sqrt{\mu}f_0(x,v)\geq0$. Take $1/2\leq \varrho<3/2$ and $0<q\ll 1$.  Let $N$ be an appropriately chosen integer and $l_0^\ast$ be a large enough constant to be specified in the proof. If
\begin{equation*}
\sum_{|\alpha|+|\beta|\leq N}\left\|\langle v\rangle^{l_0^\ast-|\beta|}
e^{q\langle v\rangle^2}
\partial_\beta^\alpha f_0\right\|+\|f_0\|_{L^2_v(\dot{H}^{-\varrho})}+\|(E_0,B_0)\|_{H^N\bigcap \dot{H}^{-\varrho}}
\end{equation*}
is sufficiently small,  then  the Cauchy problem (\ref{f}), (\ref{f-initial}), \eqref{compatibility conditions} admits a unique global solution $[f(t,x,v),$ $E(t,x),$ $ B(t,x)]$ satisfying $F(t,x,v)=\mu+\sqrt{\mu}f(t,x,v)\geq0$.

\end{theorem}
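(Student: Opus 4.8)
The plan is to implement a high-order energy method in the spirit of Guo's macro-micro decomposition, but with two new ingredients tailored to the very soft potential regime $-3<\gamma<-1$: a weight that combines an algebraic factor $\langle v\rangle^{l_0^\ast-|\beta|}$ with a Gaussian factor $e^{q\langle v\rangle^2}$ with $0<q\ll1$, and a negative Sobolev $\dot H^{-\varrho}$ norm with $1/2\le\varrho<3/2$ to close the time-decay estimates that feed back into the energy inequality. First I would set up the full nonlinear energy functional $\mathcal E_N(t)$ collecting $\sum_{|\alpha|+|\beta|\le N}\|\langle v\rangle^{l_0^\ast-|\beta|}e^{q\langle v\rangle^2}\partial_\beta^\alpha f\|^2$ together with $\|(E,B)\|_{H^N}^2$, and the dissipation rate $\mathcal D_N(t)$ which, because of the degenerate collision frequency $\nu(v)\sim\langle v\rangle^\gamma$, only controls $\|\langle v\rangle^{(\gamma+l_0^\ast-|\beta|)/2}e^{q\langle v\rangle^2}\partial_\beta^\alpha f\|^2$ in the microscopic part plus $\|\nabla_x(a,b,c)\|^2$ and the electromagnetic dissipation from the Maxwell structure. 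The first main step is to derive the weighted $\partial_\beta^\alpha$ energy identity for \eqref{f gn}, carefully tracking the commutators produced by $v\cdot\nabla_x$ (which loses one power of $\langle v\rangle$ per $\partial_\beta$, exactly the reason for the $-|\beta|$ in the weight exponent), by the Lorentz-force term $q_0(E+v\times B)\cdot\nabla_v f$ (velocity growth, handled by the Gaussian weight at the cost of a small loss absorbed by $q$), and by the linear term $L$ via the coercivity estimate $\langle Lg,g\rangle\gtrsim\|\{\mathbf I-\mathbf P\}g\|_\nu^2$ that must be stated earlier in the paper.

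The second main step is the macroscopic estimate: using the local conservation laws and the structure of the Maxwell system, one extracts dissipation for $\nabla_x(a,b,c)$ and for the electromagnetic field $(E,B)$ — here the regularity-loss phenomenon appears, so the dissipation for $(E,B)$ is available only at one order lower than the energy, which is the standard obstruction in Vlasov–Maxwell–Boltzmann and forces the top-order field estimates to be closed by time decay rather than by dissipation. The third step is the time-decay analysis: using the negative Sobolev norm $\dot H^{-\varrho}$, which is propagated uniformly in time by a separate low-order estimate (exploiting that the nonlinearity in negative norms behaves well and that the linearized semigroup has $L^2_x$-to-$L^2_x$ decay of order $t^{-(3/4+\varrho/2)}$ or so), one interpolates to obtain algebraic-in-time decay of $\mathcal E_{N-k}(t)$ for the intermediate levels; feeding this back closes the borderline top-order terms that the pure dissipation cannot absorb. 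The final step is a continuity/bootstrap argument: assuming $\mathcal E_N(t)\le\delta$ small on $[0,T]$, the a priori estimates give $\frac{d}{dt}\mathcal E_N+\mathcal D_N\lesssim\sqrt{\mathcal E_N}\,\mathcal D_N+(\text{decaying source})$, hence $\mathcal E_N(t)+\int_0^t\mathcal D_N\le C\mathcal E_N(0)+C(\text{initial negative norm})^2$, so the solution constructed by the standard local existence theorem extends globally and the lower bound $F=\mu+\sqrt\mu f\ge0$ is preserved because $f$ satisfies the same equation with a nonnegative-preserving structure (as in \cite{Guo-Invent-03}).

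I expect the main obstacle to be controlling the interaction between the degenerate dissipation at large $|v|$ and the velocity growth of the Lorentz term $q_0(E+v\times B)\cdot\nabla_v f$ together with the nonlinear term $\frac{q_0}{2}E\cdot v f$: when $\gamma<-1$ the collision frequency decays too fast to absorb even a single power of $|v|$, so the usual trick of trading velocity weights against $\nu$-dissipation fails, and one genuinely needs the exponential factor $e^{q\langle v\rangle^2}$ to produce an extra good term $q\|\langle v\rangle e^{q\langle v\rangle^2}\{\mathbf I-\mathbf P\}f\|^2$-type quantity from the transport operator; making this mechanism coexist with the algebraic weight $\langle v\rangle^{l_0^\ast-|\beta|}$ (needed for the $\partial_\beta$ commutators) and still closing requires choosing $l_0^\ast$ large depending on $\gamma$, $N$, and $q$ small depending on everything, and verifying that no term is left uncontrolled — in particular the cross terms where $E\cdot v$ hits a high-$v$-moment of $f$ must be tamed by splitting $|v|\lesssim R$ versus $|v|\gtrsim R$ and using the Gaussian weight on the latter. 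The negative-norm time-decay argument is technically delicate in the whole space but is by now fairly routine; the weighted-energy coercivity in the very soft case is where the real work lies.
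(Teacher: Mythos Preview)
Your outline captures the overall architecture (weighted energy, macro--micro decomposition, negative Sobolev norm for decay, bootstrap), but it misses the two mechanisms that are the actual content of the proof for $-3<\gamma<-1$, and your proposed source of extra dissipation is incorrect.

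First, a time-independent Gaussian weight $e^{q\langle v\rangle^2}$ does not ``produce an extra good term from the transport operator'': $v\cdot\nabla_x$ commutes with any weight depending on $v$ alone, and $\partial_t$ of a static weight is zero. The paper's weight is \emph{time-dependent},
\[
w_{\ell-|\beta|,\kappa}(t,v)=\langle v\rangle^{\kappa(\ell-|\beta|)}\exp\!\Big(\tfrac{q\langle v\rangle^2}{(1+t)^{\vartheta}}\Big),
\]
and it is the $\partial_t$ of the exponent that generates the extra dissipation $(1+t)^{-1-\vartheta}\|\langle v\rangle\, w\,\partial_\beta^\alpha\{\mathbf I-\mathbf P\}f\|^2$. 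Because this good term carries the prefactor $(1+t)^{-1-\vartheta}$, it can absorb the Lorentz-force contributions only if $(E,B)$ decays sufficiently fast; this is why the $\dot H^{-\varrho}$ machinery is structurally required, not merely a convenience.

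Second, even with this time-dependent weight, no single algebraic exponent $\kappa$ closes when $\gamma<-1$. With $\kappa=-\gamma$ the transport commutator $[\partial_\beta,v\cdot\nabla_x]$ is harmless (since $w_{\ell-|\beta|,-\gamma}^2=w_{\ell-|\beta|,-\gamma}\,w_{\ell-|\beta-e_i|,-\gamma}\,\langle v\rangle^{\gamma}$), but the Lorentz term $I^B$ produces a factor $\langle v\rangle^{1-\gamma}>\langle v\rangle^{2}$ that the extra dissipation cannot absorb. With $\kappa=1$ the Lorentz terms are fine, but the transport commutator leaves a residual $\|\langle v\rangle^{-\gamma/2-1}w_{\ell-|\beta|+1,1}\partial^{\alpha+e_i}_{\beta-e_i}\{\mathbf I-\mathbf P\}f\|^2$ that is not controlled by $\nu$-dissipation alone. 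The paper runs \emph{both} hierarchies simultaneously and couples them: the $\kappa=1$ estimates are allowed to grow in $t$ at prescribed rates $\sigma_{n,j}$, with $\sigma_{n,j}-\sigma_{n,j-1}=\frac{2(1+\gamma)}{\gamma-2}(1+\vartheta)$ (a device borrowed from the Hosono--Kawashima treatment of regularity-loss equations), and an interpolation in $v$ trades this growth against the decay of $\|\nabla^2(E,B)\|_{H^{N_0-2}}$, which works provided $l_0^\ast$ is taken large enough that the interpolation exponent is small. Your single-weight scheme with constant exponential factor would stall exactly at the step you identify as the main obstacle.
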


In the next section, the statement of the above theorem
will be given  more precisely in Theorem \ref{Th1.1} as well as Theorem \ref{Th1.2} for the time decay property.  Basically the result shows that {as long as initial data is small with enough regularity, one can establish the global existence of small amplitude classical solutions for the full range of cutoff intermolecular interactions with $-3<\gamma\leq 1$. Note that the case $-1\leq \gamma\leq 1$ is  a  trivial consequence of \cite{Duan_Liu-Yang_Zhao-VMB-2013}; details for that case will  be briefly discussed in Section \ref{exp-h}.} Here,
the bound in the  Sobolev space of negative index is {used  for
obtaining the time decay of solutions that is needed to close the a priori estimates.}

The proof of Theorem \ref{thm.mr} is based on a subtle
 time-weighted energy method.
For this, in addition
to the  existing analytic
 techniques used in \cite{Guo-JAMS-11} and \cite{Duan_Liu-Yang_Zhao-VMB-2013}, we  develop a new approach to deal with the weighted estimates involving both the negative power time-weight  and  the time-velocity dependent $w_{\ell-|\beta|,\kappa}(t,v)$ weight.

The rest of this paper is organized as follows. In Section 2, we  explain
the difficulty in studying {the case when $-3<\gamma<-1$, particularly including the very soft potential case,} and give
a {complete} statement of the main results.
 In Section $3$, we list some basic lemmas for later use.
The proof of the main results will be given in Section 4. For
clear presentation, the  proofs of several technical
 lemmas and estimates used in Section 4 will be given in the appendix.

\section{{Main results}}

In this section, we will first review the previous approaches for studying
the global existence of classical solutions to Valsov-Maxwell-Boltzmann
equations, and then we will point out the difficulties in studying the
very soft potentials and give the {complete} statements of the main results.

First of all, we recall some basic facts concerning the collision operators and the macro-micro decomposition.  $L,\Gamma$ in \eqref{f gn} are respectively defined by
\begin{equation*}
Lf=[L_+f,L_-f],\quad\quad\quad\Gamma(f,g)=[\Gamma_+(f,g),\Gamma_-(f,g)]
\end{equation*}
with
\begin{equation*}
\begin{aligned}
{ L}_\pm f =& -{\bf \mu}^{-1/2}
\left\{{Q\left(\mu,{\bf \mu}^{1/2}(f_\pm+f_\mp)\right)+ 2Q\left(\mu^{1/2}f_\pm, \mu\right)}\right\},\\{}
{ \Gamma}_{\pm}(f,g) =&{\bf \mu}^{-1/2}\left\{Q\left({\bf \mu}^{1/2}f_{\pm},{\bf \mu}^{1/2}g_\pm\right)+Q\left({\bf \mu}^{1/2}f_{\pm},{\bf \mu}^{1/2}g_\mp\right)\right\}.
\end{aligned}
\end{equation*}
For the linearized  collision operator $ L $, it is well known (cf.~\cite{Guo-Invent-03}) that it is non-negative and the null space $\mathcal{N}$ of ${ L}$ is spanned by
\begin{equation*}
{\mathcal{ N}}={\textrm{span}}\left\{[1,0]\mu^{1/2} , [0,1]\mu^{1/2}, [v_i,v_i]{\mu}^{1/2} (1\leq i\leq3),[|v|^2,|v|^2]{\bf \mu}^{1/2}\right\}.
\end{equation*}
Moreover,  under Grad's angular cutoff assumption \eqref{cutoff-assump}, it is easy to see that $L$ can be decomposed as
\begin{equation}\label{decomposition-L}
{L}f=\nu f-Kf
\end{equation}
with the collision frequency $\nu(v)$ and the nonlocal integral operator $K=[K_+,K_-]$ being defined by
\begin{equation}\label{collision-frequency}
\nu(v)=2Q_{loss}(1,\mu)=2\int_{\mathbb{R}^3\times {\mathbb{S}}^2}|v-u|^\gamma {\bf b}\left(\frac{\omega\cdot(v-u)}{|v-u|}\right)\mu(u)d\omega du\thicksim(1+|v|)^\gamma,
\end{equation}
and
\begin{multline}\label{Operator-K}
\left(K_\pm f\right)(v)
={\mu}^{-\frac 12}
\left\{2Q_{gain}\left(\mu^{\frac 12}f_\pm, \mu\right)-Q\left(\mu,{\mu}^{\frac 12}(f_\pm+f_\mp)\right)\right\}\\
=\int_{\mathbb{R}^3\times \mathbb{S}^2}|u-v|^\gamma {\bf b}\left(\frac{\omega\cdot(v-u)}{|v-u|}\right)\mu^{\frac 12}(u)\qquad\qquad\qquad\\
\left\{2\mu^{\frac 12}(u')f_\pm(v')-\mu^{\frac 12}(v') (f_\pm+f_\mp)(u')
+\mu^{\frac 12}(v)(f_\pm+f_\mp)(u)\right\}d\omega du,
\end{multline}
respectively.

Define ${\bf P}$ as the orthogonal projection from $L^2({\mathbb{R}}^3_ v)\times L^2({\mathbb{R}}^3_ v)$ to $\mathcal{N}$. Then for any given function $f(t, x, v )\in L^2({\mathbb{R}}^3_ v)\times L^2({\mathbb{R}}^3_ v)$, one has
\begin{equation*}
  {\bf P}f ={a_+(t, x)[1,0]\mu^{1/2}+a_-(t, x)[0,1]\mu^{1/2}+\sum_{i=1}^{3}b_i(t, x) [1,1]v_i{\mu}^{1/2}+c(t, x)[1,1](| v|^2-3)}{\bf \mu}^{1/2}
\end{equation*}
with
\begin{equation*}
  a_\pm=\int_{{\mathbb{R}}^3}{\bf \mu}^{1/2}f_\pm d v,\quad
  b_i=\frac12\int_{{\mathbb{R}}^3} v  _i {\bf \mu}^{1/2}(f_++f_-)d v,\quad
  c=\frac{1}{12}\int_{{\mathbb{R}}^3}(| v|^2-3){\bf \mu}^{1/2}(f_++f_-) d v.
\end{equation*}
Therefore, we have the following macro-micro decomposition with respect to the given global Maxwellian $\mu(v)$, cf.  \cite{Guo-IUMJ-04},
\begin{equation*}
 f(t,x, v)={\bf P}f(t,x, v)+\{{\bf I}-{\bf P}\}f(t, x, v),
\end{equation*}
where ${\bf I}$ denotes the identity operator, and ${\bf P}f$ and $\{{\bf I}-{\bf P}\}f$ are called the macroscopic and the microscopic component of $f(t,x,v)$, respectively.

Under the Grad's angular cutoff assumption \eqref{cutoff-assump},
by  \cite[Lemma 1]{Guo-Invent-03},  $L$ is locally coercive in the sense that
\begin{equation}\label{coercive-estimates}
-\left\langle f, Lf\right\rangle\geq \sigma_0\left|\{{\bf I}-{\bf P}\}f\right|_\nu^2\equiv \sigma_0\left\|\sqrt{\nu}\{{\bf I}-{\bf P}\}f\right\|^2_{L^2(\mathbb{R}^3_v)},\quad \nu(v)\sim(1+|v|)^{\gamma}
\end{equation}
holds for some positive constant $\sigma_0>0$. Here $\langle\cdot, \cdot\rangle$ denotes the   inner product in $L^2(\mathbb{R}^3_v)\times L^2(\mathbb{R}^3_v)$.

\subsection{Existing approaches}

For the problem on the construction of solutions to the Cauchy problem \eqref{f}, \eqref{f-initial}, \eqref{compatibility conditions}, the local existence and uniqueness of solution $[f_+(t,x,v), f_-(t,x,v), E(t,x), B(t,x)]$ in certain weighted Sobolev space to be specified later can be obtained by combining the arguments used in \cite{Guo-Invent-03} and \cite{Guo-JAMS-11}.
To extend the local solution $[f_+(t,x,v), f_-(t,x,v), E(t,x), B(t,x)]$
to be global in time, one needs to deduce certain a priori estimates in
some function spaces.
In general,   the main difficulties in this step lies in:
\begin{itemize}
\item How to control the possible velocity-growth  induced by the nonlinearity
of the system \eqref{f gn}?
\item How to control the convection  term $v\cdot\nabla_xf$ in the weighted
energy estimates?
\end{itemize}

The nonlinear energy method developed in \cite{Guo-CMP-02,Guo-IUMJ-04, Liu-Yang-Yu, Liu-Yu} for the Boltzmann equation provides an effective approach
 in the perturbative framework. The main idea in those work is to decompose the solution into the macroscopic part and the microscopic part and then rewrite the original equation as the combination of an equation satisfied by the microscopic part which contains the macroscopic part as source term and a system satisfied by the macroscopic part with the microscopic part as source term. In the perturbative framework, the dissipative mechanism  on the microscopic part is the coercive estimate \eqref{coercive-estimates} of the linearized Boltzmann collision operator or its weighted variants, while for the macroscopic part, the corresponding mechanism comes
from the dissipation of the compressible Navier-Stokes type system.
The corresponding approach to treat the case of non-cutoff cross sections was developed in \cite{AMUXY-JFA-2012} and \cite{Gressman_Strain-JAMS-2011}.

However, as pointed out in \cite{Guo-JAMS-11} and \cite{Duan_Liu-Yang_Zhao-VMB-2013},
when one applies the energy method to some complex systems such as the Vlasov-Maxwell-Boltzmann system \eqref{VMB}, \eqref{Maxwell},  in addition to the difficulty caused by the nonlinear collision operator mentioned above, additional difficulties are encountered:

\begin{itemize}
\item How to control the corresponding nonlinear terms induced by the Lorentz force, such as the terms $(E+v\times B)\cdot \nabla_v f$ and $E\cdot vf$, that can lead to  velocity growth at the rate of the first order $|v|$?

\item How to cope with the regularity loss of the electromagnetic field $[E(t,x), B(t,x)]$?
\end{itemize}

For the hard sphere model, the coercive estimate \eqref{coercive-estimates} of $L$ is sufficient to control the nonlinear terms related to the Lorentz force provided that the electromagnetic field $[E(t,x), B(t,x)]$ is suitably small and thus   satisfactory global well-posedness theory for the Vlasov-Maxwell-Boltzmann system \eqref{VMB}, \eqref{Maxwell} for the hard sphere model
has been established, cf. \cite{DS-CPAM-11, Guo-Invent-03, GS-12,Jang-ARMA-2009, Strain-CMP-2006} and the references therein. But for the corresponding problem involving cutoff non-hard sphere intermolecular interactions with $\gamma<1$, the story is quite different. One can not use the coercive estimate \eqref{coercive-estimates} of $L$ to absorb the nonlinear terms related to the Lorentz force which yield the velocity growth at the rate of the first order $|v|$.  Thus it is  interesting and important to find out how to construct global classical solutions near Maxwellians to the Vlasov-Maxwell-Boltzmann system \eqref{VMB}, \eqref{Maxwell} for cutoff non-hard sphere cases. Certainly, the same applies to the Vlasov-Poisson-Landau system and the Vlasov-Maxwell-Landau system containing the Coulomb potential,  cf. \cite{Guo-JAMS-11, S-Z-12, Wang-12} and \cite{Duan-VML, S-G-04}, respectively;  and the Vlasov-Poisson-Boltzmann system for non-hard sphere interactions cf. \cite{Duan_Liu-2012, Duan_Yang_Zhao-MMMA-2012, Xiao-Xiong-Zhao-2014}.

Particularly, a breakthrough was made in
 Guo's work  \cite{Guo-JAMS-11} on the two-species Vlasov-Poisson-Landau system in a periodic box, that leads to  the subsequent works for the Vlasov-Poisson-Landau system in the whole space mentioned above.
The main ideas  can be outlined as follows:
\begin{itemize}
\item An exponential weight of electric potential $e^{\mp\phi}$ is used to cancel the growth of the velocity in the nonlinear term
$\mp\frac 12\nabla_x\phi\cdot v f_\pm$.
\item A velocity weight
$$
\overline{w}_{l-|\alpha|-|\beta|}(v)=\langle v\rangle^{-(\gamma+1)(l-|\alpha|-|\beta|)},\quad \langle v\rangle=\sqrt{1+|v|^2},\quad l\geq |\alpha|+|\beta|
$$
is used to compensate the weak dissipation
of the linearized Landau kernel $\mathcal{L}$ for the case of $-3\leq \gamma<-2$;
\item The decay of the electric field $\phi(t,x)$ is used to close the energy estimate.
\end{itemize}

However,  since the Lorentz force $E+v\times B$ is not of the potential form, the argument in \cite{Guo-JAMS-11} can not be directly adopted to study
the Vlasov-Maxwell-Boltzmann system \eqref{VMB}, \eqref{Maxwell}.
For this, a time-velocity weighted energy method is introduced in \cite{Duan_Yang_Zhao-MMMA-2012}
by using the following weight $\widetilde{w}_{\ell,|\beta|}(t,v)$ function:
\begin{equation}\label{weight}
    \widetilde{w}_{\ell-|\beta|}\equiv\widetilde{w}_{\ell-|\beta|}(t,v)
    =\langle v\rangle^{-\gamma(\ell-|\beta|)}e^{\frac{q\langle v\rangle^2}{(1+t)^{\vartheta}}},\quad 0<q \ll 1, \quad |\beta|\leq \ell,\quad 0<\vartheta\leq\frac 14.
\end{equation}
Here it is worth pointing out that, unlike the weight function $\overline{w}_{l-|\alpha|-|\beta|}(v)$, the  algebraic factor $\widetilde{w}^a_{\ell-|\beta|}(v)=\langle v\rangle^{-\gamma(\ell-|\beta|)}$ in \eqref{weight} varies only with the order of the $v-$derivatives to represent  the fact that the dissipative effect of the cutoff linearized Boltzmann collision operator $L$ is  ``weaker'' than that of the linearized Landau collision operator $\mathcal{L}$.

\subsection{Difficulties for very soft potentials}\label{exp-h}
To illustrate the main ideas used in \cite{Duan_Liu-Yang_Zhao-VMB-2013, Duan_Yang_Zhao-MMMA-2012}  for $-1\le\gamma\le 1$, and the problem to be studied in this paper, we first introduce the following general weight function 
\begin{equation}\label{our-weight}
w_{\ell-|\beta|,\kappa}\equiv w_{\ell-|\beta|,\kappa}(t,v)=\langle v\rangle^{\kappa(\ell-|\beta|)}e^{\frac{q\langle v\rangle^2}{(1+t)^\vartheta}},\quad \kappa\geq 0,\quad 0<q\ll 1,
\end{equation}
where the precise range of the parameter $\vartheta$ will be specified later. It is easy to see that
$$
w_{\ell-|\beta|,-\gamma}(t,v)\equiv \widetilde{w}_{\ell-|\beta|}(t,v).
$$
Since for cutoff non-hard sphere intermolecular interactions, the macroscopic part can be controlled as for the case of hard sphere model, the main difficulty for the case of non-hard sphere model is to control the microscopic component $\{{\bf I}-{\bf P}\}f(t,x,v)$ suitably. The idea  for that purpose is to use the following two types of dissipative mechanisms:
\begin{itemize}
\item The first one is the dissipative term
\begin{equation*}
D^L_{|\alpha|,\ell-|\beta|,\kappa}\equiv\left\|w_{\ell-|\beta|,\kappa}\partial^\alpha_\beta\{{\bf I}-{\bf P}\}f\right\|^2_\nu\equiv\left\|\sqrt{\nu}w_{\ell-|\beta|,\kappa}\partial^\alpha_\beta\{
{\bf I}-{\bf P}\}f\right\|^2_{L^2(\mathbb{R}^3_v\times\mathbb{R}^3_x)}
\end{equation*}
from the coercive estimate of the linearized collision operator $L$;

\item The second type is the extra dissipative term
\begin{eqnarray}\label{dissipative-weight}
D^W_{|\alpha|,\ell-|\beta|,\kappa}
&\equiv& (1+t)^{-1-\vartheta}\left\| w_{\ell-|\beta|,\kappa}\partial^\alpha_\beta\{{\bf I}-{\bf P}\}f\langle v\rangle \right \|^2_{L^2(\mathbb{R}^3_v\times\mathbb{R}^3_x)}\nonumber
\end{eqnarray}
induced by  the weight function $w_{\ell-|\beta|,\kappa}(t,v)$.
\end{itemize}
The most difficult terms to be studied are:
\begin{itemize}
\item The term
\begin{equation}\label{I^{lt}}
I^{lt}_{|\alpha|,\ell-|\beta|,\kappa}\equiv\left(\partial^{\alpha+e_i}_{\beta-e_i}\{{\bf I-P}\}f,w_{\ell-|\beta|,\kappa}^2\partial^\alpha_\beta\{{\bf I-P}\}f\right)
\end{equation}
related to the linear transport term $v\cdot\nabla_x f$;

\item  The terms containing the electromagnetic field $[E(t,x), B(t,x)]$, i.e.
\begin{equation}\label{difficulty-1}
I^E_{|\alpha|,\ell-|\beta|,\kappa}\equiv\sum_{|\alpha_1|\geq1}\left(\partial^{\alpha_1}E\cdot \nabla_v\partial^{\alpha-\alpha_1}_\beta\{{\bf I-P}\}f,w_{\ell-|\beta|,\kappa}^2\partial^\alpha_\beta\{{\bf I-P}\}f\right),
\end{equation}
and
\begin{equation}\label{difficulty-2}
I_{|\alpha|,\ell-|\beta|,\kappa}^B\equiv\sum_{|\alpha_1|\geq1}\left(\left(v\times\partial^{\alpha_1}B\right)\cdot \nabla_v\partial^{\alpha-\alpha_1}_\beta\{{\bf I-P}\}f,w_{\ell-|\beta|,\kappa}^2\partial^\alpha_\beta\{{\bf I-P}\}f\right).
\end{equation}
Here $(\cdot,\cdot)$ denotes the standard $L^2(\mathbb{R}^3_v\times\mathbb{R}^3_x)\times L^2(\mathbb{R}^3_v\times\mathbb{R}^3_x)$ inner product in $\mathbb{R}^3_v\times\mathbb{R}^3_x$.
\end{itemize}
To deduce the desired estimates on the above terms, the main ingredients used in \cite{Duan_Liu-Yang_Zhao-VMB-2013, Duan_Yang_Zhao-MMMA-2012}  can be summarized as follows:
\begin{itemize}
\item A time-velocity weighted energy method is introduced basing on the weight function $\widetilde{w}_{\ell-|\beta|}(t,v)=w_{\ell-|\beta|,-\gamma}(t,v)$. An advantage of this weight function is that the term $I^{lt}_{|\alpha|,\ell-|\beta|,-\gamma}$ related to the linear transport term $v\cdot\nabla_x f$ can be controlled suitably. In fact,
$$
\widetilde{w}_{\ell-|\beta|}^2=\widetilde{w}_{\ell-|\beta|}\times\widetilde{w}_{\ell-|\beta-e_i|}
\times \langle v\rangle^\gamma,
$$
then
\begin{eqnarray*}
\left|I^{lt}_{|\alpha|,\ell-|\beta|,-\gamma}\right|&\leq& \varepsilon \left\|\widetilde{w}_{\ell-|\beta|}\partial^\alpha_\beta\{{\bf I}-{\bf P}\}f\right\|^2_\nu+
C(\varepsilon)\left\|\widetilde{w}_{\ell-|\beta-e_i|}\partial^{\alpha+e_i}_{\beta-e_i}\{{\bf I}-{\bf P}\}f\right\|^2_\nu,
\end{eqnarray*}
that, by a suitable linear combination with respect to $|\alpha|$,  can be further controlled by the dissipation
$$
\sum\limits_{|\alpha|+|\beta|\leq N} D^L_{|\alpha|,\ell-|\beta|,-\gamma}
$$
induced by the linearized Boltzmann collision operator $L$.

On the other hand, this approach leads to an additional difficulty on estimating the nonlinear term $(E+v\times B)\cdot\nabla_vf_\pm$ that
requires a restriction on the range of the parameter $\gamma$. In fact, to control the term $I^B_{|\alpha|,\ell-|\beta|,-\gamma}$, by
$$
\widetilde{w}^2_{\ell-|\beta|}(t,v)=\widetilde{w}_{\ell-|\beta|}(t,v)\times \widetilde{w}_{\ell-|\beta|-1}(t,v) \times\langle v\rangle^{-\gamma},
$$
we can have
\begin{eqnarray}\label{restriction-gamma}
&&\left|I^B_{\alpha|,\ell-|\beta|,-\gamma}\right|\\
&\leq&
\sum\limits_{0<\alpha_1\leq \alpha}\int_{\mathbb{R}^3_x\times\mathbb{R}^3_v}\langle v\rangle ^{1-\gamma}\left|\partial^{\alpha_1}B\right|\left|\widetilde{w}_{\ell-|\beta|}\partial^\alpha_\beta\{{\bf I}-{\bf P}\}f_\pm\right|\left|\widetilde{w}_{\ell-|\beta|-1}\nabla_v\partial^{\alpha-\alpha_1}_\beta\{{\bf I}-{\bf P}\}f_\pm\right| dvdx,\nonumber
\end{eqnarray}
which can be  controlled by the dissipation
$$
\sum\limits_{|\alpha|+|\beta|\leq N} D^W_{|\alpha|,\ell-|\beta|,-\gamma}
$$
induced by the exponential factor of the weight function $w_{\ell-|\beta|,-\gamma}(t,v)$
{\it only when}
$$
1-\gamma\leq 2, \quad i.e.\quad \gamma\geq -1,
$$
and $\partial^{\alpha_1}B(t,x)$ decays sufficiently fast.

\end{itemize}

Thus, up to now, the existing approaches for the construction of global classical solutions to the Vlasov-Maxwell-Boltzmann system \eqref{VMB}, \eqref{Maxwell} near Maxwellians is limited to the case when $-1\leq \gamma\leq 1$. And
the purpose of this paper is to introduce a new
approach for the  whole range soft potential, that is, to include the case when
$-3< \gamma < -1$.


To continue, we first introduce some notations used throughout the paper.
\begin{itemize}
\item $C$ and $O(1)$ denote some positive constants (generally large) and $\kappa$, $\delta$ and $\lambda$ are used to denote
some positive constants (generally small), where  $C$, $O(1)$, $\kappa$, $\delta$, and $\lambda$ may take different values in different places;
\item $A\lesssim B$ means that there is a generic constant $C> 0$ such that $A \leq   CB$. $A \sim B$ means $A\lesssim B$ and $B\lesssim A$;
\item The multi-indices $ \alpha= [\alpha_1,\alpha_2, \alpha_3]$ and $\beta = [\beta_1, \beta_2, \beta_3]$ will be used to record spatial and velocity derivatives, respectively. And $\partial^{\alpha}_{\beta}=\partial^{\alpha_1}_{x_1}\partial^{\alpha_2}_{x_2}\partial^{\alpha_3}_{x_3} \partial^{\beta_1}_{ v_1}\partial^{\beta_2}_{ v_2}\partial^{\beta_3}_{ v_3}$. Similarly, the notation $\partial^{\alpha}$ will be used when $\beta=0$ and likewise for $\partial_{\beta}$. The length of $\alpha$ is denoted by $|\alpha|=\alpha_1 +\alpha_2 +\alpha_3$. $\alpha'\leq  \alpha$ means that no component of $\alpha'$ is greater than the corresponding component of $\alpha$, and $\alpha'<\alpha$ means that $\alpha'\leq  \alpha$ and $|\alpha'|<|\alpha|$. And it is convenient to write $\nabla_x^k=\partial^{\alpha}$ with $|\alpha|=k$;
\item $\langle\cdot,\cdot\rangle$ is used to denote the ${L^2_{ v}}\times L^2_{ v}$ inner product in ${\mathbb{ R}}^3_{ v}$, with the ${L^2}$ norm $|\cdot|_{L^2}$. For notational simplicity, $(\cdot, \cdot)$ denotes the ${L^2}\times L^2$ inner product either in ${\mathbb{ R}}^3_{x}\times{\mathbb{ R}}^3_{ v }$ or in ${\mathbb{ R}}^3_{x}$ with the ${L^2}\times L^2$ norm $\|\cdot\|$;
\item $\chi_{\Omega}$ is the standard indicator function of the set $\Omega$;
\item $\|f(t,\cdot,\cdot)\|_{L^p_xL^q_v}=\left({\displaystyle\int_{\mathbb{R}^3_x}}
    \left({\displaystyle\int_{\mathbb{R}^3_v}}|f(t,x,v)|^qdv\right)^{\frac pq}dx\right)^{\frac 1p}$, and others
like $\|f(t,\cdot,\cdot)\|_{L^p_xH^q_v}$ can be defined similarly;
\item $B_C \subset \mathbb{R}^3$ denotes the ball of radius $C$ centered at the origin, and $L^2(B_C)\times L^2(B_C)$ stands for the space $L^2\times L^2$ over $B_C$ and likewise for other spaces. Recall that $\nu(v)\sim(1+|v|^2)^{\frac{\gamma}{2}}$, we set
    $|f|_{\nu}^2\equiv\int_{\mathbb{R}^3}|f|^2\nu(v) dv$ and for each $l\in \mathbb{R}$ , $L_l^2(\mathbb{R}_v^3)\times L_l^2(\mathbb{R}_v^3)$ denotes the weighted
function space with norm
$$
|f|_{L^2_{l}}^2\equiv\int_{\mathbb{R}^3_v}|f(v)|^2\langle v\rangle^{2l}dv,\quad \langle v\rangle=\sqrt{1+| v|^2}.
$$
$H^k_l(\mathbb{R}_v^3)\times H^k_l(\mathbb{R}_v^3)$ with the norm $|f|_{H^k_l}$ etc. can be defined similarly;
\item For $s\in\mathbb{R}$,
\[
\left(\Lambda^sg\right)(t,x,v)=\int_{\mathbb{R}^3}|\xi|^{s}\hat{g}(t,\xi,v)e^{2\pi ix\cdot\xi}d\xi
=\int_{\mathbb{R}^3}|\xi|^{s}\mathcal{F}[g](t,\xi,v)e^{2\pi ix\cdot\xi}d\xi
\]
with $\hat{g}(t,\xi,v)\equiv\mathcal{F}[g](t,\xi,v)$ being the Fourier transform of $g(t,x,v)$ with respect to $x$.  The homogeneous Sobolev space $\dot{H}^s\times \dot{H}^s$ is the Banach space consisting of all $g$ satisfying  $\|g\|_{\dot{H}^s}<+\infty$, where
\[
\|g(t)\|_{\dot{H}^s}\equiv\left\|\left(\Lambda^s g\right)(t,x,v)\right\|_{L^2_{x,v}}=\left\||\xi|^s\hat{g}(t,\xi,v)\right\|_{L^2_{\xi,v}}.
\]
\end{itemize}

For an integer $N\geq0$ and $\ell\in \mathbb{R}$, the parameter $\vartheta$ is suitably chosen so that \begin{equation}\label{range-vartheta}
 \left\{
 \begin{array}{ll}
0<\vartheta\leq \min\left\{\frac{\gamma-2\varrho\gamma+4\varrho+2}{4-4\gamma},
\frac{2\varrho\gamma-3\gamma-4\varrho-6}{8\gamma-4}\right\}, \quad & when\ \  \varrho\in[\frac12,\frac32)\  and\  N_0\geq 5,\\[2mm]
 0<\vartheta\leq \min\left\{\frac{\gamma-2\varrho\gamma+4\varrho+2}{4-4\gamma},
\frac{\varrho\gamma-2\gamma-2\varrho-2}{4\gamma-2}\right\},\quad & when \ \ \varrho\in(1,\frac32)\  and\ N_0=4.
 \end{array}
 \right.
\end{equation}
Define the energy functional $\mathcal{\overline{E}}_{N,\ell,\kappa}(t)$ and the corresponding energy dissipation rate functional $\mathcal{\overline{D}}_{N,\ell,\kappa}(t)$ of a given function $f(t,x, v)$ with respect to the weight function $w_{\ell-|\beta|,\kappa}(t,v)$ defined by \eqref{our-weight} as follows:
\begin{equation*}
\mathcal{\overline{E}}_{N,\ell,\kappa}(t)\sim{\mathcal{E}}_{N,\ell,\kappa}(t)+\left\|\Lambda^{-\varrho}(f,E,B)\right\|^2,
\end{equation*}
and
\begin{equation*}
\mathcal{\overline{D}}_{N,\ell,\kappa}(t)\sim{\mathcal{D}}_{N,\ell,\kappa}(t)
+\left\|\Lambda^{1-\varrho}(a,b,c,E,B)\right\|^2+\left\|\Lambda^{-\varrho}
(a_+-a_-,E)\right\|^2+\left\|\Lambda^{-\varrho}{\bf\{I-P\}}f\right\|^2_\nu,
\end{equation*}
respectively. Here
\begin{equation*}
{\mathcal{E}}_{N,\ell,\kappa}(t)\sim\sum_{|\alpha|+|\beta|\leq N}\left\|w_{\ell-|\beta|,\kappa}
\partial^{\alpha}_{\beta}f\right\|^2+\|(E,B)\|_{H^{N}_x}^2,
\end{equation*}
and
\begin{equation*}
\begin{split}
{\mathcal{D}}_{N,\ell,\kappa}(t)\sim&\sum_{1\leq|\alpha|\leq N}\left\|
\partial^{\alpha}(a_{\pm},b,c)\right\|^2+\sum_{|\alpha|+|\beta|\leq N}\left\|w_{\ell-|\beta|,\kappa}
\partial^{\alpha}_{\beta}{\bf\{I-P\}}f\right\|^2_{\nu}
+\left\|a_+-a_-\right\|^2\\{}
&+\|E\|_{H^{N-1}_x}^2+\left\|\nabla_x B\right\|_{H^{N-2}_x}^2
+(1+t)^{-1-\vartheta}\sum_{|\alpha|+|\beta|\leq N}\left\| w_{\ell-|\beta|,\kappa}
\partial^{\alpha}_{\beta}{\bf\{I-P\}}f\langle v\rangle \right\|^2.
\end{split}
\end{equation*}
Moreover, we also need to define $\mathcal{E}_{N}(t)$, the energy functional without weight, $\mathcal{E}_{N_0}^{k}(t)$, the high order energy functional without weight, and $\mathcal{E}^k_{N_0,\ell,\kappa}(t)$, the high order energy functional with respect to the weight function $w_{\ell-|\beta|,\kappa}(t,v)$, as follows:
\begin{equation*}
\mathcal{E}_{N}(t)\sim\sum_{k=0}^{N}\left\|\nabla^k(f,E,B)\right\|^2,
\end{equation*}
\begin{equation*}
\mathcal{E}_{N_0}^{k}(t)\sim\sum_{|\alpha|=k}^{N_0}\left\|\partial^\alpha(f,E,B)\right\|^2,
\end{equation*}
and
\begin{equation*}
\mathcal{E}^k_{N_0,\ell,\kappa}(t)\sim\sum_{|\alpha|+|\beta\leq N_0,\atop{|\alpha|\geq k}}\left\|w_{\ell-|\beta|,\kappa}\partial^\alpha_\beta f\right\|^2+\sum_{|\alpha|=k}^{N_0}\left\|\partial^\alpha(E,B)\right\|^2,
\end{equation*}
respectively. The corresponding energy dissipation rate functionals $\mathcal{D}_{N}(t)$, $\mathcal{D}_{N_0}^{k}(t)$, and $\mathcal{D}_{N_0,\ell,\kappa}^{k}(t)$ are given by
\begin{equation*}
\begin{split}
\mathcal{D}_{N}(t)\sim&
\left\|(E,a_+-a_-)\right\|^2+\sum_{1\leq|\alpha|\leq N-1}\left\|
\partial^\alpha({\bf P}f,E,B)\right\|^2
+\sum_{|\alpha|=N}\left\|\partial^\alpha {\bf P}f\right\|^2+\sum_{ |\alpha| \leq N}\left\|\partial^\alpha{\bf\{I-P\}}f\right\|^2_{\nu},
\end{split}
\end{equation*}
\begin{equation*}
\begin{split}
\mathcal{D}_{N_0}^{k}(t)\sim&\left\|\nabla^{k}(E,a_+-a_-)\right\|^2+\sum_{k+1\leq|\alpha|\leq N_0-1}
\left\|\partial^\alpha({\bf P}f,E,B)\right\|^2+\sum_{|\alpha|=N_0}\left\|\partial^\alpha {\bf P}f\right\|^2+\sum_{k\leq |\alpha|\leq N_0}\left\|\partial^\alpha{\bf\{I-P\}}f\right\|^2_{\nu},
\end{split}
\end{equation*}
and
\begin{equation*}
\begin{split}
\mathcal{D}_{N_0,\ell,\kappa}^{k}(t)\sim&\left\|\nabla^{k}(E,a_+-a_-)\right\|^2+\sum_{k+1\leq |\alpha|\leq N_0-1}
\left\|\partial^\alpha({\bf P}f,E,B)\right\|^2+\sum_{|\alpha|+|\beta\leq N_0,\atop{|\alpha|\geq k}}\left\|w_{\ell-|\beta|,\kappa}\partial^\alpha_\beta{\bf\{I-P\}}f\right\|^2_{\nu}\\{}
&+\sum_{|\alpha|=N_0}\left\|\partial^\alpha {\bf P}f\right\|^2+(1+t)^{-1-\vartheta}\sum_{|\alpha|+|\beta\leq N_0,\atop{|\alpha|\geq k}}\left\| w_{\ell-|\beta|,\kappa}
\partial^{\alpha}_{\beta}{\bf\{I-P\}}f\langle v\rangle \right\|^2,
\end{split}
\end{equation*}
respectively.

\subsection{Main results and ideas}

With the above preparation, the precise statement  concerning the global in time
 solvability of the Cauchy problem (\ref{f}), (\ref{f-initial}), \eqref{compatibility conditions} can be stated as follows.

\begin{theorem}\label{Th1.1}
Suppose that
\begin{itemize}
\item[(i)] $F_0(x,v)=\mu+\sqrt{\mu}f_0(x,v)\geq0$, $\frac 12\leq \varrho<\frac 32$, $-3<\gamma<-1$. Let
\begin{equation}\label{N-N_0}
 \left\{
 \begin{array}{rl}
 N_0\geq 5, N=2N_0-1, \quad & when\ \  \varrho\in[\frac12,1],\\[2mm]
 N_0\geq 4, N=2N_0,\quad & when \ \ \varrho\in(1,\frac32);
 \end{array}
 \right.
\end{equation}
\item[(ii)] The parameter $\vartheta$ is chosen to satisfy \eqref{range-vartheta} and we take {$\sigma_{N,0}=\frac{1+\epsilon_0}{2}$, $\sigma_{n,0}=0$ with $n\leq N-1$,  $\sigma_{n,j}-\sigma_{n,j-1}=\frac{2(1+\gamma)}{\gamma-2}(1+\vartheta)$ when $0\leq j\leq n$ and $1\leq n\leq N$;}
\item[(iii)] There exists a positive constants $\widetilde{l}$ which depends only on $\gamma$ and $N_0$ such that
\begin{itemize}
\item[(a)] $\widetilde{l}_1\geq \frac\gamma2+\frac{(1-2\gamma)\sigma_{N_0,N_0}}{2+\varrho},$ $\widetilde{\ell}_2\geq\frac\gamma2+\frac{2(1-2\gamma)\sigma_{N,N}}{3+2\varrho}$, and $\widetilde{\ell}_3\geq\frac\gamma2+\frac{(1-2\gamma)\sigma_{ N-1, N-1}}{2+\varrho}$,
\item[(b)] $l_1\geq N$, $l_1^*\geq\max\left\{\widetilde{\ell}_2-\frac\gamma2,\  \widetilde{\ell}_3-\frac\gamma2-\gamma l_1\right\}$, {$l_0\geq \ {l_1^*}+\frac{5}{2}$},
$l_0^*\geq \widetilde{\ell}_1-\frac\gamma2-\gamma(l_0+l^*)$ with $l^*=\frac32-\frac{\widetilde{l}}\gamma$.
\end{itemize}
\end{itemize}
If we assume further that
\begin{equation*}
Y_0=\sum_{|\alpha|+|\beta|\leq N_0}\left\|w_{l_0^*-|\beta|,1}\partial^\alpha_\beta f_0\right\| +\sum_{N_0+1\leq|\alpha|+|\beta|\leq N}\left\|w_{l_1^*-|\beta|,1}\partial^\alpha_\beta f_0\right\|
+\|(E_0,B_0)\|_{H^N\bigcap \dot{H}^{-\varrho}}+\|f_0\|_{\dot{H}^{-\varrho}}
\end{equation*}
is sufficiently small, then the Cauchy problem (\ref{f}), (\ref{f-initial}), \eqref{compatibility conditions} admits a unique global solution $[f(t,x,v),$ $E(t,x),$ $ B(t,x)]$ satisfying $F(t,x,v)=\mu+\sqrt{\mu}f(t,x,v)\geq0$.
\end{theorem}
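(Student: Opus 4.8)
The plan is to run a time-weighted energy method of the type used in \cite{Duan_Liu-Yang_Zhao-VMB-2013, Duan_Yang_Zhao-MMMA-2012}, but with the general weight $w_{\ell-|\beta|,\kappa}(t,v)$ from \eqref{our-weight} kept as a free-parameter family rather than fixed at $\kappa=-\gamma$, together with the extra layer of negative-Sobolev decay estimates encoded in $\mathcal{\overline{E}}_{N,\ell,\kappa}$ and $\mathcal{\overline{D}}_{N,\ell,\kappa}$. The skeleton is the usual continuity-argument: local existence (from the combination of \cite{Guo-Invent-03, Guo-JAMS-11} as already noted in the excerpt) plus a closed a priori estimate of the form
\begin{equation*}
\frac{d}{dt}\mathcal{\overline{E}}_{N,\ell,\kappa}(t)+\lambda\,\mathcal{\overline{D}}_{N,\ell,\kappa}(t)\lesssim \big(\mathcal{\overline{E}}_{N,\ell,\kappa}(t)\big)^{1/2}\mathcal{\overline{D}}_{N,\ell,\kappa}(t)+(\text{lower-order time-decay slack}),
\end{equation*}
uniform on $[0,T]$, so that smallness of $Y_0$ propagates. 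First I would set up the macroscopic estimates: since for $-3<\gamma<-1$ the macroscopic component $a_\pm,b,c$ and the fields $E,B$ are handled exactly as in the hard-sphere/$\gamma\ge-1$ theory, I would invoke the compressible Navier--Stokes-type dissipation structure and the Maxwell-system cancellations to bound $\sum_{1\le|\alpha|\le N}\|\partial^\alpha(a_\pm,b,c)\|^2$, $\|a_+-a_-\|^2$, $\|E\|_{H^{N-1}}^2$, $\|\nabla_xB\|_{H^{N-2}}^2$ by the microscopic dissipation plus the energy; the negative-index pieces $\|\Lambda^{1-\varrho}(a,b,c,E,B)\|^2$ and $\|\Lambda^{-\varrho}(a_+-a_-,E)\|^2$ come from applying $\Lambda^{-\varrho}$ to \eqref{f} and using interpolation, which is where $\varrho\in[1/2,3/2)$ and the split \eqref{N-N_0} of $(N,N_0)$ enter.

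\textbf{The microscopic weighted estimates.} The heart of the argument is the weighted estimate on $\{\mathbf I-\mathbf P\}f$. Applying $\partial^\alpha_\beta$ to the microscopic equation, testing against $w_{\ell-|\beta|,\kappa}^2\partial^\alpha_\beta\{\mathbf I-\mathbf P\}f$ and summing produces, on the good side, $D^L_{|\alpha|,\ell-|\beta|,\kappa}$ from the coercivity \eqref{coercive-estimates} and $D^W_{|\alpha|,\ell-|\beta|,\kappa}$ from differentiating the exponential factor $e^{q\langle v\rangle^2/(1+t)^\vartheta}$ in time; on the bad side, the three troublesome terms $I^{lt}$, $I^E$, $I^B$ of \eqref{I^{lt}}, \eqref{difficulty-1}, \eqref{difficulty-2} plus the nonlinear $\Gamma$ and the $E\cdot v f$ term. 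The new ingredient — and the decisive point for reaching $-3<\gamma<-1$ — is to \emph{not} absorb $I^{lt}$ through the $\nu$-dissipation (which forced $\kappa=-\gamma$ and then blew up the $B$-term as in \eqref{restriction-gamma}) but instead to distribute the burden across a hierarchy of weight exponents $\sigma_{n,j}$ with the prescribed increments $\sigma_{n,j}-\sigma_{n,j-1}=\frac{2(1+\gamma)}{\gamma-2}(1+\vartheta)$, combined with the negative-power time weights $(1+t)^{-\sigma_{n,j}}$. Concretely, I would run an induction on the number $|\beta|$ of velocity derivatives (from high $|\beta|$ down to $0$) and, inside each level, a suitable linear combination over $|\alpha|$: the transport term $I^{lt}$ at level $|\beta|$ is passed to level $|\beta|-1$ at the cost of one factor $\langle v\rangle$, which is then paid for either by $D^W$ (using the $(1+t)^{-1-\vartheta}\langle v\rangle^2$ gain) or by a small slice of $D^L$ at the lower level; the field terms $I^E, I^B$ are estimated by putting $L^\infty_x$ on the low-derivative field factor via Sobolev embedding, using the time-decay of $(E,B)$ obtained from the negative-Sobolev bounds, and absorbing the $\langle v\rangle\,|\nabla_v\partial^{\alpha-\alpha_1}_\beta\{\mathbf I-\mathbf P\}f|$ factor into $D^W$ at the appropriate weight level — this is exactly where the constraints \eqref{range-vartheta} on $\vartheta$ and the chain of lower bounds on $\widetilde\ell_1,\widetilde\ell_2,\widetilde\ell_3,l_0,l_1,l_0^*,l_1^*$ in hypothesis (iii) are consumed, to guarantee that every algebraic power of $\langle v\rangle$ produced along the way is covered and that the induction closes. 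The nonlinear terms $\Gamma(f,f)$ and $\frac{q_0}{2}E\cdot vf$ are handled by the standard weighted trilinear estimates for the cutoff $\Gamma$ (from the basic lemmas of Section 3) together with $L^\infty$-$L^2$ splittings, contributing the cubic right-hand side $(\mathcal{\overline E})^{1/2}\mathcal{\overline D}$.

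\textbf{Closing the loop.} Once the weighted microscopic estimate, the macroscopic estimate, and the negative-Sobolev decay estimate are each in hand, I would combine them with carefully chosen (small) coefficients — first the high-order-derivative functionals $\mathcal E^k_{N_0,\ell,\kappa}$ with their own dissipation $\mathcal D^k_{N_0,\ell,\kappa}$, then the full $\mathcal{\overline E}_{N,\ell,\kappa}$ — to get the closed inequality above. Because the $D^W$-terms carry negative powers of $(1+t)$, closing requires an interpolation-in-time / Lyapunov argument: one establishes polynomial decay $\mathcal E_{N_0}(t)\lesssim (1+t)^{-\text{(rate)}}$ from the negative-index bound, feeds that decay back into the top-order weighted estimate to control the time-integrals of the slack terms, and iterates. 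The decay rate itself is dictated by $\varrho$ and is what the auxiliary Theorem~\ref{Th1.2} (referenced in the excerpt) records. Finally, positivity $F=\mu+\sqrt\mu f\ge0$ is preserved along the flow by the usual argument since the constructed solution is a genuine (limit of) nonnegative solution of the original $F$-equation. \textbf{Main obstacle.} I expect the genuine difficulty to be the simultaneous bookkeeping of the two weights — the negative-power time weight $(1+t)^{-\sigma_{n,j}}$ and the time-velocity weight $w_{\ell-|\beta|,\kappa}$ — in the transport and Lorentz-force terms: one must choose the exponent ladder $\sigma_{n,j}$ and the base weights $l_0^*,l_1^*,\widetilde\ell_i$ so delicately that the velocity-growth $\langle v\rangle$ from $v\cdot\nabla_x$, the $\langle v\rangle$ from $v\times B\cdot\nabla_v$, and the loss of one unit of $\langle v\rangle^{-\gamma}$ in the $\nu$-weight all get paid for without circularity across the $(|\alpha|,|\beta|)$-induction — this is the technical core that the long list of inequalities in hypothesis (iii) and the constraint \eqref{range-vartheta} are engineered to make possible, and verifying it is where most of the work in Section 4 and the appendix will go.
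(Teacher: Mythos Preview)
Your proposal captures the broad architecture correctly — continuity argument, macroscopic estimates as in the hard-sphere case, negative-Sobolev norms to generate decay, and the $\sigma_{n,j}$ ladder as the new ingredient — but it misses the central structural idea of the paper, and as written your scheme would not close.

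The gap is that you describe a \emph{single} weighted estimate (with $\kappa$ ``kept free''), whereas the paper runs \emph{two} weight families simultaneously and couples them. With $\kappa=-\gamma$ the transport commutator $I^{lt}$ is absorbed exactly as in \cite{Duan_Yang_Zhao-MMMA-2012}, and this is what produces \emph{uniform-in-time} bounds on $\mathcal{\overline E}_{N_0,l_0+l^*,-\gamma}$, $\mathcal E_N$, $\mathcal E_{N-1,l_1,-\gamma}$; but for $\gamma<-1$ the Lorentz terms $I^E,I^B$ in these estimates cannot be absorbed into $D^W$ (the growth is $\langle v\rangle^{1-\gamma}>\langle v\rangle^2$). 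The paper does \emph{not} solve this by the $\sigma_{n,j}$ ladder directly. Instead it interpolates in velocity: the bad Lorentz terms in the $\kappa=-\gamma$ estimate are bounded by $\big\|\nabla^2(E,B)\big\|_{H^{N_0-2}}^{1/\theta_i}\,\mathcal{\widetilde D}_{\cdot,l^*,1}(t)$, where $\mathcal{\widetilde D}_{\cdot,l^*,1}$ is a dissipation with the \emph{other} weight $\kappa=1$, and $\theta_i=\frac{1-2\gamma}{2\widetilde\ell_i-\gamma}$. The parameters $\widetilde\ell_1,\widetilde\ell_2,\widetilde\ell_3$ are precisely the interpolation exponents — they do not ``cover algebraic powers of $\langle v\rangle$'' as you suggest, but make $1/\theta_i$ large enough that the decaying field factor beats the time growth of the $\kappa=1$ quantity.

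The $\sigma_{n,j}$ hierarchy then enters only in the auxiliary $\kappa=1$ estimate (Lemmas~\ref{lemma7}--\ref{lemma8}): with $\kappa=1$ the Lorentz terms \emph{do} go into $D^W$, but now the transport commutator leaves $\|w_{\ell-|\beta-e_i|,1}\partial^{\alpha+e_i}_{\beta-e_i}\{\mathbf I-\mathbf P\}f\,\langle v\rangle^{-\gamma/2-1}\|^2$, and this is what the ladder $(1+t)^{-\sigma_{n,j}}$ with the prescribed increment absorbs via a further interpolation between $D^L$ and $D^W$ at level $j-1$. The resulting $\kappa=1$ bounds are \emph{allowed to grow} like $(1+t)^{\sigma_{n,j}}$; they are used only to control $\mathcal{\widetilde D}_{\cdot,l^*,1}$ on the right of the $\kappa=-\gamma$ inequalities, where the factor $\|\nabla^2(E,B)\|^{1/\theta_i}\lesssim(1+t)^{-\sigma_{N,N}}$ kills the growth. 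A second coupling — the term $\mathcal E_N(t)\mathcal E^1_{N_0,l_0,-\gamma}(t)$ in the $\mathcal E_N$ estimate — requires $\mathcal E^1_{N_0,l_0,-\gamma}\in L^1_t$, which again comes from the $\kappa=-\gamma$ decay (Lemma~\ref{lemma2}), so neither family can be dispensed with. Your plan to ``not absorb $I^{lt}$ through the $\nu$-dissipation'' and rely on the ladder alone would leave you with only time-growing norms and no mechanism to recover uniform control of $\mathcal E_N$.
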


\begin{remark} Several remarks concerning Theorem \ref{Th1.1} are given.
\begin{itemize}
\item {As mentioned before, although only the case of $-3<\gamma<-1$ is studied in this paper, the case of $-1\leq\gamma\leq 1$ is much simpler and similar result holds. Thus, this
work together with \cite{Duan_Liu-Yang_Zhao-VMB-2013} provide a satisfactory well-posedness theory for the Cauchy problem of the two-species Vlasov-Maxwell-Boltzmann system (\ref{f}), (\ref{f-initial}), \eqref{compatibility conditions} in the perturbative framework for both cutoff and non-cutoff intermolecular interactions.}
\item Since in the proof of Lemma \ref{lemma4.3}, $N$ is assumed to satisfy $N>\frac53 N_0-\frac53$, while in the proof of Lemma \ref{Lemma1}, $N$ is further required to satisfy $N\geq 2N_0-2+\varrho$. Putting these assumptions together, we can take $N=2N_0-1$ for $\varrho\in[\frac12,1]$ and $N=2N_0$ for $\varrho\in(1,\frac32)$.
\item The minimal regularity index, i.e., the lower bound on the parameter $N$, we imposed on the initial data is $N=9$, $N_0=5$ for $\varrho\in[\frac12,1]$ and $N=8$, $N_0=4$ for $\varrho\in(1,\frac32)$.
\item The precise value of the parameter $\widetilde{l}$ will be specified in the proof of Lemma \ref{lemma4.3}.
\end{itemize}
\end{remark}

Note that Theorem \ref{thm.mr} is an immediate consequence of Theorem  \ref{Th1.1}. The next result is concerned with the temporal decay estimates on the global solution $[f(t,x,v),$ $E(t,x),$ $B(t,x)]$ obtained in Theorem \ref{Th1.1}.

\begin{theorem}\label{Th1.2}
Under the assumptions of Theorem \ref{Th1.1},
we have
\begin{itemize}
\item[(1)]
Taking $k=0,1,2,\cdots, N_0-2$, it follows that
\begin{equation}\label{TH2-1}
\mathcal{E}^k_{N_0}(t)\lesssim Y^2_0(1+t)^{-(\varrho+k)}.
\end{equation}
\item[(2)]Let $0\leq i\leq k\leq N_0-3$ be an integer. Take {$l_{0,k}\geq N_0$ with $l_{0,k-1}\geq l_{0,k}+3$ for $2\leq k\leq N_0-3$. Further take $l_0$ and $l^*$ respectively as $l_0= l_{0,0}=l_{0,1}\geq\max\left\{\chi_{k\geq2}(l_{0,k}+3k-3), {l_1^*}+\frac{5}{2}\right\}$
and $l^*=\frac{k+2}{2}-\frac{\widetilde{l}}\gamma$ in Theorem 1.1.  Then it follows that
\begin{equation}\label{TH2-2}
 \begin{split}
\mathcal{E}^k_{N_0,l_{0,k}+\frac{i}{2},-\gamma}(t)
\lesssim Y^2_0(1+t)^{-k-\varrho+i}, \quad \quad i=0,1,\cdots,k+[\varrho].
\end{split}
\end{equation}}
Here and in the sequel $[\varrho]$ denotes the greatest integer less than $\varrho$.
\item[(3)]When $N_0+1\leq|\alpha|\leq N-1$,
\begin{equation}\label{TH2-4}
\begin{split}
\|\partial^\alpha f\|^2
\lesssim Y^2_0(1+t)^{-\frac{(N-|\alpha|)(N_0-2+\varrho)}{N-N_0}}.
\end{split}
\end{equation}
\end{itemize}
\end{theorem}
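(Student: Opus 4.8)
\emph{Overall strategy.} The temporal rates are extracted from the a priori estimates already established in proving Theorem \ref{Th1.1}: the Lyapunov-type differential inequalities for the energy functionals $\mathcal{E}_N$, $\mathcal{E}_{N_0}^k$, $\mathcal{E}_{N_0,\ell,\kappa}^k$ and their dissipation rates $\mathcal{D}_N$, $\mathcal{D}_{N_0}^k$, $\mathcal{D}_{N_0,\ell,\kappa}^k$, the uniform boundedness $\lesssim Y_0^2$ of the full weighted energy, and the propagation of the negative Sobolev norm $\|\Lambda^{-\varrho}(f,E,B)(t)\|\lesssim Y_0$ (obtained by pairing the equations with $\Lambda^{-2\varrho}$ of the unknowns and absorbing the right-hand side with the extra dissipative pieces $\|\Lambda^{1-\varrho}(a,b,c,E,B)\|^2$, $\|\Lambda^{-\varrho}(a_+-a_-,E)\|^2$, $\|\Lambda^{-\varrho}\{{\bf I-P}\}f\|_\nu^2$ collected in $\mathcal{\overline D}_{N,\ell,\kappa}$). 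I would then follow the Guo--Wang scheme in the form used in \cite{Duan_Liu-Yang_Zhao-VMB-2013}: first prove the unweighted hierarchy \eqref{TH2-1} by an induction on $k$, then bootstrap to the weighted hierarchy \eqref{TH2-2}, and finally deduce \eqref{TH2-4} by Gagliardo--Nirenberg interpolation. Throughout, the soft-potential degeneracy $\nu(v)\sim\langle v\rangle^\gamma$ and the $|v|$-growth of the Lorentz-force nonlinearities are absorbed by a velocity splitting $\int_{\mathbb{R}^3_v}=\int_{\langle v\rangle\le R}+\int_{\langle v\rangle>R}$, the tail being paid for by the uniformly bounded weighted energy and, in the weighted estimates, by the extra dissipation $D^W_{|\alpha|,\ell-|\beta|,\kappa}$ carrying a gain $\langle v\rangle^2$.

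\emph{Step 1 (unweighted rates \eqref{TH2-1}).} Fix $0\le k\le N_0-2$. Since $\mathcal{D}_{N_0}^k$ omits only the zero-frequency macroscopic and field modes and the large-velocity tail of $\{{\bf I-P}\}f$, I would first show $\mathcal{E}_{N_0}^k\lesssim\mathcal{D}_{N_0}^k+\mathcal{E}_{N_0}^{k-1}$, the tail being folded into $\mathcal{D}_{N_0}^k$ up to a harmless power of the bounded weighted energy (using $l_0^\ast$ large) via $\int_{\langle v\rangle\le R}(\cdot)\lesssim R^{-\gamma}\|\cdot\|_\nu^2$ and $\int_{\langle v\rangle>R}(\cdot)\lesssim R^{-2l_0^\ast}Y_0^2$, and the missing low-frequency modes recovered from the interpolation
\begin{equation*}
\|\nabla^{j}g\|\lesssim\|\Lambda^{-\varrho}g\|^{\frac1{j+1+\varrho}}\,\|\nabla^{j+1}g\|^{\frac{j+\varrho}{j+1+\varrho}},\qquad \|\Lambda^{-\varrho}g\|\lesssim Y_0 .
\end{equation*}
Inserting this into $\frac{d}{dt}\mathcal{E}_{N_0}^k+\lambda\mathcal{D}_{N_0}^k\le 0$ (valid up to a cubic remainder killed by $Y_0\ll1$), multiplying by $(1+t)^{\varrho+k}$, integrating in time, and estimating the resulting $\int_0^t(1+s)^{\varrho+k-1}\mathcal{E}_{N_0}^k\,ds$ by Young's inequality together with the inductive bound on $\mathcal{E}_{N_0}^{k-1}$, one closes the induction; the base case $k=0$ yields $(1+t)^{-\varrho}$ directly since $\tfrac{1-\theta}{\theta}=\varrho$ with $\theta=\tfrac1{1+\varrho}$.

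\emph{Step 2 (weighted rates \eqref{TH2-2}) --- the main obstacle.} Here the new approach of the paper is needed, because the time-velocity weight $w_{\ell-|\beta|,\kappa}(t,v)=\langle v\rangle^{\kappa(\ell-|\beta|)}e^{q\langle v\rangle^2/(1+t)^\vartheta}$ degrades as $t\to\infty$: multiplying the weighted energy inequality by a power $(1+t)^{\sigma}$ creates the term $\sigma(1+t)^{\sigma-1}\|w_{\ell-|\beta|,\kappa}\partial^\alpha_\beta\{{\bf I-P}\}f\|^2$, which the coercive dissipation cannot absorb. The remedy is to balance it against $(1+t)^{\sigma}D^W_{|\alpha|,\ell-|\beta|,\kappa}=(1+t)^{\sigma-1-\vartheta}\|w_{\ell-|\beta|,\kappa}\partial^\alpha_\beta\{{\bf I-P}\}f\langle v\rangle\|^2$, spending the $\langle v\rangle^2$ gain of $D^W$ to convert the excess algebraic weight; the step sizes $\sigma_{n,j}-\sigma_{n,j-1}=\tfrac{2(1+\gamma)}{\gamma-2}(1+\vartheta)$ of item (ii) and the range \eqref{range-vartheta} for $\vartheta$ are precisely calibrated so that this balance, combined with the structural fact that $\mathcal{D}_{N_0,\ell,\kappa}^k$ with algebraic weight $\ell$ controls the microscopic part of $\mathcal{E}_{N_0,\ell-1/2,\kappa}^k$ (a consequence of $\nu\sim\langle v\rangle^\gamma$ and $\kappa=-\gamma$), propagates along a finite descent in the half-integer weight index $i$ and downward in the derivative count. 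Feeding in the already-proven \eqref{TH2-1} to handle the macroscopic source terms and the Lorentz-force nonlinearities $E\cdot\nabla_vf$, $(v\times B)\cdot\nabla_vf$, $E\cdot vf$ (whose velocity growth is again paid by the $\langle v\rangle^2$ of $D^W$), one arrives at \eqref{TH2-2}. The chain of inequalities tying $\widetilde l$, $l_{0,k}$, $l_0$ and $l^*$ in the statement and in item (iii) of Theorem \ref{Th1.1} are exactly the conditions that keep this bookkeeping consistent across all the nested inductions; verifying them uniformly is the delicate point and, in my view, the main difficulty.

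\emph{Step 3 (top spatial derivatives \eqref{TH2-4}).} For $N_0+1\le|\alpha|\le N-1$ I would interpolate between order $N_0$, where the $k=N_0-2$ case of \eqref{TH2-1} gives $\|\nabla^{N_0}f\|^2\le\mathcal{E}_{N_0}^{N_0-2}(t)\lesssim Y_0^2(1+t)^{-(N_0-2+\varrho)}$, and the top order $N$, where only boundedness $\|\nabla^{N}f\|^2\le\mathcal{E}_N(t)\lesssim Y_0^2$ is available from Theorem \ref{Th1.1}. Gagliardo--Nirenberg gives $\|\nabla^{|\alpha|}f\|\lesssim\|\nabla^{N_0}f\|^{1-\lambda}\|\nabla^{N}f\|^{\lambda}$ with $\lambda=\tfrac{|\alpha|-N_0}{N-N_0}$, hence
\begin{equation*}
\|\nabla^{|\alpha|}f\|^2\lesssim Y_0^2(1+t)^{-(N_0-2+\varrho)(1-\lambda)}=Y_0^2(1+t)^{-\frac{(N-|\alpha|)(N_0-2+\varrho)}{N-N_0}},
\end{equation*}
which is \eqref{TH2-4}.
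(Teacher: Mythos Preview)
Your Step 3 is correct and identical to the paper's.

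Step 1 has a genuine gap. The additive comparison $\mathcal{E}_{N_0}^k\lesssim \mathcal{D}_{N_0}^k+\mathcal{E}_{N_0}^{k-1}$ together with a time-weighted induction does not close: after multiplying by $(1+t)^{\varrho+k}$ the term $\int_0^t (1+s)^{\varrho+k-1}\mathcal{E}_{N_0}^{k-1}\,ds$ is only $\lesssim Y_0^2\,t$ under the inductive hypothesis, which is one power short. The paper does \emph{not} induct on $k$. It proves, for each fixed $k$, the multiplicative bound
\[
\mathcal{E}_{N_0}^k(t)\lesssim \bigl(\mathcal{D}_{N_0}^k(t)\bigr)^{\frac{k+\varrho}{k+\varrho+1}}\,C^{\frac{1}{k+\varrho+1}},
\]
via H\"older in $v$ for the microscopic part ($\|\partial^\alpha\{{\bf I-P}\}f\|\le \|\partial^\alpha\{{\bf I-P}\}f\|_\nu^{\theta}\|w_{\frac{k+\varrho}{2},-\gamma}\partial^\alpha\{{\bf I-P}\}f\|^{1-\theta}$) and Sobolev interpolation in $x$ for the macroscopic and field parts, and then integrates the superlinear ODE $\dot y+c\,y^{1+1/(k+\varrho)}\le 0$ directly. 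You also omit the regularity-loss obstruction: $\|\nabla^{N_0}(E,B)\|^2$ lies in $\mathcal{E}_{N_0}^k$ but not in $\mathcal{D}_{N_0}^k$, and it is \emph{not} a low-frequency mode. The paper handles it by interpolating $\|\nabla^{N_0}(E,B)\|\lesssim \|\nabla^{N_0-1}(E,B)\|^{\theta}\|\nabla^{N_0+k+\varrho}(E,B)\|^{1-\theta}$, the second factor being bounded by $\mathcal{E}_N$; this is precisely where the constraint $N\ge N_0+k+\varrho$ (hence \eqref{N-N_0}) enters.

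In Step 2 you invoke the $\sigma_{n,j}$ hierarchy and the balance against $D^W$, but those belong to the $\kappa=1$ weighted estimates used to close the a priori bound in Theorem~\ref{Th1.1}, not to the proof of \eqref{TH2-2}. The paper's argument for \eqref{TH2-2} is a finite tower in the half-integer velocity-weight index $i$ only: starting from $\frac{d}{dt}\mathcal{E}^k_{N_0,\ell,-\gamma}+\mathcal{D}^k_{N_0,\ell,-\gamma}\lesssim \sum_{|\alpha|=N_0}\|\partial^\alpha E\|^2+(\text{cross terms for }k\ge 2)$, one takes $\ell=l_{0}+\tfrac{i}{2}$, multiplies by $(1+t)^{k+\varrho-i+\epsilon}$, and uses the structural relation $\|w_{l_0+\frac{i+1}{2},-\gamma}\cdot\|_\nu\ge \|w_{l_0+\frac{i}{2},-\gamma}\cdot\|$ so that the dissipation at level $i{+}1$ absorbs the microscopic energy at level $i$. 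The residual macroscopic and field terms $\|\nabla^k({\bf P}f,B)\|^2$, $\|\nabla^{N_0}B\|^2$, $\sum_{|\alpha|=N_0}\|\partial^\alpha E\|^2$ are then bounded using the unweighted decay \eqref{TH2-1} already obtained; this is where $\mathcal{E}^{k+1}_{N_0}\lesssim Y_0^2(1+t)^{-(k+1+\varrho)}$ is fed in. The extra dissipation $D^W$ plays no role in this step.
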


\begin{remark}
In Theorem \ref{Th1.2}, we notice that the highest index k of $\mathcal{E}^k_{N_0}(t)$ is $N_0-2$ while  the highest index of  $\mathcal{E}^k_{N_0,\ell,-\gamma}(t)$ is $N_0-3$. The reason is that the highest order $\|\partial^\alpha E\|^2$ appearing in (\ref{lemma2-1}) does not belong to the corresponding dissipation rate $\mathcal{D}^k_{N_0,\ell,-\gamma}(t)$.
\end{remark}


Now we present the main ideas in the proof.
To overcome the difficulties pointed out
before for the case when $-3<\gamma<-1$, the  main observation
is that two sets of time-velocity weighted energy estimates should be
 performed simultaneously as explained in the following.

\begin{itemize}
\item [(i).]
First of all, when estimating $I^B_{|\alpha|,\ell-|\beta|,\kappa}$ defined by \eqref{difficulty-2} for $\kappa=-\gamma$,
there are some error terms with higher weight
when
$-3<\gamma<-1$, cf. \eqref{restriction-gamma} that can not be
controlled. However, as long as the solution $[f(t,x,v), E(t,x), B(t,x)]$ constructed  up to  $t=T>0$ satisfies the a priori assumption
\begin{eqnarray}\label{a-priori-assumption}
X(t)&=&\sup_{0\leq s\leq t}\left\{\mathcal{E}_N(s)+\mathcal{\overline{E}}_{N_0,l_0+l^*,-\gamma}(s)
+\mathcal{E}_{ N-1,l_1,-\gamma}(s)\right\}\nonumber\\
&&+\sup_{0\leq s\leq t}\left\{\sum_{N_0+1\leq n\leq N}\sum_{|\alpha|+|\beta|=n,\atop|\beta|=j,1\leq j\leq n}(1+s)^{-\sigma_{n,j}}\left\|w_{l_1^*-j,1}\partial_\beta^\alpha\{{\bf I-P}\} f\right\|^2\right.\nonumber\\
&&+\sum_{N_0+1\leq n\leq N-1}\sum_{|\alpha|=n}\left\|w_{l_1^*,1}\partial^\alpha f\right\|^2+\sum_{|\alpha|=N}(1+s)^{-\frac{1+\epsilon_0}{2}}\left\|w_{l_1^*,1}\partial^\alpha f\right\|^2\\
&&+\sum_{1\leq n\leq N_0}\sum_{|\alpha|+|\beta|=n,\atop|\beta|=j,1\leq j\leq n}(1+s)^{-\sigma_{n,j}}\left\|w_{l_0^*-j,1}\partial_\beta^\alpha\{{\bf I-P}\} f\right\|^2\nonumber\\
&&\left.+\sum_{1\leq n\leq N_0}\sum_{|\alpha|=n}\left\|w_{l_0^*,1}\partial^\alpha f\right\|^2+\left\|w_{l_0^*,1}\{{\bf I-P}\} f\right\|^2\right\}
\leq M,\nonumber
\end{eqnarray}
where $M>0$ is sufficiently small, then one can obtain
\begin{equation*}
\frac{d}{dt}\mathcal{\overline{E}}_{N_0,l_0+l^*,-\gamma}(t)+\mathcal{\overline{D}}_{N_0,l_0+l^*,-\gamma}(t)\lesssim
\left\|\nabla^2(E,B)\right\|_{H^{ N_0-2}_x}^{\frac1{\theta_1}}\mathcal{\widetilde{D}}_{N_0,l_0^*,1}(t)+\sum_{|\alpha|=N_0}\varepsilon\|\partial^\alpha E\|^2,
\end{equation*}
\begin{equation*}
\frac{d}{dt}\mathcal{E}_{N}(t)+\mathcal{D}_{N}(t)
\lesssim \left(\| E\|_{L^{\infty}_x}+\left\|\nabla^2(E,B)\right\|_{H^{ N_0-2}_x}\right)^{\frac1{\theta_2}}\mathcal{\widetilde{D}}_{N,l_1^*,1}(t)
+\mathcal{E}_{N}(t)\mathcal{E}^1_{N_0,l_0,-\gamma}(t),
\end{equation*}
and
\begin{equation*}
\begin{aligned}
&\frac{d}{dt}\mathcal{E}_{ N-1,l_1,-\gamma}(t)+\mathcal{D}_{ N-1,l_1,-\gamma}(t)\\{}
\lesssim& \left\|\nabla^2(E,B)\right\|_{H^{ N_0-2}_x}^{\frac1{\theta_3}}\mathcal{\widetilde{D}}_{ N-1,l_1^*,1}(t)
+\mathcal{E}_{N}(t)\mathcal{E}^1_{N_0,l_0,-\gamma}(t)+\sum_{|\alpha|= N-1}\left\|\partial^\alpha E\right\|\left\|\mu^\delta\partial^\alpha f\right\|,
\end{aligned}
\end{equation*}
where $\mathcal{\widetilde{D}}_{N_0,l_0^*,1}(t)$, $\mathcal{\widetilde{D}}_{ N-1,l_1^*,1}(t),$ and $\mathcal{\widetilde{D}}_{ N,l_1^*,1}(t)$ are defined by \eqref{D_{N_0,l^*_0,1}} and \eqref{D_{m,l^*_1,1}} respectively.

Notice that $\theta_i\, (i=1,2,3)$ can be chosen sufficiently small as long as $l^*_j(j=0,1)$ is taken sufficiently large. Thus, one deduce some  uniform-in-time estimates based on the above three differential inequalities provided that
\begin{itemize}
\item [(i1).] The electromagnetic field $[E(t,x), B(t,x)]$ has certain
temporal decay estimate and $\mathcal{E}^1_{N_0,l_0,-\gamma}(t)\in L^1(\mathbb{R}^+)$;
\item [(i2).] There are some upper bound estimates on $\mathcal{\widetilde{D}}_{N_0,l_0^*,1}(t)$, $\mathcal{\widetilde{D}}_{ N-1,l_1^*,1}(t),$ and $\mathcal{\widetilde{D}}_{ N,l_1^*,1}(t)$. For example, even if we can not deduce uniform-in-time bounds on $\mathcal{\widetilde{D}}_{N_0,l_0^*,1}(t)$, $\mathcal{\widetilde{D}}_{ N-1,l_1^*,1}(t),$ and $\mathcal{\widetilde{D}}_{ N_,l_1^*,1}(t)$, it suffices to show that the possible time increasing upper bounds on $\mathcal{\widetilde{D}}_{N_0,l_0^*,1}(t)$, $\mathcal{\widetilde{D}}_{ N-1,l_1^*,1}(t),$ and $\mathcal{\widetilde{D}}_{ N,l_1^*,1}(t)$ are   independent of the choices of the parameters $l^*_j$ $(j=0,1)$ but depend only on $N$ and $N_0$.
\end{itemize}
To achieve  (i1), first of all,  under the assumption of \eqref{a-priori-assumption} with $M>0$ sufficiently small, we can deduce that
\begin{equation*}
 \frac{d}{dt}\mathcal{E}_{N_0}^{k}(t)+\mathcal{D}_{N_0}^{k}(t)\leq 0,\quad k=0,1,\cdots, N_0-2
 \end{equation*}
 and
 \begin{equation*}
\begin{aligned}
&\frac{d}{dt}\mathcal{E}^k_{N_0,\ell,-\gamma}(t)+\mathcal{D}^k_{N_0,\ell,-\gamma}(t)
\lesssim \sum_{|\alpha|=N_0}\|\partial^\alpha E\|^2,
&\quad k=0,1
\end{aligned}
\end{equation*}
hold for any $0\leq t\leq T$.

From these two differential inequalities, by using the interpolation technique as in \cite{Guo-CPDE-12, Wang-12}, we can deduce a temporal decay rate of $\mathcal{E}^k_{N_0}(t)$, from which one can further obtain the temporal decay rates of $\mathcal{E}^k_{N_0,l_0,-\gamma}(t)$ with $\mathcal{E}^1_{N_0,l_0,-\gamma}(t)\in L^1({\mathbb{R}}^+)$.
\item [(ii).] To deduce the estimates stated in (i2),
we need the second set of time-velocity
weighted energy estimates  with  the weight function $w_{\ell-|\beta|,1}(t,v)$ for some $\ell$ that is sufficiently large. In this case, since
\begin{eqnarray*}
&& w^2_{\ell-|\beta|,1}(t,v)=w_{\ell-|\beta|,1}(t,v)\times w_{\ell-|\beta|-1,1}(t,v)\times \langle v\rangle,\\
&& w^2_{\ell-|\beta|,1}(t,v)=w_{\ell-|\beta|,1}(t,v)\times w_{\ell-|\beta|+1,1}(t,v)\times \langle v\rangle^{-1},
\end{eqnarray*}
we can deduce that for all $-3<\gamma<-1$, the terms \eqref{difficulty-1} and \eqref{difficulty-2} can be controlled by the extra dissipative term \eqref{dissipative-weight} provided that the electromagnetic field $[E(t,x), B(t,x)]$ has certain temporal decay estimates. On the other hand,  the term \eqref{I^{lt}} related to the linear transport term $v\cdot\nabla_xf$  can only be bounded as
\begin{eqnarray*}
I^{lt}_{|\alpha|,\ell-|\beta|,1}
&\lesssim& \eta\left\|w_{\ell-|\beta|, 1}\partial^{\alpha}_\beta{\bf\{I-P\}}f\right\|_\nu^2+C_\eta\left\|w_{\ell-|\beta-e_i|, 1}\partial^{\alpha+e_i}_{\beta-e_i}{\bf\{I-P\}}f\langle v\rangle^{-\frac\gamma2-1}\right\|^2.
\end{eqnarray*}
Hence, it leads to how to control
\begin{equation}\label{diffi-linear-term}
\left\|w_{\ell-|\beta-e_i|, 1}\partial^{\alpha+e_i}_{\beta-e_i}{\bf\{I-P\}}f\langle v\rangle^{-\frac\gamma2-1}\right\|^2.
\end{equation}

For \eqref{diffi-linear-term}, observe that
\begin{itemize}
\item Since $\frac\gamma 2<-\frac \gamma 2-1<2$ holds for all $-3<\gamma<-1$, it does not lead to the increase of the weight if we neglect the fact $(1+t)^{-1-\vartheta}$ in the extra dissipative term $D^W_{|\alpha|,\ell-|\beta|,1}$ given by \eqref{dissipative-weight};
\item The order of the derivative with respect to $x$ increases by one in \eqref{diffi-linear-term} so that the corresponding temporal decay rate in $L^2-$norm increases $\frac 12$, cf.  \cite{Duan_Liu-Yang_Zhao-VMB-2013, DS-CPAM-11}.
\end{itemize}

Therefore, motivated in \cite{Hosono-Kawashima-2006} for deducing
 the temporal decay estimates on solutions to some nonlinear equations of regularity-loss type, we set different time increase rate $\sigma_{n,j}$ for
$$
\displaystyle\sum_{|\alpha|+|\beta|=n, |\beta|=j}\left\|w_{l_1^*-j,1}\partial_\beta^\alpha\{{\bf I-P}\} f\right\|^2,
$$
where
\begin{equation*}
\sigma_{n,j}-\sigma_{n,j-1}=\frac{2(1+\gamma)}{\gamma-2}(1+\vartheta).
\end{equation*}
Thus, one can deduce that
 \begin{eqnarray*}
&&\sum_{|\alpha|+|\beta|=n,\atop|\beta|=j,1\leq j\leq n}(1+t)^{-\sigma_{n,j}}
\left\|w_{\ell-j+1,1}\partial^{\alpha+e_i}_{\beta-e_i} \{{\bf I-P}\} f\langle v\rangle^{-\frac\gamma2-1}\right\|
^2\\ \nonumber
&\lesssim&\sum_{|\alpha|+|\beta|=n,\atop|\beta|=j,1\leq j\leq N_0}\left\{(1+t)^{-\sigma_{n,j-1}-1-\vartheta}
\left\|w_{\ell-j+1,1}\partial^{\alpha+e_i}_{\beta-e_i} \{{\bf I-P}\} f\langle v\rangle\right\|^{2}\right.\\ \nonumber
&&\left.+(1+t)^{-\sigma_{n,j-1}}\left\|w_{\ell-j+1,1}\partial^{\alpha+e_i}_{\beta-e_i} \{{\bf I-P}\} f\right\|^2_\nu\right\}.
\end{eqnarray*}
\end{itemize}

Once the above argument is substantiated,
 we can then close the a priori assumption \eqref{a-priori-assumption} and the global solvability result follows. And this will be given in detail  in the following sections.


\section{Proofs of the main results}
The proofs of Theorem \ref{Th1.1} and Theorem \ref{Th1.2}
will be given in this section. To illustrate the main ideas of the proof clearly and to make the presentation easy to follow, we will just state some key estimates first and then use them to prove our main results. The complete proofs of these key estimates will be given in the next section. To simplify the presentation, we divide this section into a few parts.

\subsection{Preliminaries}

In this subsection, for later use we collect several basic estimates on the linearized Boltzmann collision operator $L$ and the nonlinear term $\Gamma$ for cutoff potentials, whose  one-species version can be found in \cite{Duan_Yang_Zhao-MMMA-2012, S-G-08}.

The first lemma concerns the coercivity estimate \eqref{coercive-estimates} on the linearized collision operators $L$ together with its weighted version with respect to the weight $w_{\ell,\kappa}(t,v)$ given by \eqref{our-weight}.

\begin{lemma}\label{Lemma L}
Let $-3<\gamma<0$, one has
\begin{equation}\label{L_0}
  \langle L f, f\rangle\geq |{\bf \{I-P\}}f|^2_\nu.
\end{equation}
Moreover, let $|\beta|>0$, for $\eta>0$ small enough and any $\ell\in\mathbb{R}, \kappa\geq0, 0<q\ll 1, \vartheta\in\mathbb{R}$, there exists $C_{\eta}>0$ such that
\begin{equation}\label{L_v}
  \left\langle w_{\ell,\kappa}^2\partial_{\beta}{ L}f,\partial_{\beta}f\right\rangle\geq \left|w_{\ell,\kappa}\partial_{\beta}f\right|^2_\nu
  -\eta\sum_{|\beta'|<|\beta|}\left|w_{\ell,\kappa}\partial_{\beta'}\{{\bf I-P}\}f\right|_\nu^2-C_{\eta}\left|\chi_{\{| v|\leq2C_{\eta}\}}f\right|^2
\end{equation}
holds.
\end{lemma}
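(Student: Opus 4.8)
The plan is to prove both parts of Lemma~\ref{Lemma L} by reducing the weighted estimate \eqref{L_v} to the basic coercivity \eqref{L_0} via the decomposition $L=\nu-K$ from \eqref{decomposition-L}. First I would establish \eqref{L_0}: this is just the coercivity estimate \eqref{coercive-estimates} of \cite[Lemma~1]{Guo-Invent-03} rephrased, noting that since $L$ annihilates $\mathbf{P}f$ and is self-adjoint, $\langle Lf,f\rangle=\langle L\{\mathbf{I-P}\}f,\{\mathbf{I-P}\}f\rangle\geq\sigma_0|\{\mathbf{I-P}\}f|_\nu^2$, and absorbing the constant $\sigma_0$ into a normalization (or simply tracking it) gives the stated form.

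For \eqref{L_v}, the key point is to commute $\partial_\beta$ and the weight $w_{\ell,\kappa}^2$ past $L$. I would write
\[
\left\langle w_{\ell,\kappa}^2\partial_\beta Lf,\partial_\beta f\right\rangle
=\left\langle w_{\ell,\kappa}^2\partial_\beta(\nu\{\mathbf{I-P}\}f),\partial_\beta f\right\rangle
-\left\langle w_{\ell,\kappa}^2\partial_\beta(Kf),\partial_\beta f\right\rangle,
\]
using that $L\mathbf{P}f=0$. For the $\nu$-part, Leibniz expansion $\partial_\beta(\nu\{\mathbf{I-P}\}f)=\nu\partial_\beta\{\mathbf{I-P}\}f+\sum_{|\beta'|<|\beta|}C_{\beta'}^\beta(\partial_{\beta-\beta'}\nu)\partial_{\beta'}\{\mathbf{I-P}\}f$ yields the main term $|w_{\ell,\kappa}\partial_\beta\{\mathbf{I-P}\}f|_\nu^2$ (and, since $\partial_\beta\mathbf{P}f$ only involves finitely many velocity-polynomial-times-$\sqrt\mu$ modes, replacing $\partial_\beta\{\mathbf{I-P}\}f$ by $\partial_\beta f$ costs only a term absorbed by the compactly-supported remainder after using that $\langle v\rangle$-derivatives of $\nu\sim\langle v\rangle^\gamma$ lower the weight). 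The lower-order derivative terms are bounded, via Cauchy-Schwarz and $|\partial_{\beta-\beta'}\nu|\lesssim\langle v\rangle^\gamma$, by $\eta\sum_{|\beta'|<|\beta|}|w_{\ell,\kappa}\partial_{\beta'}\{\mathbf{I-P}\}f|_\nu^2$ plus $\eta|w_{\ell,\kappa}\partial_\beta\{\mathbf{I-P}\}f|_\nu^2$, the latter being absorbable into the main term since $\eta$ is small.

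The main obstacle is the operator $K$. Here I would invoke the standard weighted compactness estimate for the cutoff kernel $K$ of \eqref{Operator-K}: for any $m>0$ and $\eta>0$ there is $C_\eta>0$ such that $|w_{\ell,\kappa}\partial_\beta(Kf)|\lesssim\eta|w_{\ell,\kappa}\partial_\beta f|_\nu+C_\eta|\chi_{\{|v|\leq C_\eta\}}f|$ — this is the weighted version of the estimates in \cite{Duan_Yang_Zhao-MMMA-2012, S-G-08} whose one-species form the excerpt already references. The mechanism is that $K$ has a smoothing/decaying kernel: commuting $\partial_\beta$ and multiplying by $w_{\ell,\kappa}^2$ produces a kernel that is still integrable with a gain of $\langle v\rangle$-decay outside a large ball, so that on $\{|v|\geq C_\eta\}$ one extracts the small factor $\eta$, while on the bounded region one controls everything by the $L^2$ norm on a ball (the exponential factor $e^{q\langle v\rangle^2/(1+t)^\vartheta}$ of $w_{\ell,\kappa}$ is bounded on any ball uniformly in $t$, so it causes no trouble). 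Pairing this with $\partial_\beta f$ via Cauchy-Schwarz and again absorbing the $\eta|w_{\ell,\kappa}\partial_\beta f|_\nu^2$ contributions (splitting $\partial_\beta f=\partial_\beta\mathbf{P}f+\partial_\beta\{\mathbf{I-P}\}f$ and noting $\partial_\beta\mathbf{P}f$ is itself compactly-essentially-supported in the weighted $L^2$ sense, hence absorbed into the $C_\eta$-term) completes the estimate. I expect the delicate bookkeeping to be exactly the verification that all stray $\eta|w_{\ell,\kappa}\partial_\beta f|_\nu^2$ terms — coming from both the $\nu$-commutator and the $K$-term — can be absorbed into the single positive term $|w_{\ell,\kappa}\partial_\beta f|_\nu^2$ on the left after choosing $\eta$ small, which is routine once the weighted $K$-bound is in hand; that weighted $K$-bound, cited from the prior literature, is the real engine of the proof.
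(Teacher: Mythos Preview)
Your proposal is correct and follows essentially the same approach as the paper. The paper's own proof is purely by citation: for \eqref{L_0} it invokes the decomposition $K=K_s+K_c$ and the argument of Lemmas~2--3 in \cite{Guo-ARMA-03}, and for \eqref{L_v} it refers to a straightforward modification of Lemma~2 in \cite{S-G-08}; your outline---splitting $L=\nu-K$, extracting the main $\nu$-term via the Leibniz rule, and controlling the $K$-contribution through the standard weighted smoothing/decay estimate---is exactly what those references do.
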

\begin{proof} For the estimate \eqref{L_0}, the case for the hard sphere model has been proved in \cite{Guo-Invent-03}, while for general cutoff soft potentials, recall that $L$ can be decomposed as in \eqref{decomposition-L} with the collision frequency $\nu(v)$ and the nonlocal integral operator $K$ being defined by \eqref{collision-frequency} and \eqref{Operator-K} respectively, one can deduce by using the argument employed in Lemma 2 of \cite{Guo-ARMA-03} for one-species linearized Boltzmann collision operator with cutoff that the operator $K$ can be decomposed into a ``small part'' $K_s$ and a ``compact part'' $K_c$, therefore \eqref{L_0} follows by repeating the argument used in Lemma 3 of \cite{Guo-ARMA-03}.

As to \eqref{L_v}, it can be proved by a straightforward modification of the argument used in Lemma 2 of \cite{S-G-08}, we thus omit the details for brevity.
\end{proof}
The second lemma is concerned with the corresponding weighted estimates on the nonlinear term $\Gamma$.
For this purpose, similar to that of \cite{S-G-08}, we can get that
\begin{eqnarray}\label{gamma-0}
\partial^\alpha_\beta\Gamma_\pm(g_1,g_2)&\equiv&\sum C_\beta^{\beta_0\beta_1\beta_2}C_\alpha^{\alpha_1\alpha_2} {\Gamma}^0_\pm\left(\partial^{\alpha_1}_{\beta_1}g_1,\partial^{\alpha_2}_{\beta_2}g_2\right)\\ \nonumber
&\equiv& \sum C_\beta^{\beta_0\beta_1\beta_2}C_\alpha^{\alpha_1\alpha_2}\int_{\mathbb{R}^3\times\mathbb{S}^2}|v-u|^\gamma{\bf b}(\cos\theta)\partial_{\beta_0}[\mu(u)^\frac 12]\left\{\partial^{\alpha_1}_{\beta_1}g_{1\pm}(v')\partial^{\alpha_2}_{\beta_2}g_{2\pm}(u')\right.\\ \nonumber
&&\left.
+\partial^{\alpha_1}_{\beta_1}g_{1\pm}(v')\partial^{\alpha_2}_{\beta_2}g_{2\mp}(u')
-\partial^{\alpha_1}_{\beta_1}g_{1\pm}(v)\partial^{\alpha_2}_{\beta_2}g_{2\pm}(u)
-\partial^{\alpha_1}_{\beta_1}g_{1\pm}(v)\partial^{\alpha_2}_{\beta_2}g_{2\mp}(u)\right\}d\omega du,
\end{eqnarray}
where $g_i(t,x,v)=[g_{i+}(t,x,v), g_{i-}(t,x,v)]$ $(i=1,2)$ and the summations are taken for all $\beta_0+\beta_1+\beta_2=\beta, \alpha_1+\alpha_2=\alpha$.
From which one can deduce that

\begin{lemma}\label{lemma-nonlinear}
Assume $\kappa\geq 0, \ell\geq 0$.
Let $-3<\gamma<0$, $N\geq4$, $g_i=g_i(t,x,v)=[g_{i+}(t,x,v),g_{i-}(t,x,v)]\ (i=1,2,3)$, $\beta_0+\beta_1+\beta_2=\beta$ and $\alpha_1+\alpha_2=\alpha$, we have
the following results:
\begin{itemize}
\item[(i).] When $|\alpha_1|+|\beta_1|\leq N$, we have
\begin{equation}\label{nonlinear-1}
\begin{array}{rl}
\left\langle w_{\ell,\kappa}^2{\Gamma}^0_\pm\left(\partial^{\alpha_1}_{\beta_1}g_1,\partial^{\alpha_2}_{\beta_2}g_2\right), \partial^{\alpha}_{\beta}g_3\right\rangle
\lesssim\sum\limits_{m\leq2}\left\{
\left|\nabla^m_{v}\left\{\mu^\delta\partial^{\alpha_1}_{\beta_1}g_1\right\}\right|+\left|w_{\ell,\kappa}\partial^{\alpha_1}_{\beta_1}g_1\right|\right\}
\left|w_{\ell,\kappa}\partial^{\alpha_2}_{\beta_2}g_2\right|_{L^2_{\nu}}
\left|w_{\ell,\kappa}\partial^{\alpha}_{\beta}g_3\right|_{L^2_{\nu}}
\end{array}
\end{equation}
or
\begin{equation}\label{nonlinear-2}
\begin{array}{rl}
\left\langle w_{\ell,\kappa}^2{\Gamma}^0_\pm\left(\partial^{\alpha_1}_{\beta_1}g_1,\partial^{\alpha_2}_{\beta_2}g_2\right), \partial^{\alpha}_{\beta}g_3\right\rangle
\lesssim\sum\limits_{m\leq2}\left\{
\left|\nabla^m_{v}\left\{\mu^\delta\partial^{\alpha_2}_{\beta_2}g_2\right\}\right|+\left|w_{\ell,\kappa}\partial^{\alpha_2}_{\beta_2}g_2\right|\right\}
\left|w_{\ell,\kappa}\partial^{\alpha_1}_{\beta_1}g_1\right|_{L^2_{\nu}}
\left|w_{\ell,\kappa}\partial^{\alpha}_{\beta}g_3\right|_{L^2_{\nu}}.
\end{array}
\end{equation}
\item[(ii).]
Set $\varsigma(v)=\langle v\rangle^{-\gamma}\equiv \nu(v)^{-1},~l\geq0$, it holds that
\begin{equation}\label{n-3}
\begin{split}
\left|\varsigma^{l}{\Gamma}(g_1,g_2)\right|^2_{L^2_v}
&\lesssim\displaystyle\sum_{|\beta|\leq2}\left|\varsigma^{l-|\beta|}\partial_{\beta}g_1\right|^2_{L^2_{\nu}}
\left|\varsigma^{l}g_2\right|^2_{L^2_{\nu}},\\
\left|\varsigma^{l}{\Gamma}(g_1,g_2)\right|^2_{L^2_v}
&\lesssim\sum_{|\beta|\leq2}\left|\varsigma^{l}g_1\right|^2_{L^2_{\nu}}
\left|\varsigma^{l-|\beta|}\partial_{\beta}g_2\right|^2_{L^2_{\nu}}.
\end{split}
\end{equation}
\end{itemize}
\end{lemma}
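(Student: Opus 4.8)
The plan is to prove Lemma \ref{lemma-nonlinear} starting from the pointwise representation \eqref{gamma-0} of $\partial^\alpha_\beta\Gamma_\pm(g_1,g_2)$, which reduces everything to estimating the model bilinear operator ${\Gamma}^0_\pm$ applied to pairs of derivatives. The key structural fact to exploit is that $\partial_{\beta_0}[\mu(u)^{1/2}]$ still carries a full Gaussian factor in $u$, so collisional gains in the $u$-variable after the change of variables $u\mapsto u'$ remain integrable, and the kinetic kernel $|v-u|^\gamma$ with $-3<\gamma<0$ is locally integrable in $\mathbb{R}^3$. First I would split ${\Gamma}^0_\pm$ into its gain part (the $v',u'$ terms) and loss part (the $v,u$ terms), and treat the loss part first since it is easier: there $g_1$ is evaluated at $v$ and $g_2$ at $u$, so after bounding $\langle v\rangle^{\kappa\ell}$ by $\langle v'\rangle^{\kappa\ell}$-type weight transfers are unnecessary, and one pulls out $L^\infty_v$ of the lower-order factor (using Sobolev embedding $H^2_v\hookrightarrow L^\infty_v$, whence the $\sum_{m\le 2}|\nabla^m_v\{\mu^\delta\cdot\}|$ terms appear — the $\mu^\delta$ comes from absorbing a piece of the Gaussian in $u$ and the $\langle v-u\rangle^\gamma$ weight) and estimates the remaining $u$-integral by Cauchy--Schwarz against $\mu^{1/2}(u)\,|v-u|^\gamma$, which is bounded in $v$.

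For the gain part, the standard device is the Carleman-type change of variables (or the regular collision change of variables $u\mapsto u'$ for fixed $v,\omega$) to move the integration onto $g_2(u')$; the Jacobian is bounded and the Gaussian $\partial_{\beta_0}[\mu^{1/2}(u)]$ controls the weight discrepancy between $u$ and $u'$ since on the collision manifold $|u'|^2 + |v'|^2 \sim |u|^2 + |v|^2$ up to mass-ratio constants, so $\langle v'\rangle,\langle u'\rangle \lesssim \langle v\rangle\langle u\rangle$ and conversely one can absorb $\langle v\rangle^{\kappa\ell}\lesssim \langle v'\rangle^{\kappa\ell}\langle u'\rangle^{\kappa\ell}$ into the weights on $g_1,g_2$ at the cost of an extra $\langle u'\rangle^{\kappa\ell}$, which is then killed by the Gaussian remnant $\mu^{\delta}(u)$ (note $\mu^{\delta}(u)\lesssim e^{-c|u'|^2}e^{-c|v'|^2}$-type bounds transfer the decay). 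After these reductions one again puts the lower-order factor in $L^\infty_{x}$ or $L^\infty_v$ via $H^2$ embedding — this is why the threshold $N\ge 4$ enters, so that for any split $\alpha_1+\alpha_2=\alpha$, $\beta_1+\beta_2\le\beta$ with $|\alpha|+|\beta|\le N$ at least one of the two factors has order $\le N-2$ and hence can absorb two extra $v$- or $x$-derivatives. The asymmetry between \eqref{nonlinear-1} and \eqref{nonlinear-2} is just the choice of which factor plays the role of the $L^\infty$-controlled one.

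For part (ii), inequality \eqref{n-3}, the proof is the same template but simpler because $\alpha=\beta=0$ and the weight is the purely algebraic $\varsigma^l=\langle v\rangle^{-\gamma l}=\nu^{-l}$; here one distributes $\varsigma^l$ across the gain term using $\langle v\rangle^{-\gamma l}\lesssim \langle v'\rangle^{-\gamma l}\langle u'\rangle^{-\gamma l}$ (valid since $-\gamma l\ge 0$), absorbs the $\langle u'\rangle^{-\gamma l}$ into the Gaussian, and estimates the $u'$-integral by Cauchy--Schwarz, putting the first (or second) factor in $L^2_\nu$ in $v'$ and the other in $L^\infty_{v}$ via $|g|_{L^\infty_v}\lesssim\sum_{|\beta|\le 2}|\partial_\beta g|_{L^2_v}$, which is the source of the $\sum_{|\beta|\le 2}|\varsigma^{l-|\beta|}\partial_\beta g_1|_{L^2_\nu}$ sum (the weight loss by $|\beta|$ per derivative is harmless since $\varsigma\ge 1$). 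I expect the main technical obstacle to be the careful bookkeeping of the velocity weights through the collisional change of variables in the very soft regime: one must verify that the Gaussian factor $\partial_{\beta_0}[\mu^{1/2}(u)]$ genuinely dominates both the transferred polynomial weight $\langle u'\rangle^{\kappa\ell}$ and the singular kernel $|v-u|^\gamma$ uniformly, with a fixed small $\delta>0$ independent of $\ell$ and $\kappa$ — this is routine but is the place where a sign error or an insufficient Gaussian budget would break the argument; since the one-species analogue is done in \cite{S-G-08}, I would cite that and present only the modifications needed for the two-species coupling (the $\pm$/$\mp$ mixing terms in \eqref{gamma-0}), which are handled identically termwise.
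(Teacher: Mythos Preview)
Your proposal is correct and follows essentially the same route as the paper: the paper's own proof simply states that \eqref{nonlinear-1}--\eqref{nonlinear-2} follow by repeating the argument of Lemma~3 in \cite{S-G-08} (with the two-species modifications being termwise trivial) and that \eqref{n-3} follows from Lemma~2.4 of \cite{Xiao-Xiong-Zhao-JDE-2013}, and then omits all details. Your sketch is precisely a summary of that Strain--Guo argument (gain/loss splitting, Gaussian absorption of transferred weights through the collisional change of variables, $H^2_v\hookrightarrow L^\infty_v$ producing the $\sum_{m\le 2}$), so the approaches coincide.

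One small correction: the inner product $\langle\cdot,\cdot\rangle$ here is in $L^2_v$ only, so there is no $L^\infty_x$ step in this lemma and the hypothesis $N\ge 4$ plays no role in the proof of \eqref{nonlinear-1}--\eqref{nonlinear-2} themselves; it is only used downstream when the lemma is applied and one must decide which factor to place in $L^\infty_x$ via spatial Sobolev embedding. Your remark that ``this is why the threshold $N\ge 4$ enters'' should therefore be dropped from the proof of the lemma and deferred to its applications.
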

\begin{proof}
Although the definition of ${\Gamma}^0_\pm(g_1,g_2)$ in \eqref{gamma-0} is a little different from ${\Gamma}^0(g_1,g_2)$ of \cite{S-G-08}, one can still deduce \eqref{nonlinear-1} and \eqref{nonlinear-2} by employing the similar argument used to yield the estimates stated in Lemma 3 of \cite{S-G-08}, we thus omit its proof for simplicity. As for \eqref{n-3}, it can also be proved by repeating the argument used in Lemma 2.4 of \cite{Xiao-Xiong-Zhao-JDE-2013}. This completes the proof of Lemma \ref{lemma-nonlinear}.
\end{proof}

In what follows, we will collect some analytic tools which will be used in this paper. The first one is on the Sobolev interpolation among the spatial regularity.
\begin{lemma}\label{lemma2.2}(cf. \cite{R. Adams, Guo-CPDE-12})
Let $2\leq p<\infty$ and $k,\ell, m\in\mathbb{R}$, then we have
\begin{equation*}
\left\|\nabla^k f\right\|_{L^p}\lesssim\left\|\nabla^\ell f\right\|^{\theta}_{L^2}\left\|\nabla^m f\right\|^{1-\theta}_{L^2}.
\end{equation*}
Here $0\leq \theta\leq1$ and $\ell$ satisfy
\begin{equation*}
\frac{1}{p}-\frac k3=\left(\frac12-\frac\ell3\right)\theta+\left(\frac12-\frac m3\right)(1-\theta).
\end{equation*}
Moreover, we have that
\begin{equation*}
\left\|\nabla^k f\right\|_{L^\infty}\lesssim\left\|\nabla^\ell f\right\|^{\theta}_{L^2}\left\|\nabla^m f\right\|^{1-\theta}_{L^2},
\end{equation*}
where $0\leq \theta\leq1$ and $\ell$ satisfy
\begin{equation*}
-\frac k3=\left(\frac12-\frac\ell3\right)\theta+\left(\frac12-\frac m3\right)(1-\theta),\quad \ell\leq k+1, \quad m\geq k+2.
\end{equation*}
\end{lemma}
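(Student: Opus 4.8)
The statement to prove is the Gagliardo--Nirenberg--Sobolev interpolation Lemma \ref{lemma2.2}. Throughout I would interpret $\nabla^k$, for general real $k$, as the homogeneous Fourier multiplier $\Lambda^k$ with symbol $|\xi|^k$ (consistent with the notation $\Lambda^s$ introduced earlier in the excerpt); for integer $k$ the equivalence with the usual mixed derivatives is standard on $L^p$, $1<p<\infty$. The whole argument reduces to two classical ingredients on $\mathbb{R}^3$: the homogeneous Sobolev embedding $\dot H^{s}(\mathbb{R}^3)\hookrightarrow L^p(\mathbb{R}^3)$ for $2\le p<\infty$ with $s=\tfrac32-\tfrac3p$, and an elementary H\"older interpolation performed on the Fourier side.

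For the $L^p$ case with $2\le p<\infty$, first apply the Sobolev embedding to the function $\Lambda^k f$ to get $\|\Lambda^k f\|_{L^p}\lesssim \|\Lambda^{r} f\|_{L^2}$ with $r=k+\tfrac32-\tfrac3p$; note that this choice is precisely the one making $\tfrac1p-\tfrac k3=\tfrac12-\tfrac r3$, which is the scaling relation in the statement once one writes $r=\theta\ell+(1-\theta)m$. It then remains to show $\|\Lambda^{\theta\ell+(1-\theta)m}f\|_{L^2}\le\|\Lambda^\ell f\|_{L^2}^{\theta}\|\Lambda^m f\|_{L^2}^{1-\theta}$, and this follows directly by writing $\|\Lambda^{\theta\ell+(1-\theta)m}f\|_{L^2}^2=\int_{\mathbb{R}^3}|\xi|^{2\theta\ell}\,|\xi|^{2(1-\theta)m}\,|\hat f(\xi)|^{2\theta}\,|\hat f(\xi)|^{2(1-\theta)}\,d\xi$ and applying H\"older's inequality with exponents $1/\theta$ and $1/(1-\theta)$. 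The endpoint cases $\theta\in\{0,1\}$ are trivial. Matching the exponent constraint: $\tfrac12-\tfrac{\theta\ell+(1-\theta)m}{3}=\theta(\tfrac12-\tfrac\ell3)+(1-\theta)(\tfrac12-\tfrac m3)$, which is exactly the displayed identity.

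For the $L^\infty$ case the endpoint Sobolev embedding $\dot H^{3/2}\not\hookrightarrow L^\infty$ fails, so I would instead split in frequency: $\|\Lambda^k f\|_{L^\infty}\le \bigl\|\,|\xi|^k\hat f\,\bigr\|_{L^1(\mathbb{R}^3_\xi)}=\int_{|\xi|\le R}|\xi|^k|\hat f|\,d\xi+\int_{|\xi|>R}|\xi|^k|\hat f|\,d\xi$ for any $R>0$, then Cauchy--Schwarz on each piece against $\|\Lambda^\ell f\|_{L^2}$ and $\|\Lambda^m f\|_{L^2}$ respectively. The resulting weight integrals behave like $\int_{|\xi|\le R}|\xi|^{2(k-\ell)}d\xi\sim R^{3+2k-2\ell}$ and $\int_{|\xi|>R}|\xi|^{2(k-m)}d\xi\sim R^{3+2k-2m}$, and these converge precisely because $\ell\le k+1$ gives $3+2(k-\ell)\ge1>0$ and $m\ge k+2$ gives $3+2(k-m)\le-1<0$ --- this is where the stated side conditions are used. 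Hence $\|\Lambda^k f\|_{L^\infty}\lesssim R^{(3+2k-2\ell)/2}\|\Lambda^\ell f\|_{L^2}+R^{(3+2k-2m)/2}\|\Lambda^m f\|_{L^2}$, and optimizing in $R>0$ by equating the two terms (i.e. $R^{m-\ell}=\|\Lambda^m f\|_{L^2}/\|\Lambda^\ell f\|_{L^2}$) produces $\|\Lambda^k f\|_{L^\infty}\lesssim\|\Lambda^\ell f\|_{L^2}^{\theta}\|\Lambda^m f\|_{L^2}^{1-\theta}$ with $\theta$ given by matching powers; a short computation shows this $\theta$ satisfies $-\tfrac k3=\theta(\tfrac12-\tfrac\ell3)+(1-\theta)(\tfrac12-\tfrac m3)$, the stated relation with $1/p=0$. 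The only genuine ``obstacle'' here is purely bookkeeping: verifying that the exponents coming out of the frequency optimization coincide with the scaling relation in the statement, and that the convergence thresholds for the two frequency integrals are exactly captured by $\ell\le k+1$ and $m\ge k+2$; everything else is a direct application of H\"older/Cauchy--Schwarz and the Sobolev embedding, so I would in practice simply cite \cite{R. Adams, Guo-CPDE-12}.
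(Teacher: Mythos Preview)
The paper does not give a proof of this lemma at all; it is stated with the citation ``(cf.\ \cite{R. Adams, Guo-CPDE-12})'' and treated as a standard analytic tool. Your argument---Sobolev embedding plus Fourier-side H\"older for the $L^p$ case, and a low/high frequency split with Cauchy--Schwarz and optimization in $R$ for the $L^\infty$ case---is a correct and standard way to establish the inequality, and your final remark that one could simply cite the references is precisely what the paper does.
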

The second one is concerned with the $L^p-L^q$ estimate on the operator $\Lambda^{-\varrho}$.
\begin{lemma}\label{lemma2.3}
Let $0<\varrho<3$, $1<p<q<\infty$, $\frac1q+\frac \varrho3=\frac1p$, then
\begin{equation*}
\|\Lambda^{-\varrho}f\|_{L^q}\lesssim\|f\|_{L^p}.
\end{equation*}
\end{lemma}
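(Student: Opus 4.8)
The final statement is Lemma \ref{lemma2.3}, the Hardy--Littlewood--Sobolev inequality for the Riesz potential $\Lambda^{-\varrho}$. This is a classical result, so the plan is to recall the standard proof rather than invent something new.

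\medskip

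\textbf{Plan of proof.} The operator $\Lambda^{-\varrho}$ is, up to a dimensional constant, convolution with the Riesz kernel $|x|^{-(3-\varrho)}$ on $\mathbb{R}^3$; indeed $\widehat{|x|^{-(3-\varrho)}} = c_\varrho |\xi|^{-\varrho}$ for $0<\varrho<3$, so that $\Lambda^{-\varrho} f = c_\varrho^{-1}\, |x|^{-(3-\varrho)} * f$. Thus the claim $\|\Lambda^{-\varrho}f\|_{L^q}\lesssim\|f\|_{L^p}$ with $\tfrac1q+\tfrac\varrho3=\tfrac1p$ reduces to the Hardy--Littlewood--Sobolev inequality
\[
\left\| |x|^{-(3-\varrho)} * f \right\|_{L^q(\mathbb{R}^3)} \lesssim \|f\|_{L^p(\mathbb{R}^3)}, \qquad 1<p<q<\infty,\quad \frac1q = \frac1p - \frac\varrho3.
\]
First I would verify the scaling consistency: if $f_\lambda(x)=f(\lambda x)$ then both sides transform with the same power of $\lambda$ precisely when $\tfrac1q+\tfrac\varrho3=\tfrac1p$, which explains why this is the only possible exponent relation. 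Then I would prove the inequality itself, for which the cleanest route is the real-variable argument: split the kernel at a radius $R$ to be optimized, estimate the near piece $\int_{|y|\le R}|y|^{-(3-\varrho)}|f(x-y)|\,dy$ by $R^{\varrho}$ times the Hardy--Littlewood maximal function $Mf(x)$ (summing over dyadic annuli), and estimate the far piece $\int_{|y|> R}|y|^{-(3-\varrho)}|f(x-y)|\,dy$ by $R^{\varrho - 3/p'}\|f\|_{L^p}$ via H\"older (this integral converges since $(3-\varrho)p' > 3$, equivalently $\varrho < 3/p$, which is forced by $q<\infty$). Optimizing in $R$ by choosing $R$ so the two bounds balance yields the pointwise estimate
\[
\left| (|x|^{-(3-\varrho)}*f)(x) \right| \lesssim (Mf(x))^{1-\varrho p/3}\, \|f\|_{L^p}^{\varrho p/3}.
\]
Taking the $L^q$ norm and invoking the Hardy--Littlewood maximal inequality $\|Mf\|_{L^p}\lesssim\|f\|_{L^p}$ (valid since $p>1$), together with the identity $q(1-\varrho p/3)=p$ which follows from the exponent relation, completes the argument.

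\medskip

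\textbf{Main obstacle.} There is no genuine obstacle here, as the result is standard; the only point requiring care is to track the exponent bookkeeping, in particular to confirm that the hypotheses $1<p<q<\infty$ and $\tfrac1q+\tfrac\varrho3=\tfrac1p$ are exactly what is needed for both the convergence of the far piece of the kernel (which needs $\varrho<3/p$, i.e.\ $q<\infty$) and the applicability of the maximal function bound (which needs $p>1$). Alternatively, one could simply cite the Hardy--Littlewood--Sobolev theorem from a standard reference such as Stein's \emph{Singular Integrals} or Lieb--Loss \emph{Analysis}; since the lemma is invoked in the sequel only as a black box for the negative-Sobolev-norm decay estimates, a one-line citation is entirely adequate and is presumably what the authors intend.
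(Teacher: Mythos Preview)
Your proposal is correct: the Hedberg-type argument you outline (splitting the Riesz kernel at radius $R$, bounding the near part by the maximal function and the far part by H\"older, then optimizing) is the standard proof of the Hardy--Littlewood--Sobolev inequality, and your exponent bookkeeping is accurate. You also correctly anticipated the paper's treatment: the paper gives no proof at all for this lemma, simply stating it as a known analytic tool, so your suggestion that a one-line citation would suffice is exactly in line with what the authors do.
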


\subsection{Some a priori estimates}
In this subsection, we will deduce
  some a priori estimates on the solutions $[f(t,x,v), E(t,x), B(t,x)]$ to the Cauchy problem (\ref{f}) and (\ref{f-initial}) under some additional assumptions imposed on $[f(t,x,v), E(t,x), B(t,x)]$. For this purpose, we suppose that the Cauchy problem (\ref{f}) and (\ref{f-initial}) admits a unique local solution $[f(t,x,v), E(t,x), B(t,x)]$ defined on the time interval $ 0\leq t\leq T$ for some $0<T<\infty$. We now turn to deduce certain a priori estimates on $[f(t,x,v), E(t,x), B(t,x)]$. The first result is concerned with the temporal decay estimates on the energy functional $\mathcal{E}^k_{N_0}(t)$ for $k=0,1,2,\cdots, N_0-2$:
\begin{lemma}\label{Lemma1}
Let $N_0$ and $N$ satisfy \eqref{N-N_0}, $n\geq \frac23 N_0-\frac{5}3$, and take $k=0,1,2,\cdots, N_0-2$, then one has
\begin{equation}\label{Lemma1-1}
\frac{d}{dt}\mathcal{E}^k_{N_0}(t)+\mathcal{D}^k_{N_0}(t)\leq 0,\quad 0\leq t\leq T
\end{equation}
provided that there exists a positive constant $\widetilde{l}$ whose precise range will be specified in the proof of Lemma \ref{lemma4.3} such that
$$
\max\left\{\displaystyle\sup_{0\leq\tau\leq T}\mathcal{E}_{N_0+n}(\tau),
\sup_{0\leq\tau\leq T}\mathcal{E}_{N-1,N-1,-\gamma}(\tau),
\sup_{0\leq\tau\leq T}\mathcal{\overline{E}}_{N_0,N_0-\frac{\widetilde{l}}\gamma,-\gamma}(\tau)\right\}
\ \ \text{is sufficiently small}.
\leqno(H_1)
$$

Furthermore, as a consequence of \eqref{Lemma1-1}, we can get that
\begin{equation}\label{Lemma1-2}
\mathcal{E}^k_{N_0}(t)\lesssim\max\left\{\sup_{0\leq \tau\leq t}\mathcal{\overline{E}}_{N_0,N_0+\frac{k+\varrho}{2},-\gamma}(\tau),\sup_{0\leq \tau\leq t}\mathcal{E}_{N_0+k+\varrho}(\tau)\right\}(1+t)^{-(k+\varrho)}
\end{equation}
holds for $0\leq t\leq T$.
\end{lemma}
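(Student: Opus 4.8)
\textbf{Proof proposal for Lemma \ref{Lemma1}.}

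The plan is to establish \eqref{Lemma1-1} by a layered energy estimate at each fixed spatial-derivative level $k$, and then to upgrade it to the decay statement \eqref{Lemma1-2} via a Sobolev interpolation argument together with the negative-Sobolev-norm bound that is propagated by the a priori assumption $(H_1)$. For the first step, I would apply $\partial^\alpha$ with $k \le |\alpha| \le N_0$ to the system \eqref{f gn} (equivalently the individual equations in \eqref{f}), take the $L^2_{x,v}\times L^2_{x,v}$ inner product with $\partial^\alpha f$, and sum. The linear collision term contributes $\sum_{k\le|\alpha|\le N_0}\|\partial^\alpha\{{\bf I-P}\}f\|_\nu^2$ by the coercivity estimate \eqref{L_0} of Lemma \ref{Lemma L} (the unweighted case, $|\beta|=0$, so no error terms appear). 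The transport term $v\cdot\nabla_x f$ and the Maxwell system are handled as in the hard-sphere theory: an appropriate interaction/Kawashima-type functional built from the macroscopic moments $(a_\pm, b, c)$ and the fields $(E,B)$ produces the remaining dissipation in $\mathcal{D}^k_{N_0}(t)$, namely $\|\nabla^k(E, a_+-a_-)\|^2$, the intermediate-order $\|\partial^\alpha({\bf P}f, E, B)\|^2$ for $k+1\le|\alpha|\le N_0-1$, and $\|\partial^\alpha{\bf P}f\|^2$ for $|\alpha|=N_0$; this is exactly the structure recorded in the definition of $\mathcal{D}^k_{N_0}(t)$, and one borrows the computations from \cite{Duan_Liu-Yang_Zhao-VMB-2013, DS-CPAM-11, Guo-Invent-03}. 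Since the potential here is soft rather than hard-sphere, the macroscopic estimates are unaffected (as the paper notes, "the macroscopic part can be controlled as for the case of hard sphere model"); only the microscopic dissipation is in $\nu$-weighted form, which is already built into $\mathcal{D}^k_{N_0}(t)$.

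The crux is to show that every nonlinear and Lorentz-force term produced in this procedure can be absorbed into the left-hand side after using the smallness in $(H_1)$. The dangerous terms are the velocity-growing ones coming from $\pm(E+v\times B)\cdot\nabla_v f$ and $\pm\frac12 E\cdot v f$; the whole point of $(H_1)$ including the weighted energies $\mathcal{E}_{N-1,N-1,-\gamma}$ and $\overline{\mathcal{E}}_{N_0,N_0-\widetilde l/\gamma,-\gamma}$ is that, via Hölder and Sobolev embeddings in $x$ (Lemma \ref{lemma2.2}), one trades the extra factor $\langle v\rangle$ or $\langle v\rangle^2$ against a fixed negative power of $\langle v\rangle$ supplied by these weighted norms, so that the resulting bound is $\lesssim \sqrt{M}\,\mathcal{D}^k_{N_0}(t)$ (recall $\nu(v)\sim\langle v\rangle^\gamma$ with $\gamma<0$ gives room only after pairing with the weight). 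The hypothesis $n\ge \frac23 N_0 - \frac53$, i.e. enough extra regularity in $\mathcal{E}_{N_0+n}$, is what lets the top-order $\partial^\alpha$ with $|\alpha|=N_0$ be estimated in $L^\infty_x$ or $L^3_x$ by lower-order weighted quantities without losing derivatives; the pure nonlinear collision term $\Gamma$ is treated with \eqref{nonlinear-1}–\eqref{nonlinear-2} of Lemma \ref{lemma-nonlinear}, again producing a factor that is small by $(H_1)$. I expect this absorption step — specifically tracking which terms need the regularity margin $n$ and which need the weighted smallness, and checking that no term genuinely requires a higher weight than $(H_1)$ supplies — to be the main obstacle; the field-regularity-loss terms $\sum_{|\alpha|=N_0}\|\partial^\alpha E\|^2$ are precisely the ones that \emph{cannot} be absorbed at this level, which is why they are excluded from $\mathcal{D}^k_{N_0}$ and why $k$ is capped at $N_0-2$ (as noted in the Remark after Theorem \ref{Th1.2}).

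For the decay statement \eqref{Lemma1-2}, I would argue by the standard interpolation-with-negative-norm scheme of \cite{Guo-CPDE-12, Wang-12}. From \eqref{Lemma1-1} one has $\frac{d}{dt}\mathcal{E}^k_{N_0}(t) + \mathcal{D}^k_{N_0}(t)\le 0$. Note that $\mathcal{E}^k_{N_0}(t)$ is comparable to $\sum_{k\le|\alpha|\le N_0}\|\partial^\alpha(f,E,B)\|^2$, while $\mathcal{D}^k_{N_0}(t)$ controls $\sum_{k\le|\alpha|\le N_0}\|\partial^\alpha{\bf\{I-P\}}f\|_\nu^2$ plus the macroscopic and field pieces at orders $\ge k$; the only gap between energy and dissipation at a given level is the macroscopic part at order exactly $k$ and the magnetic field $B$ at orders $\le N_0-1$, which one closes using the low-order negative Sobolev bound $\|\Lambda^{-\varrho}(f,E,B)\|$ that is part of $\overline{\mathcal{E}}$ and hence uniformly bounded by $(H_1)$. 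Interpolating $\|\nabla^k(\cdot)\|^2 \lesssim \|\Lambda^{-\varrho}(\cdot)\|^{2\theta}\|\nabla^{k+1}(\cdot)\|^{2(1-\theta)}$ with $\theta = 1/(k+1+\varrho)$, one gets a closed inequality of the form $\frac{d}{dt}\mathcal{E}^k_{N_0} + c\,(\mathcal{E}^k_{N_0})^{1+1/(k+\varrho)} \lesssim (\text{bounded})$, whose solution decays like $(1+t)^{-(k+\varrho)}$ with constant governed by $\sup_\tau \overline{\mathcal{E}}_{N_0,N_0+(k+\varrho)/2,-\gamma}(\tau)$ and $\sup_\tau\mathcal{E}_{N_0+k+\varrho}(\tau)$ — here the shifted weight index $N_0+(k+\varrho)/2$ and the raised regularity $N_0+k+\varrho$ are exactly what is needed to supply the $\nabla^{k+1}$-level and weighted quantities that feed the interpolation. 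Carrying out the induction on $k$ from $k=0$ upward then yields \eqref{Lemma1-2}.
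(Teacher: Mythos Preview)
Your outline for \eqref{Lemma1-1} is essentially the paper's: the inequality is obtained by combining the unweighted energy identity at each level $k\le|\alpha|\le N_0$ (coercivity \eqref{L_0} for the microscopic dissipation, the interaction functional of Lemma~\ref{Lemma4.4} for the macro/field dissipation) with the nonlinear absorption estimates of Lemma~\ref{lemma4.3}, where the smallness in $(H_1)$ and the regularity margin $n\ge\frac23N_0-\frac53$ enter exactly as you describe.

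For \eqref{Lemma1-2} your scheme is the right one, but two of the interpolations are misidentified, and this matters for explaining why the specific quantities in the $\max\{\cdot\}$ appear. First, the gap between $\mathcal{E}^k_{N_0}$ and $\mathcal{D}^k_{N_0}$ on the \emph{microscopic} part is not a spatial-frequency gap but a \emph{velocity-weight} gap: the energy carries $\|\partial^\alpha\{{\bf I-P}\}f\|^2$ while the dissipation only carries $\|\partial^\alpha\{{\bf I-P}\}f\|_\nu^2$ with $\nu\sim\langle v\rangle^\gamma$, $\gamma<0$. The paper closes this by H\"older in $v$,
\[
\|\partial^\alpha\{{\bf I-P}\}f\|\le \|\partial^\alpha\{{\bf I-P}\}f\|_\nu^{\frac{k+\varrho}{k+\varrho+1}}\left\|w_{\frac{k+\varrho}{2},-\gamma}\partial^\alpha\{{\bf I-P}\}f\right\|^{\frac{1}{k+\varrho+1}},
\]
and it is \emph{this} that forces the weight index $N_0+\frac{k+\varrho}{2}$ in $\overline{\mathcal{E}}_{N_0,N_0+\frac{k+\varrho}{2},-\gamma}$. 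Second, the top-order field norm $\|\nabla^{N_0}(E,B)\|$ sits in $\mathcal{E}^k_{N_0}$ but \emph{not} in $\mathcal{D}^k_{N_0}$ (the dissipation stops at $|\alpha|=N_0-1$ for $(E,B)$); the paper interpolates it between $\|\nabla^{N_0-1}(E,B)\|$ and $\|\nabla^{N_0+k+\varrho}(E,B)\|$, which is precisely where the higher regularity $\mathcal{E}_{N_0+k+\varrho}$ enters. With these two interpolations and your spatial one for $\|\nabla^k({\bf P}f,B)\|$, one obtains
\[
\mathcal{E}^k_{N_0}(t)\le \left\{\mathcal{D}^k_{N_0}(t)\right\}^{\frac{k+\varrho}{k+\varrho+1}}\left\{\max\{\cdots\}\right\}^{\frac{1}{k+\varrho+1}},
\]
so \eqref{Lemma1-1} yields the clean ODE $\frac{d}{dt}\mathcal{E}^k_{N_0}+\{\max\}^{-1/(k+\varrho)}\{\mathcal{E}^k_{N_0}\}^{1+1/(k+\varrho)}\le 0$ (right side zero, not merely bounded), whose solution gives \eqref{Lemma1-2} directly for each $k$ --- no induction on $k$ is needed.
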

\begin{proof} First notice that under the smallness assumption $(H_1)$, one can deduce that
\begin{equation*}
\frac{d}{dt}\mathcal{E}^k_{N_0}(t)+\mathcal{D}^k_{N_0}(t)\leq 0,
\end{equation*}
which is an immediate consequence of Lemma \ref{lemma4.3} and Lemma \ref{Lemma4.4} whose proofs are complicated and thus are postponed to the next section.

Now we turn to compare the difference between $\mathcal{E}^k_{N_0}(t)$ and $\mathcal{D}^k_{N_0}(t)$. To this end, for the macroscopic component ${\bf P}f(t,x,v)$ and the electromagnetic field $[E(t,x), B(t,x)]$
one has by Lemma \ref{lemma2.2} that
\begin{equation*}
\begin{aligned}
\left\|\nabla^k({\bf P}f,B)\right\|\leq \left\|\nabla^{k+1}({\bf P}f,B)\right\|^{\frac{k+\varrho}{k+\varrho+1}}\left\|\Lambda^{-\varrho}({\bf P}f,B)\right\|^{\frac{1}{k+\varrho+1}}
\end{aligned}
\end{equation*}
and
\[
\left\|\nabla^{N_0}(E,B)\right\|\lesssim\left\|\nabla^{N_0-1}(E,B)\right\|^\frac{k+\varrho}{k+\varrho+1}
\left\|\nabla^{N_0+k+\varrho}(E,B)\right\|^\frac{1}{k+\varrho+1},
\]
while for the microscopic component $\{{\bf I}-{\bf P}\}f(t,x,v)$, we have by employing the H\"older inequality that
\begin{equation*}
\begin{aligned}
\sum_{k\leq|\alpha|\leq N_0}\left\|\partial^\alpha {\bf\{I-P\}}f\right\|\leq & \left\|\partial^\alpha {\bf\{I-P\}}f\langle v\rangle^{\frac{\gamma}2}\right\|^{\frac{k+\varrho}{k+\varrho+1}}
\left\|\partial^\alpha {\bf\{I-P\}}f\langle v\rangle^{-\frac{\gamma(k+\varrho)}{2}}\right\|^{\frac{1}{k+\varrho+1}}\\
\leq&\left\|\partial^\alpha {\bf\{I-P\}}f\right\|_\nu^{\frac{k+\varrho}{k+\varrho+1}}
\left\|w_{\frac{k+\varrho}{2},-\gamma}\partial^\alpha {\bf\{I-P\}}f\right\|^{\frac{1}{k+\varrho+1}}.
\end{aligned}
\end{equation*}
Therefore, we arrive at
\begin{equation*}
\begin{aligned}
\mathcal{E}^k_{N_0}(t)\leq \left\{\mathcal{D}^k_{N_0}(t)\right\}^\frac{k+\varrho}{k+\varrho+1}\left\{\max\left\{\sup_{0\leq \tau\leq t}\mathcal{\overline{E}}_{N_0,N_0+\frac{k+\varrho}{2},-\gamma}(\tau),\sup_{0\leq \tau\leq t}\mathcal{E}_{N_0+k+\varrho}(\tau)\right\}\right\}^\frac{1}{k+\varrho+1},
\end{aligned}
\end{equation*}
which combing with \eqref{Lemma1-1} yields that
\begin{equation*}
\frac{d}{dt}\mathcal{E}^k_{N_0}(t)+\left\{\max\left\{\sup_{0\leq \tau\leq t}\mathcal{\overline{E}}_{N_0,N_0+\frac{k+\varrho}{2},-\gamma}(\tau),\sup_{0\leq \tau\leq t}\mathcal{E}_{N_0+k+\varrho}(\tau)\right\}\right\}^{-\frac{1}{k+\varrho}}\left\{\mathcal{E}^k_{N_0}(t)\right\}^{1+\frac{1}{k+\varrho}}\leq 0.
\end{equation*}
Solving the above inequality directly gives
\begin{equation*}
\mathcal{E}^k_{N_0}(t)\lesssim\max\left\{\sup_{0\leq \tau\leq t}\mathcal{\overline{E}}_{N_0,N_0+\frac{k+\varrho}{2},-\gamma}(\tau),\sup_{0\leq \tau\leq t}\mathcal{E}_{N_0+k+\varrho}(\tau)\right\}(1+t)^{-(k+\varrho)}.
\end{equation*}
Here we have used the fact that
$$
\mathcal{E}^k_{N_0}(0)\lesssim \displaystyle\sup_{0\leq \tau\leq t}\left\{\mathcal{\overline{E}}_{N_0,N_0+\frac{k+\varrho}{2},-\gamma}(\tau)\right\}.
$$
This completes the proof of Lemma \ref{Lemma1}.
\end{proof}
Based on the above lemma, we can further obtain the temporal time decay of $\mathcal{E}^k_{N_0,\ell,-\gamma}(t)$ as in the following lemma.

\begin{lemma}\label{lemma2}
 Let $\ell\geq N_0$, $n\geq \frac23 N_0-\frac{5}3$ and suppose that
$$
\max\left\{\sup\limits_{0\leq\tau\leq t}\mathcal{E}_{N_0+n}(\tau), \sup\limits_{0\leq \tau\leq t}\mathcal{E}_{N_0,\ell-\frac{\widetilde{l}}\gamma}(\tau)\right\}\ \ \text{is sufficiently small}
\leqno(H_2)
$$
with $\widetilde{l}$ being given in Lemma \ref{Lemma1},
then the following estimates
\begin{equation}\label{lemma2-1}
\begin{aligned}
&\frac{d}{dt}\mathcal{E}^k_{N_0,\ell,-\gamma}(t)+\mathcal{D}^k_{N_0,\ell,-\gamma}(t)\\{}
\lesssim& \sum_{|\alpha|=N_0}\|\partial^\alpha E\|^2+\chi_{k\geq 2}\sum_{1\leq |\alpha'|\leq k-1,\atop
|\alpha|+|\beta|=N_0}\left\|\partial^{\alpha'} (E,B)\right\|_{L^\infty_x}^2\left\| w_{\ell-|\beta|-1,-\gamma}\partial^{\alpha-\alpha'}_{\beta+e_i}\{{\bf I-P}\}f\langle v\rangle^{1-\frac{3\gamma}2} \right\|^2
\end{aligned}
\end{equation}
hold for any $0\leq t\leq T$ and $k=0,1,\cdots,N_0-3$. Therefore, letting {$l_{0,k}\geq N_0$ with  $l_0= l_{0,0}=l_{0,1}$ and $l_{0,k-1}\geq l_{0,k}+ 3$ for  $2\leq k\leq N_0-3$},
one has
\begin{equation}\label{lemma3-1}
 \begin{split}
\mathcal{E}^k_{N_0,{l_{0,k}}+\frac{i}{2},-\gamma}(t)
\lesssim \max\left\{\sup_{0\leq \tau\leq t}\mathcal{\overline{E}}_{N_0,N_0+\frac{{k+1}+\varrho}{2},-\gamma}(\tau),\sup_{0\leq \tau\leq t}\mathcal{E}_{N_0+{k+1}+\varrho}(\tau)\right\}(1+t)^{-k-\varrho+i}, \quad   i=0,1,\cdots,{k+[\varrho]}.
\end{split}
\end{equation}
\end{lemma}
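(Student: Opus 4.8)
The plan is to prove Lemma \ref{lemma2} in two stages: first establish the differential inequality \eqref{lemma2-1}, then bootstrap it together with the decay of $\mathcal{E}^k_{N_0}(t)$ from Lemma \ref{Lemma1} to obtain the weighted decay \eqref{lemma3-1}. For \eqref{lemma2-1}, I would apply $\partial^\alpha_\beta$ with $|\alpha|\geq k$ to the microscopic equation for $\{{\bf I-P}\}f$ obtained from \eqref{f gn} via macro-micro decomposition, then take the $L^2_{x,v}$ inner product against $w_{\ell-|\beta|,-\gamma}^2\partial^\alpha_\beta\{{\bf I-P}\}f$ and sum over the relevant $(\alpha,\beta)$. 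The coercivity estimate \eqref{L_v} of Lemma \ref{Lemma L} produces the good term $\|w_{\ell-|\beta|,-\gamma}\partial^\alpha_\beta\{{\bf I-P}\}f\|^2_\nu$ plus the time-derivative of the exponential weight factor yields the extra dissipation $(1+t)^{-1-\vartheta}\|w_{\ell-|\beta|,-\gamma}\partial^\alpha_\beta\{{\bf I-P}\}f\langle v\rangle\|^2$; the macroscopic dissipation is recovered as in the hard-sphere case (since $\kappa=-\gamma$ exactly compensates the soft-potential loss in the linear transport term, via the identity $\widetilde w_{\ell-|\beta|}^2=\widetilde w_{\ell-|\beta|}\widetilde w_{\ell-|\beta-e_i|}\langle v\rangle^\gamma$ recalled in Section \ref{exp-h}). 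The remaining contributions are: the transport term $I^{lt}_{|\alpha|,\ell-|\beta|,-\gamma}$, absorbed by a suitable linear combination over $|\alpha|$ into the $D^L$ dissipation as indicated in Section \ref{exp-h}; the Lorentz-force terms $I^E$ and $I^B$ of \eqref{difficulty-1}, \eqref{difficulty-2}, which for $\kappa=-\gamma$ and $-3<\gamma<-1$ cannot all be absorbed — those with $|\alpha_1|$ large (low-order field derivative hitting a high-order $\{{\bf I-P}\}f$) generate the error term displayed on the right side of \eqref{lemma2-1} after the H\"older split $w^2=w\cdot w_{\ell-|\beta|-1,-\gamma}\cdot\langle v\rangle^{-\gamma}$ and one derivative moved onto the field; the $\mp E\cdot v\mu^{1/2}$ and $\pm\frac12 E\cdot v f$ terms contribute $\sum_{|\alpha|=N_0}\|\partial^\alpha E\|^2$ and lower-order pieces controlled by smallness; and the nonlinear term $\partial^\alpha_\beta\Gamma$ is handled by Lemma \ref{lemma-nonlinear}, with the decomposition \eqref{gamma-0}, under hypothesis $(H_2)$.

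For the decay estimate \eqref{lemma3-1}, I would argue by induction on $k$ from $k=N_0-3$ downward (or equivalently organize it so the field decay and the lower-$k$ weighted decay feed the next case), exactly paralleling the proof of \eqref{Lemma1-2} in Lemma \ref{Lemma1}. The key inputs are: the decay $\mathcal{E}^k_{N_0}(t)\lesssim Y_0^2(1+t)^{-(k+\varrho)}$ and in particular $\|\nabla^{N_0}E\|^2 + \sum_{|\alpha|=N_0-1}\|\partial^\alpha E\|^2\lesssim(1+t)^{-(N_0-1+\varrho)}$, which makes the first term $\sum_{|\alpha|=N_0}\|\partial^\alpha E\|^2$ on the right of \eqref{lemma2-1} integrable and decaying; and for the $\chi_{k\geq2}$ error term, $\|\partial^{\alpha'}(E,B)\|_{L^\infty_x}$ with $1\leq|\alpha'|\leq k-1$ is estimated by Lemma \ref{lemma2.2} (Sobolev embedding into the already-controlled lower-order field norms) and decays, while the weighted velocity-growth factor $\langle v\rangle^{1-\frac{3\gamma}{2}}$ on $\partial^{\alpha-\alpha'}_{\beta+e_i}\{{\bf I-P}\}f$ is absorbed by raising the weight index — this is precisely why one needs the gap $l_{0,k-1}\geq l_{0,k}+3$: a loss of $x$-derivative order by $|\alpha'|\geq1$ is traded against three extra powers of $\langle v\rangle$, which the margin in the weight provides. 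Then one interpolates between $\mathcal{D}^k_{N_0,\ell,-\gamma}(t)$ and $\mathcal{E}^k_{N_0,\ell,-\gamma}(t)$ using H\"older in $v$ (as in Lemma \ref{Lemma1}: $\langle v\rangle^{\kappa j}=\langle v\rangle^{\gamma/2}\cdot\langle v\rangle^{\kappa j-\gamma/2}$), obtaining an inequality of the form $\frac{d}{dt}\mathcal{E}^k_{N_0,\ell,-\gamma}+C(\sup\overline{\mathcal E})^{-1/(k+\varrho)}(\mathcal{E}^k_{N_0,\ell,-\gamma})^{1+1/(k+\varrho)}\lesssim (\text{decaying forcing})$, which integrates to the stated rate $(1+t)^{-k-\varrho+i}$ after absorbing the half-power shifts $i/2$ in the weight index into $i$ shifts of the decay exponent.

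I expect the main obstacle to be the careful bookkeeping of the weight indices and the error term in \eqref{lemma2-1}: one must verify that every contribution from $I^E$, $I^B$ and from the transport term either falls into the $D^W$ dissipation (which requires the velocity-growth exponent to lie in the admissible window — here one uses $\gamma/2 \leq -\gamma/2-1 < 2$, valid precisely for $-3<\gamma<-1$, as noted in item (ii) of Section \ref{exp-h}), or into the explicitly displayed error term with exactly the stated weight $w_{\ell-|\beta|-1,-\gamma}$ and growth $\langle v\rangle^{1-\frac{3\gamma}{2}}$, or is of higher order in the small quantities and absorbed by $(H_2)$. A secondary subtlety is that the top-order field term $\sum_{|\alpha|=N_0}\|\partial^\alpha E\|^2$ genuinely does not belong to $\mathcal{D}^k_{N_0,\ell,-\gamma}(t)$ (as flagged in the Remark after Theorem \ref{Th1.2}), so one cannot close the estimate for $k=N_0-2$; this forces the restriction $k\leq N_0-3$ and means the induction must be seeded from the unweighted decay of Lemma \ref{Lemma1} rather than being self-contained. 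Everything else — the $\partial^\alpha_\beta\Gamma$ estimates, the macroscopic dissipation recovery, the final ODE integration — is routine given Lemmas \ref{Lemma L}, \ref{lemma-nonlinear}, \ref{lemma2.2} and the proof template of Lemma \ref{Lemma1}.
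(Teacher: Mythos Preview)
Your treatment of the differential inequality \eqref{lemma2-1} is essentially what the paper does: it too defers to the proof template of Lemma~\ref{Lemma1}, singling out exactly the two sources of the displayed right-hand side (the top-order source term $\sum_{|\alpha|=N_0}\|\partial^\alpha E\|^2$ and the Lorentz-force remainders for $1\le|\alpha'|\le k-1$).

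For the decay \eqref{lemma3-1}, however, your proposed route diverges from the paper's. You plan to parallel Lemma~\ref{Lemma1} by interpolating $\mathcal{E}^k_{N_0,\ell,-\gamma}\le (\mathcal{D}^k_{N_0,\ell,-\gamma})^\theta(\sup\overline{\mathcal E})^{1-\theta}$ and solving a nonlinear ODE with forcing. The paper instead runs a \emph{time-weighted cascade in the weight index}: for each $i=0,\dots,k+[\varrho]$ it takes \eqref{lemma2-1} with $\ell=l_0+\tfrac{i}{2}$, multiplies by $(1+t)^{k+\varrho-i+\epsilon}$, and adds an unweighted copy at $i=k+[\varrho]+1$. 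The key identity is that the microscopic part of $(1+t)^{k+\varrho-i-1+\epsilon}\mathcal{E}^k_{N_0,l_0+i/2,-\gamma}$ coincides with that of $(1+t)^{k+\varrho-(i+1)+\epsilon}\mathcal{D}^k_{N_0,l_0+(i+1)/2,-\gamma}$, since $\|w_{\ell,-\gamma}g\|^2=\|w_{\ell+1/2,-\gamma}g\|_\nu^2$. A linear combination telescopes; the leftover macroscopic pieces $\|\nabla^k({\bf P}f,B)\|^2+\|\nabla^{N_0}B\|^2$ and the forcing $\sum_{|\alpha|=N_0}\|\partial^\alpha E\|^2$ are then bounded via Lemma~\ref{Lemma1} applied at level $k+1$ (this is where the shift to $N_0+\tfrac{k+1+\varrho}{2}$ in the final bound originates), and one integrates in $t$.

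Your ODE approach, as written, has a gap: with the fixed exponent $1+1/(k+\varrho)$ you only recover the $i=0$ rate. Getting the shifted rates $(1+t)^{-(k+\varrho-i)}$ by that method requires the $i$-dependent exponent $1+1/(k+\varrho-i)$ \emph{and} an independent uniform bound on $\mathcal{E}^k_{N_0,l_0+(k+\varrho)/2,-\gamma}$, neither of which you spell out. The paper's cascade sidesteps this entirely. Also, the induction on $k$ for the $\chi_{k\ge2}$ error proceeds \emph{upward} from $k=0,1$, not downward: the error term at level $k$ involves $\partial^{\alpha-\alpha'}_{\beta+e_i}\{{\bf I-P}\}f$ with $|\alpha-\alpha'|\ge1$, which is controlled by the already-established decay at smaller $k'$ with the larger weight $l_{0,k'}\ge l_{0,k}+3$.
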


\begin{proof}
We omit the proof of $\eqref{lemma2-1}$ as it is similar to the one of \eqref{Lemma1-1}.  Here, we point out that the main difference for proving \eqref{Lemma1-1} and $\eqref{lemma2-1}$:
\begin{itemize}
\item The term $\displaystyle\sum_{|\alpha|=N_0}\|\partial^\alpha E\|^2$ appears when we deal with the term
$\displaystyle\sum_{|\alpha|=N_0}\left(\partial^\alpha E\cdot v\mu^{\frac12}, w^2_{\ell,-\gamma}\partial^\alpha f\right)$;

\item To deduce the desired estimates on
$$
\displaystyle{\sum_{1\leq |\alpha_1|\leq k-1,\atop|\alpha|=k,
|\alpha|+|\beta|=N_0}}
\left(\partial^{\alpha_1}E\cdot\nabla_v\partial_\beta^{\alpha-\alpha_1}\{{\bf I-P}\}f,w^2_{\ell-|\beta|,-\gamma}\{{\bf I-P}\}f\right)
$$
and
$$
\displaystyle{\sum_{1\leq |\alpha_1|\leq k-1,\atop|\alpha|=k,
|\alpha|+|\beta|=N_0}}
\left(\left(v\times\partial^{\alpha_1}B\right)\cdot\nabla_v\partial_\beta^{\alpha-\alpha_1}\{{\bf I-P}\}f,w^2_{\ell-|\beta|,-\gamma}\{{\bf I-P}\}f\right),
$$
one has to encounter the term
$$
\displaystyle{\sum_{1\leq |\alpha_1|\leq k-1,\atop|\alpha|=k,
|\alpha|+|\beta|=N_0}}
\left\|\partial^{\alpha_1} (E,B)\right\|_{L^\infty_x}^2\left\|\langle v\rangle^{1-\frac{3\gamma}2} w_{\ell-|\beta|-1,-\gamma}\partial^{\alpha-\alpha_1}_{\beta+e_i}\{{\bf I-P}\}f\right\|^2.
$$
\end{itemize}

With \eqref{lemma2-1} in hand, we now turn to prove \eqref{lemma3-1}.
{For the case $k=0,1$, the lase term on the right hand side of \eqref{lemma2-1} disappears, }we have by replacing the parameter $\ell$ in \eqref{lemma2-1} by ${l_0}+\frac{i}{2} (i=0,1,\cdots,{k+[\varrho]})$ and then by multiplying the resulting inequality by $(1+t)^{k+\varrho-i+\epsilon}$ that
\begin{equation}\label{E-i}
 \begin{split}
&\frac{d}{dt}\left\{(1+t)^{k+\varrho-i+\epsilon}\mathcal{E}^k_{N_0,{l_0}+\frac{i}{2},-\gamma}(t)\right\}+(1+t)^{k+\varrho-i+\epsilon}\mathcal{D}^k_{N_0,{l_0}+\frac{i}{2},-\gamma}(t)\\{}
\lesssim& \sum_{|\alpha|=N_0}(1+t)^{k+\varrho-i+\epsilon}\|\partial^\alpha E\|^2+(1+t)^{k+\varrho-i-1+\epsilon}\mathcal{E}^k_{N_0,{l_0}+\frac{i}{2},-\gamma}(t).
\end{split}
\end{equation}
Here
$\epsilon$ is taken as a sufficiently small positive constant.

{By replacing the parameter
$\ell$ in \eqref{lemma2-1} by $l_0+\frac{k+[\varrho]+1}2$,} it holds that
\begin{equation}\label{E_1}
\frac{d}{dt}\mathcal{E}^k_{N_0,{l_0+\frac{k+[\varrho]+1}2},-\gamma}(t)+\mathcal{D}^k_{N_0,{l_0+\frac{k+[\varrho]+1}2},-\gamma}(t)\lesssim \sum_{|\alpha|=N_0}\|\partial^\alpha E\|^2.
\end{equation}
By using the relation between the energy functional $\mathcal{E}^k_{N_0,{l_0},-\gamma}(t)$ and its corresponding dissipation functional $\mathcal{D}^k_{N_0,{l_0},-\gamma}(t)$, we deduce by a proper linear combination of (\ref{E-i}) and (\ref{E_1}) that
\begin{equation}\label{E um}
 \begin{split}
&\frac{d}{dt}\left\{\sum_{i=0}^{k+[\varrho]}C_i(1+t)^{k+\varrho-i+\epsilon}\mathcal{E}^k_{N_0,{l_0}+\frac{i}2,-\gamma}(t)+C_{{k+[\varrho]}+1}\mathcal{E}^k_{N_0,{l_0}+\frac{{k+[\varrho]}+1}{2},-\gamma}(t)\right\}\\{}
&+\sum_{i=0}^{k+[\varrho]}(1+t)^{k+\varrho-i+\epsilon}\mathcal{D}^k_{N_0,{l_0}+\frac{i}{2},-\gamma}(t)+\mathcal{D}^k_{N_0,{l_0}+\frac{{k+[\varrho]}+1}{2},-\gamma}(t)\\{}
\lesssim& \sum_{|\alpha|=N_0}(1+t)^{k+\varrho+\epsilon}\|\partial^\alpha E\|^2+(1+t)^{k+\varrho-1+\epsilon}\left\{\left\|\nabla^k({\bf P}f, B)\right\|^2
+\left\|\nabla^{N_0}B\right\|^2\right\}.
\end{split}
\end{equation}
On the other hand, Lemma \ref{Lemma1} tells us that
\begin{equation}\label{E-decay}
 \begin{split}
&\sum_{|\alpha|=N_0}(1+t)^{k+\varrho+\epsilon}\left\|\partial^\alpha E\right\|^2+(1+t)^{k+\varrho-1+\epsilon}\left\{\left\|\nabla^k({\bf P}f,E,B)\right\|^2+\left\|\nabla^{N_0}B\right\|^2\right\}\\
&\quad\lesssim \max\left\{\sup_{0\leq \tau\leq t}\mathcal{\overline{E}}_{N_0,N_0+\frac{{k+1}+\varrho}{2},-\gamma}(\tau),\sup_{0\leq \tau\leq t}\mathcal{E}_{N_0+{k+1}+\varrho}(\tau)\right\}(1+t)^{-1+\epsilon}.
\end{split}
\end{equation}
Plugging (\ref{E-decay}) into (\ref{E um}) and taking the time integration, one can get that
\begin{equation*}
 \begin{split}
&\sum_{i=0}^{k+[\varrho]}(1+t)^{k+\varrho-i+\epsilon}\mathcal{E}^k_{N_0,{l_0}+\frac{i}2,-\gamma}(t)+\mathcal{E}^k_{N_0,{l_0}+\frac{{k+[\varrho]}+1}{2},-\gamma}(t)\\{}
&\quad+\int_0^t\left\{\sum_{i=0}^{k+[\varrho]}(1+\tau)^{k+\varrho-i+\epsilon}\mathcal{D}^k_{N_0,{l_0}+\frac{i}{2},-\gamma}(\tau)+\mathcal{D}^k_{N_0,{l_0+\frac{k+[\varrho]+1}{2}},-\gamma}(\tau)\right\}d\tau
\\{}&\quad\quad\lesssim \max\left\{\sup_{0\leq \tau\leq t}\mathcal{\overline{E}}_{N_0,N_0+\frac{{k+1}+\varrho}{2},-\gamma}(\tau),\sup_{0\leq \tau\leq t}\mathcal{E}_{N_0+{k+1}+\varrho}(\tau)\right\}(1+t)^{\epsilon},
\end{split}
\end{equation*}
and the estimate (\ref{lemma3-1}) { with the case $k=0,1$ follows by multiplying the above inequality by $(1+t)^{-\epsilon}$ where we take $l_0=l_{0,1}=l_{0,1}$}.

As to the case of {$2\leq k\leq N_0-3$, noticing that $\gamma\in(-3,-1)$, let $l_{0,k}\geq N_0$ and $l_{0,k-1}\geq l_{0,k}+3$, $l_0= l_{0,0}=l_{0,1}$}, 
\eqref{lemma3-1} { with the case $2\leq k\leq N_0-3$} follows by using induction in $k$.
Thus the proof of Lemma \ref{lemma2} is complete.
\end{proof}

The above two lemmas are for the temporal time decay estimates on $\mathcal{E}^k_{N_0}(t)$ and $\mathcal{E}^k_{N_0,\ell,-\gamma}(t)$ respectively which are based on the following two assumptions:
\begin{itemize}
\item $n>\frac23N_0-\frac53$ and $N_0+n\leq N$. It is easy to see that if $N_0$ and $N$ are suitably chosen such that \eqref{N-N_0} holds, one can be able to find such an index $n$;

\item The assumptions $(H_1)$ and $(H_2)$ hold, that is, both
$$\max\left\{\displaystyle\sup_{0\leq\tau\leq t}\mathcal{E}_{N}(\tau),
\sup_{0\leq\tau\leq t}\mathcal{E}_{N-1,N-1,-\gamma}(\tau),
\sup_{0\leq\tau\leq t}\mathcal{\overline{E}}_{N_0,N_0-\frac{\widetilde{l}}\gamma,-\gamma}(\tau)\right\}
$$
and
$$
\max\left\{\sup_{0\leq\tau\leq t}\mathcal{\overline{E}}_{N_0,{l_0+\frac{k+2}2}-\frac{\widetilde{l}}\gamma,-\gamma}(\tau), \sup_{0\leq\tau\leq t}\mathcal{E}_{N}(\tau)\right\}
$$
are assumed to be small.
\end{itemize}
Set
\begin{equation}\label{l*}
l^*={\frac{k+2}2}-\frac{\widetilde{l}}\gamma,
\end{equation}
the above computation tells us that to guarantee the validity of the assumptions imposed in Lemma \ref{Lemma1} and Lemma \ref{lemma2}, we need to control $\mathcal{\overline{E}}_{N_0,{l_0}+l^*,-\gamma}(t)$, $\mathcal{E}_{N}(t)$, and $\mathcal{E}_{N-1,N-1,-\gamma}(t)$ suitably. To this end, we only outline the main ideas to yield these estimates and since the proofs are quite complicated, we leave the details to the next section. In fact, as pointed out in the introduction, if we perform the weighted energy estimate with respect to the weight function $w_{\ell-|\beta|,-\gamma}$, it is easy to see that the corresponding term
$I^{lt}_{|\alpha|,\ell-|\beta|,-\gamma}$ defined by \eqref{I^{lt}}
related to the linear transport term $v\cdot\nabla_x f$ can be controlled suitably. In fact, due to
$$
w^2_{\ell-|\beta|,-\gamma}(t,v)=w_{\ell-|\beta|,-\gamma}(t,v)\times w_{\ell-|\beta-e_i|,-\gamma}(t,v)\times\langle v\rangle^\gamma,
$$
the above term can be controlled by
$$
I^{lt}_{|\alpha|,\ell-|\beta|,-\gamma}\lesssim\left\|w_{\ell-|\beta-e_i|,-\gamma}\partial^{\alpha+e_i}_{\beta-e_i}\{{\bf I-P}\}f\right\|_\nu^2+
\varepsilon\left\|w_{\ell-|\beta|,-\gamma}\partial^{\alpha}_{\beta}\{{\bf I-P}\}f\right\|_\nu^2.
$$
On the other hand, since
$$
w^2_{\ell-|\beta|,-\gamma}(t,v)=w_{\ell-|\beta|,-\gamma}(t,v)\times w_{\ell-|\beta+e_i|,-\gamma}(t,v)\times\langle v\rangle^{-\gamma},
$$
one can deduce that for $\gamma<-1$, the terms \eqref{difficulty-1} and \eqref{difficulty-2} containing the electromagnetic field $[E(t,x), B(t,x)]$ can not be controlled by the extra dissipation term
$$
(1+t)^{-1-\vartheta}\left\| w_{\ell-|\beta|,-\gamma}(t,v)\partial^\alpha_\beta \{{\bf I}-{\bf P}\}f\langle v\rangle\right\|^2
$$
induced by the weight $w_{\ell-|\beta|,-\gamma}$.

To overcome such a difficulty, our main trick is to use the interpolation method for $v$ to bound these terms by $\left\|\nabla^2(E,B)\right\|_{H^{ N_0-2}_x}^{\frac {1}{\theta_1}} \mathcal{\widetilde{D}}_{N_0,l_0^*,1}(t)$ with
\begin{equation}\label{D_{N_0,l^*_0,1}}
\begin{split}
\mathcal{\widetilde{D}}_{N_0,l_0^*,1}(t)
\thicksim\sum_{1\leq n\leq N_0}\sum_{|\alpha|+|\beta|=n,\atop|\beta|=j,1\leq j\leq n}\left\|w_{l_0^*-j,1}\partial_\beta^\alpha\{{\bf I-P}\} f\right\|^2_\nu
+\sum_{1\leq n\leq N}\sum_{|\alpha|=n}\left\|w_{l_0^*,1}\partial^\alpha f\right\|^2_\nu+\left\|w_{l_0^*,1}\{{\bf I-P}\} f\right\|^2_\nu
\end{split}
\end{equation}
and some other similar terms. In fact, for $\mathcal{\overline{E}}_{N_0,\ell,-\gamma}(t)$, we can deduce that
\begin{lemma}\label{lemma4}
Let $N_0\geq 3$, $\ell\geq N_0$, {$\widetilde{\ell}_1>\frac12-\frac12\gamma$},
$\theta_1=\frac{1-2\gamma}{2\widetilde{l}_1-\gamma}$ and $l_0^*\geq \widetilde{\ell}_1-\frac\gamma2-\gamma\ell$,
then one has
\begin{multline}\label{lemma4-1}
\frac{d}{dt}\mathcal{\overline{E}}_{N_0,\ell,-\gamma}(t)+\mathcal{\overline{D}}_{N_0,\ell,-\gamma}(t)\lesssim
\|E\|_{L^\infty_x}^{\frac{2-\gamma}{1-\gamma}}\sum_{|\alpha|+|\beta|\leq N_0}\left\|w_{\ell-|\beta|,-\gamma}\partial^{\alpha}_\beta\{{\bf I-P}\}f\langle v\rangle\right\|^2\\
+\left\|\nabla^2(E,B)\right\|_{H^{ N_0-2}_x}^{\frac1{\theta_1}}\mathcal{\widetilde{D}}_{N_0,l_0^*,1}(t)+\sum_{|\alpha|=N_0}\varepsilon\|\partial^\alpha E\|^2
\end{multline}
provided that
$$
\mathcal{\overline{E}}_{N_0,\ell}(t)\  \text{is sufficiently small}.
\leqno(H_3)
$$
\end{lemma}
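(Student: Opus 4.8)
\noindent\emph{Overall strategy.} The plan is to run a weighted energy estimate for the reformulated system \eqref{f gn} with the weight $w_{\ell-|\beta|,-\gamma}(t,v)$ (the choice $\kappa=-\gamma$ in \eqref{our-weight}), and to add to it the standard negative-index estimate that supplies the $\|\Lambda^{-\varrho}(f,E,B)\|^2$ part of $\mathcal{\overline E}_{N_0,\ell,-\gamma}$. For $|\alpha|+|\beta|\le N_0$ I would apply $\partial^\alpha_\beta$ to the first equation of \eqref{f gn} and take the inner product against $w_{\ell-|\beta|,-\gamma}^2\partial^\alpha_\beta f$: the time derivative produces $\tfrac12\tfrac{d}{dt}\|w_{\ell-|\beta|,-\gamma}\partial^\alpha_\beta f\|^2$ together with the favorable contribution $c\,q\,(1+t)^{-1-\vartheta}\|w_{\ell-|\beta|,-\gamma}\partial^\alpha_\beta f\langle v\rangle\|^2$ from $\partial_t w_{\ell-|\beta|,-\gamma}<0$, which is exactly the extra dissipation \eqref{dissipative-weight}. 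The linear term is controlled by the weighted coercivity of $L$ (Lemma~\ref{Lemma L}), namely \eqref{L_0} when $|\beta|=0$ (after separating $\mathbf Pf$, which feeds the macroscopic dissipation) and \eqref{L_v} when $|\beta|>0$, whose lower-$|\beta'|$ $\eta$-error and compactly supported error are absorbed by a linear combination over $|\beta|$ with decreasing coefficients. The transport term gives $I^{lt}_{|\alpha|,\ell-|\beta|,-\gamma}$ of \eqref{I^{lt}}; the identity $w_{\ell-|\beta|,-\gamma}^2=w_{\ell-|\beta|,-\gamma}\,w_{\ell-|\beta-e_i|,-\gamma}\,\langle v\rangle^\gamma$ and Cauchy--Schwarz bound it by $\varepsilon\|w_{\ell-|\beta|,-\gamma}\partial^\alpha_\beta\{\mathbf I-\mathbf P\}f\|^2_\nu+C_\varepsilon\|w_{\ell-|\beta-e_i|,-\gamma}\partial^{\alpha+e_i}_{\beta-e_i}\{\mathbf I-\mathbf P\}f\|^2_\nu$, which closes under the same combination since the second factor has one more $x$-derivative and one fewer $v$-derivative.

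\noindent\emph{Nonlinear and field terms.} The nonlinear term $\partial^\alpha_\beta\Gamma(f,f)$ is expanded via \eqref{gamma-0} and each summand estimated by \eqref{nonlinear-1}--\eqref{nonlinear-2} (Lemma~\ref{lemma-nonlinear}): the low-order factor, carrying the smallness from $(H_3)$, goes into $L^\infty_x$ by the Sobolev embedding of Lemma~\ref{lemma2.2}, the other two into $\nu$-weighted norms, and the result is absorbed into $\mathcal{\overline D}_{N_0,\ell,-\gamma}$. The term $\tfrac{q_0}2E\cdot v\,\partial^\alpha_\beta f$ (with its lower-order commutators) is bounded by $\|E\|_{L^\infty_x}\|\langle v\rangle^{1/2}w_{\ell-|\beta|,-\gamma}\partial^\alpha_\beta f\|^2$; writing $\langle v\rangle=(\langle v\rangle^\gamma)^{1-\lambda}(\langle v\rangle^2)^\lambda$ with $\lambda=\tfrac{1-\gamma}{2-\gamma}$ and applying Young's inequality with the conjugate pair $\big(\tfrac1{1-\lambda},\tfrac1\lambda\big)=\big(\tfrac{1-\gamma}{-\gamma},\tfrac{2-\gamma}{1-\gamma}\big)$ splits it into $\varepsilon\|w_{\ell-|\beta|,-\gamma}\partial^\alpha_\beta f\|^2_\nu$ plus $C\|E\|_{L^\infty_x}^{(2-\gamma)/(1-\gamma)}\|\langle v\rangle w_{\ell-|\beta|,-\gamma}\partial^\alpha_\beta\{\mathbf I-\mathbf P\}f\|^2$, which is the first term on the right of \eqref{lemma4-1} (the $\mathbf Pf$-part is harmless by the Gaussian). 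The field source $\mp E\cdot v\mu^{1/2}q_1$ after $\partial^\alpha$ is $\lesssim\|\partial^\alpha E\|\,\|\mu^\delta\partial^\alpha f\|$; for $|\alpha|<N_0$ this enters the macroscopic and field dissipation, while for $|\alpha|=N_0$ only $\varepsilon\|\partial^\alpha E\|^2$ remains, since $\|\nabla^{N_0}E\|^2\notin\mathcal{\overline D}_{N_0,\ell,-\gamma}$; this is the last term of \eqref{lemma4-1}.

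\noindent\emph{The main obstacle.} The genuinely new difficulty, and the step I expect to be hardest, is the control of $I^E_{|\alpha|,\ell-|\beta|,-\gamma}$ and $I^B_{|\alpha|,\ell-|\beta|,-\gamma}$ of \eqref{difficulty-1}--\eqref{difficulty-2}, coming from $q_0(E+v\times B)\cdot\nabla_v f$ with $|\alpha_1|\ge1$. Using $w_{\ell-|\beta|,-\gamma}^2=w_{\ell-|\beta|,-\gamma}\,w_{\ell-|\beta|-1,-\gamma}\,\langle v\rangle^{-\gamma}$, pulling $\partial^{\alpha_1}(E,B)$ into $L^\infty_x$ with $\|\partial^{\alpha_1}(E,B)\|_{L^\infty_x}\lesssim\|\nabla^2(E,B)\|_{H^{N_0-2}_x}$ on the admissible range of $|\alpha_1|$ (Lemma~\ref{lemma2.2}; the few top-order cases are handled by putting the microscopic factor in $L^\infty_x$ instead), and Cauchy--Schwarz in $v$, these terms are dominated by $\varepsilon\sum\|w_{\ell-|\beta|,-\gamma}\partial^\alpha_\beta\{\mathbf I-\mathbf P\}f\|^2_\nu$ plus $C_\varepsilon\|\nabla^2(E,B)\|^2_{H^{N_0-2}_x}\sum\|\langle v\rangle^{1-\tfrac{3\gamma}2}w_{\ell-|\beta|-1,-\gamma}\partial^{\alpha-\alpha_1}_{\beta+e_i}\{\mathbf I-\mathbf P\}f\|^2$. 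Since $1-\tfrac{3\gamma}2>2$ for $\gamma<-1$, the velocity moment in the last factor overshoots the extra dissipation \eqref{dissipative-weight} within the $w_{\cdot,-\gamma}$-framework — precisely the obstruction that confined earlier works to $\gamma\ge-1$. The remedy is a velocity interpolation against the much larger weight $w_{l_0^*-|\beta|-1,1}$ occurring in $\mathcal{\widetilde D}_{N_0,l_0^*,1}$ of \eqref{D_{N_0,l^*_0,1}}: one estimates $\langle v\rangle^{1-\tfrac{3\gamma}2}w_{\ell-|\beta|-1,-\gamma}$, via a H\"older splitting in $v$ (the Gaussian factors matching automatically), by the product of $\big(\langle v\rangle^{\gamma/2}w_{l_0^*-|\beta|-1,1}\big)^{\theta_1}$ with a $w_{\cdot,-\gamma}$-weighted $\nu$-factor to the power $1-\theta_1$ that already lies in $\mathcal{\overline D}_{N_0,\ell,-\gamma}$; then Young's inequality with conjugate exponents $\big(\tfrac1{\theta_1},\tfrac1{1-\theta_1}\big)$ carries $\|\nabla^2(E,B)\|^2_{H^{N_0-2}_x}$ up to the power $1/\theta_1$ and leaves exactly $\|\nabla^2(E,B)\|^{1/\theta_1}_{H^{N_0-2}_x}\mathcal{\widetilde D}_{N_0,l_0^*,1}(t)$, the remainder being absorbed on the left. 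The value $\theta_1=\tfrac{1-2\gamma}{2\widetilde\ell_1-\gamma}$, the bound $l_0^*\ge\widetilde\ell_1-\tfrac\gamma2-\gamma\ell$, and the requirement $\widetilde\ell_1>\tfrac12-\tfrac\gamma2$ (which is just $\theta_1\in(0,1)$) are precisely what is needed for the $\langle v\rangle$-powers to match in this interpolation.

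\noindent\emph{Closing up.} It remains to recover the macroscopic and negative-index dissipation. Testing \eqref{f gn} against the collision invariants and their $x$-derivatives and coupling with the Maxwell system, the standard macro--micro argument (as in \cite{Guo-Invent-03, Guo-IUMJ-04, Duan_Liu-Yang_Zhao-VMB-2013}) produces $\sum_{1\le|\alpha|\le N_0}\|\partial^\alpha(a_\pm,b,c)\|^2+\|a_+-a_-\|^2+\|E\|^2_{H^{N_0-1}_x}+\|\nabla_x B\|^2_{H^{N_0-2}_x}$ at the cost of $\varepsilon$ times microscopic dissipation and small quadratic errors. For the negative Sobolev norm I would apply $\Lambda^{-\varrho}$ to \eqref{f} and test against $\Lambda^{-\varrho}(f,E,B)$: the transport term is skew, $L$ gives $\|\Lambda^{-\varrho}\{\mathbf I-\mathbf P\}f\|^2_\nu$, the Maxwell coupling gives $\|\Lambda^{1-\varrho}(a,b,c,E,B)\|^2$ and $\|\Lambda^{-\varrho}(a_+-a_-,E)\|^2$ by a div--curl computation, and the quadratic Lorentz and $\Gamma$ contributions are handled by Lemma~\ref{lemma2.3}, turning $\|\Lambda^{-\varrho}(\cdot)\|_{L^2_{x,v}}$ into an $L^{6/(3+2\varrho)}_xL^2_v$ norm that H\"older and Sobolev bound by the square root of the energy (small by $(H_3)$) times dissipation. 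Summing all contributions with a hierarchy of small constants — first the $|\beta|$-combination, then the $|\alpha|$-combination, then the macro--micro and negative-index balances — yields \eqref{lemma4-1}. The only laborious part is the bookkeeping of the many error terms; the single new ingredient is the velocity interpolation of the previous paragraph.
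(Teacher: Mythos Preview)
Your proposal is correct and follows essentially the same route as the paper: weighted energy estimates with $w_{\ell-|\beta|,-\gamma}$ on $\partial^\alpha f$, $\{{\bf I-P}\}f$, and $\partial^\alpha_\beta\{{\bf I-P}\}f$ (the paper's \eqref{C-1}--\eqref{C-3}), combined with the unweighted estimate \eqref{Lemma1-1}, the negative-index Lemma~\ref{LemmaF}, and the interactive functional \eqref{G_{E,B}}; the crucial new step---velocity interpolation against the larger weight $w_{l_0^*,1}$ to absorb the excess $\langle v\rangle^{1-3\gamma/2}$ produced by the Lorentz term when $\gamma<-1$---is exactly the paper's mechanism.

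One minor executional difference: the paper splits $\langle v\rangle^{1-\gamma}$ \emph{symmetrically} as $\langle v\rangle^{\frac12-\frac\gamma2}$ on each of the two microscopic factors and interpolates both between $\langle v\rangle^{\gamma/2}$ and $\langle v\rangle^{\widetilde\ell_1}$ (so $\frac12-\frac\gamma2=\frac\gamma2(1-\theta_1)+\widetilde\ell_1\theta_1$, whence $\theta_1=\frac{1-2\gamma}{2\widetilde\ell_1-\gamma}$), whereas you first Cauchy--Schwarz asymmetrically and then interpolate only one factor carrying $\langle v\rangle^{1-3\gamma/2}$. Your interpolation parameter is then $2\theta_1$, not $\theta_1$; since you are raising $\|\nabla^2(E,B)\|^2$ (not $\|\nabla^2(E,B)\|$) to the reciprocal, the net power is $2/(2\theta_1)=1/\theta_1$, so the final estimate matches---but your sentence ``Young's inequality with conjugate exponents $(1/\theta_1,1/(1-\theta_1))$'' should read $(1/(2\theta_1),1/(1-2\theta_1))$ to be internally consistent.
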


Note that $\varepsilon>0$ is an arbitrary small constant, and for brevity of presentation, here and in the sequel the dependence of coefficient constants on $\varepsilon$ similarly as on the right of \eqref{lemma4-1} is skipped, since the order of those terms are strictly higher than that of the quadratic term.

Similar to the definition of $\mathcal{\widetilde{D}}_{N_0,l_0^*,1}(t)$ given in Lemma \ref{lemma4}, for $m=N-1$ or $N$,
$\mathcal{\widetilde{D}}_{m,l_1^*,1}(t)$ is defined by
\begin{equation}\label{D_{m,l^*_1,1}}
\begin{split}
\mathcal{\widetilde{D}}_{m,l_1^*,1}(t)
\thicksim\sum_{N_0+1\leq n\leq m}\sum_{|\alpha|+|\beta|=n,\atop|\beta|=j,1\leq j\leq n}\left\|w_{l_1^*-j,1}\partial_\beta^\alpha\{{\bf I-P}\} f\right\|^2_\nu
+\sum_{N_0+1\leq n\leq m}\sum_{|\alpha|=n}\left\|w_{l_1^*,1}\partial^\alpha f\right\|^2_\nu.
\end{split}
\end{equation}
Here we emphasize that for the functional $\mathcal{\widetilde{D}}_{N-1,l_1^*,1}(t)$ or $\mathcal{\widetilde{D}}_{N,l_1^*,1}(t)$, the differentiation order in $x$ and $v$ starts from $N_0+1$, i.e.~$|\alpha|+|\beta|\geq N_0+1$.
We have the following two lemmas for $\mathcal{E}_{N}(t)$ and $\mathcal{E}_{N-1,\ell,-\gamma}(t)$ respectively:
\begin{lemma}\label{lemma5}
Assume $N_0\geq 3$, $N_0+1\leq N\leq 2N_0$,
{$\widetilde{l}_2 >\frac12-\frac12\gamma$}, $\theta_2=\frac{1-2\gamma}{2\widetilde{\ell}_2-\gamma}$, $l_0\geq \frac32-\frac1\gamma$, and $l_1^*\geq\widetilde{\ell}_2-\frac\gamma2$, we can deduce that
\begin{equation}\label{lemma5-1}
\frac{d}{dt}\mathcal{E}_{N}(t)+\mathcal{D}_{N}(t)
\lesssim \left(\| E\|_{L^{\infty}_x}+\left\|\nabla^2(E,B)\right\|_{H^{ N_0-2}_x}\right)^{\frac1{\theta_2}}\mathcal{\widetilde{D}}_{N,l_1^*,1}(t)
+\mathcal{E}_{N}(t)\mathcal{E}^1_{N_0,l_0,-\gamma}(t)
\end{equation}
provided that
$$
\mathcal{E}_{N}(t)\ \text{is sufficiently small.}
\leqno(H_4)
$$
\end{lemma}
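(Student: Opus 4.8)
The plan is to run the standard high-order energy estimate for \eqref{f gn} in the unweighted space $\mathcal{E}_N(t)$, to recover the full dissipation $\mathcal{D}_N(t)$ from the combined Boltzmann/Navier--Stokes/Maxwell mechanism, and then to control the Lorentz-force contributions, which for $\gamma<-1$ is the genuinely new point. First I would apply $\partial^\alpha$ with $|\alpha|\le N$ to the first equation of \eqref{f gn}, take the $L^2(\mathbb{R}^3_x\times\mathbb{R}^3_v)\times L^2$ inner product with $\partial^\alpha f$, and sum over $\alpha$. The convection term $v\cdot\nabla_x\partial^\alpha f$ is antisymmetric and drops out; Lemma \ref{Lemma L} in the form \eqref{L_0} gives $(L\partial^\alpha f,\partial^\alpha f)\ge\|\partial^\alpha\{{\bf I-P}\}f\|_\nu^2$, hence the microscopic dissipation $\sum_{|\alpha|\le N}\|\partial^\alpha\{{\bf I-P}\}f\|_\nu^2$; and the macroscopic dissipation $\sum_{1\le|\alpha|\le N}\|\partial^\alpha(a_\pm,b,c)\|^2$, the damping $\|a_+-a_-\|^2$ and the electromagnetic dissipation $\|E\|_{H^{N-1}_x}^2+\|\nabla_xB\|_{H^{N-2}_x}^2$ are obtained exactly as in the hard-sphere theory, from the local conservation laws, the macro--micro structure, the Maxwell system \eqref{Maxwell} and an appropriate interaction functional; in particular the linear term $-E\cdot v\mu^{1/2}q_1$ enters this interaction-functional argument rather than being treated perturbatively, and none of these steps is sensitive to the sign of $\gamma$. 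This yields $\frac{d}{dt}\mathcal{E}_N(t)+\mathcal{D}_N(t)\lesssim\mathcal{R}_\Gamma(t)+\mathcal{R}_L(t)$, where $\mathcal{R}_\Gamma$ collects the contribution of $\Gamma(f,f)$ and $\mathcal{R}_L$ that of $q_0(E+v\times B)\cdot\nabla_v f$ and $\tfrac{q_0}{2}E\cdot vf$.

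For $\mathcal{R}_\Gamma$ I would expand $\partial^\alpha\Gamma(f,f)$ via \eqref{gamma-0} and apply the standard cutoff bilinear estimates (the unweighted instance of Lemma \ref{lemma-nonlinear}(i), with the Gaussian factor $\mu^\delta$ absorbing the low-order ``large'' block and $H^2_x\hookrightarrow L^\infty_x$ in the spatial variable, which $N\ge N_0+1\ge4$ accommodates), bounding $\mathcal{R}_\Gamma$ by $\sqrt{\mathcal{E}_N(t)}\,\mathcal{D}_N(t)$ up to strictly higher-order pieces, which $(H_4)$ absorbs. In $\mathcal{R}_L$, when $\partial^\alpha$ falls entirely on $\nabla_v f$ in $q_0(E+v\times B)\cdot\nabla_v\partial^\alpha f$, an integration by parts in $v$ makes that contribution vanish by $\nabla_v\cdot(E+v\times B)=0$. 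The dangerous contributions are those in which $|\alpha_1|\ge1$ derivatives fall on the field: the $I^E$- and $I^B$-type terms as in \eqref{difficulty-1}--\eqref{difficulty-2} but with no velocity weight, namely $(\partial^{\alpha_1}E\cdot\nabla_v\partial^{\alpha-\alpha_1}f,\partial^\alpha f)$, $((v\times\partial^{\alpha_1}B)\cdot\nabla_v\partial^{\alpha-\alpha_1}f,\partial^\alpha f)$, and $(\partial^{\alpha_1}E\cdot v\,\partial^{\alpha-\alpha_1}f,\partial^\alpha f)$. Each carries a first-order velocity growth $|v|$ acting on $\{{\bf I-P}\}f$, which for $\gamma<-1$ the bare dissipation $\mathcal{D}_N(t)$ cannot absorb; this is the main obstacle.

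To close these terms I would first split $f={\bf P}f+\{{\bf I-P}\}f$: on the macroscopic component the growth $\langle v\rangle$ is killed by the Gaussian, so that piece is bounded by $\|(E,B)\|_{L^\infty_x}$ times $\mathcal{D}_N(t)$-type quantities. For the microscopic component, after an integration by parts in $v$ to relocate $\nabla_v$ when needed, I would use the velocity-interpolation inequality
\[
\big\|\langle v\rangle^{s}h\big\|_{L^2_v}\lesssim\|h\|_\nu^{\,1-\theta_2}\big\|\langle v\rangle^{\widetilde{\ell}_2}h\big\|_{L^2_v}^{\,\theta_2},\qquad \theta_2=\frac{1-2\gamma}{2\widetilde{\ell}_2-\gamma},
\]
where $s$ is the exponent produced by the first-order growth together with the $\langle v\rangle^{-\gamma}$ loss in the $\nu$-weight, and the hypothesis $\widetilde{\ell}_2>\tfrac12-\tfrac12\gamma$ is exactly what forces $\theta_2\in(0,1)$. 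A Young inequality with conjugate exponents $\tfrac1{1-\theta_2}$ and $\tfrac1{\theta_2}$ then transfers the $L^\infty_x$-norms of the low-order field derivatives $\partial^{\alpha_1}(E,B)$ — which Lemma \ref{lemma2.2} controls by $\|E\|_{L^\infty_x}+\|\nabla^2(E,B)\|_{H^{N_0-2}_x}$ — into the coefficient $\big(\|E\|_{L^\infty_x}+\|\nabla^2(E,B)\|_{H^{N_0-2}_x}\big)^{1/\theta_2}$ multiplying $\mathcal{\widetilde{D}}_{N,l_1^*,1}(t)$ of \eqref{D_{m,l^*_1,1}}, while the factors $\|h\|_\nu$ and $\|\partial^\alpha f\|_\nu$ land in $\mathcal{D}_N(t)$ and are absorbed by $(H_4)$; the hypothesis $l_1^*\ge\widetilde{\ell}_2-\tfrac\gamma2$ is precisely what makes $\langle v\rangle^{\widetilde{\ell}_2}h$ dominated by the matching $w_{l_1^*-|\beta|,1}\sqrt\nu\,h$ entry of $\mathcal{\widetilde{D}}_{N,l_1^*,1}(t)$. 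When the $f$-factor $\nabla_v\partial^{\alpha-\alpha_1}\{{\bf I-P}\}f$ (or $v\,\partial^{\alpha-\alpha_1}\{{\bf I-P}\}f$) has low order $\le N_0$, I would instead store it in the weighted low-order energy $\mathcal{E}^1_{N_0,l_0,-\gamma}(t)$ — here $l_0\ge\tfrac32-\tfrac1\gamma$ suffices to carry the extra $\langle v\rangle$ — and the remaining two factors in $\mathcal{E}_N(t)$, producing the term $\mathcal{E}_N(t)\,\mathcal{E}^1_{N_0,l_0,-\gamma}(t)$.

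Finally, collecting all contributions and absorbing the $\sqrt{\mathcal{E}_N}\,\mathcal{D}_N$ terms by $(H_4)$ yields \eqref{lemma5-1}. I expect the delicate point to be the bookkeeping of the velocity weights and differentiation orders in the $I^B$-type terms: one must check that every $\langle v\rangle^{\widetilde{\ell}_2}$-weighted factor of total order $\ge N_0+1$ produced by the interpolation matches a genuine entry of $\mathcal{\widetilde{D}}_{N,l_1^*,1}(t)$ — so that the large parameter $l_1^*$ is available but never appears in the final coefficient — while the finitely many low-order occurrences are channelled into $\mathcal{E}_N\,\mathcal{E}^1_{N_0,l_0,-\gamma}$; the restrictions $N_0\ge3$ and $N_0+1\le N\le 2N_0$ are what leave enough room in both the Sobolev and the interpolation exponents for this to close.
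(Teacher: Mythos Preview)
Your approach is essentially the same as the paper's: the paper states that Lemma \ref{lemma5} ``follows from modifying the proof of Lemma \ref{lemma6} in a straightforward way without considering any weight function,'' and the key mechanism there is exactly the velocity interpolation between $\langle v\rangle^{\gamma/2}$ and $\langle v\rangle^{\widetilde{\ell}_2}$ for the Lorentz-force terms with high-order $f$-factor (yielding the coefficient $\big(\|E\|_{L^\infty_x}+\|\nabla^2(E,B)\|_{H^{N_0-2}_x}\big)^{1/\theta_2}$ against $\mathcal{\widetilde{D}}_{N,l_1^*,1}$), together with routing the low-order $f$-factors into $\mathcal{E}_N\,\mathcal{E}^1_{N_0,l_0,-\gamma}$. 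One small point you underplay: the $\alpha_1=0$ case of $\tfrac{q_0}{2}E\cdot v f$ does \emph{not} vanish by integration by parts and is precisely the source of the $\|E\|_{L^\infty_x}$ contribution in the coefficient---it is handled by the same interpolation, but you should list it among the ``dangerous'' terms rather than restricting to $|\alpha_1|\ge1$.
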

\begin{lemma}\label{lemma6}
Take $N_0\geq 3$, $N_0+1\leq N\leq 2N_0$, {$\widetilde{l}_3>\frac12-\frac12\gamma$}, $\theta_3=\frac{1-2\gamma}{2\widetilde{\ell}_3-\gamma}$, $l_1\geq N$, {$l_0\geq l_1+\frac{5}{2}$}, and $l_1^*\geq \widetilde{\ell}_3-\frac\gamma2-\gamma l_1$, one has
\begin{equation}\label{lemma6-1}
\begin{aligned}
&\frac{d}{dt}\mathcal{E}_{N-1,l_1,-\gamma}(t)+\mathcal{D}_{N-1,l_1,-\gamma}(t)\\
\lesssim& \|E\|_{L^\infty_x}^{\frac{2-\gamma}{1-\gamma}}\sum_{|\alpha|+|\beta|\leq N-1}\left\|w_{l_1-|\beta|,-\gamma}\partial_\beta^{\alpha}\{{\bf I-P}\}f\langle v\rangle\right\|^2\\
&+\left\|\nabla^2(E,B)\right\|_{H^{ N_0-2}_x}^{\frac1{\theta_3}}\mathcal{\widetilde{D}}_{N-1,l_1^*,1}(t)+\mathcal{E}_{N}(t)\mathcal{E}^1_{N_0,l_0,-\gamma}(t)+\sum_{|\alpha|= N-1}\left\|\partial^\alpha E\right\|\left\|\mu^\delta\partial^\alpha f\right\|,
\end{aligned}
\end{equation}
where we have used the assumption that
$$
\mathcal{E}_{N-1,l_1}(t)\ \text{is sufficiently small.}
\leqno(H_5)
$$
\end{lemma}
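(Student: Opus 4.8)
The plan is to prove \eqref{lemma6-1} by the same time--velocity weighted energy scheme already used for Lemma \ref{lemma4} and Lemma \ref{lemma5}, now carried out with the weight $w_{l_1-|\beta|,-\gamma}(t,v)$ and only up to total order $N-1$, and then to fold in the standard unweighted macroscopic estimates. Concretely, for every $|\alpha|+|\beta|\leq N-1$ I would apply $\partial^\alpha_\beta$ to equation \eqref{f gn}, pair the resulting identity with $w^2_{l_1-|\beta|,-\gamma}\partial^\alpha f$ when $\beta=0$ and with $w^2_{l_1-|\beta|,-\gamma}\partial^\alpha_\beta\{{\bf I-P}\}f$ when $|\beta|\geq1$, integrate over $\mathbb{R}^3_x\times\mathbb{R}^3_v$, and sum over $\alpha,\beta$ with small constants $C_{|\alpha|,|\beta|}$ that decrease in $|\beta|$ so that the commutator and collision errors telescope. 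The $\partial_t$-term yields $\tfrac{d}{dt}\mathcal{E}_{N-1,l_1,-\gamma}(t)$ together with the favorable weighted dissipation $(1+t)^{-1-\vartheta}\sum\|w_{l_1-|\beta|,-\gamma}\partial^\alpha_\beta\{{\bf I-P}\}f\langle v\rangle\|^2$ coming from differentiating $e^{q\langle v\rangle^2/(1+t)^\vartheta}$; the $Lf$-contribution is handled by Lemma \ref{Lemma L}, estimate \eqref{L_v}, producing $\sum\|w_{l_1-|\beta|,-\gamma}\partial^\alpha_\beta\{{\bf I-P}\}f\|^2_\nu$ up to the lower-$|\beta|$ errors (absorbed by the $\alpha,\beta$-summation) and the compact remainder $C_\eta\|\chi_{\{|v|\leq 2C_\eta\}}f\|^2$ (absorbed afterwards by the macroscopic dissipation). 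The transport term $v\cdot\nabla_xf$ is an exact $x$-divergence for the $\{{\bf I-P}\}f$ part, and for $|\beta|\geq1$ it leaves the commutator term $I^{lt}_{|\alpha|,l_1-|\beta|,-\gamma}$ of \eqref{I^{lt}}, which, via $w^2_{l_1-|\beta|,-\gamma}=w_{l_1-|\beta|,-\gamma}w_{l_1-|\beta-e_i|,-\gamma}\langle v\rangle^\gamma$ and Cauchy--Schwarz, is dominated by $\varepsilon\|w_{l_1-|\beta|,-\gamma}\partial^\alpha_\beta\{{\bf I-P}\}f\|^2_\nu+C_\varepsilon\|w_{l_1-|\beta-e_i|,-\gamma}\partial^{\alpha+e_i}_{\beta-e_i}\{{\bf I-P}\}f\|^2_\nu$ and absorbed into $\mathcal{D}_{N-1,l_1,-\gamma}(t)$ by the choice of $C_{|\alpha|,|\beta|}$.

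The decisive terms are those carrying the electromagnetic field. In $q_0(E+v\times B)\cdot\nabla_vf$ the contribution with no $x$-derivative on $[E,B]$ is treated by integration by parts in $v$: the magnetic part drops out because $\nabla_v\cdot(v\times B)=0$ and $(v\times B)\cdot v=0$ kill both the divergence and the weight derivative, while the electric part reduces to $\int E\cdot(\nabla_v w^2_{l_1-|\beta|,-\gamma})|\partial^\alpha_\beta\{{\bf I-P}\}f|^2$, whose algebraic and exponential factors are of size $O(\langle v\rangle^{-1})w^2$ and $O((1+t)^{-\vartheta}\langle v\rangle)w^2$ respectively. Since $\gamma<-1$, neither of these is controlled by the $\nu$-dissipation; interpolating the relevant powers of $\langle v\rangle$ between $\langle v\rangle^{\gamma/2}$ and $\langle v\rangle$ and then applying Young's inequality produces precisely the first term $\|E\|_{L^\infty_x}^{(2-\gamma)/(1-\gamma)}\sum_{|\alpha|+|\beta|\leq N-1}\|w_{l_1-|\beta|,-\gamma}\partial^\alpha_\beta\{{\bf I-P}\}f\langle v\rangle\|^2$ on the right of \eqref{lemma6-1}, which is deliberately retained and will be closed later via the temporal decay of $E$ and the a priori bound \eqref{a-priori-assumption}. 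For the contribution with $|\alpha_1|\geq1$ derivatives on $[E,B]$, i.e.\ the terms $I^E_{|\alpha|,l_1-|\beta|,-\gamma}$ and $I^B_{|\alpha|,l_1-|\beta|,-\gamma}$ of \eqref{difficulty-1} and \eqref{difficulty-2}, I would put $\partial^{\alpha_1}(E,B)$ into $L^\infty_x$ or $L^3_x$ and bound it, through Lemma \ref{lemma2.2}, by $\|\nabla^2(E,B)\|_{H^{N_0-2}_x}$; then, using $|v\times\partial^{\alpha_1}B|\lesssim\langle v\rangle|\partial^{\alpha_1}B|$ together with $w^2_{l_1-|\beta|,-\gamma}=w_{l_1-|\beta|,-\gamma}w_{l_1-|\beta|-1,-\gamma}\langle v\rangle^{-\gamma}$ as in \eqref{restriction-gamma}, the surplus $\langle v\rangle^{1-\gamma}$ — which is exactly what becomes too large for the $w_{\cdot,-\gamma}$-dissipation when $\gamma<-1$ — is absorbed by interpolating the algebraic part of the weight between $\langle v\rangle^{\gamma/2}$ and the much larger power $\langle v\rangle^{l_1^*-|\beta|}$ of the $\kappa=1$ weight; this transfers the loss into $\mathcal{\widetilde{D}}_{N-1,l_1^*,1}(t)$ of \eqref{D_{m,l^*_1,1}} at the fixed cost of $\|\nabla^2(E,B)\|_{H^{N_0-2}_x}^{1/\theta_3}$ with $\theta_3=\tfrac{1-2\gamma}{2\widetilde{\ell}_3-\gamma}$, the requirement $l_1^*\geq\widetilde{\ell}_3-\tfrac\gamma2-\gamma l_1$ being precisely what makes this interpolation admissible.

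The remaining contributions are routine. The linear source $\mp E\cdot v\mu^{1/2}q_1$ is handled after a Leibniz expansion using that the Gaussian dominates the polynomial weight, $w^2_{l_1-|\beta|,-\gamma}\mu^{1/2}\lesssim\mu^\delta$: the pieces with $|\alpha'|\leq N-2$ derivatives on $E$ are absorbed into $\mathcal{D}_{N-1,l_1,-\gamma}(t)$ times the small energy, while the top piece $|\alpha|=N-1$ — for which $\partial^{N-1}E$ is not part of the dissipation rate — is kept as the last term $\sum_{|\alpha|=N-1}\|\partial^\alpha E\|\|\mu^\delta\partial^\alpha f\|$ of \eqref{lemma6-1}. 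The genuinely nonlinear terms $\tfrac{q_0}{2}E\cdot vf$ and $\Gamma(f,f)$ are estimated by Lemma \ref{lemma-nonlinear}, the Sobolev interpolations of Lemma \ref{lemma2.2}, and the smallness assumption (H$_5$), distributing derivatives via the usual low--high splitting; here the extra margin $l_0\geq l_1+\tfrac52$ is what lets the low-order factor be absorbed into $\mathcal{E}^1_{N_0,l_0,-\gamma}(t)$ and the high-order factor into $\mathcal{E}_N(t)$, so that all these contributions are bounded by $\mathcal{E}_N(t)\mathcal{E}^1_{N_0,l_0,-\gamma}(t)$ plus remainders absorbable by $\mathcal{D}_{N-1,l_1,-\gamma}(t)$.

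Finally I would add to the weighted identity the standard unweighted macroscopic estimates for $\partial^\alpha(a_\pm,b,c)$ with $1\leq|\alpha|\leq N-1$, the estimate for $\|a_+-a_-\|$, and the Maxwell estimates giving $\|E\|_{H^{N-2}_x}$ and $\|\nabla_xB\|_{H^{N-3}_x}$ — obtained exactly as in the hard-sphere theory and as in \cite{Duan_Liu-Yang_Zhao-VMB-2013} — so that the left-hand side upgrades to the full dissipation rate $\mathcal{D}_{N-1,l_1,-\gamma}(t)$ and the compact remainders from \eqref{L_v} get absorbed; summation over $\alpha,\beta$ with the decreasing constants $C_{|\alpha|,|\beta|}$, together with the smallness of $\mathcal{E}_{N-1,l_1}(t)$, then yields \eqref{lemma6-1}. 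The step I expect to be the main obstacle is the control of $I^E_{|\alpha|,l_1-|\beta|,-\gamma}$ and $I^B_{|\alpha|,l_1-|\beta|,-\gamma}$ for $-3<\gamma<-1$: as explained in \S\ref{exp-h}, these cannot be closed within the $w_{\cdot,-\gamma}$-framework alone, and the essential content of the argument is the two-fold device above — keeping the $\|E\|_{L^\infty_x}^{(2-\gamma)/(1-\gamma)}$-term on the right, and interpolating the higher-$x$-derivative pieces into the $\kappa=1$ dissipation $\mathcal{\widetilde{D}}_{N-1,l_1^*,1}(t)$ — which is exactly why \eqref{lemma6-1} is stated with those three extra terms on its right-hand side rather than as a self-contained closed estimate.
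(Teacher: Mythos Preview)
Your overall architecture is right and matches the paper's: weighted energy estimates on $\partial^\alpha f$ ($1\leq|\alpha|\leq N-1$), on $\{{\bf I-P}\}f$, and on $\partial^\alpha_\beta\{{\bf I-P}\}f$ ($|\beta|\geq1$), combined with the unweighted macroscopic estimate. You also correctly isolate the two new devices --- retaining the $\|E\|_{L^\infty_x}^{(2-\gamma)/(1-\gamma)}$--term and interpolating into the $\kappa=1$ dissipation $\mathcal{\widetilde D}_{N-1,l_1^*,1}(t)$.

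There is, however, one genuine misallocation. You propose to send \emph{all} Lorentz cross--terms $I^E_{|\alpha|,l_1-|\beta|,-\gamma}$, $I^B_{|\alpha|,l_1-|\beta|,-\gamma}$ with $|\alpha_1|\geq1$ into $\mathcal{\widetilde D}_{N-1,l_1^*,1}(t)$ via the $\langle v\rangle^{\gamma/2}$--$\langle v\rangle^{\widetilde\ell_3}$ interpolation. This cannot work as stated: by its definition \eqref{D_{m,l^*_1,1}}, $\mathcal{\widetilde D}_{N-1,l_1^*,1}(t)$ contains only contributions with total order $|\alpha|+|\beta|\geq N_0+1$, so it cannot absorb the pieces in which the remaining factor $\nabla_v\partial^{\alpha-\alpha_1}_\beta\{{\bf I-P}\}f$ has total order $\leq N_0$; moreover, for those pieces $|\alpha_1|$ may be as large as $N-1$, and then $\partial^{\alpha_1}(E,B)$ cannot be placed in $L^\infty_x$ or $L^3_x$ within the available norms. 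The paper splits the Lorentz terms according to the order of the $f$--factor: when $|\alpha-\alpha_1|+|\beta|+1\leq N_0$ one puts the low--order factor $w_{l_1-|\beta|-1,-\gamma}\nabla_v\partial^{\alpha-\alpha_1}_\beta\{{\bf I-P}\}f\,\langle v\rangle^{1-\frac{3\gamma}{2}}$ into $L^2_vL^\infty_x$ (or $L^2_vL^3_x$), uses the margin $l_0\geq l_1+\tfrac52$ so that $w_{l_1,-\gamma}\langle v\rangle^{1-\frac{3\gamma}{2}}\lesssim w_{l_0,-\gamma}$, and bounds this by $\mathcal{E}^1_{N_0,l_0,-\gamma}(t)^{1/2}$, while $\partial^{\alpha_1}(E,B)$ goes into $\mathcal{E}_N(t)^{1/2}$. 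It is \emph{this} half of the Lorentz terms that produces the $\mathcal{E}_N(t)\,\mathcal{E}^1_{N_0,l_0,-\gamma}(t)$ on the right of \eqref{lemma6-1} and that actually uses the hypothesis $l_0\geq l_1+\tfrac52$. Only the complementary pieces, where $|\alpha-\alpha_1|+|\beta|+1\geq N_0+1$ (so that $|\alpha_1|\leq N_0$ and $\partial^{\alpha_1}(E,B)$ sits safely inside $\|\nabla^2(E,B)\|_{H^{N_0-2}_x}$), are treated by your interpolation and land in $\|\nabla^2(E,B)\|_{H^{N_0-2}_x}^{1/\theta_3}\mathcal{\widetilde D}_{N-1,l_1^*,1}(t)$.

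A related misattribution: $\Gamma(f,f)$ does \emph{not} generate $\mathcal{E}_N(t)\,\mathcal{E}^1_{N_0,l_0,-\gamma}(t)$ in this lemma. In the paper it is simply bounded, via Lemma~\ref{lemma-nonlinear}, by $(\mathcal{E}_{N-1,l_1,-\gamma}(t)+\varepsilon)\,\mathcal{D}_{N-1,l_1,-\gamma}(t)$ and absorbed through $(H_5)$. Once you insert the correct dichotomy for the Lorentz terms and reassign $l_0\geq l_1+\tfrac52$ to that step, the rest of your sketch goes through as in the paper.
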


Lemmas \ref{lemma4}-\ref{lemma6} together with the fact $\mathcal{E}^1_{N_0,l_0,-\gamma}(t)\in L^1({\mathbb{R}^+})$ which is a direct consequence of the estimates \eqref{lemma3-1} imply that to deduce the desired estimates on
$\mathcal{\overline{E}}_{N_0,\ell,-\gamma}(t)$, $\mathcal{E}_{N}(t)$, and $\mathcal{E}_{N-1,l_1,-\gamma}(t)$, one needs to bound $\mathcal{\widetilde{D}}_{N_0,l_0^*,1}(t)$ and $\mathcal{\widetilde{D}}_{N,l_1^*,1}(t)$ suitably. To this end, we have to perform the weighted energy estimates by replacing the weight $w_{\ell-|\beta|,-\gamma}$ by $w_{\ell-|\beta|,1}$ and in such a case, as explained in the introduction,
the terms $I^E_{|\alpha|,\ell-|\beta|,1}$ and $I^B_{|\alpha|, \ell-|\beta|,1}$ corresponding to \eqref{difficulty-1} and \eqref{difficulty-2} can be controlled by the corresponding extra dissipation rate $D^W_{|\alpha|,\ell-|\beta|,1}$ given by \eqref{dissipative-weight}
induced by the exponential factor of the weight $w_{\ell-|\beta|,1}(t,v)$ provided that the electromagnetic field $[E(t,x),$ $B(t,x)]$ enjoys certain temporal decay estimates. However, compared with the weighted energy estimate with respect to the weight $w_{\ell,-\gamma}$, the linear term $I^{lt}_{|\alpha|,\ell-|\beta|,1}$ defined by \eqref{I^{lt}} leads to a new difficult term
$$
\left\|w_{\ell-|\beta-e_i|, 1}\partial^{\alpha+e_i}_{\beta-e_i}{\bf\{I-P\}}f\langle v\rangle^{-\frac\gamma2-1}\right\|^2,
$$
which can not be controlled directly by combining the dissipative effects $D^L_{|\alpha|,\ell-|\beta|,1}$ induced by the linearized collision operator $L$.

Motivated by the argument developed in \cite{Hosono-Kawashima-2006} to deduce the temporal decay estimates on solutions to some nonlinear equations of regularity-loss type, we want to design different time increase rate $\sigma_{n,j}$ for
$$
\displaystyle\sum_{|\alpha|+|\beta|=n, |\beta|=j}\left\|w_{l_1^*-j,1}\partial_\beta^\alpha\{{\bf I-P}\} f\right\|^2,
$$
where $\sigma_{n,j}-\sigma_{n,j-1}=\frac{2(1+\gamma)}{\gamma-2}(1+\vartheta)$. For result in this direction, we have the following two lemmas whose proof will be given in the next section. The first one is concerned with the case of $N_0+1\leq n\leq N$.
\begin{lemma}\label{lemma7}
Assume $N_0\geq 4$,
$\sigma_{n,j}-\sigma_{n,j-1}=\frac{2(1+\gamma)}{\gamma-2}(1+\vartheta)$, $l_1^*\geq N$, and {$l_0\geq {l_1^*}+\frac{5}{2}$},
one can get that
\begin{eqnarray}\label{lemma7-1}
&&\sum_{N_0+1\leq n\leq N}\frac{d}{dt}\left\{\sum_{|\alpha|+|\beta|=n,\atop|\beta|=j,1\leq j\leq n}(1+t)^{-\sigma_{n,j}}\left\|w_{l_1^*-j,1}\partial_\beta^\alpha\{{\bf I-P}\} f\right\|^2\right.\left.+\sum_{|\alpha|=n}(1+t)^{-\sigma_{n,0}}\left\|w_{l_1^*,1}\partial^\alpha f\right\|^2\right\}\nonumber\\
&&
+\sum_{N_0+1\leq n\leq N}\left\{\sum_{|\alpha|+|\beta|=n,\atop|\beta|=j,1\leq j\leq n}(1+t)^{-\sigma_{n,j}}\left\|w_{l_1^*-j,1}\partial_\beta^\alpha\{{\bf I-P}\} f\right\|^2_\nu+\sum_{|\alpha|=n}(1+t)^{-\sigma_{n,0}}\left\|w_{l_1^*,1}\partial^\alpha f\right\|^2_\nu\right\}
\nonumber\\
&&+\sum_{N_0+1\leq n\leq N}\sum_{|\alpha|+|\beta|=n,\atop|\beta|=j,1\leq j\leq n}(1+t)^{-1-\vartheta-\sigma_{n,j}}\left\|w_{l_1^*-j,1}\partial_\beta^\alpha\{{\bf I-P}\} f\langle v\rangle\right\|^2\\  \nonumber
&&+\sum_{N_0+1\leq n\leq N}\sum_{|\alpha|=n}(1+t)^{-1-\vartheta-\sigma_{n,0}}\left\|w_{l_1^*,1}\partial^\alpha f\langle v\rangle\right\|^2\\  \nonumber
&\lesssim&\sum_{|\alpha|\leq N-1}\left\{\left\|\nabla^{|\alpha|+1}f\right\|^2_\nu
+\left\|\{{\bf I-P}\} f\right\|^2_\nu+\left\|\nabla^{|\alpha|}E\right\|^2\right\}
+(1+t)^{-2\sigma_{N,0}}\left\|\nabla^NE\right\|^2+\mathcal{E}_{N}(t)\mathcal{E}^1_{N_0,l_0,-\gamma}(t)
\nonumber\\
&&+\mathcal{E}_{N}(t)\mathcal{D}_{N}(t)+{\sum_{N_0+1\leq n\leq N,\atop 0\leq j\leq n}}(1+t)^{-\sigma_{n,j}}E^n_{tri,j}(t)+\eta{\sum_{N_0+1\leq n\leq N ,\atop
1\leq j\leq n}}\sum_{|\alpha|+|\beta|=n,\atop |\beta|=j, |\beta'|<j}(1+t)^{-\sigma_{n,j}}
\left\|w_{l_1^*-|\beta'|,1}\partial^\alpha_{\beta'}\{{\bf I-P}\} f\right\|^2_\nu,\nonumber
\end{eqnarray}
where $E^n_{tri,j}(t)$ is defined by
\begin{eqnarray}
E^{n}_{tri,j}(t)&\sim&
\sum_{|\alpha|+|\beta|=n,\atop |\beta|=j
}\|E\|_{L^\infty}^{\frac{2-\gamma}{1-\gamma}}
\left\|w_{l_1^*-j,1}\partial^\alpha_\beta \{{\bf I-P}\}f\langle v\rangle\right\|^2\notag\\ \nonumber
&&+{\sum_{|\alpha|+|\beta|=n,\atop |\beta|=j
}\sum_{|\alpha-\alpha_1|+j+m\geq N_0+1,\atop 1\leq|\alpha_1|\leq N_0-2,m\leq1}}(1+t)^{1+\vartheta}\left\|\partial^{\alpha_1} B\right\|^2_{L^\infty_x} \left\|w_{l_1^*-j-m,1}\nabla^m_v \partial_\beta^{\alpha-\alpha_1}{\bf \{I-P\}}f\langle v\rangle\right\|^2\nonumber\\
&&+{\sum_{|\alpha|+|\beta|=n,\atop |\beta|=j
}\sum_{|\alpha-\alpha_1|+j+m\geq N_0+1,\atop N_0-1\leq|\alpha_1|\leq N_0,m\leq1}}
(1+t)^{1+\vartheta}\left\|\partial^{\alpha_1} B\right\|^2 \left\|w_{l_1^*-j-m,1}
\nabla^m_v \partial_\beta^{\alpha-\alpha_1} {\bf \{I-P\}}f\langle v\rangle\right\|^2_{L^2_vL^\infty_x}\nonumber\\
&&+{\sum_{|\alpha|+|\beta|=n,\atop |\beta|=j
}\sum_{|\alpha-\alpha_1|+j+m\geq N_0+1,\atop { 1\leq|\alpha_1|\leq N_0-2},m\leq 1}}(1+t)^{1+\vartheta}
\left\|\partial^{\alpha_1} E\right\|^2_{L^\infty_x}\left\|w_{l_1^*-j-m,1}\nabla^m_v \partial_\beta^{\alpha-\alpha_1}{\bf \{I-P\}}f\right\|^2 \nonumber\\
&&+{\sum_{|\alpha|+|\beta|=n,\atop |\beta|=j
}\sum_{|\alpha-\alpha_1|+j+m\geq N_0+1,\atop{ N_0-1\leq|\alpha_1|\leq N_0}, m\leq 1}}
(1+t)^{1+\vartheta}\left\|\partial^{\alpha_1} E\right\|^2\left\|w_{l_1^*-j-m,1}
\nabla^m_v \partial_\beta^{\alpha-\alpha_1}{\bf \{I-P\}}f\right\|^2_{L^2_vL^\infty_x}\nonumber\\
&&+\max\left\{\mathcal{E}_{N_0,l_0,-\gamma}(t), \mathcal{E}_{\overline{m},\overline{m},-\gamma}(t)\right\}
\sum_{|\alpha'|+|\beta'|\leq n,\atop
|\beta'|\leq |\beta|=j}\left\|w_{l_1^*-|\beta'|,1}\partial_{\beta'}^{\alpha'}{\bf\{I-P\}}f\right\|^2_{\nu }.\label{Enj}
\end{eqnarray}
\end{lemma}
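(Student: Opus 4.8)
The plan is to establish \eqref{lemma7-1} by running, for each pair $(\alpha,\beta)$ with $N_0+1\leq|\alpha|+|\beta|=n\leq N$ and $|\beta|=j$, the weighted $L^2_{x,v}$ energy estimate obtained by pairing $\partial^\alpha_\beta$ of the $\{{\bf I-P}\}$-equation derived from \eqref{f gn} with $(1+t)^{-\sigma_{n,j}}w_{l_1^*-j,1}^2\,\partial^\alpha_\beta\{{\bf I-P}\}f$ (when $j\geq1$), respectively $\partial^\alpha$ of \eqref{f gn} itself with $(1+t)^{-\sigma_{n,0}}w_{l_1^*,1}^2\,\partial^\alpha f$ (when $j=0$), and then taking a suitable linear combination in $n$ and $j$. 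Several ingredients are routine. By the weighted coercivity \eqref{L_v} of Lemma \ref{Lemma L}, the $Lf$ term produces $(1+t)^{-\sigma_{n,j}}\|w_{l_1^*-j,1}\partial^\alpha_\beta\{{\bf I-P}\}f\|_\nu^2$ up to the $\eta$-small lower-$|\beta|$ remainder that is kept on the right of \eqref{lemma7-1} and a $\chi_{\{|v|\leq 2C_\eta\}}$-localized term absorbed into $\|\nabla^{|\alpha|+1}f\|_\nu^2+\|\{{\bf I-P}\}f\|_\nu^2$. Differentiating in $t$ the factor $e^{q\langle v\rangle^2/(1+t)^\vartheta}$ inside $w_{l_1^*-j,1}$ yields, with the favourable sign, the extra dissipation $(1+t)^{-1-\vartheta-\sigma_{n,j}}\|w_{l_1^*-j,1}\partial^\alpha_\beta\{{\bf I-P}\}f\langle v\rangle\|^2$, while $\partial_t(1+t)^{-\sigma_{n,j}}$ only contributes a nonpositive term. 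The macroscopic commutators and the source $E\cdot v\mu^{1/2}q_1$ supply the lower-order-in-$x$ data $\|\nabla^{|\alpha|+1}f\|_\nu^2$, $\|\nabla^{|\alpha|}E\|^2$ and, at the top order (time weight $(1+t)^{-\sigma_{N,0}}$, split by Young's inequality), $(1+t)^{-2\sigma_{N,0}}\|\nabla^NE\|^2$; the nonlinear term $\Gamma(f,f)$ is controlled by Lemma \ref{lemma-nonlinear}, which, after splitting the derivatives of $(E,B)$ and of $f$ via Lemma \ref{lemma2.2} according to whether the differentiation order is $\leq N_0-2$ (placed in $L^\infty_x$) or $\geq N_0-1$ (placed in $L^2_x$, the companion in $L^\infty_x$), reproduces $\mathcal{E}_{N}(t)\mathcal{E}^1_{N_0,l_0,-\gamma}(t)$, $\mathcal{E}_{N}(t)\mathcal{D}_{N}(t)$ and the nonlinear part of $E^n_{tri,j}$.

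For the Lorentz terms $I^E_{|\alpha|,l_1^*-j,1}$ and $I^B_{|\alpha|,l_1^*-j,1}$ the idea is to transfer the weight onto the $\nabla_v$-differentiated factor $\nabla_v\partial^{\alpha-\alpha_1}_\beta\{{\bf I-P}\}f=\partial^{\alpha-\alpha_1}_{\beta+e_i}\{{\bf I-P}\}f$, whose natural weight index is $l_1^*-(j+1)$: since
\[
w_{l_1^*-j,1}^2=w_{l_1^*-j,1}\,w_{l_1^*-j-1,1}\,\langle v\rangle,\qquad \langle v\rangle\,w_{l_1^*-j,1}^2=\big(\langle v\rangle\,w_{l_1^*-j,1}\big)\big(\langle v\rangle\,w_{l_1^*-j-1,1}\big),
\]
one spare $\langle v\rangle$ assembles the extra-dissipation expression $\langle v\rangle\,w_{l_1^*-j,1}\partial^\alpha_\beta\{{\bf I-P}\}f$ and a second $\langle v\rangle$ (in the $I^B$ case) absorbs $|v\times\partial^{\alpha_1}B|\lesssim\langle v\rangle|\partial^{\alpha_1}B|$. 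Cauchy--Schwarz followed by Young's inequality, splitting the time factor as $(1+t)^{-1-\vartheta}$ against $(1+t)^{1+\vartheta}$, then sends one copy into the extra dissipation at level $(\alpha,\beta)$ (harmless because its prefactor carries a power of the small $\|(E,B)\|_{L^\infty_x}$) and the other, carrying $(1+t)^{1+\vartheta}\|\partial^{\alpha_1}(E,B)\|_{L^\infty_x}^2$ or $(1+t)^{1+\vartheta}\|\partial^{\alpha_1}(E,B)\|^2$ (the latter when $|\alpha_1|\geq N_0-1$, the companion factor then being in $L^2_vL^\infty_x$), into $E^n_{tri,j}$ exactly as displayed in \eqref{Enj}; the source $\frac{q_0}{2}E\cdot vf$ is treated the same way.

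The main obstacle is the linear transport term $I^{lt}_{|\alpha|,l_1^*-j,1}$. For $\kappa=1$ the identity $w^2_{l_1^*-j,1}=w_{l_1^*-j,1}w_{l_1^*-j+1,1}\langle v\rangle^{\gamma}$ used in the $\kappa=-\gamma$ case fails (now $w^2_{l_1^*-j,1}=w_{l_1^*-j,1}w_{l_1^*-j+1,1}\langle v\rangle^{-1}$), and with $g=\partial^{\alpha+e_i}_{\beta-e_i}\{{\bf I-P}\}f$ one only gets
\[
\big|I^{lt}_{|\alpha|,l_1^*-j,1}\big|\lesssim\eta\,\big\|w_{l_1^*-j,1}\partial^\alpha_\beta\{{\bf I-P}\}f\big\|_\nu^2+C_\eta\,\big\|w_{l_1^*-j+1,1}\,g\,\langle v\rangle^{-\gamma/2-1}\big\|^2 ,
\]
whose second term is not controlled by the collision dissipation at velocity order $j-1$ since $-\gamma/2-1>\gamma/2$. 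I would resolve this by the regularity-loss device: since $\gamma<-\gamma-2<2$ for $-3<\gamma<-1$, $\langle v\rangle^{-\gamma-2}$ interpolates between $\nu\sim\langle v\rangle^\gamma$ and $\langle v\rangle^2$ with exponent $\theta=\frac{\gamma+4}{2-\gamma}$, i.e., $1-\theta=\frac{2(1+\gamma)}{\gamma-2}$, and incorporating the time weights one has the pointwise identity
\[
(1+t)^{-\sigma_{n,j}}\big|w_{l_1^*-j+1,1}g\big|^2\langle v\rangle^{-\gamma-2}=\Big((1+t)^{-\sigma_{n,j-1}}\big|w_{l_1^*-j+1,1}g\big|^2\langle v\rangle^{\gamma}\Big)^{\theta}\Big((1+t)^{-\sigma_{n,j-1}-1-\vartheta}\big|w_{l_1^*-j+1,1}g\big|^2\langle v\rangle^{2}\Big)^{1-\theta}
\]
precisely when $\sigma_{n,j}-\sigma_{n,j-1}=(1-\theta)(1+\vartheta)=\frac{2(1+\gamma)}{\gamma-2}(1+\vartheta)$, which is the prescribed increment. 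Young's inequality then bounds $(1+t)^{-\sigma_{n,j}}|I^{lt}_{|\alpha|,l_1^*-j,1}|$ by $\varepsilon(1+t)^{-\sigma_{n,j-1}-1-\vartheta}\|w_{l_1^*-j+1,1}g\langle v\rangle\|^2+C_\varepsilon(1+t)^{-\sigma_{n,j-1}}\|w_{l_1^*-j+1,1}g\|_\nu^2$, and since $|\alpha+e_i|+|\beta-e_i|=n$ and $|\beta-e_i|=j-1$, both of these already occur on the left of \eqref{lemma7-1}. To finish I would sum over $|\alpha|+|\beta|=n$, then over $0\leq j\leq n$ with coefficients decreasing fast enough in $j$ so that the large constant $C_\varepsilon$ in front of the level-$(j-1)$ collision dissipation is reabsorbed (a finite cascade, $j\leq N$), and finally over $N_0+1\leq n\leq N$; the smallness of $X(t)\leq M$ and of $\|(E,B)\|_{L^\infty_x}$ coming from \eqref{a-priori-assumption} through Lemma \ref{lemma2.2}, together with the weight conditions $l_1^*\geq N$ and $l_0\geq l_1^*+\frac{5}{2}$, makes all the absorptions legitimate and yields \eqref{lemma7-1}.
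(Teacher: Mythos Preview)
Your proposal is essentially the paper's own argument, and in particular your handling of the transport term $I^{lt}_{|\alpha|,l_1^*-j,1}$ via the interpolation $\langle v\rangle^{-\gamma-2}=(\langle v\rangle^{\gamma})^{\theta}(\langle v\rangle^{2})^{1-\theta}$ with $1-\theta=\frac{2(1+\gamma)}{\gamma-2}$, tuned so that the time weights match precisely when $\sigma_{n,j}-\sigma_{n,j-1}=\frac{2(1+\gamma)}{\gamma-2}(1+\vartheta)$, is exactly the device the paper uses (their display \eqref{sigma-d}). The coercivity, extra dissipation from $\partial_t e^{q\langle v\rangle^2/(1+t)^{\vartheta}}$, and the cascade summation in $j$ are also as in the paper.

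There is one point your Lorentz paragraph does not cover and which you then mis-attribute. Your weight-shift route for $I^E,I^B$ lands every piece in $E^n_{tri,j}$, but by definition \eqref{Enj} those sums carry the restriction $|\alpha-\alpha_1|+j+m\geq N_0+1$. When the $\nabla_v$-differentiated factor has \emph{low} total order, i.e.\ $|\alpha-\alpha_1|+j+1\leq N_0$ (equivalently $|\alpha_1|$ is large), the paper does \emph{not} shift the weight down to $w_{l_1^*-j-1,1}$; it instead keeps the full $\langle v\rangle^{1-\gamma/2}$ on that low-order factor and bounds it by the $(-\gamma)$-scaled energy $\mathcal{E}^1_{N_0,l_0,-\gamma}(t)$ (using $L^3$--$L^6$--$L^2$ or $L^\infty$--$L^2$--$L^2$ in $x$). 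This is exactly where the hypothesis $l_0\geq l_1^*+\tfrac{5}{2}$ is used, via $\langle v\rangle^{l_1^*-|\beta|+1-\gamma/2}\lesssim \langle v\rangle^{-\gamma(l_0-|\beta|-1)}$, and it is these Lorentz contributions---not $\Gamma(f,f)$---that generate the term $\mathcal{E}_{N}(t)\mathcal{E}^1_{N_0,l_0,-\gamma}(t)$ on the right of \eqref{lemma7-1}. The nonlinear term $\Gamma(f,f)$, after the H\"older splitting you describe (with derivatives of $f$, not of $(E,B)$), contributes only the last line of \eqref{Enj}. With this one reallocation your sketch matches the paper's proof.
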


Similar to Lemma \ref{lemma7}, we can also get for the case of $1\leq n\leq N_0$ that
\begin{lemma}\label{lemma8}
Under the assumptions of Lemma \ref{lemma7},
for $l^*_0\geq N_0$,
we have
\begin{eqnarray}\label{lemma8-1}
&&\sum_{1\leq n\leq N_0}\frac{d}{dt}\Bigg\{\sum_{|\alpha|+|\beta|=n,\atop|\beta|=j,1\leq j\leq n}(1+t)^{-\sigma_{n,j}}\left\|w_{l_0^*-j,1}\partial_\beta^\alpha\{{\bf I-P}\} f\right\|^2+\sum_{|\alpha|=n}(1+t)^{-\sigma_{n,0}}\left\|w_{l_0^*,1}\partial^\alpha f\right\|^2\nonumber\\
&&+(1+t)^{-\sigma_{0,0}}\left\|w_{l_0^*,1}\{{\bf I-P}\} f\right\|^2\Bigg\}
+\sum_{1\leq n\leq N_0}\left\{\sum_{|\alpha|+|\beta|=n,\atop|\beta|=j,1\leq j\leq n}(1+t)^{-\sigma_{n,j}}\left\|w_{l_0^*-j,1}\partial_\beta^\alpha\{{\bf I-P}\} f\right\|^2_\nu\right.\\
&&\left.+\sum_{|\alpha|=n}(1+t)^{-\sigma_{n,0}}\left\|w_{l_0^*,1}\partial^\alpha f\right\|_\nu^2\right\}
+(1+t)^{-\sigma_{0,0}}\left\|w_{l_0^*,1}\{{\bf I-P}\} f\right\|^2_\nu\nonumber\\
&&+\sum_{1\leq n\leq N_0}\left\{\sum_{|\alpha|+|\beta|=n,\atop|\beta|=j,1\leq j\leq n}(1+t)^{-1-\vartheta-\sigma_{n,j}}\left\|w_{l_0^*-j,1}\partial_\beta^\alpha\{{\bf I-P}\} f\langle v\rangle\right\|^2\right.\nonumber\\
&& \left.+\sum_{|\alpha|=n}(1+t)^{-1-\vartheta-\sigma_{n,0}}\left\|w_{l_0^*,1}\partial^\alpha f\langle v\rangle\right\|^2\right\}+(1+t)^{-1-\vartheta-\sigma_{0,0}}\left\|w_{l_0^*,1}\{{\bf I-P}\} f\langle v\rangle\right\|^2\nonumber\\
&\lesssim&\sum_{|\alpha|\leq N_0-1}\left\{\left\|\nabla^{|\alpha|+1}f\right\|_\nu^2+\left\|\{{\bf I-P}\} f\right\|^2_\nu
+\left\|\nabla^{|\alpha|}E\right\|^2+\left\|\nabla^{N_0}E\right\|^2\right\}+\mathcal{E}_{N_0}(t)\mathcal{D}_{N_0}(t)\nonumber\\
&&+{\sum_{0\leq n\leq N_0,\atop0\leq j\leq n}}(1+t)^{-\sigma_{n,j}}F^n_{tri,j}(t)+\eta\sum_{1\leq n\leq N_0,\atop 1\leq j\leq n,}{\sum_{|\alpha|+|\beta|=n,\atop|\beta|=j,|\beta'|<j}}(1+t)^{-\sigma_{n,j}}
\left\|w_{l_0^*-|\beta'|,1}\partial^\alpha_{\beta'}\{{\bf I-P}\} f\right\|^2_\nu,\nonumber
\end{eqnarray}
where $F^{n}_{tri,j}(t)$ is defined by
\begin{eqnarray}
F^{n}_{tri,j}(t)&\thicksim&\sum_{|\alpha|+|\beta|=n,\atop |\beta|=j
}\|E\|_{L^\infty}^{\frac{2-\gamma}{1-\gamma}}
\left\|w_{l_0^*-j,1}\partial^\alpha_\beta \{{\bf I-P}\}f\langle v\rangle\right\|^2\notag\\ \nonumber
&&+{\sum_{|\alpha|+|\beta|=n,\atop |\beta|=j
}
\sum_{1\leq|\alpha_1|\leq \min\{n-j, N_0-2\},\atop m\leq1}}(1+t)^{1+\vartheta}
\left\|\partial^{\alpha_1} B\right\|^2_{L^\infty_x}\left\|w_{l_0^*-m-j,1}\nabla^m_v\partial_\beta^{\alpha-\alpha_1}{\bf \{I-P\}}f\langle v\rangle\right\|^2\nonumber\\
&&+{\sum_{|\alpha|+|\beta|=n,\atop |\beta|=j
}\sum_{N_0-1\leq|\alpha_1|\leq N_0,\atop m\leq1}}(1+t)^{1+\vartheta}\left\|\partial^{\alpha_1} B\right\|^2\left\|w_{l_0^*-m-j,1}\nabla^m_v\partial_\beta^{\alpha-\alpha_1}{\bf \{I-P\}}f\langle v\rangle\right\|^2_{L^2_vL^\infty_x}\nonumber\\ \notag
&&+{\sum_{|\alpha|+|\beta|=n,\atop |\beta|=j
}
\sum_{1\leq|\alpha_1|\leq \min\{n-j, N_0-2\},\atop m\leq1}}(1+t)^{1+\vartheta}\left\|\partial^{\alpha_1} E\right\|^2_{L^\infty_x} \left\|w_{l_0^*-m-j,1}\nabla^m_v\partial_\beta^{\alpha-\alpha_1}{\bf \{I-P\}}f\right\|^2\\ \nonumber
&&+{\sum_{|\alpha|+|\beta|=n,\atop |\beta|=j
}\sum_{N_0-1\leq|\alpha_1|\leq N_0,\atop m\leq1}}(1+t)^{1+\vartheta}\left\|\partial^{\alpha_1} E\right\|^2\left\|w_{l_0^*-m-j,1}\nabla^m_v\partial^{\alpha-\alpha_1}{\bf \{I-P\}}f\right\|^2_{L^2_vL^\infty_x}\nonumber\\
&&+\left(\mathcal{E}_{N_0,0}(t)+\left\|w_{l_0^*,1}f\right\|_{L^2_vH^2_x}^2\right)
\sum_{|\alpha'|+|\beta'|\leq n,\atop|\alpha'|\geq 1,|\beta'|\leq j}\left\|w_{l_0^*-|\beta'|,1}\partial^{\alpha'}_{\beta'}f\right\|^2_\nu\nonumber\\
&&
+\sum_{|\alpha|+|\beta|=n,\atop |\beta|=j}
\sum_{1\leq|\alpha_1|+|\beta_1|\leq n-1}\left\|w_{l_0^*,1}\partial^{\alpha_1}_{\beta_1}f\right\|^2_{L^2_vL^3_x}
\left\|w_{l_0^*,1}\partial^{\alpha-\alpha_1}_{\beta-\beta_1}f\right\|^2_{L^2_\nu L^6_x}.\label{Fnj}
\end{eqnarray}
\end{lemma}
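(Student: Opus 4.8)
The plan is to prove Lemma \ref{lemma8} by mirroring the proof of Lemma \ref{lemma7}, the differences being purely bookkeeping: the differentiation order now runs over the low range $0\le n\le N_0$ (so that the spatial derivatives hitting $[E,B]$ can still be placed in $L^\infty_x$ by Sobolev embedding, which is why $N_0\ge4$ suffices), and the weight index $l_0^*$ replaces $l_1^*$. First I would start from the equation for $\{{\bf I}-{\bf P}\}f$ obtained by applying $\{{\bf I}-{\bf P}\}$ to \eqref{f gn}, fix $\alpha,\beta$ with $|\alpha|+|\beta|=n$, $|\beta|=j$, apply $\partial^\alpha_\beta$, and take the $L^2(\mathbb{R}^3_x\times\mathbb{R}^3_v)$ inner product with $w_{l_0^*-j,1}^2\partial^\alpha_\beta\{{\bf I}-{\bf P}\}f$. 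The collision term $\langle w_{l_0^*-j,1}^2\partial^\alpha_\beta Lf,\partial^\alpha_\beta f\rangle$ is controlled by \eqref{L_v}, giving the coercive dissipation $\|w_{l_0^*-j,1}\partial^\alpha_\beta\{{\bf I}-{\bf P}\}f\|_\nu^2$ together with the $\eta$-small lower-$\beta$ errors kept in the last line of \eqref{lemma8-1}; the $t$-derivative of the exponential factor $e^{q\langle v\rangle^2/(1+t)^\vartheta}$ produces the favorable extra dissipation $(1+t)^{-1-\vartheta}\|w_{l_0^*-j,1}\partial^\alpha_\beta\{{\bf I}-{\bf P}\}f\langle v\rangle\|^2$; and the macroscopic source terms coming from $\partial_t{\bf P}f+v\cdot\nabla_x{\bf P}f$ are absorbed, after Cauchy--Schwarz, into $\|\nabla^{|\alpha|+1}f\|_\nu^2+\|\{{\bf I}-{\bf P}\}f\|_\nu^2$.

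Next I would dispatch the nonlinear and electromagnetic contributions exactly as in Lemma \ref{lemma7}. For $\Gamma$, using the Leibniz expansion \eqref{gamma-0} together with Lemma \ref{lemma-nonlinear}, the products of derivatives of $f$ are distributed between $L^2_x$ and $L^\infty_x$ (respectively $L^3_x$ and $L^6_x$ for the top-order factor); the $L^2_vL^\infty_x$ and $L^2_vL^3_x$ Sobolev embeddings, available since $N_0\ge4$, then yield the $\mathcal{E}_{N_0}(t)\mathcal{D}_{N_0}(t)$ contribution and the bilinear $L^2_vL^3_x$--$L^2_\nu L^6_x$ remainder recorded in $F^n_{tri,j}(t)$. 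For the terms $I^E_{|\alpha|,l_0^*-j,1}$, $I^B_{|\alpha|,l_0^*-j,1}$ and those generated by $E\cdot v\mu^{1/2}$ and $\tfrac12E\cdot vf$, I would use the two elementary identities $w_{\ell-|\beta|,1}^2=w_{\ell-|\beta|,1}w_{\ell-|\beta|-1,1}\langle v\rangle=w_{\ell-|\beta|,1}w_{\ell-|\beta|+1,1}\langle v\rangle^{-1}$ to trade one power of $\langle v\rangle$ against the extra dissipation $(1+t)^{-1-\vartheta}\|\,\cdot\,\langle v\rangle\|^2$, splitting the field factor into $L^\infty_x$ (when $1\le|\alpha_1|\le N_0-2$, via Sobolev) or $L^2_x$ paired with an $L^2_vL^\infty_x$ norm of the $f$-derivative (when $N_0-1\le|\alpha_1|\le N_0$); since $\|E\|_{L^\infty_x}$ and the derivatives of $[E,B]$ will later be shown to decay, all of these are collected into the tri-linear functional $F^n_{tri,j}(t)$ of \eqref{Fnj}.

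The genuinely new ingredient, following \cite{Hosono-Kawashima-2006}, concerns the linear transport term $I^{lt}_{|\alpha|,l_0^*-j,1}$. When $j\ge1$, using $w_{l_0^*-j,1}=w_{l_0^*-(j-1),1}\langle v\rangle^{-1}$ and $\langle v\rangle^{\gamma/2}\sim\sqrt\nu$, it is bounded by $\eta\|w_{l_0^*-j,1}\partial^\alpha_\beta\{{\bf I}-{\bf P}\}f\|_\nu^2+C_\eta\|w_{l_0^*-j+1,1}\partial^{\alpha+e_i}_{\beta-e_i}\{{\bf I}-{\bf P}\}f\langle v\rangle^{-\gamma/2-1}\|^2$; the first term is absorbed into the coercive dissipation, and for the second the key observation is that $\gamma/2<-\gamma/2-1<2$ for every $-3<\gamma<-1$, so Young's inequality gives $\langle v\rangle^{-\gamma-2}\le\epsilon\langle v\rangle^2+C_\epsilon\langle v\rangle^\gamma$ with exponent $\theta=\frac{2(1+\gamma)}{\gamma-2}\in[0,1)$. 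Multiplying the $(n,j)$-inequality by $C_{n,j}(1+t)^{-\sigma_{n,j}}$ and choosing $\sigma_{n,j}-\sigma_{n,j-1}=\frac{2(1+\gamma)}{\gamma-2}(1+\vartheta)=\theta(1+\vartheta)$, the troublesome term is dominated by $\epsilon$ times the level-$(n,j-1)$ extra dissipation $(1+t)^{-\sigma_{n,j-1}-1-\vartheta}\|w_{l_0^*-(j-1),1}\partial^{\alpha+e_i}_{\beta-e_i}\{{\bf I}-{\bf P}\}f\langle v\rangle\|^2$ plus the level-$(n,j-1)$ $\nu$-dissipation $(1+t)^{-\sigma_{n,j-1}}\|w_{l_0^*-(j-1),1}\partial^{\alpha+e_i}_{\beta-e_i}\{{\bf I}-{\bf P}\}f\|_\nu^2$ (note $|\alpha+e_i|+|\beta-e_i|=n$, one fewer velocity derivative, one more spatial derivative). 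Choosing the combination constants $C_{n,j}$ so that the levels with larger $j$ are successively absorbed into those with $j-1$, and noting that for $j=0$ the transport term $(v\cdot\nabla_x\partial^\alpha f,\,w_{l_0^*,1}^2\partial^\alpha f)=\tfrac12\int v\cdot\nabla_x(w_{l_0^*,1}^2|\partial^\alpha f|^2)=0$ because $w$ is $x$-independent and $\nabla_x\cdot v=0$, the cascade closes. The $n=0$ piece $\|w_{l_0^*,1}\{{\bf I}-{\bf P}\}f\|^2$ is handled the same way, its transport term again vanishing, and summing all the resulting differential inequalities produces \eqref{lemma8-1}, with the $\eta$-errors of \eqref{L_v} collected into the final term.

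\textbf{Main obstacle.} The delicate part is making the cascade close uniformly: one must track, simultaneously, the weight index ($l_0^*-j$ versus $l_0^*-j+1$), the power of $\langle v\rangle$ (so that it lies \emph{strictly} between $\sqrt\nu$ and $\langle v\rangle$, which is exactly where the restriction $-3<\gamma<-1$ is used), and the time weight $(1+t)^{-\sigma_{n,j}}$, and then verify that a single admissible choice of the constants $C_{n,j}$ together with the small parameters $\eta,\epsilon$ renders every transferred term strictly subordinate to a dissipation already present on the left. A secondary technical nuisance is the $L^\infty_x/L^2_x$ splitting for the field factors and the associated Sobolev-embedding budget, which is what forces $N_0\ge4$.
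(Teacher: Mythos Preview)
Your proposal is correct and follows essentially the same route as the paper: weighted energy estimates on $\partial^\alpha f$ (for $j=0$) and on $\partial^\alpha_\beta\{{\bf I}-{\bf P}\}f$ (for $j\ge1$), control of the Lorentz and $\Gamma$ terms via the splittings that generate $F^n_{tri,j}$, and the key cascade in $j$ using $\sigma_{n,j}-\sigma_{n,j-1}=\frac{2(1+\gamma)}{\gamma-2}(1+\vartheta)$ together with the interpolation $\langle v\rangle^{-\gamma-2}\le\epsilon\langle v\rangle^2+C_\epsilon\langle v\rangle^\gamma$. The only point to add explicitly is that for $j=0$ one works with $\partial^\alpha f$ rather than $\partial^\alpha\{{\bf I}-{\bf P}\}f$, so the pairing $(\partial^\alpha E\cdot v\mu^{1/2},\,w_{l_0^*,1}^2\partial^\alpha f)$ produces the $\|\nabla^{N_0}E\|^2$ term on the right of \eqref{lemma8-1}.
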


\subsection{The proof of Theorem \ref{Th1.1}}
We now prove Theorem \ref{Th1.1} in this subsection. For this purpose, suppose that the Cauchy problem (\ref{f}) and (\ref{f-initial}) admits a unique local solution $[f(t,x,v), E(t,x), B(t,x)]$ defined on the time interval $ 0\leq t\leq T$ for some $0<T<\infty$ and $f(t,x,v)$ satisfies the a priori assumption \eqref{a-priori-assumption},
where the parameters $\overline{m},N_0,N,l_0,l_1,$ and $l^*, l^*_0,l^*_1,\sigma_{n,j}$ are given in Theorem \ref{Th1.1} and $M$ is a sufficiently small positive constant. Then to use the continuity argument to extend such a solution step by step to a global one, one only need to deduce certain uniform-in-time energy type estimates on $f(t,x,v)$ such that the a priori assumption \eqref{a-priori-assumption} can be closed, which is the main result of the following lemma.
\begin{lemma}\label{lemma9} Assume that
\begin{itemize}
\item The assumptions of Lemma \ref{lemma7} hold;
\item $\vartheta$ is chosen to satisfy \eqref{range-vartheta}, $N_0$ and $N$ satisfy \eqref{N-N_0};
\item $\sigma_{N,0}=\frac{1+\epsilon_0}{2}$, $\sigma_{n,0}=0$ for $n\leq N-1$;
\item $\widetilde{\ell}_1\geq \frac\gamma2+ \frac{(1-2\gamma)\sigma_{N_0,N_0}}{2+\varrho},$
$\widetilde{\ell}_2\geq\frac\gamma2+\frac{2(1-2\gamma)\sigma_{N,N}}{3+2\varrho}$ and $\widetilde{\ell}_3\geq\frac\gamma2+\frac{(1-2\gamma)\sigma_{N-1,N-1}}{2+\varrho}$;
\item $l_1\geq N$, $l_1^*\geq\max\left\{\widetilde{\ell}_2-\frac\gamma2,\  \widetilde{\ell}_3-\frac\gamma2-\gamma l_1\right\}$, {$l_0\geq {l_1^*}+\frac{5}{2}$},
$l_0^*\geq \widetilde{\ell}_1-\frac\gamma2-\gamma(l_0+l^*)$ with {$l^*=\frac32-\frac{\widetilde{l}}\gamma$} with $\widetilde{l}$ being given in Lemma \ref{Lemma1};
\item The a priori assumption \eqref{a-priori-assumption} holds for some sufficiently small $M>0$.
\end{itemize}
Then it holds that
\begin{equation}\label{lemma9-1}
\begin{aligned}
&\mathcal{E}_N(t)+\mathcal{\overline{E}}_{N_0,l_0+l^*,-\gamma}(t)
+\mathcal{E}_{N-1,l_1,-\gamma}(t)\\{}
&+\sum_{N_0+1\leq n\leq N}\sum_{|\alpha|+|\beta|=n,\atop|\beta|=j,1\leq j\leq n}(1+t)^{-\sigma_{n,j}}\left\|w_{l_1^*-j,1}\partial_\beta^\alpha\{{\bf I-P}\} f\right\|^2
+\sum_{N_0+1\leq n\leq N-1}\sum_{|\alpha|=n}\left\|w_{l_1^*,1}\partial^\alpha f\right\|^2\\
&+\sum_{|\alpha|=N}(1+t)^{-(1+\epsilon_0)/2}\left\|w_{l_1^*,1}\partial^\alpha f\right\|^2
+\sum_{1\leq n\leq N_0}\sum_{|\alpha|+|\beta|=n,\atop|\beta|=j,1\leq j\leq n}(1+t)^{-\sigma_{n,j}}\left\|w_{l_0^*-j,1}\partial_\beta^\alpha\{{\bf I-P}\} f\right\|^2\\
&+\sum_{1\leq n\leq N_0}\sum_{|\alpha|=n}\left\|w_{l_0^*,1}\partial^\alpha f\right\|^2+\left\|w_{l_0^*,1}\{{\bf I-P}\}f\right\|^2\\
\lesssim& Y_0^2
\end{aligned}
\end{equation}
for all $0\leq t\leq T$.
\end{lemma}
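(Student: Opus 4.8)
The plan is to combine the differential inequalities of Lemmas \ref{Lemma1}, \ref{lemma2}, \ref{lemma4}, \ref{lemma5}, \ref{lemma6}, \ref{lemma7} and \ref{lemma8} into a single time-weighted estimate and then close it by the smallness of $Y_0$ and $M$. First I would check that the a priori assumption \eqref{a-priori-assumption} with $M$ small validates every auxiliary smallness hypothesis $(H_1)$--$(H_5)$: each energy functional occurring there is, by monotonicity of the weights $w_{\cdot,\kappa}$ in the index (recall $-\gamma>0$) together with the inequalities on $l_0,l_1,l_0^*,l_1^*,l^*$ imposed in Theorem \ref{Th1.1}, dominated by the quantity $X(t)$ in \eqref{a-priori-assumption}, hence $\lesssim M$. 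So Lemmas \ref{Lemma1}--\ref{lemma8} all apply on $[0,T]$.

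Next I would run Lemmas \ref{Lemma1} and \ref{lemma2} to obtain the decay estimates $\mathcal{E}^k_{N_0}(t)\lesssim Y_0^2(1+t)^{-(k+\varrho)}$ for $0\le k\le N_0-2$ and $\mathcal{E}^k_{N_0,l_{0,k}+i/2,-\gamma}(t)\lesssim Y_0^2(1+t)^{-k-\varrho+i}$, absorbing the suprema on the right of \eqref{Lemma1-2} and \eqref{lemma3-1} into $Y_0^2$ via \eqref{a-priori-assumption} and the initial bounds $\mathcal{E}^k_{N_0}(0),\mathcal{\overline{E}}_\bullet(0)\lesssim Y_0^2$. Three facts are recorded for later use: $(\mathrm a)$ $\mathcal{E}^1_{N_0,l_0,-\gamma}(t)\lesssim Y_0^2(1+t)^{-1-\varrho}$, so $\mathcal{E}^1_{N_0,l_0,-\gamma}\in L^1(\mathbb{R}^+)$ with norm $\lesssim Y_0^2$; $(\mathrm b)$ $\|\nabla^2(E,B)\|_{H^{N_0-2}_x}^2\lesssim\mathcal{E}_{N_0}^2(t)\lesssim Y_0^2(1+t)^{-(2+\varrho)}$; $(\mathrm c)$ $\|E\|_{L^\infty_x}\lesssim Y_0(1+t)^{-(1+\varrho)/2}$ by Sobolev embedding and the decay of $\mathcal{E}^1_{N_0}(t)$.

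I would then bound the high-order weighted dissipations $\mathcal{\widetilde{D}}_{N_0,l_0^*,1}(t)$, $\mathcal{\widetilde{D}}_{N-1,l_1^*,1}(t)$ and $\mathcal{\widetilde{D}}_{N,l_1^*,1}(t)$ (defined in \eqref{D_{N_0,l^*_0,1}} and \eqref{D_{m,l^*_1,1}}) by integrating \eqref{lemma7-1} and \eqref{lemma8-1} in time. After absorbing the $\eta$-terms to the left (summing over $|\beta'|<j$ and taking $\eta$ small), on the right the pieces $\mathcal{E}_N\mathcal{D}_N$, $\mathcal{E}_N\mathcal{E}^1_{N_0,l_0,-\gamma}$, $E^n_{tri,j}(t)$, $F^n_{tri,j}(t)$ are controlled by the quadratic smallness of $M$, by $(\mathrm a)$, and by $(\mathrm b)$--$(\mathrm c)$ (the $(1+t)^{1+\vartheta}$ factors inside $E^n_{tri,j},F^n_{tri,j}$ being beaten by the decay of $\|\partial^{\alpha_1}(E,B)\|$ and $\|E\|_{L^\infty_x}$, thanks to $\varrho\ge\frac12$ and the choice of $\vartheta$ in \eqref{range-vartheta}), while the remaining linear pieces $\sum_{|\alpha|\le N-1}(\|\nabla^{|\alpha|+1}f\|_\nu^2+\|\{{\bf I-P}\}f\|_\nu^2+\|\nabla^{|\alpha|}E\|^2)$ and $(1+t)^{-2\sigma_{N,0}}\|\nabla^NE\|^2$ are time-integrable by the dissipation bounds of the second step. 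The decisive point, exactly as anticipated in step (i2) of Section \ref{exp-h}, is that the increments $\sigma_{n,j}-\sigma_{n,j-1}=\frac{2(1+\gamma)}{\gamma-2}(1+\vartheta)$ and the choice $\sigma_{N,0}=\frac{1+\epsilon_0}{2}$ are fixed in terms of $\gamma,\vartheta,N,N_0$ only, so whatever polynomial-in-$t$ growth survives has an exponent $\Sigma=\Sigma(N,N_0)$ \emph{independent} of the large parameters $l_0^*,l_1^*$; the upshot is $\mathcal{\widetilde{D}}_{N_0,l_0^*,1}(t)+\mathcal{\widetilde{D}}_{N-1,l_1^*,1}(t)+\mathcal{\widetilde{D}}_{N,l_1^*,1}(t)\lesssim Y_0^2(1+t)^\Sigma$, together with the same bound for the last four lines of the left side of \eqref{lemma9-1}, with implicit constants not depending on $l_0^*,l_1^*$.

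Finally I would insert these bounds into \eqref{lemma4-1}, \eqref{lemma5-1} and \eqref{lemma6-1}. Since $\theta_i=\frac{1-2\gamma}{2\widetilde{\ell}_i-\gamma}$ is as small as desired for $\widetilde{\ell}_i$ (hence $l_j^*$) large, item $(\mathrm b)$ gives $\|\nabla^2(E,B)\|_{H^{N_0-2}_x}^{1/\theta_i}\lesssim Y_0^{1/\theta_i}(1+t)^{-(2+\varrho)/(2\theta_i)}$, and I would fix $l_0^*,l_1^*$ so large that $(2+\varrho)/(2\theta_i)>\Sigma+1$ for $i=1,2,3$; then $\|\nabla^2(E,B)\|_{H^{N_0-2}_x}^{1/\theta_i}\mathcal{\widetilde{D}}_\bullet(t)$ is integrable with integral $\lesssim Y_0^2$ (using also $Y_0^{1/\theta_i}\le Y_0^2$). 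The terms $\|E\|_{L^\infty_x}^{(2-\gamma)/(1-\gamma)}\sum\|w_{\cdot,-\gamma}\partial^\alpha_\beta\{{\bf I-P}\}f\langle v\rangle\|^2$ are absorbed by the $\langle v\rangle$-gain inside $\mathcal{\overline{D}}_{N_0,l_0+l^*,-\gamma}$ and $\mathcal{D}_{N-1,l_1,-\gamma}$ plus the smallness of $\|E\|_{L^\infty_x}$; $\mathcal{E}_N\mathcal{E}^1_{N_0,l_0,-\gamma}$ integrates to $\lesssim M\,Y_0^2$ by $(\mathrm a)$; $\sum_{|\alpha|=N-1}\|\partial^\alpha E\|\,\|\mu^\delta\partial^\alpha f\|$ is handled by Cauchy--Schwarz and coercivity of $L$; and $\varepsilon\sum_{|\alpha|=N_0}\|\partial^\alpha E\|^2$ closes once coupled with the bounds of the second step. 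Adding the three integrated inequalities to the integrated forms of \eqref{lemma7-1} and \eqref{lemma8-1}, all coupling terms close by the absolute smallness of $Y_0$ and $M$, which yields \eqref{lemma9-1}. I expect the main obstacle to be precisely this last decoupling --- verifying that the $l_j^*$-independent growth exponent $\Sigma$ of the weighted dissipations is always beaten by the $l_j^*$-tunable decay exponent $1/\theta_i$ produced by the electromagnetic field, i.e.\ that the loop ``decay of $(E,B)\Rightarrow$ control of $\mathcal{\widetilde{D}}_\bullet\Rightarrow$ control of $\mathcal{E}_N,\mathcal{\overline{E}}_{N_0,l_0+l^*,-\gamma},\mathcal{E}_{N-1,l_1,-\gamma}\Rightarrow$ decay of $(E,B)$'' can indeed be closed; this is the heart of the two-tier time-velocity weighted scheme and the reason for the parameter constraints collected in Theorem \ref{Th1.1}.
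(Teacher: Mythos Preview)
Your overall plan is right, and the final paragraph correctly identifies the ``loop'' that must close. But the sequential ordering of your steps 3 and 4 has a genuine gap, and one claimed bound is not of the right type.

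In step 3 you propose to integrate \eqref{lemma7-1} and \eqref{lemma8-1} \emph{first} and to control their right-hand sides ``by the dissipation bounds of the second step''. This fails: the right-hand side of \eqref{lemma7-1} contains $\sum_{|\alpha|\le N-1}\|\nabla^{|\alpha|+1}f\|_\nu^2$ (which runs up to $\|\nabla^N f\|_\nu^2$), $\|\nabla^{|\alpha|}E\|^2$ for $|\alpha|$ up to $N-1$, and $\mathcal{E}_N\mathcal{D}_N$. None of these are time-integrable from the $N_0$-level decay of Lemmas \ref{Lemma1}--\ref{lemma2}; they all sit inside $\mathcal{D}_N$, whose time integral is only produced by \eqref{lemma5-1}, i.e.\ by your step 4. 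So steps 3 and 4 are genuinely coupled and cannot be run in sequence. A second issue: integrating a differential inequality of the form $\frac{d}{dt}A+D\lesssim R$ bounds $A(t)$ and $\int_0^t D$, not $D(t)$ pointwise; the pointwise bound $\mathcal{\widetilde D}_\bullet(t)\lesssim Y_0^2(1+t)^\Sigma$ you want does not come from integration but (if at all) from the a priori assumption $X(t)\le M$ directly, via $\|\cdot\|_\nu^2\lesssim\|\cdot\|^2$.

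The paper resolves the circularity differently: it does \emph{not} bound $\mathcal{\widetilde D}_\bullet$ separately. Instead, using the choices of $\widetilde\ell_i$ and $\theta_i$, it rewrites the dangerous right-hand terms of \eqref{lemma4-1}--\eqref{lemma6-1} as
\[
\big\|\nabla^2(E,B)\big\|_{H^{N_0-2}_x}^{1/\theta_i}\,\mathcal{\widetilde D}_\bullet(t)
\ \lesssim\ X(t)^{1/(2\theta_i)}\,(1+t)^{-\sigma_\bullet}\,\mathcal{\widetilde D}_\bullet(t),
\]
where $(1+t)^{-\sigma_\bullet}\mathcal{\widetilde D}_\bullet(t)$ is \emph{precisely} the dissipation appearing on the left of \eqref{lemma7-1}--\eqref{lemma8-1}. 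One then takes a single linear combination of \eqref{lemma5-1} (also multiplied by $(1+t)^{-\epsilon_0}$, which creates the good term $\epsilon_0(1+t)^{-1-\epsilon_0}\mathcal{E}_N$ on the left needed to absorb $(1+t)^{-2\sigma_{N,0}}\|\nabla^N E\|^2$ from \eqref{lemma7-1}), of \eqref{lemma4-1}, \eqref{lemma6-1}, \eqref{lemma7-1}, \eqref{lemma8-1}, and integrates once. At the differential level the $X(t)^{1/(2\theta_i)}(1+t)^{-\sigma}\mathcal{\widetilde D}$ terms are absorbed by the left-hand dissipations of \eqref{lemma7-1}--\eqref{lemma8-1} (smallness of $X$), while conversely the $\mathcal{D}_N$-type terms on the right of \eqref{lemma7-1}--\eqref{lemma8-1} are absorbed by the left-hand dissipation of \eqref{lemma5-1}. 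So the correct fix to your argument is simply to merge steps 3 and 4 into a single simultaneous combination \emph{before} integrating, with the exact parameter balance $(2+\varrho)/(2\theta_i)\ge\sigma_\bullet$ (not $>\Sigma+1$) encoded in the hypotheses on $\widetilde\ell_i$; the handling of $E^n_{tri,j},F^n_{tri,j}$ via \eqref{range-vartheta} that you sketch is then exactly what the paper does.
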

\begin{proof} Before proving \eqref{lemma9-1}, we first point out that if the assumptions stated in Lemma \ref{lemma9} hold, especially the a priori assumption \eqref{a-priori-assumption} is satisfied and the parameters such as $\vartheta, \varrho, N_0, N, \sigma_{n,j}, \widetilde{l}_1, \widetilde{l}_2, \widetilde{l}_3, l_1,\l^*_1, l_0,$ and $l_*$ satisfy the conditions listed in Lemma \ref{lemma9}, then all the conditions listed in Lemma \ref{Lemma1}, Lemma \ref{lemma2}, Lemma \ref{lemma4}, Lemma \ref{lemma5}, Lemma \ref{lemma6}, Lemma \ref{lemma7}, and Lemma \ref{lemma8} are satisfied, and based on the results obtained in these lemmas, we can deduce that:
\begin{itemize}
\item[(i).] If we take
$$
\sigma_{n,0}=\left\{
\begin{array}{cl}
\frac{1+\epsilon_0}{2}, & n=N,\\{}
0,& n\leq N-1
\end{array}
\right.
$$
and notice that
$$
\sigma_{n,j}-\sigma_{n,j-1}=\frac{2(1+\gamma)}{\gamma-2}(1+\vartheta),
$$
we can deduce that
$$
\max_{ N_0+1\leq n\leq N,  0\leq j\leq n}\{\sigma_{n,j}\}=\sigma_{N,N},\quad \displaystyle\max_{ N_0+1\leq n\leq N-1,  0\leq j\leq n}\{\sigma_{n,j}\}=\sigma_{N-1,N-1},\quad
\max_{ 0\leq n\leq N_0,  0\leq j\leq n}\{\sigma_{n,j}\}=\sigma_{N_0,N_0};
$$
\item[(ii).] If we choose $\widetilde{\ell}_2\geq\frac\gamma2+\frac{2(1-2\gamma)\sigma_{N,N}}{3+2\varrho}$ and $l_1^*\geq\widetilde{\ell}_2-\frac\gamma2$, then we can deduce that
$\theta_2=\frac{1-2\gamma}{2\widetilde{\ell}_2-\gamma}\leq\frac{3+2\varrho}{4\sigma_{N,N}}$. Consequently, we have from Lemma \ref{Lemma1} that
\begin{equation}\label{1-key}
\begin{split}
&\left(\| E\|_{L^{\infty}}+\left\|\nabla^2(E,B)\right\|_{H^{ N_0-2}_x}\right)^{\frac1{\theta_2}}\mathcal{\widetilde{D}}_{N,l_1^*,1}(t)\\
\lesssim&\left(\| E\|_{L^{\infty}}+\left\|\nabla^2(E,B)\right\|_{H^{ N_0-2}_x}\right)^{\frac1{\theta_2}}(1+t)^{\sigma_{N,N}}(1+t)^{-\sigma_{N,N}}\mathcal{\widetilde{D}}_{N,l_1^*,1}(t)\\
\lesssim&X(t)^{\frac1{2\theta_2}}(1+t)^{-\left(\frac34+\frac\varrho2\right)\frac{1}{\theta_2}}(1+t)^{\sigma_{N,N}}(1+t)^{-\sigma_{N,N}}
\mathcal{\widetilde{D}}_{N,l_1^*,1}(t)\\
\lesssim&X(t)^{\frac1{2\theta_2}}(1+t)^{-\sigma_{N,N}}
\mathcal{\widetilde{D}}_{N,l_1^*,1}(t);
\end{split}
\end{equation}
\item [(iii).] If we take  $\widetilde{\ell}_3\geq\frac\gamma2+\frac{(1-2\gamma)\sigma_{N-1,N-1}}{2+\varrho}$ and $l_1^*\geq \widetilde{\ell}_3-\frac\gamma2-\gamma l_1$,
then $\theta_3=\frac{1-2\gamma}{2\widetilde{l}_3-\gamma}\leq\frac{2+\varrho}{2\sigma_{N-1,N-1}}$ and we have from Lemma \ref{Lemma1} that
\begin{equation}\label{2-key}
\begin{split}
&\left\|\nabla^2(E,B)\right\|_{H^{ N_0-2}_x}^{\frac1{\theta_3}}\mathcal{\widetilde{D}}_{N-1,l_1^*,1}(t)\\
\lesssim&X(t)^{\frac1{2\theta_3}}
(1+t)^{-\left(1+\frac\varrho2\right)\frac{1}{\theta_1}}
(1+t)^{\sigma_{N-1,N-1}}
(1+t)^{-\sigma_{N-1,N-1}}
\mathcal{\widetilde{D}}_{N-1,l_1^*,1}(t)\\
\lesssim&X(t)^{\frac1{2\theta_3}}
(1+t)^{-\sigma_{N-1,N-1}}
\mathcal{\widetilde{D}}_{N-1,l_1^*,1}(t);
\end{split}
\end{equation}
\item [(iv).] For $\widetilde{\ell}_1\geq \frac\gamma2+ \frac{(1-2\gamma)\sigma_{N_0,N_0}}{2+\varrho}$ and $l_0^*\geq \widetilde{\ell}_1-\frac\gamma2-\gamma(l_0+l^*)$, it is easy to see that $\theta_1=\frac{1-2\gamma}{2\widetilde{\ell}_1-\gamma}\leq\frac{2+\varrho}{2\sigma_{N_0,N_0}}$ and consequently we have from Lemma \ref{Lemma1} that
\begin{equation}\label{3-key}
\begin{split}
\left\|\nabla^2(E,B)\right\|_{H^{ N_0-2}_x}^{\frac1{\theta_1}}\mathcal{\widetilde{D}}_{N_0,l_0^*,1}(t)
\lesssim X(t)^{\frac1{2\theta_1}}(1+t)^{-\sigma_{N_0,N_0}}
\mathcal{\widetilde{D}}_{N_0,l_0^*,1}(t);
\end{split}
\end{equation}
\item [(v).]Since $N_0\geq 4$,
by \eqref{Lemma1-2}, we take
$0<\vartheta\leq \frac{\gamma-2\varrho\gamma+4\varrho+2}{4-4\gamma}$
such that
\begin{eqnarray}\label{theta-def-1}
&&\|E\|_{L^\infty_x}^{\frac{2-\gamma}{1-\gamma}}\sum_{|\alpha|+|\beta|\leq N-1\ or\  N_0}\left\|w_{\ell-|\beta|,-\gamma}\partial_\beta^{\alpha}\{{\bf I-P}\}f\langle v\rangle\right\|^2\\\nonumber
&\lesssim&
\left\|\nabla_xE\right\|^{\frac{2-\gamma}{2(1-\gamma)}}
\left\|\nabla^2_xE\right\|^{\frac{2-\gamma}{2(1-\gamma)}}
\sum_{|\alpha|+|\beta|\leq N-1\ or\  N_0}\left\|w_{\ell-|\beta|,-\gamma}\partial_\beta^{\alpha}\{{\bf I-P}\}f\langle v\rangle\right\|^2\\ \nonumber
&\lesssim&\left\{\sup_{0\leq \tau\leq t}\mathcal{\overline{E}}_{N_0,N_0+\frac{k+\varrho}{2},-\gamma}(\tau),\sup_{0\leq \tau\leq t}\mathcal{E}_{N_0+k+\varrho}(\tau)\right\}^{\frac{2-\gamma}{2(1-\gamma)}}\sum_{|\alpha|=N}(1+t)^{-\left(\frac34+\frac\varrho2\right)\frac{2-\gamma}
{1-\gamma}}
\\ \nonumber
&&\times \sum_{|\alpha|+|\beta|\leq N-1\ or\  N_0}\left\|w_{\ell-|\beta|,-\gamma}\partial_\beta^{\alpha}\{{\bf I-P}\}f\langle v\rangle\right\|^2\\ \nonumber
&\lesssim&
X(t)^{\frac{2-\gamma}{2(1-\gamma)}}\sum_{|\alpha|=N}(1+t)^{-1-\vartheta}
 \sum_{|\alpha|+|\beta|\leq N-1\ or\  N_0}\left\|w_{\ell-|\beta|,-\gamma}\partial_\beta^{\alpha}\{{\bf I-P}\}f\langle v\rangle\right\|^2.\\ \nonumber
\end{eqnarray}
\end{itemize}

With the above preparations in hand, we now turn to prove \eqref{lemma9-1}. To this end, we first
multiply (\ref{lemma5-1}) by $(1+t)^{-\epsilon_0}$ and get by employing \eqref{1-key} that
\begin{equation}\label{e-n-1}
\begin{aligned}
&\frac{d}{dt}\left\{(1+t)^{-\epsilon_0}\mathcal{E}_{N}(t)\right\}
+\epsilon_0(1+t)^{-1-\epsilon_0}\mathcal{E}_{N}(t)+(1+t)^{-\epsilon_0}\mathcal{D}_{N}(t)\\
\lesssim&
(1+t)^{-\epsilon_0}\left(\| E\|_{L^{\infty}}+\left\|\nabla^2(E,B)\right\|_{H^{ N_0-2}}\right)^{\frac1{\theta_2}}\mathcal{\widetilde{D}}_{N,l_1^*,1}(t)
+(1+t)^{-\epsilon_0}\mathcal{E}_{N}(t)\mathcal{E}^1_{N_0,l_0,-\gamma}(t)\\
\lesssim&
(1+t)^{-\epsilon_0}X(t)^{\frac1{2\theta_2}}(1+t)^{-\sigma_{N,N}}
\mathcal{\widetilde{D}}_{N,l_1^*,1}(t)
+(1+t)^{-\epsilon_0}\mathcal{E}_{N}(t)\mathcal{E}^1_{N_0,l_0,-\gamma}(t).
\end{aligned}
\end{equation}
It is worth pointing out that the term $\epsilon_0(1+t)^{-1-\epsilon_0}\mathcal{E}_{N}(t)$ on the left hand side of the above inequality can be used to control the term $\displaystyle\sum_{|\alpha|=N}(1+t)^{-2\sigma_{N,0}}\|\partial^{\alpha}E\|^2$ on the right hand of \eqref{lemma8-1}.

Secondly, plugging \eqref{1-key} into \eqref{lemma5-1} gives
\begin{equation}\label{X(t)-1}
\frac{d}{dt}\mathcal{E}_{N}(t)+\mathcal{D}_{N}(t)
\lesssim X(t)^{\frac1{2\theta_2}}(1+t)^{-\sigma_{N,N}}
\mathcal{\widetilde{D}}_{N,l_1^*,1}(t)
+\mathcal{E}_{N}(t)\mathcal{E}^1_{N_0,l_0,-\gamma}(t).
\end{equation}

Thirdly, by combing \eqref{2-key}, \eqref{theta-def-1} with \eqref{lemma6-1}, one has
\begin{equation}\label{X(t)-2}
\begin{aligned}
&\frac{d}{dt}\mathcal{E}_{N-1,l_1,-\gamma}(t)+\mathcal{D}_{N-1,l_1,-\gamma}(t)\\{}
\lesssim& X(t)^{\frac1{2\theta_3}}(1+t)^{-\sigma_{N-1,N-1}}
\mathcal{\widetilde{D}}_{N-1,l_1^*,1}(t)
+\mathcal{E}_{N}(t)\mathcal{E}^1_{N_0,l_0,-\gamma}(t)+\sum_{|\alpha|= N-1}\left\|\partial^\alpha E\right\|\left\|\mu^\delta\partial^\alpha f\right\|.
\end{aligned}
\end{equation}
Thus if $l_1^*$ is suitably chosen such that $l_1^*\geq\max\left\{\widetilde{\ell}_2-\frac\gamma2,\  \widetilde{\ell}_3-\frac\gamma2-\gamma l_1\right\}$, then the estimates \eqref{X(t)-1} and \eqref{X(t)-2} hold and from these we can deduce that
\begin{itemize}
\item If we choose $l_1\geq N$, then once we deduce the estimate on $\mathcal{E}_{N-1,l_1,-\gamma}(t)$, the estimate on $\mathcal{E}_{N-1,N-1,-\gamma}(t)$ follows immediately;
\item A sufficient condition to control the term $\mathcal{E}_{N}(t)\mathcal{E}^1_{N_0,l_0,-\gamma}(t)$ which appears on the right hand side of \eqref{X(t)-1}, \eqref{X(t)-2}, and \eqref{lemma7-1} is to show that $\mathcal{E}^1_{N_0,l_0,-\gamma}(t)\in L^1(\mathbb{R}^3)$. In fact Lemma \ref{lemma2} provides us with such a nice estimate provided that $\displaystyle\sup_{0\leq\tau\leq t}\mathcal{\overline{E}}_{N_0,l_0+l^*,-\gamma}(\tau)$ is sufficiently small.
\end{itemize}
Now we turn to estimate $\mathcal{\overline{E}}_{N_0,l_0+l^*,-\gamma}(t)$ and for this purpose, we first notice from \eqref{l*} that since $k=1$, $l^*$ is now taken as $l^*=\frac32-\frac{\widetilde{l}}\gamma$, then for
{$l_0\geq {l_1^*}+\frac{5}{2}$}, we have by replacing $\ell$ in the estimate \eqref{lemma4-1} with $l_0+l^*$ and the estimate \eqref{3-key} that
\begin{equation}\label{X(t)-3}
\frac{d}{dt}\mathcal{\overline{E}}_{N_0,l_0+l^*,-\gamma}(t)+\mathcal{\overline{D}}_{N_0,l_0+l^*,-\gamma}(t)\lesssim
X(t)^{\frac1{2\theta_1}}(1+t)^{-\sigma_{N_0,N_0}}
\mathcal{\widetilde{D}}_{N_0,l_0^*,1}(t)+\sum_{|\alpha|=N_0}\varepsilon\left\|\partial^\alpha E\right\|^2,
\end{equation}
where we have used the estimate \eqref{theta-def-1}.

Taking a proper linear combination of (\ref{X(t)-1}), (\ref{X(t)-2}), (\ref{X(t)-3}), (\ref{lemma7-1}), (\ref{lemma8-1}), and (\ref{e-n-1}) and by using the smallness of $X(t)$ and $\varepsilon$, we can deduce by taking the time integration  from $0$ to $t$ to the resulting differential inequality that
\begin{equation*}
\begin{aligned}
&\mathcal{E}_N(t)+\mathcal{\overline{E}}_{N_0,l_0+l^*,-\gamma}(t)
+\mathcal{E}_{N-1,l_1,-\gamma}(t)\\{}
&+\sum_{N_0+1\leq n\leq N}\sum_{|\alpha|+|\beta|=n,\atop|\beta|=j,1\leq j\leq n}(1+t)^{-\sigma_{n,j}}\left\|w_{l_1^*-j,1}\partial_\beta^\alpha\{{\bf I-P}\} f\right\|^2+\sum_{N_0+1\leq n\leq N-1}\sum_{|\alpha|=n}\left\|w_{l_1^*,1}\partial^\alpha f\right\|^2\\
&+\sum_{|\alpha|=N}(1+t)^{-\frac{1+\epsilon_0}{2}}\left\|w_{l_1^*,1}\partial^\alpha f\right\|^2 +\sum_{1\leq n\leq N_0}\sum_{|\alpha|+|\beta|=n,\atop|\beta|=j,1\leq j\leq n}(1+t)^{-\sigma_{n,j}}\left\|w_{l_0^*-j,1}\partial_\beta^\alpha\{{\bf I-P}\} f\right\|^2\\
&+\sum_{1\leq n\leq N_0}\sum_{|\alpha|=n}\left\|w_{l_0^*,1}\partial^\alpha f\right\|^2 +\left\|w_{l_0^*,1}\{{\bf I-P}\} f\right\|^2\\
\lesssim& Y_0^2.
\end{aligned}
\end{equation*}
Here we have used the following estimate
\begin{eqnarray}\label{E^n_{tri,j}-F^n_{tri,j}}
&&{\sum_{{N_0+1\leq n\leq N,\atop0\leq j\leq n}}}(1+t)^{-\sigma_{n,j}}E^n_{tri,j}(t)+{\sum_{{0\leq n\leq N_0,\atop0\leq j\leq n}}}(1+t)^{-\sigma_{n,j}}F^n_{tri,j}(t)\nonumber\\
&\lesssim&{\sum_{{N_0+1\leq |\alpha|+|\beta|\leq N}}}X(t)(1+t)^{-\sigma_{|\alpha|+|\beta|,|\beta|}}\mathcal{\widetilde{D}}^{|\alpha|,|\beta|}_{l_1^*,1}(t)\\
\nonumber&&+{\sum_{{0\leq |\alpha|+|\beta|\leq N_0}}}X(t)(1+t)^{-\sigma_{|\alpha|+|\beta|,|\beta|}}\mathcal{\widetilde{D}}^{|\alpha|,|\beta|}_{l_0^*,1}(t)\\ \nonumber
&&
+
{\sum_{{|\alpha|+|\beta|\leq N}}X(t)^{\frac{2-\gamma}{2(1-\gamma)}}(1+t)^{-\sigma_{|\alpha|+|\beta|,|\beta|}}\mathcal{\widetilde{D}}^{|\alpha|,|\beta|}_{l_0^*\ or\ l_1^*,1}(t)}+X(t){D}_{N_0}(t),
\nonumber
\end{eqnarray}
provided that the parameters $\vartheta, \varrho, N,$ and $N_0$ satisfy the conditions listed in Lemma \ref{lemma9}. Here to state briefly, we use $\mathcal{\widetilde{D}}^{|\alpha|,|\beta|}_{\ell,1}(t)$ to denote
$$(1+t)^{-1-\vartheta}\left\|w_{\ell-|\beta|,1}\partial_\beta^\alpha\{{\bf I-P}\} f\langle v\rangle\right\|^2+\left\|{w_{\ell-|\beta|,1}}\partial_\beta^\alpha\{{\bf I-P}\}f\right\|^2_\nu.$$

Without loss of generality, we only verify the estimate \eqref{E^n_{tri,j}-F^n_{tri,j}} for the term
$$(1+t)^{-\sigma_{n,j}}\sum_{|\alpha|+|\beta|=n,\atop|\alpha_1|=1, |\beta|=j,}(1+t)^{1+\vartheta}\left\|\partial^{\alpha_1} B\right\|^2_{L^\infty_x}\left\|w_{l_0^*-1-j,1}\nabla_v\partial_\beta^{\alpha-\alpha_1}{\bf \{I-P\}}f\langle v\rangle\right\|^2
$$
since the other terms can be estimated in a similar way. In such a case, Lemma \ref{Lemma1} tells us that
\[
\sum_{|\alpha_1|=1}\left\|\partial^{\alpha_1}B\right\|^2_{L^\infty_x}\lesssim
\begin{cases} X(t)(1+t)^{-\frac52-\varrho}, N_0\geq 5,\\[2mm]
X(t)(1+t)^{-2-\varrho}, N_0=4\\
\end{cases}
\]
which implies
\begin{equation}\label{3.42}
\sum_{|\alpha_1|=1}\left\|\partial^{\alpha_1}B\right\|^2_{L^\infty_x}\lesssim X(t)(1+t)^{-2-2\vartheta-\frac{2(1+\gamma)}{\gamma-2}(1+\vartheta)}
\end{equation}
if the parameters $\vartheta$ and $\varrho$ are suitably chosen such that
\begin{equation*}
\begin{cases}
0<\vartheta\leq
\frac{2\varrho\gamma-3\gamma-4\varrho-6}{8\gamma-4},\ \ \varrho\in[\frac12,\frac32),\ N_0\geq 5,\\
0<\vartheta
\leq\frac{\varrho\gamma-2\gamma-2\varrho-2}{4\gamma-2},\ \ \varrho\in(1,\frac32),\ N_0=4.
\end{cases}
\end{equation*}
Now due to
$$
\sigma_{n,j}-\sigma_{n,j-1}=\frac{2(1+\gamma)}{\gamma-2}(1+\vartheta),
$$
we can get from the estimate \eqref{3.42} that
\begin{eqnarray*}
&&(1+t)^{-\sigma_{n,j}}\sum_{|\alpha|+|\beta|=n,\atop|\alpha_1|=1, |\beta|=j,}(1+t)^{1+\vartheta}\left\|\partial^{\alpha_1} B\right\|^2_{L^\infty_x} \left\|w_{l_0^*-1-j,1}\nabla_v\partial_\beta^{\alpha-\alpha_1}{\bf \{I-P\}}f\langle v\rangle\right\|^2\\
&\lesssim&X(t)(1+t)^{-\sigma_{n,j+1}-1-\vartheta}\sum_{|\alpha|+|\beta|=n,\atop |\beta|=j+1,}\left\|w_{l_0^*-1-j,1}\partial_\beta^{\alpha}{\bf \{I-P\}}f\langle v\rangle\right\|^2,
\end{eqnarray*}
that is exactly what we wanted.

Finally, Lemma \ref{Lemma1} implies that
$$
 \left\{
 \begin{array}{rl}
 N=2N_0-1, \quad & when\ \  \varrho\in[\frac12,1],\\[2mm]
 N=2N_0,\quad & when \ \ \varrho\in(1,\frac32).
 \end{array}
 \right.
$$
Thus the proof of Lemma \ref{lemma9} is complete.
\end{proof}

Now we turn to prove Theorem \ref{Th1.1}. To this end, recall the definition of the $X(t)-$norm. Lemma \ref{lemma9} tells that for the local solution $[f(t,x,v), E(t,x), B(t,x)]$ to the Cauchy problem (\ref{f}) and (\ref{f-initial}) defined on the time interval $[0,T]$ for some $0<T\leq +\infty$, if
\begin{equation*}
X(t)\leq M,\quad \forall\, t\in[0,T],
\end{equation*}
then there exists a sufficiently small positive constant $\delta_0>0$ such that if
\begin{equation*}
M\leq \delta^2_0,
\end{equation*}
there exists a positive constant $\overline{C}>0$ such that
\begin{equation*}
X(t)\leq \overline{C}^2Y_0^2
\end{equation*}
holds for all $0\leq t\leq T$.

Thus if the initial perturbation $Y_0$ is assumed to be sufficiently small such that
\begin{equation*}
Y_0\leq \frac{\delta_0}{\overline{C}},
\end{equation*}
then the global existence follows by combining the local solvability result with the continuation argument in the usual way. This completes the proof of Theorem \ref{Th1.1}.\qed

\subsection{The proof of Theorem \ref{Th1.2}}

Based on Theorem \ref{Th1.1} and by taking $k=0,1,2,\cdots, N_0-2$, we can get firstly from  Lemma \ref{Lemma1} that
\begin{equation*}
\mathcal{E}^k_{N_0}(t)\lesssim Y^2_0(1+t)^{-(k+\varrho)},
\end{equation*}
that gives (\ref{TH2-1}).

{As to (\ref{TH2-2}), as long as one takes $l_0$ and $l^*$ respectively as $$l_0= l_{0,0}=l_{0,1}\geq\max\left\{\chi_{k\leq2}(l_{0,k}+3k-3), {l_1^*}+\frac{5}{2}\right\},$$
and $l^*=\frac{k+2}{2}-\frac{\widetilde{l}}\gamma$ in Theorem 1.1, then (\ref{TH2-2}) follows from Lemma \ref{lemma2}.}

Finally, to prove (\ref{TH2-4}), we have by the interpolation method with respect to space derivative $x$ for $N_0+1\leq|\alpha|\leq N-1$ and by using the time decay of $\left\|\nabla^{N_0}f\right\|$ and the bound of $\left\|\nabla^{N}f\right\|$ that
\begin{equation*}
\begin{split}
\left\|\partial^\alpha f\right\|^2\lesssim& \left\|\nabla^Nf\right\|^{\frac{|\alpha|-N_0}{N-N_0}}
\left\|\nabla^{N_0}f\right\|^{\frac{N-|\alpha|}{N-N_0}}
\lesssim Y^2_0(1+t)^{-\frac{(N-|\alpha|)(N_0-2+\varrho)}{N-N_0}}.
\end{split}
\end{equation*}
This is \eqref{TH2-4} and the proof of Theorem \ref{Th1.2} is complete.\qed

\section{Appendix}

We will complete the proofs of some lemmas and estimates used in the previous section. \\

\noindent 4.1 {\bf The proof of the key estimate  in Lemma \ref{Lemma1}.} First of all, the following lemmas are for proving \eqref{Lemma1-1}.

\begin{lemma}\label{B-lemma}
Assume $-3<\gamma<-1$, $N_0$ and $N$ satisfying \eqref{N-N_0} and $n\geq \frac23N_0-\frac53$,
there exist a positive integer $\overline{m}$ satisfying $N_0+1\leq\overline{m}\leq N-1$ and a sufficiently large number $\widetilde{l}$, which both depend only $\gamma$ and $N_0$,
such that
when $1\leq k\leq N_0-2$,
\begin{equation}\label{B-k}
\begin{aligned}
&\left|\left(\nabla^k((v\times B)\cdot\nabla_vf),\nabla^kf\right)\right|
\lesssim\max\left\{\mathcal{E}_{\overline{m},\overline{m},-\gamma}(t),
\mathcal{\overline{E}}_{N_0,N_0-\frac{\widetilde{l}}\gamma,-\gamma}(t) \right\}\\
&\times\left(\left\|\nabla^{k+1}B\right\|^2
+\left\|\nabla^k\{{\bf I-P}\}f\right\|^2_\nu+\|\nabla^{k+1}f\|_{\nu}^2\right)+\varepsilon\left(\left\|\nabla^k\{{\bf I-P}\}f\right\|^2_{\nu}+\left\|\nabla^{k+1}f\right\|_{\nu}^2\right),
\end{aligned}
\end{equation}
when $k=N_0-1$, it holds that
\begin{equation}\label{B-N_0-1}
\begin{split}
\left|\left(\nabla^k\left((v\times B)\cdot\nabla_vf\right),\nabla^kf\right)\right|\lesssim&\max\left\{\mathcal{E}_{\overline{m},\overline{m},-\gamma}(t),
\mathcal{\overline{E}}_{N_0,N_0-\frac{\widetilde{l}}\gamma,-\gamma}(t) \right\}\\
&\times\left(\left\|\nabla^{N_0-1}B\right\|^2
+\left\|\nabla^{N_0-1}f\right\|_{\nu}^2\right)
+\varepsilon\left\|\nabla^{N_0-1}f\right\|_{\nu}^2,
\end{split}
\end{equation}
and as for $k=N_0$, it follows that
\begin{equation}\label{B-N_0}
\begin{aligned}
\sum_{k=N_0}\left|\left(\nabla^k((v\times B)\cdot\nabla_vf),\nabla^kf\right)\right|\lesssim&
\max\left\{\mathcal{E}_{N_0+n}(t),
\mathcal{E}_{\overline{m},\overline{m},-\gamma}(t),
\mathcal{\overline{E}}_{N_0,N_0-\frac{\widetilde{l}}\gamma,-\gamma}(t)\right\}\\
&\times
\left(\left\|\nabla^{N_0-1}B\right\|^2+\left\|\nabla^{N_0-2}{\bf \{I-P\}}f\right\|_{\nu}^2+\left\|\nabla^{N_0-1}f\right\|_{\nu}^2
+\left\|\nabla^{N_0}f\right\|_{\nu}^2\right)\\
&
+\varepsilon\left(\left\|\nabla^{N_0-2}{\bf\{I-P\}}f\right\|^2_{H^1_xL^2_{\nu}}+\left\|\nabla^{N_0}f\right\|^2_{\nu}\right).
\end{aligned}
\end{equation}
\end{lemma}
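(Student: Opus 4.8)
The plan is to expand $\nabla^k\big((v\times B)\cdot\nabla_v f\big)$ by the Leibniz rule as $\sum C\,(v\times\partial^{\alpha_1}B)\cdot\nabla_v\partial^{\alpha-\alpha_1}f$ over $\alpha_1\le\alpha$ with $|\alpha|=k$, and to bound each inner product $\big((v\times\partial^{\alpha_1}B)\cdot\nabla_v\partial^{\alpha-\alpha_1}f,\partial^\alpha f\big)$ separately. The term with $\alpha_1=0$ is killed by the divergence-free structure of the magnetic rotation: since $B$ is $v$-independent, $\nabla_v\cdot(v\times B)=0$, so integration by parts in $v$ gives $\big((v\times B)\cdot\nabla_v\partial^\alpha f,\partial^\alpha f\big)=\tfrac12\int(v\times B)\cdot\nabla_v|\partial^\alpha f|^2\,dvdx=0$, and only the terms with $1\le|\alpha_1|\le k$ survive.

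For these I would first use the macro-micro decomposition $f={\bf P}f+\{{\bf I-P}\}f$. On the macroscopic part, $\nabla_v\partial^{\alpha-\alpha_1}{\bf P}f$ is a polynomial in $v$ times $\mu^{1/2}$ with coefficients $\partial^{\alpha-\alpha_1}(a_\pm,b,c)$, so the velocity factor $\langle v\rangle$ is harmless and the contribution is controlled by products of $\|\partial^{\alpha_1}B\|$ (or $\|\partial^{\alpha_1}B\|_{L^\infty_x}$ after Sobolev embedding) with $\|\mu^\delta\partial^{\alpha-\alpha_1}(a_\pm,b,c)\|$ and $\|\partial^\alpha f\|$; here one uses that $N_0$ is large — ensured by $n\ge\tfrac23N_0-\tfrac53$ together with \eqref{N-N_0} — to afford the $L^\infty_x$ embeddings. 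On the microscopic part the genuine difficulty is the first-order velocity growth $\langle v\rangle\,|\nabla_v\partial^{\alpha-\alpha_1}\{{\bf I-P}\}f|$, for which there is no velocity weight available in $\mathcal{E}^k_{N_0}$; it must be absorbed by borrowing a polynomial weight from the a priori quantities $\mathcal{E}_{\overline{m},\overline{m},-\gamma}$ and $\mathcal{\overline{E}}_{N_0,N_0-\frac{\widetilde{l}}\gamma,-\gamma}$. Concretely, writing $\langle v\rangle^2=\langle v\rangle^{\gamma}\langle v\rangle^{2-\gamma}$ and choosing $\widetilde{l}$ large (depending only on $\gamma$ and $N_0$) so that $\langle v\rangle^{2-\gamma}\lesssim w_{\ell,-\gamma}^2$ on the relevant weight range, one splits $|v|\le R$ / $|v|>R$: on $|v|\le R$ the growth costs only an $R$-dependent constant against the $\nu$-norm, while on $|v|>R$ it is dominated by the weighted a priori norm. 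The $x$-derivatives are distributed so that whichever of $\partial^{\alpha_1}B$ and $\nabla_v\partial^{\alpha-\alpha_1}\{{\bf I-P}\}f$ carries fewer than about half of the $k$ derivatives is placed in $L^\infty_x$ by Sobolev embedding; the threshold integer $\overline{m}$ with $N_0+1\le\overline{m}\le N-1$ records the derivative level at which this bookkeeping switches, and precisely at that switch one invokes $\mathcal{E}_{\overline{m},\overline{m},-\gamma}$ or $\mathcal{\overline{E}}_{N_0,N_0-\frac{\widetilde{l}}\gamma,-\gamma}$ as the small coefficient. A $v$-derivative is traded for an extra $x$-derivative through the standard microscopic splitting (using the equation for $\{{\bf I-P}\}f$ and Lemma \ref{Lemma L}), which is what produces the residual dissipation factors $\|\nabla^{k+1}B\|^2$, $\|\nabla^k\{{\bf I-P}\}f\|_\nu^2$, $\|\nabla^{k+1}f\|_\nu^2$; the leftover linear-in-$B$ pieces that cannot be given a small coefficient are separated by Young's inequality, yielding the $\varepsilon\big(\|\nabla^k\{{\bf I-P}\}f\|_\nu^2+\|\nabla^{k+1}f\|_\nu^2\big)$ remainder.

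The endpoint cases $k=N_0-1$ and $k=N_0$ follow the same scheme with tighter bookkeeping. For $k=N_0-1$ one can no longer afford any loss of regularity, so the microscopic contribution is estimated directly against $\|\nabla^{N_0-1}f\|_\nu^2$ with the small coefficient (plus the $\varepsilon$ remainder) and the $B$-factor is kept at order $N_0-1$ or placed in $L^\infty_x$ at lower order, giving \eqref{B-N_0-1}. For $k=N_0$ one genuinely runs out of room, so the top-order contribution is handled by allowing a controlled loss of one derivative and invoking the extra regularity $\mathcal{E}_{N_0+n}(t)$ — this is why $\mathcal{E}_{N_0+n}$ appears in the coefficient of \eqref{B-N_0} — and the appearance of $\|\nabla^{N_0-2}\{{\bf I-P}\}f\|_\nu^2$ and of the $H^1_xL^2_\nu$ norm in the $\varepsilon$-term there reflects exactly this redistribution. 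I expect the main obstacle to be the treatment of the first-order $v$-growth from $v\times B$ on the microscopic part in the absence of any velocity weight in $\mathcal{E}^k_{N_0}$: one must choose the interpolation exponents, the constant $\widetilde{l}$, and the velocity cutoff $R$ mutually compatibly so that the residual dissipation factors are exactly the ones listed and no worse, while keeping the coefficient confined to the three admissible small energies $\mathcal{E}_{N_0+n}$, $\mathcal{E}_{\overline{m},\overline{m},-\gamma}$, $\mathcal{\overline{E}}_{N_0,N_0-\frac{\widetilde{l}}\gamma,-\gamma}$.
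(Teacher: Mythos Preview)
Your broad decomposition (kill $\alpha_1=0$ by parts, macro-micro split, handle the macroscopic part by exponential decay) matches the paper, but the core microscopic estimate is not done the way you describe, and your mechanism for producing the dissipation factors $\|\nabla^{k+1}B\|^2,\|\nabla^k\{{\bf I-P}\}f\|_\nu^2,\|\nabla^{k+1}f\|_\nu^2$ is incorrect.

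First, the paper never uses the equation for $\{{\bf I-P}\}f$ or Lemma~\ref{Lemma L} to ``trade a $v$-derivative for an $x$-derivative'' in this proof; those dissipation factors arise purely from \emph{interpolation}, not from the evolution equation. Second, no velocity cutoff $|v|\le R/|v|>R$ is used. For $I_{B,3}$ with $1\le j\le k-1$ the paper combines three layers of interpolation: (a) the Sobolev interpolation of Lemma~\ref{lemma2.2} in $x$ against the negative-index norm, e.g.\ $\|\nabla^j B\|_{L^\infty_x}\lesssim\|\Lambda^{-\varrho}B\|^{\theta}\|\nabla^{k+1}B\|^{1-\theta}$; (b) a velocity-derivative interpolation $\|\nabla_v\nabla^{k-j}\{{\bf I-P}\}f\|\lesssim\|\nabla_v^{m_{1j}+1}\nabla^{k-j}\{{\bf I-P}\}f\|^{1/(m_{1j}+1)}\|\nabla^{k-j}\{{\bf I-P}\}f\|^{m_{1j}/(m_{1j}+1)}$, which is what sends the lone $v$-derivative into the high mixed-derivative norm $\mathcal{E}_{\overline{m},\overline{m},-\gamma}$; and (c) a H\"older interpolation in the velocity weight, splitting $\|\nabla^k\{{\bf I-P}\}f\langle v\rangle\|$ into $\|\nabla^k\{{\bf I-P}\}f\langle v\rangle^{\gamma/2}\|^{\beta_j}\|\nabla^k\{{\bf I-P}\}f\langle v\rangle^{\hat l_{2j}}\|^{1-\beta_j}$ and similarly for the weightless factor. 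The exponents $m_{1j}$ and $\beta_j$ are then chosen so that the total power carried by the dissipation factors $\|\nabla^{k+1}B\|$ and $\|\nabla^k\{{\bf I-P}\}f\|_\nu$ equals exactly $2$; solving this constraint gives an explicit lower bound on $m_{1j}$, and $\overline m=\max_j(k-j+m_{1j}+1)$ while $\widetilde l$ is determined by $\hat l_{2j}$. Your $R$-splitting and ``equation'' step would not produce these precise powers and would leave either an uncontrolled velocity growth or an extra derivative that cannot be absorbed.

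For $k=N_0$ the same scheme applies except that the term with $|\alpha_1|=N_0$ forces $\|\nabla^{N_0}B\|$ to be interpolated between $\|\nabla^{N_0-1}B\|$ and $\|\nabla^{N_0+n}B\|$, which is exactly where $\mathcal{E}_{N_0+n}$ and the hypothesis $n>\tfrac23N_0-\tfrac53$ enter; the condition on $n$ comes from solvability of the corresponding exponent system, not from a Sobolev embedding count.
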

\begin{proof}
To obtain \eqref{B-k}, by using the macro-micro decomposition, one has
\begin{equation*}
\begin{aligned}
&\left|\left(\nabla^k((v\times B)\cdot\nabla_vf),\nabla^kf\right)\right|\lesssim \sum_{1\leq j\leq k}\left|\left(\left(v\times\nabla^{j}B\right)\cdot\nabla_v\nabla^{k-j}f,\nabla^kf\right)\right|\\
&=\underbrace{ \sum_{1\leq j\leq k}\left|\left(\left(v\times\nabla^{j}B\right)\cdot\nabla_v\nabla^{k-j}{\bf P}f,\nabla^kf\right)\right|}_{I_{B,1}}+\underbrace{\sum_{1\leq j\leq k}
\left|\left(\left(v\times\nabla^{j}B\right)\cdot\nabla_v\nabla^{k-j}{\bf\{I- P\}}f,\nabla^k{\bf P}f\right)\right|}_{I_{B,2}}\\
&\qquad+\underbrace{\sum_{1\leq j\leq k}
\left|\left(\left(v\times\nabla^{j}B\right)\cdot\nabla_v\nabla^{k-j}{\bf\{I- P\}}f,\nabla^k{\bf\{I- P\}}f\right)\right|}_{I_{B,3}}.
\end{aligned}
\end{equation*}
Applying the interpolation method with respect to space derivative $x$, so we
deduce from Lemma \ref{lemma2.2}  that
\begin{equation*}
\begin{aligned}
I_{B,1}+I_{B,2}
\lesssim&\sum_{1\leq j\leq k}\left\|\nabla^jB\right\|_{L^3_x}
\left\|\nabla^{k-j}\left(\mu^{\delta}f\right)\right\|\left\|\nabla^{k+1}\left(\mu^{\delta}f\right)\right\|\\
\lesssim&\sum_{1\leq j\leq k}\left\|\Lambda^{-\frac 12}B\right\|^{\frac{2k-2j+1}{2k+3}} \left\|\nabla^{k+1}B\right\|^{\frac{2j+2}{2k+3}}
\left\|\Lambda^{-\frac 12}\left(\mu^{\delta}f\right)\right\|^{\frac{2j+2}{2k+3}} \left\|\nabla^{k+1}\left(\mu^{\delta}f\right)\right\|^{\frac{2k-2j+1}{2k+3}}
\left\|\nabla^{k+1}\left(\mu^{\delta}f\right)\right\|\\
\lesssim&\mathcal{\overline{E}}_{0,0,-\gamma}(t)
\left(\left\|\nabla^{k+1}B\right\|^2+\left\|\nabla^{k+1}f\right\|_{\nu}^2\right)
+\varepsilon\left\|\nabla^{k+1}f\right\|_{\nu}^2.
\end{aligned}
\end{equation*}
As for $I_{3,3}$, when $j=k$, taking $L^6-L^3-L^2$ type inequality and applying Lemma \ref{lemma2.2}, one has
\begin{equation*}
\begin{aligned}
I_{B,3}
\lesssim&\left\|\nabla^{k}B\right\|_{L^6_x}\left\|\nabla_v{\bf\{I- P\}}f\langle v\rangle^{1-\frac{\gamma}2}\right\|_{L^2_v L^3_x}\left\|\nabla^k{\bf\{I- P\}}f\langle v\rangle^{\frac{\gamma}2}\right\|_{L^2_vL^2_x} \\
\lesssim&\mathcal{\overline{E}}_{2,\frac52-\frac {1}\gamma,-\gamma}(t)\left\|\nabla^{k+1}B\right\|^2+\varepsilon\left\|\nabla^k\{{\bf I-P}\}f\right\|^2_\nu.
\end{aligned}
\end{equation*}
While for the case $1\leq j\leq k-1$, by the similar virtue of the estimates on $I_{B,3}$ for $j=k$, one also has
\begin{equation*}
\begin{aligned}
I_{B,3}
\lesssim&\sum_{1\leq j\leq k-1}\left\|\nabla^{j}B\right\|_{L^\infty_x}\left\|\nabla_v\nabla^{k-j}{\bf\{I- P\}}f\right\|\left\|\nabla^k{\bf\{I- P\}}f\langle v\rangle\right\| \\
\lesssim&\sum_{1\leq j\leq k-1}\left\|\Lambda^{-{\varrho}}B\right\|^{\frac{2k-2j-1}{2(k+1+\varrho)}}\left\|\nabla^{k+1}B\right\|^{\frac{2j+2\varrho+3}{2(k+1+\varrho)}}
\left\|\nabla^{m_{1j}+1}_v\nabla^{k-j}{\bf\{I- P\}}f\right\|^{\frac1{m_{1j}+1}}\\{}
&\times\left\|\Lambda^{-{\varrho}}{\bf\{I- P\}}f\right\|^{\frac{m_{1j}j}{(m_{1j}+1)(k+\varrho)}}\left\|\nabla^{k}{\bf\{I- P\}}f\right\|^{\frac{m_{1j}(k-j+\varrho)}{(m_{1j}+1)(k+\varrho)}}\left\|\nabla^k{\bf\{I- P\}}f\langle v\rangle\right\| \\{}
\lesssim&\sum_{1\leq j\leq k-1}\left\|\Lambda^{-{\varrho}}B\right\|^{\frac{2k-2j-1}{2(k+1+\varrho)}}\left\|\nabla^{k+1}B\right\|^{\frac{2j+2\varrho+3}{2(k+1+\varrho)}}
\left\|\nabla^{m_{1j}+1}_v\nabla^{k-j}{\bf\{I- P\}}f\right\|^{\frac1{m_{1j}+1}}\\{}
&\times\left\|\Lambda^{-{\varrho}}{\bf\{I- P\}}f\right\|^{\frac{m_{1j}j}{(m_{1j}+1)(k+\varrho)}}\left\|\nabla^{k}{\bf\{I- P\}}f\right\langle v\rangle^{\frac\gamma2}\|^{\frac{m_{1j}(k-j+\varrho)\beta_j}{(m_{1j}+1)(k+\varrho)}}\left\|\nabla^k{\bf\{I- P\}}f\langle v\rangle^{\frac\gamma2}\right\|^{\beta_j} \\{}
&\times\left\|\nabla^{k}{\bf\{I- P\}}f\right\langle v\rangle^{l_{1j}}\|^{\frac{m_{1j}(k-j+\varrho)(1-\beta_j)}{(m_{1j}+1)(k+\varrho)}}\left\|\nabla^k{\bf\{I- P\}}f\langle v\rangle^{l_{2j}}\right\|^{1-\beta_j} \\{}
\lesssim&\max\left\{\mathcal{\overline{E}}_{k+m_{1},1+m_{1},-\gamma}(t),\mathcal{\overline{E}}_{k,-\frac{\hat{l}_{2}}\gamma,-\gamma}(t)\right\}\left(\left\|\nabla^{k+1}B\right\|^2+\left\|\nabla^k\{{\bf I-P}\}f\right\|^2_\nu\right)+\varepsilon\left\|\nabla^k\{{\bf I-P}\}f\right\|^2_\nu.
\end{aligned}
\end{equation*}
Here we have used the fact that there exists a positive constant $\beta_j\in(0,1)$ such that
$$\frac{2j+2\varrho+3}{2(k+1+\varrho)}+\frac{m_{1j}(k-j+\varrho)\beta_j}{(m_{1j}+1)(k+\varrho)}+\beta_j=2$$
holds for $1\leq j\leq k-1$. A necessary and sufficient condition to guarantee the existence of such $\beta_j$ is
$$
\frac{2j+2\varrho+3}{2(k+1+\varrho)}+\frac{m_{1j}(k-j+\varrho)}{(m_{1j}+1)(k+\varrho)}+1>2,\quad 1\leq j\leq k-1,
$$
from which one can deduce that $m_{1j}> \frac{2k^2+2\varrho k-2jk-k-2j\varrho-\varrho}{2k\varrho+3k+2\varrho^2+3\varrho-2j}$ holds for $1\leq j\leq k-1$. Noticing that $\frac12\leq\varrho<\frac 32$, it is easy to see that we can take
$$
m_1=\max_{1\leq j\leq k-1}m_{1j}=\frac{2k}{2\varrho+3}+\frac{2\varrho-3}{2\varrho+3}>\frac{2k^2+2\varrho k-3k-3\varrho}{2k\varrho+3k+2\varrho^2+3\varrho-2}.
$$
Consequently, $m_{1j}+1+k-j\leq k+m_{1}=\frac{2\varrho+5}{2\varrho+3}k+\frac{2\varrho-3}{2\varrho+3}\leq \frac{2\varrho+5}{2\varrho+3}N_0-\frac{2\varrho+13}{2\varrho+3}$ with $N_0\geq 4$.

Moreover, since $\hat{l}_{1j}$ and $\hat{l}_{2j}$ satisfy respectively $\frac\gamma2\beta_j+\hat{l}_{1j}(1-\beta_j)=0$ and $\frac\gamma2{\beta_j}+\hat{l}_{2j}(1-\beta_j)=1$ with $0<\beta_j<1$, one can deduce that, $\hat{l}_{1j}=\frac\gamma2-\frac{\gamma}{2(1-\beta_j)}$ and $\hat{l}_{2j}=\frac\gamma2-\frac{\gamma-2}{2(1-\beta_j)}$ from which we can see that $\hat{l}_{2j}>\hat{l}_{1j}$
where $$\beta_j=\frac{(4k+1+2\varrho-2j)(m_{1j}+1)(k+\varrho)}{(k+2+2\varrho)(k+\varrho+2km_{1j}+2\varrho m_{1j}-jm_{1j})}.$$
Here we take $\hat{l}_{2}=\displaystyle\max_{1\leq j\leq k-1}\left\{\hat{l}_{2j}\right\}$.

Consequently, if we take $\overline{m}=k+m_1$ and $\widetilde{l}\geq \max\left\{\hat{l}_2,\frac12-\frac1\gamma\right\}$, \eqref{B-k} follows by collecting the above estimates.
As well as case $k\leq N_0-2$,
for $k=N_0-1$, there exist a positive integer $\overline{m}$ and a sufficiently large number $\widetilde{l}$
such that
\begin{equation*}
\begin{split}
\sum_{k=N_0-1}\left|\left(\nabla^k((v\times B)\cdot\nabla_vf),\nabla^kf\right)\right|\lesssim&\max\left\{\mathcal{E}_{\overline{m},\overline{m},-\gamma}(t),
\mathcal{\overline{E}}_{N_0,N_0-\frac{\widetilde{l}}\gamma,-\gamma}(t) \right\}\\
&\times\left(\left\|\nabla^{N_0-1}B\right\|^2
+\left\|\nabla^{N_0-1}f\right\|_{\nu}^2\right)
+\varepsilon\left\|\nabla^{N_0-1}f\right\|_{\nu}^2.
\end{split}
\end{equation*}
With regard to the case $k=N_0$, compared with the above cases, we only notice that if we take $n>\frac 2 3 N_0-\frac53$,

\begin{equation*}
\begin{aligned}
&\left|\left(\left(v\times\nabla^{N_0}B\right)\cdot\nabla_v\{{\bf I-P}\}f, \nabla^{N_0}f\right)\right|\\
\lesssim&\left\|\nabla^{N_0+n}B\right\|^{\frac{1}{1+n}}\left\|\nabla^{N_0-1}B\right\|^{\frac{n}{1+n}}\left\|\nabla_x\nabla^{m_2+1}_v{\bf\{I-P\}}f\right\|^{\frac{1}{1+m_2}}_{H^1_xL^2_v}\left\|\Lambda^{-\varrho}{\bf\{I-P\}}f\right\|^{\frac{m_2(N_0-3)}{(1+m_2)(N_0-2+\varrho)}}_{H^1_xL^2_v}\\{}
&\times\left\|\nabla^{N_0-2}{\bf\{I-P\}}f\langle v\rangle^{\hat{l}_{3}}\right\|
^{\frac{m_2\beta(1+\varrho)}{(1+m_2)(N_0-2+\varrho)}}_{H^1_xL^2_v}\left\|\nabla^{N_0-2}{\bf\{I-P\}}f\langle v\rangle^{\frac\gamma2}\right\|^{\frac{m_2
(1-\beta)(1+\varrho)}{(1+m_2
)(N_0-2+\varrho)}}_{H^1_xL^2_v}\\{}
&\times\left\|\nabla^{N_0} f\langle v\rangle^{\hat{l}_{4}}\right\|^{1-\beta}\left\|\nabla^{N_0} f\langle v\rangle^{\frac\gamma2}\right\|^{\beta}\\{}
\lesssim&\max\left\{\mathcal{E}_{N_0+n}(t),\mathcal{E}_{3+m_2,3+m_2,-\gamma}(t),\mathcal{\overline{E}}_{N_0,N_0-\frac{\hat{l}_{4}}\gamma,-\gamma}(t)\right\}\left(\|\nabla^{N_0-1}B\|^2+\left\|\nabla^{N_0-2}{\bf\{I-P\}}f\right\|^2_{H^1_xL^2_{\nu}}\right)\\{}
&+\varepsilon\left(\left\|\nabla^{N_0-2}{\bf\{I-P\}}f\right\|^2_{H^1_xL^2_{\nu}}+\left\|\nabla^{N_0}f\right\|^2_{\nu}\right).
\end{aligned}
\end{equation*}
Here we need to ask $\frac{n}{1+n}+\frac{m_2(1+\varrho)\beta}{(1+m_2)(N_0-2+\varrho)}+\beta=2$ which deduce that $m_2>\frac{N_0-2+\varrho}{\varrho n+n+3-N_0},$ we can get $\hat{l}_{3}=\frac{\gamma}2-\frac{\gamma}{2(1-\beta)}$ and $\hat{l}_{4}=\frac{\gamma}2-\frac{\gamma-2}{2(1-\beta)}$ from $
\frac{\gamma}2\cdot\beta+\hat{l}_{3}(1-\beta)=0 $ and
$\frac{\gamma}2\cdot\beta+\hat{l}_{4}(1-\beta)=1 $.
 We can choose $m_2$ suitably such that $3+m_2\leq 3+\frac{N_0-2+\varrho}{(\varrho+1)n+3-N_0}$.
 The other terms can be estimated as well as \eqref{B-k}.
Consequently, if we take suitable numbers $\overline{m}$ and $\widetilde{l}$, we also have
\begin{equation*}
\begin{aligned}
\sum_{k=N_0}\left|\left(\nabla^k((v\times B)\cdot\nabla_vf),\nabla^kf\right)\right|\lesssim&
\max\left\{\mathcal{E}_{N_0+n}(t),
\mathcal{E}_{\overline{m},\overline{m},-\gamma}(t),
\mathcal{\overline{E}}_{N_0,N_0-\frac{\widetilde{l}}\gamma,-\gamma}(t)\right\}\\{}
&\times
\left(\|\nabla^{N_0-1}B\|^2+\|\nabla^{N_0-2}{\bf \{I-P\}}f\|_{\nu}^2+\|\nabla^{N_0-1}f\|_{\nu}^2+\|\nabla^{N_0}f\|_{\nu}^2\right)\\{}
&
+\varepsilon\left(\left\|\nabla^{N_0-2}{\bf\{I-P\}}f\right\|^2_{H^1_xL^2_{\nu}}+\left\|\nabla^{N_0}f\right\|^2_{\nu}\right).
\end{aligned}
\end{equation*}
Thus we have completed the proof of this lemma.
\end{proof}
By repeating the argument used to prove Lemma \ref{B-lemma}, we can also obtain that
\begin{lemma}
Under the assumptions of Lemma \ref{B-lemma}, we have estimates on the terms containing $E$ and $\Gamma(f,f)$ as follows. For $k\leq N_0-2$, it holds that
\begin{equation}\label{E-k}
\begin{aligned}
&\left|\left(\nabla^k(v\cdot E f),\nabla^kf\right)\right|+\left|\left(\nabla^k(E\cdot\nabla_vf),\nabla^kf\right)\right|+{\left|\left(\nabla^k\Gamma(f,f),\nabla^kf\right)\right|}\\
\lesssim&\max\left\{\mathcal{E}_{\overline{m},\overline{m},-\gamma}(t),
\mathcal{\overline{E}}_{N_0,N_0-\frac{\widetilde{l}}\gamma,-\gamma}(t) \right\}\left(\left\|\nabla^{k+1}E\right\|^2
+\left\|\nabla^k\{{\bf I-P}\}f\right\|^2_\nu+\|\nabla^{k+1}f\|_{\nu}^2\right)\\{}
&+\varepsilon\left(\left\|\nabla^k\{{\bf I-P}\}f\right\|^2_{\nu}+\left\|\nabla^{k+1}f\right\|_{\nu}^2\right).
\end{aligned}
\end{equation}
For  $k=N_0-1$, it holds that
\begin{equation}\label{E-N_0-1}
\begin{split}
&\left|\left(\nabla^k(v\cdot E f),\nabla^kf\right)\right|+\left|\left(\nabla^k(E\cdot\nabla_vf),\nabla^kf\right)\right|
+{\left|\left(\nabla^k\Gamma(f,f),\nabla^kf\right)\right|}\\
\lesssim&\max\left\{\mathcal{E}_{\overline{m},\overline{m},-\gamma}(t),
\mathcal{\overline{E}}_{N_0,N_0-\frac{\widetilde{l}}\gamma,-\gamma}(t) \right\}\left(\left\|\nabla^{N_0-1}E\right\|^2
+\left\|\nabla^{N_0-1}f\right\|_{\nu}^2\right)
+\varepsilon\left\|\nabla^{N_0-1}f\right\|_{\nu}^2.
\end{split}
\end{equation}
For $k=N_0$, it holds that
\begin{equation}\label{E-N_0}
\begin{aligned}
&\left|\left(\nabla^k(v\cdot E f),\nabla^kf\right)\right|+\left|\left(\nabla^k(E\cdot\nabla_vf),\nabla^kf\right)\right|
+{\left|\left(\nabla^k\Gamma(f,f),\nabla^kf\right)\right|}\\
\lesssim&
\max\left\{\mathcal{E}_{N_0+n}(t),
\mathcal{E}_{\overline{m},\overline{m},-\gamma}(t),
\mathcal{\overline{E}}_{N_0,N_0-\frac{\widetilde{l}}\gamma,-\gamma}(t)\right\}
\left(\left\|\nabla^{N_0-1}E\right\|^2+\left\|\nabla^{N_0-2}{\bf \{I-P\}}f\right\|_{\nu}^2\right.\\
&\left.+\left\|\nabla^{N_0-1}f\right\|_{\nu}^2+\left\|\nabla^{N_0}f\right\|_{\nu}^2\right)
+\varepsilon\left(\left\|\nabla^{N_0-2}{\bf\{I-P\}}f\right\|^2_{H^1_xL^2_{\nu}}+\left\|\nabla^{N_0}f\right\|^2_{\nu}\right).
\end{aligned}
\end{equation}
\end{lemma}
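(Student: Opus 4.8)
The plan is to follow the scheme of the proof of Lemma~\ref{B-lemma} almost verbatim, since each of the three families of terms in \eqref{E-k}--\eqref{E-N_0} shares the structural features that were essential there: a coefficient exhibiting at most first-order velocity growth, exactly one ``transport/collision'' factor together with one test factor $\nabla^k f$, and the requirement that the whole quantity be absorbed into the product of a small energy norm with the dissipation-type norms on the right-hand sides. Concretely, for each term I would (i) apply the Leibniz rule to distribute the $k$ spatial derivatives between the coefficient and $f$; (ii) insert the macro--micro splitting $f={\bf P}f+\{{\bf I-P}\}f$ into the relevant factors and treat the purely macroscopic, the mixed, and the purely microscopic pieces exactly as the $I_{B,1}$, $I_{B,2}$, $I_{B,3}$ contributions were treated, so that an extra spatial derivative $\nabla^{k+1}$ is produced on the ${\bf P}f$ and $(E,B)$ factors through the negative-order Sobolev interpolations ($\Lambda^{-1/2}$ and $\Lambda^{-\varrho}$), consistently with the right-hand sides; and (iii) in the microscopic pieces, interpolate the $v$-weighted norms between a $\langle v\rangle^{\gamma/2}$-weighted ($\nu$-)norm and a high-weight $\langle v\rangle^{\ell}$-norm so that the exponent of the latter is small, reading the energy functionals $\mathcal{E}_{\overline m,\overline m,-\gamma}$, $\mathcal{\overline{E}}_{N_0,N_0-\widetilde l/\gamma,-\gamma}$ (and, for $k=N_0$, also $\mathcal{E}_{N_0+n}$) off the high-weight pieces, with the \emph{same} choice of the auxiliary integer $\overline m$ and the large constant $\widetilde l$ as in Lemma~\ref{B-lemma}.

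The term $\bigl(\nabla^k(v\cdot E f),\nabla^k f\bigr)$ is the exact counterpart of $\bigl(\nabla^k((v\times B)\cdot\nabla_v f),\nabla^k f\bigr)$: the factor $v$ again produces a weight $\langle v\rangle$, so it must be controlled by the high-weight norms in precisely the way the coefficient $v\times\nabla^j B$ was. For the macroscopic and mixed pieces the factor $v\,{\bf P}f$ carries a Gaussian, which reduces them (as with $I_{B,1}+I_{B,2}$) to products of $\|\nabla^j E\|_{L^3_x}$ with $L^2_x$--$L^6_x$ and negative-Sobolev interpolations of $\mu^\delta$-localized factors of $f$, and Young's inequality puts them in the claimed form. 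For the microscopic--microscopic piece I would use the same $L^6_x$--$L^3_x$--$L^2_x$ (resp.\ $L^\infty_x$--$L^2_x$--$L^2_x$) H\"older arrangement and the same splitting of the $v$-weighted microscopic norms as in the estimate of $I_{B,3}$, which produces the identical algebraic constraints on the interpolation exponents; these are solvable with an exponent $\beta_j\in(0,1)$ under $\tfrac12\le\varrho<\tfrac32$ and $n>\tfrac23N_0-\tfrac53$, exactly as verified in Lemma~\ref{B-lemma}. The term $\bigl(\nabla^k(E\cdot\nabla_v f),\nabla^k f\bigr)$ is even easier: the coefficient $E$ carries \emph{no} velocity weight, so the same argument applies with strictly better exponents, the single $\nabla_v$ falling on $\{{\bf I-P}\}f$ being absorbed by the $v$-derivatives present in $\mathcal{E}_{\overline m,\overline m,-\gamma}$ together with the coercive $\nu$-norm.

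For $\bigl(\nabla^k\Gamma(f,f),\nabla^k f\bigr)$ I would again use Leibniz to write it as a sum of $\bigl(\Gamma^0(\partial^{\alpha_1}f,\partial^{\alpha_2}f),\nabla^k f\bigr)$ with $\alpha_1+\alpha_2=\alpha$, $|\alpha|=k$, then invoke the pointwise nonlinear estimates of Lemma~\ref{lemma-nonlinear}(i) (the unweighted instance of \eqref{nonlinear-1}, \eqref{nonlinear-2}) to bound each term by a product of a factor of the form $\sum_{m\le2}\bigl(|\nabla^m_v(\mu^\delta\partial^{\alpha_i}f)|+|\partial^{\alpha_i}f|\bigr)$ with the $\nu$-norms of the other $\partial^{\alpha_j}f$ and of $\nabla^k f$; the extra $\mu^\delta$ provides Gaussian velocity decay, so only the spatial bookkeeping remains, and it is dispatched by the same $L^2_x$--$L^3_x$--$L^6_x$/$L^\infty_x$ interpolations (Lemma~\ref{lemma2.2}) as above, again distinguishing $k\le N_0-2$ from $k=N_0-1$ and $k=N_0$.

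The main obstacle, exactly as in Lemma~\ref{B-lemma}, is the combinatorial bookkeeping: one must check that the algebraic relations among the interpolation exponents admit a solution $\beta_j\in(0,1)$ \emph{simultaneously} for all three families of terms and for every $k\le N_0$, the critical case being the top order $k=N_0$, where the fewest spare $x$-derivatives are available and one is forced to introduce the auxiliary intermediate differentiation orders ($m_1$, $m_2$) and the auxiliary velocity weights ($\hat l_1,\dots,\hat l_4$) just as in the proof of Lemma~\ref{B-lemma}; this is precisely where $n>\tfrac23N_0-\tfrac53$ and \eqref{N-N_0} enter. Once these exponent conditions are met one simply takes $\overline m$ and $\widetilde l$ to be the maxima of the finitely many values demanded by the individual terms, so that a single choice serves for all of \eqref{E-k}--\eqref{E-N_0}, and the three estimates follow by collecting the resulting bounds.
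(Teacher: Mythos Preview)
Your proposal is correct and matches the paper's approach exactly: the paper's own proof of this lemma is the single sentence ``By repeating the argument used to prove Lemma~\ref{B-lemma}, we can also obtain that'', and your outline---Leibniz rule, macro--micro splitting into $I_{B,1}$/$I_{B,2}$/$I_{B,3}$-type pieces, the same Sobolev and $v$-weight interpolations, and Lemma~\ref{lemma-nonlinear} for the $\Gamma$ term---is precisely that repetition spelled out in detail. If anything, you have supplied more justification than the paper does.
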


\begin{remark}\label{m-l}
Comparing the proofs of the above two lemmas, we all take suitably numbers $\overline{m}$ satisfying $N_0-1\leq\overline{m}\leq N-1$ and $\widetilde{l}$. In fact, by
complex calculation as well as $\eqref{B-N_0}$, we obtain
 \begin{equation*}
\begin{aligned}
\overline{m}=\max\bigg\{&\chi_{N_0\geq 4}\left\{\frac{2\varrho+5}{2\varrho+3}N_0
-\frac{6}{2\varrho+3}\right\},\ \chi_{N_0\geq 6}\left\{\frac{2\varrho+7}{2\varrho+3}N_0
-\frac{8\varrho+38}{2\varrho+3}\right\},\\ &\chi_{N_0\geq 7}\left\{\frac{2\varrho+9}{2\varrho+5}N_0
-\frac{4\varrho+36}{2\varrho+5}\right\},\
\chi_{N_0\geq 8}\left\{\frac{2\varrho+11}{2\varrho+7}N_0-\frac{4\varrho+44}{2\varrho+9}\right\}\bigg\},
\end{aligned}
\end{equation*}
Thus we can choose $\overline{m}=N-1$ without generality if $N$ satisfies $\eqref{N-N_0}$. Since the computation of accurate value of $\widetilde{l}$ is too complicated but standard, we claim that there exists a finite number
$\widetilde{l}$ satisfying the above three lemmas.
\end{remark}

Based on the above three lemmas and Remark \ref{m-l}, it is straightforward to obtain
\begin{lemma}\label{lemma4.3}
Let $N_0$ and $N$ satisfying \eqref{N-N_0}, then there exists a positive constant $\widetilde{l}$, which depends only on $N_0$, $\varrho$ and $\gamma$,  such that:
\begin{itemize}
\item[(1).] For $k=0,1,\cdots,N_0-2$, it holds that
\begin{equation}\label{Lemma4.3-1}
\begin{aligned}
&\frac{d}{dt}\left(\left\|\nabla^kf\right\|^2+\left\|\nabla^k(E,B)\right\|^2\right)
+\left\|\nabla^k\{{\bf I-P}\}f\right\|^2_{\nu}\\{}
\lesssim&\max\left\{\mathcal{E}_{N-1,N-1,-\gamma}(t),
\mathcal{\overline{E}}_{N_0,N_0-\frac{\widetilde{l}}\gamma,-\gamma}(t)\right\}
\left(\left\|\nabla^{k+1}(E,B)\right\|^2+\left\|\nabla^{k}\{{\bf I-P}\}f\right\|^2_{\nu}\right.\\{}
&\left.+\left\|\nabla^{k+1}f\right\|^2_{\nu}\right)
+\varepsilon\left(\left\|\nabla^k\{{\bf I-P}\}f\right\|^2_{\nu}+\left\|\nabla^{k+1}f\right\|_{\nu}^2\right).
\end{aligned}
\end{equation}
\item [(2).] If $k=N_0-1$, it follows that
\begin{equation}\label{Lemma4.3-2}
\begin{aligned}
&\frac{d}{dt}\left(\left\|\nabla^{N_0-1}f\right\|^2+\left\|\nabla^{N_0-1}(E,B)\right\|^2\right)
+\left\|\nabla^{N_0-1}\{{\bf I-P}\}f\right\|_{\nu}^2\\{}
\lesssim&\max\left\{\mathcal{E}_{N-1,N-1,-\gamma}(t),
\mathcal{\overline{E}}_{N_0,N_0-\frac{\widetilde{l}}\gamma,-\gamma}(t)\right\}
\left(\left\|\nabla^{N_0-1}(E,B)\right\|^2\right.\\{}
&\left.+\left\|\nabla^{N_0-2}\{{\bf I-P}\}f\right\|_{\nu}^2
+\left\|\nabla^{N_0-1}f\right\|_{\nu}^2\right)+\varepsilon\left\|\nabla^{N_0-1}f\right\|_{\nu}^2.
\end{aligned}
\end{equation}
\item[(3).] As for $k=N_0\geq 4$, if we let $n>\frac23N_0-\frac53$, one has
\begin{equation}\label{Lemma4.3-3}
\begin{aligned}
&\frac{d}{dt}\left(\left\|\nabla^{N_0}f\right\|^2+\left\|\nabla^{N_0}(E,B)\right\|^2\right)
+\left\|\nabla^{N_0}\{{\bf I-P}\}f\right\|^2_{\nu}\\{}
\lesssim&\max\left\{\mathcal{E}_{N_0+n}(t),
\mathcal{E}_{N-1,N-1,-\gamma}(t),
\mathcal{\overline{E}}_{N_0,N_0-\frac{\widetilde{l}}\gamma,-\gamma}(t)\right\}
\left(\left\|\nabla^{N_0-1}(E,B)\right\|^2+\left\|\nabla^{N_0-2}{\bf \{I-P\}}f\right\|_{\nu}^2\right.\\{}
&\left.+\left\|\nabla^{N_0-1}f\right\|^2_{H^1_xL^2_{\nu}}\right)
+\varepsilon\left(\left\|\nabla^{N_0-2}{\bf\{I-P\}}f\right\|^2_\nu
+\left\|\nabla^{N_0-1}f\right\|^2_{H^1_xL^2_{\nu}}\right).
\end{aligned}
\end{equation}
\end{itemize}
\end{lemma}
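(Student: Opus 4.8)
The plan is to derive the three differential inequalities by taking $\nabla^k$ of equation \eqref{f gn}, testing against $\nabla^k f$ in $L^2_{x,v}$, and simultaneously using the Maxwell system \eqref{f} to close the electromagnetic energy. Concretely, applying $\partial^\alpha$ with $|\alpha|=k$ to \eqref{f gn}, pairing with $\partial^\alpha f$, and summing over $|\alpha|=k$ produces: (a) the time derivative $\tfrac12\tfrac{d}{dt}\|\nabla^k f\|^2$; (b) the coercive term $-\langle L\partial^\alpha f,\partial^\alpha f\rangle$, which by \eqref{L_0} of Lemma \ref{Lemma L} is bounded below by $\|\nabla^k\{{\bf I-P}\}f\|_\nu^2$; (c) the transport term $(\partial^\alpha(v\cdot\nabla_x f),\partial^\alpha f)$, which vanishes after integration by parts; and (d) the genuinely nonlinear contributions coming from $q_0(E+v\times B)\cdot\nabla_v f$, the source $-E\cdot v\mu^{1/2}q_1$, the term $\tfrac{q_0}{2}E\cdot vf$, and $\Gamma(f,f)$. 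For the coupling with the field, I would pair the Maxwell equations for $[E,B]$ with $[\nabla^k E,\nabla^k B]$; the curl terms cancel and the current term $-\int v\mu^{1/2}(f_+-f_-)\,dv$ combines with the source term $-E\cdot v\mu^{1/2}q_1$ from the $f$-equation (this is the standard Guo-type macroscopic interaction that makes $\|\nabla^k(E,B)\|^2$ appear inside the energy rather than as an uncontrolled remainder). The terms $-E\cdot v\mu^{1/2}q_1$ and $\tfrac{q_0}{2}E\cdot vf$ are handled exactly as for the hard-sphere case since they carry fast velocity decay $\mu^{1/2}$; one only needs the interpolation Lemma \ref{lemma2.2} and the smallness of $\|E\|_{L^\infty_x}$.

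The substantive work is entirely in the terms $(\nabla^k((v\times B)\cdot\nabla_v f),\nabla^k f)$, $(\nabla^k(E\cdot\nabla_v f),\nabla^k f)$ and $(\nabla^k\Gamma(f,f),\nabla^k f)$, and these are precisely what Lemmas \ref{B-lemma}, \eqref{E-k}--\eqref{E-N_0} and \ref{lemma-nonlinear} supply. So the proof of Lemma \ref{lemma4.3} reduces to a bookkeeping step: for each $k$, insert the bounds from those lemmas, collect the $\varepsilon$-small terms and the quadratic-in-energy terms on the right, and observe that the remaining ``good'' terms are exactly the ones listed in \eqref{Lemma4.3-1}--\eqref{Lemma4.3-3}. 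The case split by $k$ ($k\le N_0-2$, $k=N_0-1$, $k=N_0$) is dictated by how many $L^\infty_x$-type Sobolev embeddings one can afford: for $k\le N_0-2$ the spatial derivatives landing on $B$ (or $E$) can be estimated in $L^3_x$ or $L^\infty_x$ and still be controlled by $\|\nabla^{k+1}B\|$ after interpolation; at $k=N_0-1$ one loses the $\{{\bf I-P}\}f$ dissipation at order $k$ (hence only $\|\nabla^{N_0-1}f\|_\nu^2$ survives on the left, with $\varepsilon\|\nabla^{N_0-1}f\|_\nu^2$ allowed on the right); at $k=N_0$ the top-order field derivative $\nabla^{N_0}B$ cannot be absorbed by $\|\nabla^{N_0+1}B\|$ (which is not in the energy of order $N_0$), so one is forced to spend an extra derivative $n>\tfrac23N_0-\tfrac53$ via $\|\nabla^{N_0+n}B\|^{1/(1+n)}\|\nabla^{N_0-1}B\|^{n/(1+n)}$, which is why the hypothesis $N_0+n\le N$ and the choice $N=2N_0-1$ or $2N_0$ enter.

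The main obstacle, already isolated inside Lemma \ref{B-lemma}, is the term $\bigl(\bigl(v\times\nabla^j B\bigr)\cdot\nabla_v\nabla^{k-j}\{{\bf I-P}\}f,\ \nabla^k\{{\bf I-P}\}f\bigr)$ with $1\le j\le k-1$: the factor $v\times(\cdot)$ costs one power of $\langle v\rangle$, and since $-3<\gamma<-1$ the collision frequency $\nu(v)\sim\langle v\rangle^\gamma$ is far too weak to absorb $\langle v\rangle^{1}$ against $\langle v\rangle^{\gamma/2}$. The resolution is the velocity-interpolation device: split $\langle v\rangle^{\text{high}}$ into a $\langle v\rangle^{\gamma/2}$ piece (absorbed by $\varepsilon\|\nabla^k\{{\bf I-P}\}f\|_\nu^2$) and a high-weight piece $\langle v\rangle^{\hat l_{2j}}$ absorbed into the weighted energy $\mathcal{\overline{E}}_{N_0,N_0-\widetilde l/\gamma,-\gamma}(t)$ (or $\mathcal{E}_{\overline m,\overline m,-\gamma}(t)$), with the interpolation exponents $\beta_j\in(0,1)$ chosen so the spatial-derivative counts balance (the solvability condition $\tfrac{2j+2\varrho+3}{2(k+1+\varrho)}+\tfrac{m_{1j}(k-j+\varrho)}{(m_{1j}+1)(k+\varrho)}+1>2$ forces the stated lower bound on $m_{1j}$, hence on $\overline m$ and $\widetilde l$). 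Once $\widetilde l$ and $\overline m$ are fixed to dominate every such term (Remark \ref{m-l} confirms $\overline m\le N-1$ and that a finite $\widetilde l$ exists), the three inequalities follow by assembling the pieces, and this in turn feeds directly into Lemma \ref{Lemma1}.
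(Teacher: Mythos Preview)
Your proposal is correct and follows essentially the same route as the paper: apply $\nabla^k$ to \eqref{f gn}, pair with $\nabla^k f$, close the field energy via the Maxwell system, invoke the coercivity \eqref{L_0}, and then feed in the nonlinear bounds \eqref{B-k}--\eqref{B-N_0} and \eqref{E-k}--\eqref{E-N_0} from Lemma~\ref{B-lemma} and its companion. The paper's own proof is in fact just this assembly step, citing those lemmas; your write-up additionally unpacks the velocity-interpolation mechanism behind Lemma~\ref{B-lemma} and the role of Remark~\ref{m-l}, which is more than the paper itself spells out at this point. One small wording issue: at $k=N_0-1$ you do not ``lose the $\{{\bf I-P}\}f$ dissipation at order $k$'' on the left (it is still there in \eqref{Lemma4.3-2}); rather, the right-hand side can no longer be placed at order $k+1$ in $(E,B)$ and must instead sit at order $N_0-1$, which is the actual distinction from the $k\le N_0-2$ case.
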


\begin{proof}
To prove (\ref{Lemma4.3-1}), we apply $\nabla^k$ to $(\ref{f})$,
multiply the resulting identity by $\nabla^kf$, and further integrate it with respect to $x$ and $v$ over $\mathbb{R}_x^3\times\mathbb{R}_v^3$. Then, for $k\leq N_0-2$, (\ref{Lemma4.3-1}) follows by recalling \eqref{B-k}, \eqref{E-k} and the coercive property of the linear operator $L$.
Similarly, for $k=N_0-1\geq 2$, \eqref{B-N_0-1} and \eqref{E-N_0-1} imply (\ref{Lemma4.3-2}).
Regarding the last case $k=N_0\geq 4$, one has (\ref{Lemma4.3-3}) by combing \eqref{B-N_0} and \eqref{E-N_0}. Thus the proof of Lemma \ref{lemma4.3} is complete.
\end{proof}

The next lemma is concerned with the macro dissipation $\mathcal{D}_{N,mac}(t)$ defined by
$$
\mathcal{D}_{N,mac}(t)\sim\left\|\nabla_x(a_\pm,b,c)\right\|^2_{H^{N-1}_x}
+\|a_+-a_-\|^2+\|E\|^2_{H^{N-1}_x}+\|\nabla_xB\|^2_{H^{N-2}_x}.
$$

\begin{lemma}\label{Lemma4.4}
For the macro dissipation estimates on $f(t,x,v)$, we have the following results:
\begin{itemize}
\item[(i).] For $k=0,1,2\cdots,N_0-2$, there exist interactive energy functionals $G^k_f(t)$ satisfying
\[
G^k_f(t)\lesssim \left\|\nabla^k(f,E,B)\right\|^2+\left\|\nabla^{k+1}(f,E,B)\right\|^2+\left\|\nabla^{k+2}E\right\|^2
\]
such that
\begin{equation*}
\begin{split}
&\frac{d}{dt}G^k_f(t)+\left\|\nabla^k(E,a_+-a_-)\right\|_{H^1_xL^2_v}^2+\left\|\nabla^{k+1}({\bf P}f,B)\right\|^2\\{}
\lesssim&\mathcal{\overline{E}}_{N_0-1,0,-\gamma}(t)\left(\left\|\nabla^{k+1}(E,B)\right\|^2+\left\|\nabla^{k+1}f\right\|^2_{\nu}\right)
+\left\|\nabla^k\{{\bf I-P}\}f\right\|^2_{\nu}\\{}
&
+\left\|\nabla^{k+1}\{{\bf I-P}\}f\right\|^2_{\nu}
+\left\|\nabla^{k+2}\{{\bf I-P}\}f\right\|^2_{\nu};
\end{split}
\end{equation*}
\item[(ii).] For $k=N_0-1$, there exists an interactive energy functional $G^{N_0-1}_f(t)$ satisfying
$$
G^{N_0-1}_f(t)\lesssim\left\|\nabla^{N_0-2}(f,E,B)\right\|^2+\left\|\nabla^{N_0-1}(f,E,B)\right\|^2
+\left\|\nabla^{N_0}(f,E)\right\|^2
$$
such that
\begin{equation*}
\begin{split}
&\frac{d}{dt}G^{N_0-1}_f(t) +\left\|\nabla^{N_0-2}(E,a_+-a_-)\right\|_{H^1_xL^2_v}^2+\left\|\nabla^{N_0-1}B
\right\|^2+\left\|\nabla^{N_0}{\bf P}f\right\|^2\\{}
\lesssim&\mathcal{\overline{E}}_{N_0,0,-\gamma}(t)\left(\left\|\nabla^{N_0-1}(E,B)\right\|^2
+\left\|\nabla^{N_0-1}f\right\|^2_{\nu}\right)+\left\|\nabla^{N_0-2}\{{\bf I-P}\}f\right\|^2_{\nu}\\{}
&+\left\|\nabla^{N_0-1}\{{\bf I-P}\}f\right\|^2_{\nu}
+\left\|\nabla^{N_0}\{{\bf I-P}\}f\right\|^2_{\nu};
\end{split}
\end{equation*}
\item[(iii).] There exists an interactive energy functional $\mathcal{E}^{int}_N(t)$ satisfying
$$
\mathcal{E}^{int}_N(t)\lesssim\sum_{|\alpha|\leq N}\left\|\partial^\alpha(f,E,B)\right\|^2$$
such that
\begin{equation*}
\frac{d}{dt}\mathcal{E}^{int}_N(t)+\mathcal{D}_{N,mac}(t)\lesssim\sum_{|\alpha|\leq N}\left\|\partial^\alpha\{{\bf I-P}\}f\right\|_{\nu}^2+\mathcal{E}_N(t)\mathcal{D}_N(t)
\end{equation*}
holds for any $t\in[0,T]$.
\end{itemize}
\end{lemma}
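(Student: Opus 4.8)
The plan is to run the classical interactive (cross) energy functional argument for the macroscopic quantities, adapted to the Vlasov--Maxwell--Boltzmann structure exactly as in \cite{Guo-IUMJ-04, DS-CPAM-11, Duan_Liu-Yang_Zhao-VMB-2013}. First I would record the macroscopic equations: inserting $f={\bf P}f+\{{\bf I-P}\}f$ into \eqref{f gn} and testing against $\mu^{1/2}$, $v_i\mu^{1/2}$, $(|v|^2-3)\mu^{1/2}$ gives the local balance laws for $(a_\pm,b,c)$, while testing against the next Hermite modes $v_iv_j\mu^{1/2}$ and $v_i(|v|^2-5)\mu^{1/2}$ produces the non-conservative first-order relations $\partial_t\Theta_{ij}(\{{\bf I-P}\}f)+\partial_ib_j+\partial_jb_i=\Xi_{ij}$ and $\partial_t\Lambda_i(\{{\bf I-P}\}f)+\partial_ic=\Upsilon_i$, where $\Theta_{ij},\Lambda_i$ are fixed linear moments of $\{{\bf I-P}\}f$ and the right-hand sides collect one $x$-derivative of $\{{\bf I-P}\}f$, the forcing coming from $\mp E\cdot v\mu^{1/2}$, the Lorentz terms $q_0(E+v\times B)\cdot\nabla_vf$ and $\tfrac{q_0}{2}E\cdot vf$, and the Boltzmann nonlinearity $\Gamma(f,f)$. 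A key point used throughout is that any such moment of $\{{\bf I-P}\}f$ against a fixed rapidly decaying function is bounded by $|\{{\bf I-P}\}f|_\nu$, since $\mu^{1/2}$ absorbs both the polynomial and the mild $\langle v\rangle^{\gamma/2}$ loss; this is why only the weightless $\nu$-norm of $\{{\bf I-P}\}f$ appears in (i)--(iii) despite $-3<\gamma<-1$.

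Next, for each fixed $k$ I would apply $\nabla^k$ and set $G^k_f(t)$ to be a suitable linear combination, with small relative coefficients ordered hierarchically as in \cite{DS-CPAM-11}, of the spatial inner products $\langle\nabla^k\Theta_{ij}(\{{\bf I-P}\}f),\nabla^k(\partial_ib_j+\partial_jb_i)\rangle$, $\langle\nabla^k\Lambda_i(\{{\bf I-P}\}f),\nabla^k\partial_ic\rangle$, and $\langle\nabla^kb,\nabla^k\nabla_x(a_++a_-)\rangle$ (plus the analogous brackets that feed $\nabla_xa_\pm$). Differentiating in $t$ and substituting the macroscopic equations, the leading contributions reproduce $\|\nabla^{k+1}({\bf P}f)\|^2$ with a favourable sign (once the relative coefficients are fixed), and the commutator and source terms generate: the micro quantities $\|\nabla^{k}\{{\bf I-P}\}f\|_\nu^2+\|\nabla^{k+1}\{{\bf I-P}\}f\|_\nu^2+\|\nabla^{k+2}\{{\bf I-P}\}f\|_\nu^2$ --- the two extra $x$-derivatives arising because the highest-moment relations carry $\{{\bf I-P}\}f$ already differentiated once, and one more derivative is spent when $\partial_t$ hits the $b$-factor in the cross term, after which an integration by parts keeps it at order $k+2$; the field-coupling terms, which by \eqref{nonlinear-1}--\eqref{n-3} and Sobolev embedding (Lemma \ref{lemma2.2}) are bounded by $\mathcal{\overline{E}}_{N_0-1,0,-\gamma}(t)\big(\|\nabla^{k+1}(E,B)\|^2+\|\nabla^{k+1}f\|_\nu^2\big)$; and the genuinely cubic remainders absorbed into $\mathcal{E}_N(t)\mathcal{D}_N(t)$. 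The size bound on $G^k_f(t)$ follows from Cauchy--Schwarz together with the identity $\partial_t(\partial_ib_j)=\partial_i\partial_j(\cdots)+\partial_i\partial_k[\text{moment of }v_k\{{\bf I-P}\}f]+\cdots$, which is what forces $\nabla^{k+2}E$ --- but never $\nabla^{k+2}B$ --- into the bound; this regularity-loss bookkeeping of the Maxwell part must be respected at every step. This yields (i), and item (ii) is the same computation truncated at the top order, which is precisely why $G^{N_0-1}_f$ only controls $\|\nabla^{N_0}(f,E)\|^2$ and why $\|\nabla^{N_0-1}B\|^2$ rather than $\|\nabla^{N_0}B\|^2$ appears on the left.

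For (iii), beyond assembling $\mathcal{E}^{int}_N(t)=\sum_{0\le k\le N}\kappa_kG^k_f(t)$ I would add a Maxwell interactive functional. The zeroth-order $\|a_+-a_-\|^2$ is recovered at once from the elliptic identity $\nabla_x\cdot E=\int_{\mathbb{R}^3}\mu^{1/2}(f_+-f_-)dv$ by $\|a_+-a_-\|^2=\langle a_+-a_-,\nabla_x\cdot E\rangle\le\eta\|a_+-a_-\|^2+C_\eta\|\nabla_xE\|^2$. To produce $\|E\|^2_{H^{N-1}_x}$ one forms $\sum_{|\alpha|\le N-1}\langle\partial^\alpha(b_+-b_-),\partial^\alpha E\rangle$ and uses the momentum relation for the charge current, in which $-2E$ appears as forcing, together with $\partial_tE=\nabla_x\times B-(b_+-b_-)+\cdots$; and to produce the weaker $\|\nabla_xB\|^2_{H^{N-2}_x}$ one adds a curl-type bracket built from $\partial_tB+\nabla_x\times E=0$, keeping one derivative of loss. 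Summing over $k$ and choosing all the small constants in the correct order so that every $\|\nabla^j\{{\bf I-P}\}f\|_\nu^2$ with $j\le N$ on the right is dominated by an already-present term, that $\mathcal{\overline{E}}_{N_0-1,0,-\gamma}(t)(\cdots)\lesssim\mathcal{E}_N(t)\mathcal{D}_N(t)$, and that the remaining field/nonlinear terms are all of the form $\mathcal{E}_N(t)\mathcal{D}_N(t)$, delivers the stated differential inequality for $\mathcal{E}^{int}_N(t)$ with $\mathcal{D}_{N,mac}(t)$ on the left.

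I expect the principal obstacle to be the bookkeeping rather than any new idea: one must check that the Lorentz-force terms $E\cdot\nabla_vf$ and $(v\times B)\cdot\nabla_vf$, despite their first-order velocity growth, only ever enter the harmless higher-order brackets once the $\nabla_v$ is integrated by parts against the rapidly decaying Hermite test functions; that at no point is a velocity-weighted norm of $\{{\bf I-P}\}f$ required (the plain $\nu$-norm at orders $k,\dots,k+2$, resp.\ $\le N$, must suffice); and that the regularity-loss constraints --- no $\nabla^{k+2}B$ in the bound for $G^k_f$, and only $\|\nabla_xB\|_{H^{N-2}_x}$ in $\mathcal{D}_{N,mac}$ --- remain consistent across all three items and after summation.
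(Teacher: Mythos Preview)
Your proposal is correct and follows essentially the same approach as the paper. The paper itself omits the proof entirely, stating only that ``the procedure of the proof is almost the same as the proof of Lemma 3.5 in \cite[page 3742]{Lei-Zhao-JFA-2014}''; the argument you outline---deriving the macroscopic balance laws by testing against Hermite modes, building the interactive functionals from the cross terms $\langle\nabla^k\Theta_{ij},\nabla^k(\partial_ib_j+\partial_jb_i)\rangle$ etc., adding the Maxwell interactive brackets, and carefully tracking the regularity loss on $B$---is precisely the standard construction carried out in that reference and in \cite{DS-CPAM-11, Duan_Liu-Yang_Zhao-VMB-2013}.
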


\begin{proof}
Since the procedure of the proof is almost the same as the proof of Lemma 3.5 in \cite[page 3742]{Lei-Zhao-JFA-2014},
we omit it for brevity.
\end{proof}

\medskip

\noindent 4.2 {\bf The estimate in the negative indexed space.} Our first result in this subsection is concerned with the estimate on $\|[f,E,B](t)\|_{\dot{H}^{-\varrho}}$.
\begin{lemma}\label{LemmaF}
For $\varrho\in\ [\frac12, \frac32)$, it holds that
\begin{equation}\label{Lemma4.1-1}
\begin{aligned}
&\frac{d}{dt}\left(\left\|\Lambda^{-\varrho}f\right\|^2+\left\|\Lambda^{-\varrho}(E,B)\right\|^2\right)
+\left\|\Lambda^{-\varrho}\{{\bf I-P}\}f\right\|_{\nu}^2\\{}
\lesssim&\left(\mathcal{\overline{E}}_{0,0,-\gamma}(t)\right)^{1/2}\left(\left\|\Lambda^{\frac34-\frac \varrho2}(E,B)\right\|^2
+\left\|\Lambda^{\frac34-\frac \varrho2}f\right\|^2_{\nu}\right)
+\mathcal{\overline{E}}_{2,\frac 32-\frac{1}{\gamma},-\gamma}(t)\left\|\Lambda^{\frac32-\varrho}(f,E,B)\right\|^2.
\end{aligned}
\end{equation}
\end{lemma}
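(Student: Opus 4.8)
\textbf{Proof strategy for Lemma \ref{LemmaF}.} The plan is to apply the operator $\Lambda^{-\varrho}$ to the system \eqref{f} (equivalently \eqref{f gn}), take the $L^2_{x,v}\times L^2_{x,v}$ inner product with $\Lambda^{-\varrho}f$ for the kinetic part, and pair $\Lambda^{-\varrho}E$, $\Lambda^{-\varrho}B$ against the Maxwell equations, exactly mimicking the zeroth-order energy estimate but in the homogeneous negative Sobolev space. The linear pieces are handled as usual: the transport term $v\cdot\nabla_x f$ contributes nothing after integration by parts, the coercivity \eqref{L_0} of $L$ produces the dissipation $\|\Lambda^{-\varrho}\{{\bf I-P}\}f\|_\nu^2$ on the left, and the field-coupling terms $\mp E\cdot v\mu^{1/2}$ together with the current term in the Maxwell equations cancel in the standard way after summing the $\pm$ equations and using $\nabla_x\cdot E$-type identities. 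Hence the only work is to bound the quadratic/cubic terms
\[
\left(\Lambda^{-\varrho}\big((E+v\times B)\cdot\nabla_v f\big),\ \Lambda^{-\varrho}f\right),\quad
\left(\Lambda^{-\varrho}(E\cdot v f),\ \Lambda^{-\varrho}f\right),\quad
\left(\Lambda^{-\varrho}\Gamma(f,f),\ \Lambda^{-\varrho}f\right).
\]

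For each of these I would write the product as $gh$ (with $g$ a field component or a factor of $f$, $h$ a derivative of $f$), and estimate $\|\Lambda^{-\varrho}(gh)\|_{L^2_x}$ by first using the Hardy--Littlewood--Sobolev inequality (Lemma \ref{lemma2.3}) to trade the negative derivative for integrability: $\|\Lambda^{-\varrho}(gh)\|_{L^2_x}\lesssim \|gh\|_{L^p_x}$ with $\frac1p=\frac12+\frac\varrho3$, so $p\in(1,2]$ for $\varrho\in[\frac12,\frac32)$. Then split by H\"older, $\|gh\|_{L^p_x}\le\|g\|_{L^{q_1}_x}\|h\|_{L^{q_2}_x}$, choosing the exponents so that both factors can be fed into the Sobolev interpolation Lemma \ref{lemma2.2}, which converts $L^{q}_x$ norms into products of an $\dot H^{-\varrho}$ (or low-order) norm and a $\dot H^{\,3/4-\varrho/2}$-type norm. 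The velocity weights are absorbed as in Lemma \ref{lemma-nonlinear}: the $\nabla_v$ falling on $f$ and the velocity growth $|v|$ from the Lorentz force are controlled by $\langle v\rangle^{1-\gamma/2}$-type weights, which are dominated by the weighted energy $\mathcal{\overline E}_{2,3/2-1/\gamma,-\gamma}(t)$; the $L^\infty_x$ or $L^3_x$ norms of $E,B$ and their first two $x$-derivatives are likewise controlled by $\mathcal{\overline E}_{2,\cdot}(t)$ via Lemma \ref{lemma2.2}. The $\Gamma$ term uses the bilinear estimate \eqref{n-3} pointwise in $x$ and then the same $L^p$-splitting. Collecting the estimates, every quadratic term is bounded either by $(\mathcal{\overline E}_{0,0,-\gamma}(t))^{1/2}\big(\|\Lambda^{3/4-\varrho/2}(E,B)\|^2+\|\Lambda^{3/4-\varrho/2}f\|_\nu^2\big)$ (when one of the interpolation exponents lands on the low-regularity $\dot H^{-\varrho}$ norm of the small quantity, contributing the $1/2$ power) or by $\mathcal{\overline E}_{2,3/2-1/\gamma,-\gamma}(t)\|\Lambda^{3/2-\varrho}(f,E,B)\|^2$ (when the highest available derivative $\Lambda^{3/2-\varrho}$ is needed).

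The main obstacle is bookkeeping the exponents: one has to check for the whole range $\varrho\in[\frac12,\frac32)$ that the H\"older/HLS/interpolation triple can be solved with admissible Lebesgue and Sobolev indices, in particular that the interpolation target derivative never exceeds $\Lambda^{3/2-\varrho}$ (which at $\varrho=\frac12$ is $\Lambda^{1}$, comfortably inside $H^{N_0}$) and that the low end is controlled by $\|\cdot\|_{\dot H^{-\varrho}}$, which is legitimate precisely because $0<\varrho<3$ and $\varrho<3/2$ guarantee $\Lambda^{-\varrho}$ maps $L^p\to L^2$ and that $3/4-\varrho/2\ge 0$. A secondary subtlety is that $\Lambda^{-\varrho}$ does not commute nicely with multiplication, so the HLS step must be applied to the whole product $gh$ rather than distributed; this is why the estimate is stated in terms of $\|\Lambda^{3/4-\varrho/2}\cdot\|$ rather than a plain product of $\dot H^{-\varrho}$ norms. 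Once the exponent arithmetic is verified, combining with the cancellations from the linear terms yields \eqref{Lemma4.1-1} directly.
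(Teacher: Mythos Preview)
Your proposal is correct and follows essentially the same approach as the paper: apply $\Lambda^{-\varrho}$ (equivalently, work on the Fourier side with weight $|\xi|^{-2\varrho}$), use the coercivity \eqref{L_0} and the standard field--current cancellation for the linear part, and then control each nonlinear term via Lemma \ref{lemma2.3} (HLS) to pass from $\|\Lambda^{-\varrho}(gh)\|_{L^2_x}$ to $\|gh\|_{L^{6/(3+2\varrho)}_x}$, followed by a H\"older split and the Sobolev embedding from Lemma \ref{lemma2.2}, with the $\Gamma$ term handled through \eqref{n-3}. The only point you leave implicit is that the paper splits the test function $\Lambda^{-\varrho}f$ into macro and micro parts so that the micro contribution produces an $\varepsilon\|\Lambda^{-\varrho}\{{\bf I-P}\}f\|_\nu^2$ which is absorbed on the left, while the macro contribution carries the $\dot H^{-\varrho}$ factor yielding the $(\mathcal{\overline E}_{0,0,-\gamma})^{1/2}$ prefactor; this is exactly the mechanism behind your two alternative final bounds.
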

\begin{proof}
We have by taking Fourier transform of (\ref{f}) with respect to $x$,  multiplying the resulting identity by $|\xi|^{-2{s}}\overline{\hat{f}}_\pm$ with $\overline{\hat{f}}_\pm$ being the complex conjugate of $\hat{f}_\pm$, and integrating the final result with respect to $\xi$ and $v$ over $\mathbb{R}^3_\xi\times\mathbb{R}^3_v$ that
\begin{eqnarray}\label{3.3}
&&\left(\partial_t\hat{f}_\pm+ v  \cdot\mathcal{F}[\nabla_xf_\pm]\pm\mathcal{F}[(E+v\times B)\cdot\nabla_{ v  }f_\pm]\mp\frac 1 2 v  \cdot\mathcal{F}[E f_\pm]\mp\hat{E}\cdot v {\bf \mu}^{\frac12}+\mathcal{F}[{ L}_\pm f]-\mathcal{F}[{ \Gamma}_\pm(f,f)]\mid|\xi|^{-2\varrho}\hat{f}\right)\nonumber\\
&=&0.
\end{eqnarray}
Recall that throughout this paper, $\mathcal{F}[g](t,\xi,v)=\hat{g}(t,\xi,v)$ denotes the Fourier transform of $g(t,x,v)$ with respect to $x$.

(\ref{3.3}) together with Lemma \ref{Lemma L} yield
\begin{equation}\label{4.6}
\begin{aligned}
&\frac{d}{dt}\left(\left\|\Lambda^{-\varrho}f\right\|^2+\left\|\Lambda^{-\varrho}(E,B)\right\|^2\right)+\left\|\Lambda^{-\varrho}\{{\bf I-P}\}f\right\|_{\sigma}^2\\{}
\lesssim&\underbrace{\sum_\pm\left|\left(\mathcal{F}[E\cdot\nabla_{ v  }f_\pm]\mid|\xi|^{-2\varrho}\hat{f}_\pm\right)\right|}_{I_1}+\underbrace{\sum_\pm\left|\left(\mathcal{F}[v\times B\cdot\nabla_{ v  }f_\pm]\mid|\xi|^{-2\varrho}\hat{f}_\pm\right)\right|}_{I_2}\\{}
&+\underbrace{\sum_\pm\left|\left(v  \cdot\mathcal{F}[E f_\pm]\mid|\xi|^{-2\varrho}\hat{f}_\pm\right)\right|}_{I_3}+\underbrace{\sum_\pm\left|\left(\mathcal{F}[{ \Gamma}_\pm(f,f)]\mid|\xi|^{-2\varrho}\hat{f}_\pm\right)\right|}_{I_{4}}.
\end{aligned}
\end{equation}
To estimate $I_i$ $(i=1,2,3)$, we have from Lemma \ref{Lemma L}, Lemma \ref{lemma2.2} and Lemma \ref{lemma2.3} that

\begin{equation*}
\begin{aligned}
I_1\lesssim&\left\|\Lambda^{-\varrho}\left(E\cdot \mu^{\delta}f\right)\right\|\left\|\Lambda^{-\varrho}\left(\mu^{\delta}f\right)\right\|
+\left\|\Lambda^{-\varrho}\left(E \cdot\nabla_vf \langle v\rangle^{-\frac{\gamma}{2}}\right)\right\|\left\|\Lambda^{-\varrho}{\bf\{I-P\}}f\right\|_{\nu}\\{}
\lesssim&\left\|E\cdot \mu^{\delta}f\right\|_{L^2_vL_x^{\frac{6}{3+2\varrho}}} \left\|\Lambda^{-\varrho}\left(\mu^{\delta}f\right)\right\|+\left\|E\cdot \nabla_vf \langle v\rangle^{-\frac{\gamma}{2}}\right\|_{L_x^{\frac{6}{3+2\varrho}}L^2_v}
\left\|\Lambda^{-\varrho}{\bf\{I-P\}}f\right\|_{\nu}\\{}
\lesssim&\left\|\Lambda^{\frac34-\frac \varrho2}E\right\|\left\|\Lambda^{\frac34-\frac \varrho2}\left(\mu^{\delta}f\right)\right\|\left\|\Lambda^{-\varrho}\left(\mu^{\delta}f\right)\right\|
+\left\|\Lambda^{\frac32-\varrho}E\right\|^2\left\|\nabla_vf \langle v\rangle^{-\frac{\gamma}{2}}\right\|^2+\varepsilon\left\|\Lambda^{-\varrho}{\bf\{I-P\}}f\right\|^2_{\nu}\\{}
\lesssim&\left(\mathcal{\overline{E}}_{0,0,-\gamma}(t)\right)^{1/2}\left(\left\|\Lambda^{\frac34-\frac \varrho2}E\right\|^2+\left\|\Lambda^{\frac34-\frac \varrho2}f\right\|^2_{\nu}\right)+\mathcal{\overline{E}}_{1,\frac 32,-\gamma}(t) \left\|\Lambda^{\frac32-\varrho}E\right\|^2+\varepsilon\left\|\Lambda^{-\varrho}{\bf\{I-P\}}f\right\|^2_{\nu}.
\end{aligned}
\end{equation*}
For $I_2$ and $I_3$, we have by repeating the argument used in deducing the estimate on $I_1$ that
\begin{equation*}
\begin{aligned}
I_2+I_3\lesssim&\left(\mathcal{\overline{E}}_{0,0,-\gamma}(t)\right)^{1/2}
\left(\left\|\Lambda^{\frac34-\frac \varrho2}(E,B)\right\|^2+\left\|\Lambda^{\frac34-\frac \varrho2}f\right\|^2_{\nu}\right)\\{}
&+\mathcal{\overline{E}}_{1,\frac 32-\frac{1}{\gamma},-\gamma}(t) \left\|\Lambda^{\frac32-\varrho}(E,B)\right\|^2+\varepsilon
\left\|\Lambda^{-\varrho}{\bf\{I-P\}}f\right\|^2_{\nu}.
\end{aligned}
\end{equation*}
$I_{4}$ can be bounded from  Lemma \ref{lemma-nonlinear} by
\begin{equation*}
\begin{aligned}
I_{4}=&\left(\mathcal{F}[{\Gamma}(f,f)],|\xi|^{-2\varrho}\mathcal{F}{{\bf\{I-P\}}f}\right)\\{}
\lesssim&\left\|\Lambda^{-\varrho}(\langle v\rangle^{-\frac\gamma2}{\Gamma}(f,f))\right\|\left\|\Lambda^{-\varrho}(\langle v\rangle^{\frac\gamma2}{\bf\{I-P\}}f)\right\|\\{}
\lesssim&\left\|\Lambda^{\frac32-\varrho}f\right\|\left\|f\right\|_{L^2_xH^2_{v}}\left\|\Lambda^{-\varrho}{\bf\{I-P\}}f\right\|_{\nu}\\{}
\lesssim&\mathcal{\overline{E}}_{2,2,-\gamma}(t)\left\|\Lambda^{\frac32-\varrho}f\right\|^2+\varepsilon\|\Lambda^{-\varrho}{\bf\{I-P\}}f\|^2_{\nu}\\{}
\end{aligned}
\end{equation*}
Substituting the estimates on $I_i (i=1,2,3,4)$ into (\ref{4.6}) yields
\begin{equation*}
\begin{aligned}
&\frac{d}{dt}\left(\left\|\Lambda^{-\varrho}f\right\|^2+\left\|\Lambda^{-\varrho}(E,B)\right\|^2\right)
+\left\|\Lambda^{-\varrho}\{{\bf I-P}\}f\right\|_{\nu}^2\\{}
\lesssim&{\left(\mathcal{\overline{E}}_{0,0,-\gamma}(t)\right)^{1/2}\left(\left\|\Lambda^{\frac34-\frac \varrho2}(E,B)\right\|^2
+\left\|\Lambda^{\frac34-\frac \varrho2}f\right\|^2_{\nu}\right)+\mathcal{\overline{E}}_{2,\frac 32-\frac{1}{\gamma},-\gamma}(t)\left\|\Lambda^{\frac32-\varrho}(f,E,B)\right\|^2}.
\end{aligned}
\end{equation*}
Thus we complete the proof of Lemma $\ref{LemmaF}$.
\end{proof}

Applying the argument of Lemma 3.2 in \cite[page 3727]{Lei-Zhao-JFA-2014} and Lemma 3.3 in \cite[page 3731]{Lei-Zhao-JFA-2014}, we easily have the following lemma:

\begin{lemma}
Let $\varrho\in [\frac12, \frac32)$, there exists an interactive functional $G_{E,B}(t)$ satisfying
\begin{equation*}
{|G^{-\varrho}_{f}(t)|}\lesssim \left\|\Lambda^{1-\varrho}(f,E,B)\right\|^2+\left\|\Lambda^{-\varrho}(f,E,B)\right\|^2+\|\Lambda^{2-\varrho}E\|^2
\end{equation*}
such that
\begin{equation}\label{G_{E,B}}
\begin{aligned}
&\frac{d}{dt}{G^{-\varrho}_{f}(t)}
+\left\|\Lambda^{1-\varrho}(E,B)\right\|^2+\left\|\Lambda^{-\varrho}E\right\|^2+\left\|\Lambda^{-\varrho}(a_+-a_-)\right\|^2_{H^1}\\
\lesssim& \left\|\Lambda^{-\varrho}\{{\bf I-P}\}f\right\|^2_{\nu}+\left\|\Lambda^{1-\varrho}\{{\bf I-P}\}f\right\|^2_{\nu}
+\left\|\Lambda^{2-\varrho}\{{\bf I-P}\}f\right\|^2_{\nu}+\mathcal{\overline{E}}_{1,0,-\gamma}(t)\mathcal{\overline{D}}_{2,0,-\gamma}(t)
\end{aligned}
\end{equation}
holds for any $0\leq t\leq T$.
\end{lemma}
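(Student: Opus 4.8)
The plan is to adapt the macroscopic–dissipation argument of \cite{Lei-Zhao-JFA-2014} to the negative Sobolev space $\dot H^{-\varrho}$. First I would apply the Fourier transform in $x$ to the system \eqref{f}, take the velocity moments of the first equation against $\mu^{1/2}$, $v_i\mu^{1/2}$, $(|v|^2-3)\mu^{1/2}$, the traceless part of $v_iv_j\mu^{1/2}$, and $v_i(|v|^2-5)\mu^{1/2}$, and thereby derive the fluid-type equations for $(\widehat a_\pm,\widehat b,\widehat c)$ coupled with the transformed Maxwell equations $\partial_t\widehat E=i\xi\times\widehat B-\widehat J$, $\partial_t\widehat B=-i\xi\times\widehat E$, $i\xi\cdot\widehat E=\widehat{a_+-a_-}$, $i\xi\cdot\widehat B=0$, where $J=\int_{\mathbb R^3}v\mu^{1/2}(f_+-f_-)\,dv=\langle v\mu^{1/2},\{{\bf I-P}\}(f_+-f_-)\rangle$ is purely microscopic. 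A key structural observation is that the $v_i\mu^{1/2}$-moment of the difference $f_+-f_-$ satisfies a transport-type equation with a term proportional to $E$ on the right-hand side; this is precisely the mechanism generating the dissipation of $E$.

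Next I would define the interaction functional $G^{-\varrho}_f(t)$ as a finite linear combination, with small but fixed coefficients, of quantities of the type $\mathrm{Re}\,\langle|\xi|^{-2\varrho}(\text{second-order }v\text{-moment of }\{{\bf I-P}\}\widehat f),\ i\xi\cdot(\widehat a_\pm,\widehat b,\widehat c)\rangle$ (the usual micro–macro cross terms producing the $H^1$-dissipation of $a_+-a_-$), together with the electromagnetic cross terms $\mathrm{Re}\,\langle|\xi|^{-2\varrho}\widehat E,\ i\xi\times\widehat B\rangle$ (which, using $\partial_t\widehat B=-i\xi\times\widehat E$ and $i\xi\cdot\widehat B=0$, generates $\|\Lambda^{1-\varrho}B\|^2$) and $\mathrm{Re}\,\langle|\xi|^{-2\varrho}\widehat E,\ \widehat J\rangle$ together with $\mathrm{Re}\,\langle|\xi|^{-2\varrho}\widehat E,\ (\text{first-order micro moment of }\widehat f_+-\widehat f_-)\rangle$ (which, via the $E$-source noted above, generates $\|\Lambda^{-\varrho}E\|^2$). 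Each summand is controlled in absolute value, after Cauchy–Schwarz, by $\|\Lambda^{1-\varrho}(f,E,B)\|^2+\|\Lambda^{-\varrho}(f,E,B)\|^2+\|\Lambda^{2-\varrho}E\|^2$, the $\Lambda^{2-\varrho}E$ term accounting for a spatial derivative of $E$ produced when the $E$-equation is substituted into one of the cross terms; this yields the claimed bound on $|G^{-\varrho}_f(t)|$.

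Then I would differentiate $G^{-\varrho}_f(t)$ in $t$, substitute the equations from the first step, and collect terms. The diagonal contributions produce the non-negative dissipation $\|\Lambda^{1-\varrho}(E,B)\|^2+\|\Lambda^{-\varrho}E\|^2+\|\Lambda^{-\varrho}(a_+-a_-)\|^2_{H^1}$ (using $a_+-a_-=\nabla_x\cdot E$ for the part of the $a_+-a_-$ norm not already produced by its own evolution), while all remaining linear terms involve $\{{\bf I-P}\}f$ and, after an $\varepsilon$-Cauchy–Schwarz, are absorbed into $\varepsilon(\|\Lambda^{1-\varrho}(E,B)\|^2+\|\Lambda^{-\varrho}E\|^2+\|\Lambda^{-\varrho}(a_+-a_-)\|_{H^1}^2)$ plus $\|\Lambda^{-\varrho}\{{\bf I-P}\}f\|_\nu^2+\|\Lambda^{1-\varrho}\{{\bf I-P}\}f\|_\nu^2+\|\Lambda^{2-\varrho}\{{\bf I-P}\}f\|_\nu^2$. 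Choosing the coefficients in $G^{-\varrho}_f$ small enough that the net coefficient of the dissipation stays positive after these absorptions is routine.

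Finally, the quadratic terms — those coming from $\pm(E+v\times B)\cdot\nabla_v f$, $\pm\tfrac12 E\cdot v f$ and $\Gamma(f,f)$ — must be estimated after multiplication by $|\xi|^{-2\varrho}$, and this is the main technical point. Here I would use the Hardy–Littlewood–Sobolev inequality of Lemma \ref{lemma2.3} to trade the negative derivative for an $L^p\to L^q$ gain, the Sobolev interpolation of Lemma \ref{lemma2.2} to turn the resulting low norms into $\|\Lambda^{2-\varrho}(\cdot)\|$-type quantities, and the weighted bilinear estimates of Lemma \ref{lemma-nonlinear} for the collision part, exactly as in the proof of Lemma \ref{LemmaF}; the outcome is a bound of the form $\mathcal{\overline{E}}_{1,0,-\gamma}(t)\,\mathcal{\overline{D}}_{2,0,-\gamma}(t)$, which is the last term on the right of \eqref{G_{E,B}}. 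Integrating the resulting differential identity proves the lemma. The main obstacle I expect is organizing the negative-order estimates of the Lorentz-force nonlinearities so that no norm stronger than $\Lambda^{2-\varrho}$ (hence nothing outside $\mathcal{\overline{D}}_{2,0,-\gamma}$) is needed: the first-order velocity growth in $(E+v\times B)\cdot\nabla_v f$ and $E\cdot v f$ has to be absorbed by the exponential factor hidden in $\mathcal{\overline{E}}_{1,0,-\gamma}$ and $\mathcal{\overline{D}}_{2,0,-\gamma}$ before the spatial interpolation is applied.
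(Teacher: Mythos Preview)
Your proposal is correct and follows essentially the same approach as the paper: the paper itself does not give a detailed proof but simply invokes Lemmas~3.2 and~3.3 of \cite{Lei-Zhao-JFA-2014}, which carry out in $\dot H^{-\varrho}$ precisely the macro--micro interaction-functional construction (moment equations, cross terms for $(E,B)$ and $a_+-a_-$, $\varepsilon$-Cauchy--Schwarz absorptions, and nonlinear estimates via Lemmas~\ref{lemma2.2}--\ref{lemma2.3} and~\ref{lemma-nonlinear}) that you outline. One cosmetic remark: the lemma is a differential inequality, so no final time integration is needed.
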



\noindent 4.3 {\bf The proof of Lemmas \ref{lemma4}, \ref{lemma5} and \ref{lemma6}.} We first give the proof of Lemma \ref{lemma6} as it is the most difficult one among those three lemmas.  The standard energy estimate on $\partial^\alpha f$ with $1\leq|\alpha|\leq N-1$ weighted by the time-velocity dependent function $w_{\ell,-\gamma}=w_{\ell,-\gamma}(t,v)$ gives
\begin{equation*}
\begin{aligned}
&\frac{d}{dt}\sum_{1\leq|\alpha|\leq N-1}\left\|w_{\ell,-\gamma}\partial^\alpha f\right\|^2+\sum_{1\leq|\alpha|\leq N-1}\left\|w_{\ell,-\gamma}\partial^\alpha f\right\|^2_{\nu}
+\frac{1}{(1+t)^{1+\vartheta}}\left\| w_{\ell,-\gamma}\partial^\alpha f\langle v\rangle\right\|^2\\{}
\lesssim&\sum_{1\leq|\alpha|\leq N-1}\left\|\partial^\alpha f\right\|_{\nu}^2+\sum_{1\leq|\alpha|\leq N-1}
\left\|\partial^\alpha E\right\|\left\|\mu^\delta\partial^\alpha f\right\|
+\underbrace{\sum_{1\leq|\alpha|\leq N-1}\left|\left(\partial^\alpha (v\times B)\cdot\nabla_v f),w^2_{\ell,-\gamma}\partial^\alpha f\right)\right|}_{J_1}\\{}
&+\underbrace{\sum_{1\leq|\alpha|\leq N-1}\left|\left(\partial^\alpha( E\cdot vf+E\cdot\nabla_v f),w^2_{\ell,-\gamma}\partial^\alpha f\right)\right|}_{J_2}+\underbrace{\sum_{1\leq|\alpha|\leq N-1}\left|\left(\partial^\alpha \Gamma(f,f),w^2_{\ell,-\gamma}\partial^\alpha f\right)\right|}_{J_{3}}.
\end{aligned}
\end{equation*}
{For $J_1$, we deduce
that
\begin{equation*}
\begin{aligned}
J_1
\lesssim&\underbrace{\left\|\partial^{\alpha}B\right\|
\left\|w_{\ell-1,-\gamma}\nabla_vf\langle v\rangle^{1-\frac{3\gamma}2}\right\|_{L^2_vL^\infty_x}
\left\|w_{\ell,-\gamma}\partial^\alpha f\right\|_\nu}_{J_{1,1}}\\
&+\underbrace{\sum_{1\leq|\alpha-\alpha_1|\leq N_0-2}\left\|\partial^{\alpha_1}B\right\|_{L^6_x}
\left\|w_{\ell-1,-\gamma}\nabla_v\partial^{\alpha-\alpha_1}f\langle v\rangle^{1-\frac{3\gamma}2}\right\|_{L^2_vL^3_x}
\left\|w_{\ell,-\gamma}\partial^\alpha f\right\|_{\nu}}_{J_{1,2}}\\{}
&+\underbrace{\sum_{|\alpha-\alpha_1|= N_0-1}\left\|\partial^{\alpha_1}B\right\|_{L^\infty_x}
\left\|w_{\ell-1,-\gamma}\nabla_v\partial^{\alpha-\alpha_1}f\langle v\rangle^{1-\frac{3\gamma}2}\right\|
\left\|w_{\ell,-\gamma}\partial^\alpha f\right\|_{\nu}}_{J_{1,3}}\\{}
&+\underbrace{\sum_{|\alpha-\alpha_1|\geq N_0,\atop
|\alpha_1|=N_0-1}\left\|\partial^{\alpha_1}B\right\|_{L^6_x}
\left\|w_{\ell-1,-\gamma}\nabla_v\partial^{\alpha-\alpha_1}\{{\bf I-P}\}f\langle v\rangle^{\frac12-\frac{\gamma}2}\right\|_{L^2_vL^3_x}
\left\|\langle v\rangle^{\frac12-\frac{\gamma}2}w_{\ell,-\gamma}\partial^\alpha f\right\|}_{J_{1,4}}\\{}
&+\underbrace{\sum_{|\alpha-\alpha_1|\geq N_0, \atop2\leq|\alpha_1|\leq N_0-1}\int_{\mathbb{R}^3}\left|\partial^{\alpha_1}B\right|
\left|w_{\ell-1,-\gamma}\nabla_v\partial^{\alpha-\alpha_1}\{{\bf I-P}\}f\langle v\rangle^{\frac12-\frac{\gamma}2}\right|
\left|w_{\ell,-\gamma}\partial^\alpha f\langle v\rangle^{\frac12-\frac{\gamma}2}\right|dx}_{J_{1,5}}\\{}
&+\underbrace{\sum_{|\alpha-\alpha_1|\geq N_0}\int_{\mathbb{R}^3}\left|\partial^{\alpha_1}B\right|
\left|w_{\ell-1,-\gamma}\nabla_v\partial^{\alpha-\alpha_1}{\bf P}f\langle v\rangle^{1-\frac{3\gamma}2}\right|
\left|w_{\ell,-\gamma}\partial^\alpha f\right|_{L^2_\nu}dx}_{J_{1,6}}\\{}
\end{aligned}
\end{equation*}
The first three term and the last term can be bounded by $$\sum_{i=1}^3J_{1,i}+J_{1,6}\lesssim\mathcal{E}_N(t)\mathcal{E}^1_{N_0,l_0,-\gamma}(t)+\mathcal{E}_{N_0}(t)\mathcal{D}_{N-1}(t)+\varepsilon\mathcal{D}_{N-1,\ell,-\gamma}(t)$$
where we take $l_0\geq\ell+\frac32-\frac1\gamma$ such that $w_{\ell,-\gamma}\langle v\rangle^{1-\frac{3\gamma}2}\leq w_{l_0,-\gamma}$.

As for the last two terms ${J_{1,4}}$ and ${J_{1,5}}$, we only estimate
${J_{1,4}}$ since ${J_{1,5}}$ can be obtained in a similar way,
\begin{eqnarray*}
J_{1,4}&\lesssim&\sum_{|\alpha-\alpha_1|\geq N_0,\atop|\alpha_1|=N_0-1}\left\|\partial^{\alpha_1} B\right\|_{L^6_x} \left\| w_{\ell-1,-\gamma}\partial^{\alpha-\alpha_1}\nabla_v\{{\bf I-P}\}f\langle v\rangle^{\frac\gamma2} \right\|^{1-\theta_3}_{L^2_vL^3_x}\\
&&\times\left\| w_{\ell-1,-\gamma}\partial^{\alpha-\alpha_1}\nabla_v\{{\bf I-P}\}f\langle v\rangle^{\widetilde{\ell}_3} \right\|^{\theta_3}_{L^2_vL^3_x}\left\| w_{\ell,-\gamma}\partial^{\alpha}f\langle v\rangle^{\frac\gamma{2}} \right\|^{1-\theta_3}\left\| w_{\ell,-\gamma}\partial^{\alpha}f\langle v\rangle^{\widetilde{\ell}_3}\right\|^{\theta_3}\\
&\lesssim&\sum_{|\alpha-\alpha_1|\geq N_0,\atop|\alpha_1|=N_0-1}\left\|\nabla^{N_0} B\right\|^{\frac1{\theta_3}} \left\| w_{\ell-1,-\gamma}\partial^{\alpha-\alpha_1}\nabla_v\{{\bf I-P}\}f\langle v\rangle^{\widetilde{\ell}_3} \right\|_{L^2_vL^3_x}
\left\| w_{\ell,-\gamma}\partial^{\alpha}f\langle v\rangle^{\widetilde{\ell}_3} \right\|\\
&&+\sum_{|\alpha-\alpha_1|\geq N_0,\atop|\alpha_1|=N_0-1}\varepsilon\left(\left\|w_{\ell,-\gamma}\partial^{\alpha}f\right\|_\nu^2
+\left\| w_{\ell-1,-\gamma}\partial^{\alpha-\alpha_1}\nabla_v\{{\bf I-P}\}f\right\|^2_{L^2_\nu L^3_x}\right)\\
&\lesssim&\left\|\nabla^{N_0}B\right\|^{\frac1{\theta_3}}\mathcal{\widetilde{D}}_{N-1,l_1^*,1}(t)
+\varepsilon\mathcal{D}_{{N-1},\ell,-\gamma}(t).
\end{eqnarray*}
where $\mathcal{\widetilde{D}}_{N-1,l_1^*,1}(t)$ is given in \eqref{D_{m,l^*_1,1}} with $m=N-1$ and $\theta_3$ satisfies that
$\frac12-\frac12\gamma=\frac\gamma2(1-\theta_3)+\widetilde{\ell}_3\theta_3$ which yields that $\theta_3=\frac{1-2\gamma}{2\widetilde{\ell}_3-\gamma}$.
Meanwhile $l_1^*$ satisfies that
$\widetilde{\ell}_3+(-\gamma)(\ell-1)\leq l_1^*-1+\frac\gamma2
$ which deduce that $l_1^*\geq \widetilde{\ell}_3+(-\gamma)(\ell-1)+1-\frac\gamma2$. Notice that $\gamma\in(-3,-1)$, we can take $l_1^*\geq \widetilde{\ell}_3-\frac\gamma2-\gamma\ell$.
Consequently, $$J_1\lesssim\left\|\nabla^2B\right\|_{H^{ N_0-2}_x}^{\frac1{\theta_3}}\mathcal{\widetilde{D}}_{N-1,l_1^*,1}(t)
+\mathcal{E}_N(t)\mathcal{E}^1_{N_0,l_0,-\gamma}(t)+\mathcal{E}_{N_0}(t)\mathcal{D}_{N-1}(t)+\varepsilon\mathcal{D}_{N-1,\ell,-\gamma}(t).$$
By the virtue of the estimates on $J_1$, we also have
\begin{equation*}
\begin{aligned}
J_2\lesssim&\left\|\nabla^2E\right\|_{H^{ N_0-2}_x}^{\frac1{\theta_3}}\mathcal{\widetilde{D}}_{N-1,l_1^*,1}(t)
+\mathcal{E}_N(t)\mathcal{E}^1_{N_0,l_0,-\gamma}(t)+\mathcal{E}_{N_0}(t)\mathcal{D}_{N-1}(t)\\{}
&
+\|E\|_{L^\infty_x}^{\frac{2-\gamma}{1-\gamma}}\sum_{1\leq|\alpha|\leq N-1}\left\|w_{\ell,-\gamma}\partial^{\alpha}\{{\bf I-P}\}f\langle v\rangle\right\|^2+\varepsilon\mathcal{D}_{N-1,\ell,-\gamma}(t)
\end{aligned}
\end{equation*}
Applying Lemma \ref{lemma-nonlinear} gives
\begin{equation*}
\begin{aligned}
J_{3}\lesssim&(\mathcal{E}_{N-1,\ell,-\gamma}(t)+\varepsilon)\mathcal{D}_{N-1,\ell,-\gamma}(t).
\end{aligned}
\end{equation*}
}

Collecting the above estimates gives the desired weighted energy type estimates on the derivatives of $f(t,x,v)$ with respect to the $x-$variables only as follows
\begin{equation}\label{E-1}
\begin{aligned}
&\frac{d}{dt}\sum_{1\leq|\alpha|\leq N-1}\left\|w_{\ell,-\gamma}\partial^\alpha f\right\|^2+\sum_{1\leq|\alpha|\leq N-1}\left\|w_{\ell,-\gamma}\partial^\alpha f\right\|^2_{\nu}
+\frac{1}{(1+t)^{1+\vartheta}}\left\| w_{\ell,-\gamma}\partial^\alpha f\langle v\rangle\right\|^2_\nu\\{}
\lesssim&\sum_{1\leq|\alpha|\leq N-1}\left\|\partial^\alpha f\right\|_{\nu}^2+\sum_{1\leq|\alpha|\leq N-1}
\left\|\partial^\alpha E\right\|\left\|\mu^\delta\partial^\alpha f\right\|+\{\mathcal{E}_{N-1,\ell,-\gamma}(t)+\varepsilon\}\mathcal{D}_{N-1,\ell,-\gamma}(t)\\
&+\|E\|_{L^\infty_x}^{\frac{2-\gamma}{1-\gamma}}\sum_{1\leq|\alpha|\leq N-1}\left\|w_{\ell,-\gamma}\partial^{\alpha}\{{\bf I-P}\}f\langle v\rangle\right\|^2+\left\|\nabla^2(E,B)\right\|_{H^{ N_0-2}_x}^{\frac1{\theta_3}}\mathcal{\widetilde{D}}_{N-1,l_1^*,1}(t)
+\mathcal{E}_N(t)\mathcal{E}^1_{N_0,l_0,-\gamma}(t).
\end{aligned}
\end{equation}
After applying $\{\bf I-P\}$ to the equation \eqref{f gn}, one can get the weighted energy estimate on $\{{\bf I-P}\}f$
\begin{equation}\label{E-2}
\begin{aligned}
&\frac{d}{dt}\left\|w_{\ell,-\gamma}\{{\bf I-P}\}f\right\|^2+\left\|w_{\ell,-\gamma}\{{\bf I-P}\}f\right\|_{\nu}^2+\frac{1}{(1+t)^{1+\vartheta}}\left\| w_{\ell,-\gamma}\{{\bf I-P}\}f\langle v\rangle\right\|^2\\{}
\lesssim&\|\{{\bf I-P}\}f\|_{\nu}^2+\|E\|_{L^\infty_x}^{\frac{2-\gamma}{1-\gamma}}\left\|w_{\ell,-\gamma}\{{\bf I-P}\}f\langle v\rangle\right\|^2+\|E\|^2+\left\|\nabla_x f\right\|_{\nu}^2
+\left(\mathcal{E}_{N-1,\ell,-\gamma}(t)
+\varepsilon\right)\mathcal{D}_{N-1,\ell,-\gamma}(t).
\end{aligned}
\end{equation}
As to the weighted energy estimate on $\{{\bf I-P}\}\partial^\alpha_\beta f$ with $|\alpha|+|\beta|\leq N-1,|\beta|\geq1$, applying the similar trick as \eqref{E-1}, we also deduce
\begin{equation}\label{E-3}
\begin{aligned}
&\frac{d}{dt}\sum_{m=1}^{N-1}C_m\sum_{\substack{|\beta|=m,\\|\alpha|+|\beta|\leq N-1}}\left\|w_{\ell-|\beta|,-\gamma}\partial^\alpha_\beta\{{\bf I-P}\}f\right\|^2
+\sum_{\substack{|\alpha|+|\beta|\leq N-1,\\|\beta|\geq 1}}\bigg\{\left\| w_{\ell-|\beta|,-\gamma}\partial^\alpha_\beta\{{\bf I-P}\}f\right\|_{\nu}^2\\{}
&+\frac{1}{(1+t)^{1+\vartheta}}\left\| w_{\ell-|\beta|,-\gamma}\partial^\alpha_\beta\{{\bf I-P}\}f\langle v\rangle \right\|^2\bigg\}\\{}
\lesssim
&\sum_{|\alpha|\leq N-2 }\left(\left\|\nabla^{|\alpha|+1}_xf\right\|_\nu^2+\|\{{\bf I-P}\}f\|^2_\nu+\left\|\partial^\alpha E\right\|^2\right)+\|E\|_{L^\infty_x}^{\frac{2-\gamma}{1-\gamma}}\sum_{|\alpha|+|\beta|\leq N-1}\left\|w_{\ell,-\gamma}\partial^{\alpha}_\beta\{{\bf I-P}\}f\langle v\rangle\right\|^2\\{}
&
+\left\|\nabla^2(E,B)\right\|_{H^{ N_0-2}_x}^{\frac1{\theta_3}}\mathcal{\widetilde{D}}_{N-1,l_1^*,1}(t)+\mathcal{E}_N(t)\mathcal{E}^1_{N_0,l_0,-\gamma}(t)+\left(\mathcal{E}_{N-1,\ell,-\gamma}(t)
+\varepsilon\right)\mathcal{D}_{N-1,\ell,-\gamma}(t).
\end{aligned}
\end{equation}
Here we used the fact that $\left((v\times B)\cdot\partial^\alpha_\beta\nabla_v \{{\bf I-P} \}f,w^2_{\ell-|\beta|,-\gamma}\partial_\beta^\alpha \{{\bf I-P} \}f\right)=0.$

Therefore, recalling \eqref{D_{m,l^*_1,1}}, a proper linear combination of (\ref{lemma5-1}), (\ref{E-1}), (\ref{E-2}) and (\ref{E-3}) gives \eqref{lemma6-1} by taking
$l_0\geq l_1+\frac52$, $\theta_3=\frac{1-2\gamma}{2\widetilde{\ell}_3-\gamma}$ , $l_1^*\geq \widetilde{\ell}_3-\frac\gamma2-\gamma\ell$ and further by replacing $\ell$ with $l_1\geq N$.
This proves Lemma \ref{lemma6}.

\medskip

Now, for brevity let us modify the proof of Lemma \ref{lemma6} above so as to obtain Lemma  \ref{lemma4} and Lemma  \ref{lemma5}. To prove Lemma \ref{lemma4}, similarly for deducing \eqref{E-1}, \eqref{E-2} and \eqref{E-3}, one can get
 \begin{equation}\label{C-1}
\begin{aligned}
&\frac{d}{dt}\sum_{1\leq|\alpha|\leq N_0}\left\|w_{\ell,-\gamma}\partial^\alpha f\right\|^2 +\sum_{1\leq|\alpha|\leq N_0}\left\|w_{\ell,-\gamma}\partial^\alpha f\right\|^2_{\nu}+\frac{1}{(1+t)^{1+\vartheta}}\left\| w_{\ell,-\gamma}\partial^\alpha f\langle v\rangle\right\|^2\\
\lesssim&\sum_{1\leq|\alpha|\leq N_0}\left\|\partial^\alpha f\right\|_{\nu}^2 +\sum_{1\leq|\alpha|\leq N_0}\left\|\partial^\alpha E\right\|
\left\|\mu^\delta\partial^\alpha f\right\|+\|E\|_{L^\infty_x}^{\frac{2-\gamma}{1-\gamma}}\sum_{1\leq|\alpha|\leq N_0}\left\|w_{\ell,-\gamma}\partial^{\alpha}\{{\bf I-P}\}f\langle v\rangle\right\|^2\\
&+\left\|\nabla^2(E,B)\right\|_{H^{ N_0-2}_x}^{\frac1{\theta_1}}\mathcal{\widetilde{D}}_{N_0,l_0^*,1}(t)
+\left(\mathcal{E}_{N_0,\ell,-\gamma}(t)+\varepsilon\right)
\mathcal{D}_{{N_0},\ell,-\gamma}(t),
\end{aligned}
\end{equation}
and
\begin{equation}\label{C-2}
\begin{aligned}
&\frac{d}{dt}\|w_{\ell,-\gamma}\{{\bf I-P}\}f\|^2+\|w_{\ell,-\gamma}\{{\bf I-P}\}f\|_{\nu}^2+\frac{1}{(1+t)^{1+\vartheta}}\| w_{\ell,-\gamma}\{{\bf I-P}\}f\langle v\rangle \|^2\\{}
\lesssim&\|\{{\bf I-P}\}f\|_{\nu}^2+\|E\|_{L^\infty_x}^{\frac{2-\gamma}{1-\gamma}}\left\|w_{\ell,-\gamma}\{{\bf I-P}\}f\langle v\rangle\right\|^2+\|E\|^2+\|\nabla f\|_{\nu}^2
+(\mathcal{E}_{N_0,\ell,-\gamma}(t)+\varepsilon)
\mathcal{D}_{{N_0},\ell,-\gamma}(t),
\end{aligned}
\end{equation}
and
\begin{equation}\label{C-3}
\begin{aligned}
&\frac{d}{dt}\sum_{m=1}^{N_0}C_m\sum_{\substack{|\beta|=m,\\|\alpha|+|\beta|\leq N_0}}\left\|w_{\ell-|\beta|,-\gamma}\partial^\alpha_\beta\{{\bf I-P}\}f\right\|^2
+\sum_{\substack{|\alpha|+|\beta|\leq N_0,\\|\beta|\geq 1}}\bigg(\left\| w_{\ell-|\beta|,-\gamma}\partial^\alpha_\beta\{{\bf I-P}\}f\right\|_{\nu}^2\\{}
&+\frac{1}{(1+t)^{1+\vartheta}}\left\| w_{\ell-|\beta|,-\gamma}\partial^\alpha_\beta\{{\bf I-P}\}f\langle v\rangle\right\|^2\bigg)\\{}
\lesssim&\sum_{|\alpha|\leq N_0-1 }\left(\left\|\nabla^{|\alpha|+1}f\right\|_\nu^2+\|\{{\bf I-P}\}f\|_\nu^2+\left\|\partial^\alpha E\right\|^2\right)+\|E\|_{L^\infty_x}^{\frac{2-\gamma}{1-\gamma}}\sum_{|\alpha|+|\beta|\leq N_0}\left\|w_{\ell-|\beta|,-\gamma}\partial^{\alpha}_\beta\{{\bf I-P}\}f\langle v\rangle\right\|^2\\{}
&+\left\|\nabla^2(E,B)\right\|_{H^{ N_0-2}_x}^{\frac1{\theta_1}}\mathcal{\widetilde{D}}_{N_0,l_0^*,1}(t)
+\left(\mathcal{E}_{N_0,\ell,-\gamma}(t)+\varepsilon\right)
\mathcal{D}_{{N_0},\ell,-\gamma}(t),
\end{aligned}
\end{equation}
where $\mathcal{\widetilde{D}}_{{N_0},\ell,-\gamma}(t)$ is given in \eqref{D_{N_0,l^*_0,1}}.
Therefore, recalling \eqref{D_{N_0,l^*_0,1}}, a proper linear combination of (\ref{Lemma1-1}), \eqref{Lemma4.1-1}, \eqref{G_{E,B}}, (\ref{C-1}), (\ref{C-2}) and (\ref{C-3}) yields the desired estimate \eqref{lemma4-1} by taking
$\theta_1=\frac{1-2\gamma}{2\widetilde{l}_1-\gamma}$and $l_0^*\geq \widetilde{\ell}_1-\frac\gamma2-\gamma\ell$. This proves Lemma \ref{lemma4}.  Finally, Lemma \ref{lemma5} follows from modifying the proof of Lemma \ref{lemma6} in a straightforward way without considering any weight function; the details of the proof are omitted for brevity.  \qed


\medskip
\noindent 4.4 {\bf The proof of Lemma \ref{lemma7}.}
To prove Lemma \ref{lemma7}, we firstly estimate the highest $N-$th order norm as follows:
\begin{eqnarray}\label{G-1}
&&\frac{d}{dt}\sum_{|\alpha|=N}\left\|w_{l_1^*,1}\partial^\alpha f\right\|^2
+\sum_{|\alpha|=N}(1+t)^{-1-\vartheta}\left\|w_{l_1^*,1}\partial^\alpha f\langle v\rangle\right\|^2
+\sum_{|\alpha|=N}\left\|w_{l_1^*,1}\partial^\alpha f\right\|_\nu^2\\ \nonumber
&\lesssim&\sum_{|\alpha|=N}\left\|\partial^\alpha f\right\|_\nu^2
+\sum_{|\alpha|=N}\left\|\partial^\alpha f\right\|_\nu\left\|\partial^\alpha E\right\|
+\underbrace{\sum_{|\alpha|=N}\left(\partial^\alpha(v\times B\cdot\nabla_v f),w^2_{l_1^*,1}\partial^\alpha f\right)}_{R_1}\\ \nonumber
&&
+\underbrace{\sum_{|\alpha|=N}\left(\partial^\alpha(E\cdot\nabla_v f),w^2_{l_1^*,1}\partial^\alpha f\right)}_{R_2}+\underbrace{\sum_{|\alpha|=N}\left(\partial^\alpha(v\cdot E f),w^2_{l_1^*,1}\partial^\alpha f\right)}_{R_3}+\underbrace{\sum_{|\alpha|=N}\left(\partial^\alpha\Gamma(f,f),w^2_{l_1^*,1}\partial^\alpha f\right)}_{R_4}.
\end{eqnarray}

Applying macro-micro decomposition and Holder inequalities gives
\begin{eqnarray*}
R_1&=&\underbrace{\sum_{|\alpha-\alpha_1|\leq N_0-2}\left\|\partial^{\alpha_1} B\right\|_{L^3_x}
\left\|w_{l_1^*,1}\nabla_v \partial^{\alpha-\alpha_1}{\bf \{I-P\}}f\langle v\rangle^{-\frac\gamma2+1}\right\|_{L^2_vL^6_x}
\left\|w_{l_1^*,1}\partial^\alpha f\langle v\rangle^\frac\gamma2\right\|}_{R_{1,1}}\\
&&+\underbrace{\sum_{|\alpha-\alpha_1|= N_0-1}\left\|\partial^{\alpha_1} B\right\|_{L^\infty_x}
\left\|w_{l_1^*,1}\nabla_v \partial^{\alpha-\alpha_1}{\bf \{I-P\}}f\langle v\rangle^{-\frac\gamma2+1}\right\|
\left\|w_{l_1^*,1}\partial^\alpha f\langle v\rangle^{\frac\gamma2}\right\|}_{R_{1,2}}\\
&&+\underbrace{\sum_{|\alpha-\alpha_1|+1\geq N_0+1, \atop 1\leq|\alpha_1|\leq N_0-2}
\left\|\partial^{\alpha_1} B\right\|_{L^\infty_x}\left\|w_{l_1^*-1,1}\nabla_v \partial^{\alpha-\alpha_1}{\bf \{I-P\}}f\langle v\rangle\right\|
\left\|w_{l_1^*,1}\partial^\alpha f\langle v\rangle\right\|}_{R_{1,3}}\\
&&+\underbrace{\sum_{|\alpha-\alpha_1|+1\geq N_0+1,\atop N_0-1\leq|\alpha_1|\leq N_0}
\left\|\partial^{\alpha_1} B\right\|\left\|w_{l_1^*-1,1}\nabla_v \partial^{\alpha-\alpha_1}{\bf \{I-P\}}f\langle v\rangle\right\|_{L^2_vL^\infty_x}
\left\|w_{l_1^*,1}\partial^\alpha f\langle v\rangle\right\|}_{R_{1,4}}\\ \nonumber
&&+\underbrace{\sum_{\alpha_1\leq\alpha}\int_{\mathbb{R}^3_x}\left|\partial^{\alpha_1}B\right|\left|\mu^\delta\partial^{\alpha-\alpha_1}f\right|
\left|\mu^\delta\partial^{\alpha}f\right|dx}_{R_{1,5}}.
\end{eqnarray*}
It is straightforward to compute that
\begin{eqnarray*}
R_{1,1}+R_{1,2}+R_{1,5}\lesssim
\mathcal{E}_{N}(t)\mathcal{E}^1_{N_0,l_0,-\gamma}(t)
+\mathcal{E}_{N}(t)\mathcal{D}_{N}(t)+\varepsilon\sum_{|\alpha|=N}\left\|w_{l_1^*,1}\partial^\alpha f\right\|_\nu^2,
\end{eqnarray*}
{where $l_0$ satisfies that $l_1^*-|\beta|+1-\frac\gamma2\leq -\gamma (l_0-|\beta|-1)$ so $l_0\geq\frac{l_1^*}{-\gamma}+\frac32+|\beta|+\frac{|\beta|-1}\gamma$.
Noticing that $\gamma\in(-3,-1)$, so we take $l_0\geq l_1^*+\frac{5}{2}$.}
As to $R_{1,3}$ and $R_{1,4}$, one deduce by Cauchy's inequality
\begin{eqnarray*}
R_{1,3}+R_{1,4}&\lesssim&\sum_{|\alpha-\alpha_1|+1\geq N_0+1,\atop 1\leq|\alpha_1|\leq N_0-2}(1+t)^{1+\vartheta}
\left\|\partial^{\alpha_1} B\right\|^2_{L^\infty_x}\left\|w_{l_1^*-1,1}\nabla_v \partial^{\alpha-\alpha_1}{\bf \{I-P\}}f\langle v\rangle\right\|^2\\
&&+\sum_{|\alpha-\alpha_1|+1\geq N_0+1, \atop N_0-1\leq|\alpha_1|\leq N_0}
(1+t)^{1+\vartheta}\left\|\partial^{\alpha_1} B\right\|^2
\left\|w_{l_1^*-1,1}\nabla_v \partial^{\alpha-\alpha_1}{\bf \{I-P\}}f\langle v\rangle\right\|^2
_{L^2_vL^\infty_x}\\
&&+\varepsilon(1+t)^{-1-\vartheta}\sum_{|\alpha|=N}\left\|w_{l_1^*,1}\partial^\alpha f\langle v\rangle\right\|^2.
\end{eqnarray*}
Consequently
\begin{eqnarray*}
R_1&\lesssim&\sum_{|\alpha-\alpha_1|+1\geq N_0+1,\atop 1\leq|\alpha_1|\leq N_0-2}(1+t)^{1+\vartheta}
\left\|\partial^{\alpha_1} B\right\|^2_{L^\infty_x}
\left\|w_{l_{l_1^*-1},1}\nabla_v \partial^{\alpha-\alpha_1}{\bf \{I-P\}}f\langle v\rangle\right\|^2\\
&&+\sum_{|\alpha-\alpha_1|+1\geq N_0+1,\atop N_0-1\leq|\alpha_1|\leq N_0}
(1+t)^{1+\vartheta}\left\|\partial^{\alpha_1} B\right\|^2
\left\|w_{l_1^*-1,1}\nabla_v \partial^{\alpha-\alpha_1}{\bf \{I-P\}}f\langle v\rangle\right\|^2
_{L^2_vL^\infty_x}\\
&&+\mathcal{E}_N(t)\mathcal{D}_N(t)+\mathcal{E}_{N}(t)\mathcal{E}^1_{N_0,l_0,-\gamma}(t)
+\varepsilon\sum_{|\alpha|=N}\left\{(1+t)^{-1-\vartheta}
\left\|w_{l_1^*,1}\partial^\alpha f\langle v\rangle\right\|^2+\left\|w_{l_1^*,1}\partial^\alpha f\right\|_\nu^2\right\}.
\end{eqnarray*}
By exploiting the same argument used to estimate $R_1$, one also deduces
\begin{eqnarray*}
R_2+R_3&\lesssim&\sum_{|\alpha-\alpha_1|+m\geq N_0+1,\atop { 1\leq|\alpha_1|\leq N_0-2},m\leq 1}(1+t)^{1+\vartheta}\|\partial^{\alpha_1} E\|^2_{L^\infty_x}\|w_{l_1^*-m,1}\nabla^m_v \partial^{\alpha-\alpha_1}{\bf \{I-P\}}f\|^2\\
&&+\sum_{|\alpha-\alpha_1|+m\geq N_0+1,\atop{ N_0-1\leq|\alpha_1|\leq N_0}, m\leq1}
(1+t)^{1+\vartheta}\|\partial^{\alpha_1} E\|^2\|w_{l_1^*-m,1}
\nabla^m_v \partial^{\alpha-\alpha_1}
{\bf \{I-P\}}f\|^2
_{L^2_vL^\infty_x}\\
&&+\sum_{|\alpha|=N}\|E\|_{L^\infty_x}^{\frac{2-\gamma}{1-\gamma}}\left\|w_{l_1^*,1}\partial^\alpha \{{\bf I-P}\}f\langle v\rangle\right\|^2 +\mathcal{E}_N(t)\mathcal{D}_N(t)+\mathcal{E}_{N}(t)\mathcal{E}^1_{N_0,l_0,-\gamma}(t)
\\{}
&&+\varepsilon\sum_{|\alpha|=N}\left\{(1+t)^{-1-\vartheta}\left\|w_{l_1^*,1}\partial^\alpha f\langle v\rangle\right\|^2
+\left\|w_{l_1^*,1}\partial^\alpha f\right\|_\nu^2\right\}.
\end{eqnarray*}
For $R_4$, Lemma \ref{lemma-nonlinear} tells us that
{\begin{eqnarray*}
R_4&\lesssim&
\underbrace{\sum_{|\alpha_1|\leq N_0-4,m\leq2}\int_{\mathbb{R}^3}\left|\nabla^m_v\left(\mu^{\delta}\partial^{\alpha_1}f\right)\right|
\left|w_{l_1^*,1}\partial^{\alpha-\alpha_1}f\right|_\nu\left|w_{l_1^*,1}\partial^{\alpha}f\right|_\nu dx}_{R_{4,1}}\\
&&+\underbrace{\sum_{|\alpha_1|=N_0-3\ or\ N_0-2 ,m\leq2}\int_{\mathbb{R}^3}\left|\nabla^m_v\left(\mu^{\delta}\partial^{\alpha_1}f\right)\right|
\left|w_{l_1^*,1}\partial^{\alpha-\alpha_1}f\right|_\nu\left|w_{l_1^*,1}\partial^{\alpha}f\right|_\nu dx}_{R_{4,2}}\\
&&+\underbrace{\sum_{|\alpha_1|\geq N_0-2,m\leq2}\int_{\mathbb{R}^3}\left|w_{l_1^*,1}\partial^{\alpha_1}f\right|_\nu
\left|\nabla^m_v\left(\mu^{\delta}\partial^{\alpha-\alpha_1}f\right)\right|\left|w_{l_1^*,1}\partial^{\alpha}f\right|_\nu dx}_{R_{4,3}}\\
&&+\underbrace{\sum_{|\alpha_1|\leq N_0}\int_{\mathbb{R}^3}\left|w_{l_1^*,1}\partial^{\alpha_1}f\right|
\left|w_{l_1^*,1}\partial^{\alpha-\alpha_1}f\right|_\nu\left|w_{l_1^*,1}\partial^{\alpha}f\right|_\nu dx}_{R_{4,4}}\\
&&+\underbrace{\sum_{N_0+1\leq|\alpha_1|\leq |\alpha|-1}\int_{\mathbb{R}^3}\left|w_{l_1^*,1}\partial^{\alpha_1}f\right|_\nu
\left|w_{l_1^*,1}\partial^{\alpha-\alpha_1}f\right|\left|w_{l_1^*,1}\partial^{\alpha}f\right|_\nu dx}_{R_{4,5}}\\
&&+\underbrace{\sum_{\alpha_1= \alpha}\int_{\mathbb{R}^3}\left|w_{l_1^*,1}\partial^{\alpha}f\right|_\nu
\left|w_{l_1^*,1}f\right|\left|w_{l_1^*,1}\partial^{\alpha}f\right|_\nu dx}_{R_{4,6}}.
\end{eqnarray*}
For $R_{4,1}$, one obtains by the H$\ddot{o}$lder inequality that
\begin{eqnarray*}
R_{4,1}&\lesssim&
\sum_{|\alpha_1|\leq N_0-4,\atop m\leq2}\left\|\nabla^m_v\left(\mu^{\delta}\partial^{\alpha_1}f\right)\right\|^2_{L^\infty_xL^2_v}
\left\|w_{l_1^*,1}\partial^{\alpha-\alpha_1}f\right\|^2_{\nu}+\varepsilon\left\|w_{l_1^*,1}\partial^{\alpha}f\right\|_\nu\\
&\lesssim&\mathcal{E}_{N_0,l_0,-\gamma}(t)
\sum_{|\alpha_1|\leq N_0-4}
\left\|w_{l_1^*,1}\partial^{\alpha-\alpha_1}f\right\|^2_{\nu }+\varepsilon\left\|w_{l_1^*,1}\partial^{\alpha}f\right\|^2_\nu.
\end{eqnarray*}
By using $L^2-L^\infty-L^2$, $L^\infty-L^2-L^2$, $L^6-L^3-L^2$  or $L^3-L^6-L^2$ type inequalities with respect to space derivative $x$,  one also has
\begin{eqnarray*}
\sum_{i=2}^{6} R_{4,i}\lesssim\max\left\{\mathcal{E}_{N_0,l_0,-\gamma}(t),
\mathcal{E}_{N-1,N-1,-\gamma}(t)\right\}
\sum_{1\leq |\alpha_1|\leq |\alpha|}\left\|w_{l_1^*,1}\partial^{\alpha_1}f\right\|^2_{\nu}
+\varepsilon\left\|w_{l_1^*,1}\partial^{\alpha}f\right\|^2_\nu.
\end{eqnarray*}}
Consequently
\begin{eqnarray*}
R_4&\lesssim&\max\{\mathcal{E}_{N_0,l_0,-\gamma}(t),
\mathcal{E}_{N-1,N-1,-\gamma}(t)\}
\sum_{1\leq |\alpha_1|\leq |\alpha|}\left\|w_{l_1^*,1}\partial^{\alpha_1}f\right\|^2_{\nu }
+\varepsilon\left\|w_{l_1^*,1}\partial^{\alpha}f\right\|^2_\nu.
\end{eqnarray*}
Recalling \eqref{Enj} in the case when $n=N$ and $j=0$, we refer to
\begin{eqnarray*}
\sum_{i=1}^4R_i&\lesssim& E^N_{tri,0}(t)+\mathcal{E}_{N}(t)\mathcal{E}^1_{N_0,l_0,-\gamma}(t)
+\mathcal{E}_{N}(t)\mathcal{D}_{N}(t)\\
&&+\sum_{|\alpha|=N}\left\|\partial^\alpha f\right\|_\nu\left\|\partial^\alpha E\right\|+\varepsilon\sum_{|\alpha|=N}\left\{(1+t)^{-1-\vartheta}
\left\|w_{l_1^*,1}\partial^\alpha f\langle v\rangle\right\|^2+\left\|w_{l_1^*,1}\partial^\alpha f\right\|_\nu^2\right\}.
\end{eqnarray*}
Collecting the above estimates into (\ref{G-1}) yields
\begin{eqnarray*}
&&\frac{d}{dt}\sum_{|\alpha|=N}\left\|w_{l_1^*,1}\partial^\alpha f\right\|^2+\sum_{|\alpha|=N}(1+t)^{-1-\vartheta}
\left\|w_{l_1^*,1}\partial^\alpha f\langle v\rangle\right\|^2+\sum_{|\alpha|=N}
\left\|w_{l_1^*,1}\partial^\alpha f\right\|_\nu^2\\
&\lesssim&\sum_{|\alpha|=N}\left\|\partial^\alpha f\right\|_\nu^2+E^N_{tri,0}(t)+\mathcal{E}_{N}(t)\mathcal{E}^1_{N_0,l_0,-\gamma}(t)
+\mathcal{E}_{N}(t)\mathcal{D}_{N}(t)+\sum_{|\alpha|=N}\left\|\partial^\alpha f\right\|_\nu\left\|\partial^\alpha E\right\|
.
\end{eqnarray*}
When $|\alpha|+|\beta|=N,|\beta|=1$, one has
\begin{eqnarray}\label{G-2}
&&\frac{d}{dt}\sum_{|\alpha|+|\beta|=N,|\beta|=1}\left\|w_{l_1^*-1,1}\partial_\beta^\alpha\{{\bf I-P}\} f\right\|^2
+\sum_{|\alpha|+|\beta|=N,|\beta|=1}\left\|w_{l_1^*-1,1}\partial_\beta^\alpha\{{\bf I-P}\} f\right\|^2_\nu
\\ \nonumber
&&+(1+t)^{-1-\vartheta}\sum_{|\alpha|+|\beta|=N,|\beta|=1}\left\|w_{l_1^*-1,1}\partial_\beta^\alpha\{{\bf I-P}\} f\langle v\rangle\right\|^2\\ \nonumber
&\lesssim&\sum_{|\alpha|+|\beta|=N,|\beta|=1}\left\{\eta\left\|w_{l_1^*,1}\partial^\alpha\{{\bf I-P}\} f\right\|^2_\nu
+\left\|\partial^{\alpha} \{{\bf I-P}\} f\right\|_\nu^2\right\}\\ \nonumber
&&+\underbrace{\sum_{|\alpha|+|\beta|=N,|\beta|=1}\left(\partial_\beta^\alpha(v\cdot \{{\bf I-P}\} f),w^2_{l_1^*-1,1}\partial_\beta^\alpha\{{\bf I-P}\} f\right)}_{R_5}\\ \nonumber
&&+\underbrace{\sum_{|\alpha|+|\beta|=N,|\beta|=1}\left(\partial_\beta^\alpha((v\times B)\cdot\nabla_v \{{\bf I-P}\} f),w^2_{l_1^*-1,1}\partial_\beta^\alpha\{{\bf I-P}\} f\right)}_{R_6}
\\ \nonumber \nonumber&&+\underbrace{\sum_{|\alpha|+|\beta|=N,|\beta|=1}\left(\partial_\beta^\alpha(E\cdot\nabla_v \{{\bf I-P}\} f),w^2_{l_1^*-1,1}\partial_\beta^\alpha\{{\bf I-P}\} f\right)}_{R_7}\\ \nonumber
&&+\underbrace{\sum_{|\alpha|+|\beta|=N,|\beta|=1}\left(\partial_\beta^\alpha(v\cdot E \{{\bf I-P}\} f),w^2_{l_1^*-1,1}\partial_\beta^\alpha\{{\bf I-P}\} f\right)}_{R_8}
\\ \nonumber &&+\underbrace{\sum_{|\alpha|+|\beta|=N,|\beta|=1}\left(\partial_\beta^\alpha\Gamma(f,f),w^2_{l_1^*-1,1}\partial_\beta^\alpha\{{\bf I-P}\} f\right)}_{R_9}
\\ \nonumber
&&+\underbrace{\sum_{|\alpha|+|\beta|=N,|\beta|=1}\left(\partial_\beta^\alpha I_{mac}(t),w^2_{l_1^*-1,1}\partial_\beta^\alpha\{{\bf I-P}\} f\right)}_{R_{10}},
\end{eqnarray}
{where $I_{mac}(t)$ is defined by
\begin{eqnarray}\label{I-mac}
I_{mac}(t)&=&{\{{\bf I-P}\}}q_1E\cdot v\mu^{1/2}+{\bf P}\left\{v\cdot\nabla_x f+q_0(E+v\times B)\cdot\nabla_v f-\frac{q_0}{2}E\cdot vf\right\}\\ \nonumber
&&-v\cdot\nabla_x {\bf P}f-q_0(E+v\times B)\cdot\nabla_v {\bf P}f+\frac{q_0}{2}E\cdot v{\bf P}f.
\end{eqnarray}
}
Unlike the corresponding linear term for the weight $w_{\ell,-\gamma}$,
here $R_5$ can be dominated by
\begin{eqnarray*}
R_5&\lesssim&\sum_{|\alpha|+|\beta|=N,|\beta|=1}\left(\partial^{\alpha+e_i} \{{\bf I-P}\} f
,w^2_{l_1^*-1,1}\partial_\beta^\alpha\{{\bf I-P}\} f\right)\\ \nonumber
&\lesssim&\sum_{|\alpha|+|\beta|=N,|\beta|=1}
\left\|w_{l_1^*,1}\partial^{\alpha+e_i} \{{\bf I-P}\} f\langle v\rangle^{-\frac\gamma2-1}\right\|
\left\|w_{l_1^*-1,1}\partial_\beta^\alpha\{{\bf I-P}\} f\langle v\rangle^{\frac\gamma2}\right\|\\ \nonumber
&\lesssim&\sum_{|\alpha|+|\beta|=N,|\beta|=1}
\left\|w_{l_1^*,1}\partial^{\alpha+e_i} \{{\bf I-P}\} f\langle v\rangle^{-\frac\gamma2-1}\right\|
^2+\varepsilon\sum_{|\alpha|+|\beta|=N,|\beta|=1}
\left\|w_{l_1^*-1,1}\partial_\beta^\alpha\{{\bf I-P}\} f\right\|^2_\nu
\end{eqnarray*}
As for $R_{10}$, it is straightforward to compute that
\[R_{10}\lesssim \|\nabla^{|\alpha|+1}f\|^2_\nu
+\|\nabla^{|\alpha|}E\|^2+\varepsilon
\left\|\nabla^{|\alpha|}\{{\bf I-P}\} f\right\|^2_\nu+\mathcal{E}_N(t)\mathcal{D}_N(t).\]
Applying the similar trick as $R_1\sim R_4 $  gives
\begin{eqnarray*}
\sum_{i=6}^9R_i&\lesssim & E^N_{tri,1}(t)
+\varepsilon\sum_{|\alpha|+|\beta|=N,|\beta|=1}\left\{(1+t)^{-1-\vartheta}\left\|w_{l_1^*-1,1}\partial_\beta^\alpha\{{\bf I-P}\} f\langle v\rangle\right\|^2+
\left\|w_{l_1^*-1,1}\partial_\beta^\alpha\{{\bf I-P}\} f\right\|^2_\nu\right\},
\end{eqnarray*}
where $E^{N}_{tri,1}(t)$ is given in \eqref{Enj} with $n=N$ and $j=1$.
Thus plugging the estimates on $R_5\sim R_{10}$ into (\ref{G-2}) yields
\begin{eqnarray*}
&&\frac{d}{dt}\sum_{|\alpha|+|\beta|=N,|\beta|=1}\left\|w_{l_1^*-1,1}\partial_\beta^\alpha\{{\bf I-P}\} f\right\|^2
+\sum_{|\alpha|+|\beta|=N,|\beta|=1}\left\|w_{l_1^*-1,1}\partial_\beta^\alpha\{{\bf I-P}\} f\right\|^2_\nu\\
&&+(1+t)^{-1-\vartheta}\sum_{|\alpha|+|\beta|=N,|\beta|=1}\left\|w_{l_1^*-1,1}\partial_\beta^\alpha\{{\bf I-P}\} f\langle v\rangle\right\|^2\\
&\lesssim&{\sum_{|\alpha|+|\beta|=N,|\beta|=1}
\left\|w_{l_1^*,1}\partial^{\alpha+e_i} \{{\bf I-P}\} f\langle v\rangle^{-\frac\gamma2-1}\right\|^2+\left\|\nabla^{|\alpha|+1}f\right\|^2_\nu+\left\|\nabla^{|\alpha|}E\right\|^2
+E^N_{tri,1}(t)}\\
&&{+\mathcal{E}_{N}(t)\mathcal{E}^1_{N_0,l_0,-\gamma}(t)
+\mathcal{E}_{N}(t)\mathcal{D}_{N}(t)+\sum_{|\alpha|+|\beta|=N,|\beta|=1}\left\{\eta\left\|w_{l_1^*,1}\partial^\alpha\{{\bf I-P}\} f\right\|^2_\nu
+\left\|\partial^{\alpha} \{{\bf I-P}\} f\right\|_\nu^2\right\}}.
\end{eqnarray*}
Similarly, we can obtain that
\begin{eqnarray*}
&&\frac{d}{dt}\sum_{|\alpha|+|\beta|=N,|\beta|=j}\left\|w_{l_1^*-j,1}\partial_\beta^\alpha\{{\bf I-P}\} f\right\|^2
+\sum_{|\alpha|+|\beta|=N,|\beta|=j}\left\|w_{l_1^*-1,1}\partial_\beta^\alpha\{{\bf I-P}\} f\right\|^2_\nu\\
&&+(1+t)^{-1-\vartheta}\sum_{|\alpha|+|\beta|=N,|\beta|=j}\left\|w_{l_1^*-1,1}\partial_\beta^\alpha\{{\bf I-P}\} f\langle v\rangle\right\|^2\\
&\lesssim&\sum_{|\alpha|+|\beta|=N,|\beta|=j}
\left\|w_{l_1^*-j+1,1}\partial^{\alpha+e_i}_{\beta-e_i} \{{\bf I-P}\} f\langle v\rangle^{-\frac\gamma2-1}\right\|^2+\left\|\nabla^{|\alpha|+1}f\right\|^2_\nu+\left\|\nabla^{|\alpha|}E\right\|^2
+E^N_{tri,j}(t)\\
&&
+\mathcal{E}_{N}(t)\mathcal{E}^1_{N_0,l_0,-\gamma}(t)
+\mathcal{E}_{N}(t)\mathcal{D}_{N}(t)+\sum_{|\alpha|+|\beta|=N,|\beta'|<j}\left\{\eta\left\|w_{l_1^*-|\beta'|,1}\partial^\alpha_{\beta'}\{{\bf I-P}\} f\right\|^2_\nu
+\left\|\partial^{\alpha} \{{\bf I-P}\} f\right\|_\nu^2\right\},
\end{eqnarray*}
for $2\leq j\leq N$, where $E^N_{tri,j}(t)$ is defined in \eqref{Enj} with $n=N$.
Taking summation over $0\leq j\leq N$, one deduces
\begin{eqnarray*}
&&\frac{d}{dt}\left\{\sum_{|\alpha|+|\beta|=N,\atop|\beta|=j,1\leq j\leq N}(1+t)^{-\sigma_{N,j}}
\left\|w_{l_1^*-j,1}\partial_\beta^\alpha\{{\bf I-P}\} f\right\|^2+\sum_{|\alpha|=N}(1+t)^{-\sigma_{N,0}}
\left\|w_{l_1^*,1}\partial^\alpha f\right\|^2\right\}\\
&&
+\sum_{|\alpha|+|\beta|=N,\atop|\beta|=j,1\leq j\leq N}(1+t)^{-\sigma_{N,j}}
\left\|w_{l_1^*-j,1}\partial_\beta^\alpha\{{\bf I-P}\} f\right\|^2_\nu
+\sum_{|\alpha|=N}(1+t)^{-\sigma_{N,0}}\left\|w_{l_1^*,1}\partial^\alpha f\right\|^2_\nu\\
&&+\sum_{|\alpha|+|\beta|=N,\atop|\beta|=j,1\leq j\leq N}(1+t)^{-1-\vartheta-\sigma_{N,j}}
\left\|w_{l_1^*-j,1}\partial_\beta^\alpha\{{\bf I-P}\} f\langle v\rangle\right\|^2
+\sum_{|\alpha|=N}(1+t)^{-1-\vartheta-\sigma_{N,0}}\left\|w_{l_1^*,1}\partial^\alpha f\langle v\rangle\right\|^2\\
&\lesssim&{\sum_{|\alpha|\leq N-1}\left\{\left\|\nabla^{|\alpha|+1}f\right\|^2_\nu+\|\{{\bf I-P}\} f\|^2_\nu+
\left\|\nabla^{|\alpha|}E\right\|^2\right\}+(1+t)^{-2\sigma_{N,0}}\left\|\nabla^NE\right\|^2+{\sum_{0\leq j\leq N}}(1+t)^{-\sigma_{N,j}}E^N_{tri,j}(t)}\\
&&{+\mathcal{E}_{N}(t)\mathcal{E}^1_{N_0,l_0,-\gamma}(t)
+\mathcal{E}_{N}(t)\mathcal{D}_{N}(t)+\eta\sum_{|\alpha|+|\beta|=N,\atop|\beta|=j,1\leq j\leq N,|\beta'|<j}(1+t)^{-\sigma_{N,j}}
\left\|w_{l_1^*-|\beta'|,1}\partial^\alpha_{\beta'}\{{\bf I-P}\} f\right\|^2_\nu,}
\end{eqnarray*}
where we have used the fact that
\begin{eqnarray}\label{sigma-d}
&&\sum_{|\alpha|+|\beta|=N,\atop|\beta|=j,1\leq j\leq N}(1+t)^{-\sigma_{N,j}}
\left\|w_{l_1^*-j+1,1}\partial^{\alpha+e_i}_{\beta-e_i} \{{\bf I-P}\} f\langle v\rangle^{-\frac\gamma2-1}\right\|
^2\\ \nonumber
&\lesssim&\sum_{|\alpha|+|\beta|=N,\atop|\beta|=j,1\leq j\leq N}(1+t)^{-\sigma_{N,j-1}-\frac{2(1+\gamma)}{\gamma-2}(1+\vartheta)}
\left\|w_{l_1^*-j+1,1}\partial^{\alpha+e_i}_{\beta-e_i} \{{\bf I-P}\} f\langle v\rangle\right\|^{\frac{4(1+\gamma)}{\gamma-2}}
\left\|w_{l_1^*-j+1,1}\partial^{\alpha+e_i}_{\beta-e_i} \{{\bf I-P}\} f\langle v\rangle^{\frac{\gamma}2}\right\|^{\frac{-2\gamma-8}{\gamma-2}}\\ \nonumber
&\lesssim&\sum_{|\alpha|+|\beta|=N,\atop|\beta|=j,1\leq j\leq N}\left\{(1+t)^{-\sigma_{N,j-1}-1-\vartheta}
\left\|w_{l_1^*-j+1,1}\partial^{\alpha+e_i}_{\beta-e_i} \{{\bf I-P}\} f\langle v\rangle\right\|^{2}
+(1+t)^{-\sigma_{N,j-1}}\left\|w_{l_1^*-j+1,1}\partial^{\alpha+e_i}_{\beta-e_i} \{{\bf I-P}\} f\langle v\rangle^{\frac{\gamma}2}\right\|^2\right\},
\end{eqnarray}
where follows from the fact that  $\sigma_{N,j}-\sigma_{N,j-1}=\frac{2(1+\gamma)}{\gamma-2}(1+\vartheta)$.

When $N_0+1\leq n\leq N-1$, we can deduce similarly that
\begin{eqnarray*}
&&\frac{d}{dt}\left\{\sum_{|\alpha|+|\beta|=n,\atop|\beta|=j,1\leq j\leq n}(1+t)^{-\sigma_{n,j}}
\left\|w_{l_1^*-j,1}\partial_\beta^\alpha\{{\bf I-P}\} f\right\|^2+\sum_{|\alpha|=n}(1+t)^{-\sigma_{n,0}}
\left\|w_{l_1^*,1}\partial^\alpha f\right\|^2\right\}\\
&&
+\sum_{|\alpha|+|\beta|=n,\atop|\beta|=j,1\leq j\leq n}(1+t)^{-\sigma_{n,j}}
\left\|w_{l_1^*-j,1}\partial_\beta^\alpha\{{\bf I-P}\} f\right\|^2_\nu
+\sum_{|\alpha|=n}(1+t)^{-\sigma_{n,0}}\left\|w_{l_1^*,1}\partial^\alpha f\right\|^2_\nu\\
&&+\sum_{|\alpha|+|\beta|=n,\atop|\beta|=j,1\leq j\leq n}(1+t)^{-1-\vartheta-\sigma_{n,j}}
\left\|w_{l_1^*-j,1}\partial_\beta^\alpha\{{\bf I-P}\} f\langle v\rangle\right\|^2
+\sum_{|\alpha|=n}(1+t)^{-1-\vartheta-\sigma_{n,0}}\left\|w_{l_1^*,1}\partial^\alpha f\langle v\rangle\right\|^2\\
&\lesssim&{\sum_{|\alpha|\leq n-1}\left\{\left\|\nabla^{|\alpha|+1}f\right\|^2_\nu
+\left\|\{{\bf I-P}\} f\right\|^2_\nu+\left\|\nabla^{|\alpha|}E\right\|^2\right\}
+(1+t)^{-2\sigma_{n,0}}\left\|\nabla^nE\right\|^2+\mathcal{E}_{N}(t)\mathcal{E}^1_{N_0,l_0,-\gamma}(t)}\\
&&+{{\sum_{0\leq j\leq n}}(1+t)^{-\sigma_{n,j}}E^n_{tri,j}(t)
+\mathcal{E}_{N}(t)\mathcal{D}_{N}(t)+\eta\sum_{|\alpha|+|\beta|=n,\atop|\beta|=j,1\leq j\leq n,|\beta'|<j}(1+t)^{-\sigma_{n,j}}
\left\|w_{l_1^*-|\beta'|,1}\partial^\alpha_{\beta'}\{{\bf I-P}\} f\right\|^2_\nu.}
\end{eqnarray*}
Here we have used $\sigma_{n,j}-\sigma_{n,j-1}=\frac{2(1+\gamma)}{\gamma-2}(1+\vartheta)$ and recall that $E^n_{tri,j}(t)$ is given in \eqref{Enj}.
%
Taking summation over $N_0+1\leq n\leq N$ gives \eqref{lemma7-1}. This completes the proof of Lemma \ref{lemma7}.\qed

\medskip
\noindent 4.5 {\bf  The proof of Lemma \ref{lemma8}.}
Similar to the proof of Lemma \ref{lemma7}, we have firstly that
\begin{eqnarray}\label{H-1}
&&\frac{d}{dt}\sum_{|\alpha|=N_0}\left\|w_{l_0^*,1}\partial^\alpha f\right\|^2+\sum_{|\alpha|=N_0}(1+t)^{-1-\vartheta}
\left\|w_{l_0^*,1}\partial^\alpha f\langle v\rangle\right\|^2+\sum_{|\alpha|=N_0}
\left\|w_{l_0^*,1}\partial^\alpha f\right\|_\nu^2\\ \nonumber
&\lesssim&\sum_{|\alpha|=N_0}\left\|\partial^\alpha f\right\|_\nu^2
+\sum_{|\alpha|=N_0}\left\|\partial^\alpha f\right\|_\nu\left\|\partial^\alpha E\right\|
+\underbrace{\sum_{|\alpha|=N_0}\left(\partial^\alpha((v\times B)\cdot\nabla_v f),w^2_{l_0^*,1}\partial^\alpha f\right)}_{H_1}\\ \nonumber
&&
+\underbrace{\sum_{|\alpha|=N_0}\left(\partial^\alpha(E\cdot\nabla_v f),w^2_{l_0^*,1}\partial^\alpha f\right)}_{H_2}+\underbrace{\sum_{|\alpha|=N_0}\left(\partial^\alpha(v\cdot E f),w^2_{l_0^*,1}\partial^\alpha f\right)}_{H_3}\\
&&+\underbrace{\sum_{|\alpha|=N_0}\left(\partial^\alpha\Gamma(f,f),w^2_{l_0^*,1}\partial^\alpha f\right)}_{H_4}.\nonumber
\end{eqnarray}
Applying the H$\ddot{o}$lder inequality and the Sobolev inequality, one has
\begin{eqnarray*}
H_1&\lesssim&
\sum_{1\leq|\alpha_1|\leq N_0-2}\left\|\partial^{\alpha_1} B\right\|_{L^\infty_x}
\left\|w_{l_0^*-1,1}\nabla_v \partial^{\alpha-\alpha_1}{\bf \{I-P\}}f\langle v\rangle\right\|
\left\|w_{l_0^*,1}\partial^\alpha f\langle v\rangle\right\|\\
&&+\sum_{N_0-1\leq|\alpha_1|\leq N_0}\left\|\partial^{\alpha_1} B\right\|
\left\|w_{l_0^*-1,1}\nabla_v \partial^{\alpha-\alpha_1}{\bf \{I-P\}}f\langle v\rangle\right\|_{L^2_vL^\infty_x}
\left\|w_{l_0^*,1}\partial^\alpha f\langle v\rangle\right\|\\
&&+\sum_{\alpha_1\leq\alpha}\int_{\mathbb{R}^3_x}
\left|\partial^{\alpha_1}B\right|\left|\mu^{\delta}\partial^{\alpha-\alpha_1}f\right|_{L^2_v}
\left|\mu^{\delta}\partial^{\alpha}f\right|_{L^2_v}dx\\
&\lesssim&\sum_{1\leq|\alpha_1|\leq N_0-2}(1+t)^{1+\vartheta}
\left\|\partial^{\alpha_1} B\right\|^2_{L^\infty_x}
\left\|w_{l_0^*-1,1}\nabla_v \partial^{\alpha-\alpha_1}{\bf \{I-P\}}f\langle v\rangle\right\|^2\\
&&+\sum_{N_0-1\leq|\alpha_1|\leq N_0}(1+t)^{1+\vartheta}
\left\|\partial^{\alpha_1} B\right\|^2
\left\|w_{l_0^*-1,1}\nabla_v \partial^{\alpha-\alpha_1}{\bf \{I-P\}}f\langle v\rangle\right\|^2_{L^2_vL^\infty_x}\\
&&{+\mathcal{E}_{N_0}(t)\mathcal{D}_{N_0}(t)+\varepsilon(1+t)^{-1-\vartheta}\left\|w_{l_0^*,1}\partial^\alpha f\langle v\rangle\right\|^2
+\varepsilon\left\|\mu^{\delta}\partial^\alpha f\right\|^2}.
\end{eqnarray*}
In a similar way, we can also get that
\begin{eqnarray*}
H_2+H_3&\lesssim&\sum_{1\leq|\alpha_1|\leq N_0-2,m\leq1}(1+t)^{1+\vartheta}
\left\|\partial^{\alpha_1} E\right\|^2_{L^\infty_x}
\left\|w_{l_0^*-m,1}\nabla^m_v \partial^{\alpha-\alpha_1}{\bf \{I-P\}}f\right\|^2\\
&&+\sum_{N_0-1\leq|\alpha_1|\leq N_0,m\leq1}(1+t)^{1+\vartheta}
\left\|\partial^{\alpha_1} E\right\|^2
\left\|w_{l_0^*-m,1}\nabla^m_v \partial^{\alpha-\alpha_1}{\bf \{I-P\}}f\right\|^2_{L^2_vL^\infty_x}\\
&&+\sum_{|\alpha|=N_0}\|E\|_{L^\infty}^{\frac{2-\gamma}{1-\gamma}}
\left\|w_{l_0^*,1}\partial^\alpha \{{\bf I-P}\}f\langle v\rangle\right\|^2+\varepsilon\left\|w_{l_0^*,1}\partial^\alpha f\right\|^2_\nu\\
&&+\varepsilon(1+t)^{-1-\vartheta}\left\|w_{l_0^*,1}\partial^\alpha \{{\bf I-P}\}f\langle v\rangle\right\|^2+\mathcal{E}_{N_0}(t)\mathcal{D}_{N_0}(t).
\end{eqnarray*}
As for $H_4$, Lemma \ref{lemma-nonlinear} suggests that
\begin{eqnarray*}
H_4&\lesssim&
\underbrace{\sum_{|\alpha_1|= 0,m\leq2}\int_{\mathbb{R}^3}\left|\nabla^m_v\left(\mu^{\delta}f\right)\right|
\left|w_{l_0^*,1}\partial^{\alpha}f\right|_\nu\left|w_{l_0^*,1}\partial^{\alpha}f\right|_\nu dx}_{H_{4,1}}\\
&&+\underbrace{\sum_{1\leq|\alpha_1|\leq N_0-3,m\leq2}\int_{\mathbb{R}^3}\left|\nabla^m_v\left(\mu^{\delta}\partial^{\alpha_1}f\right)\right|
\left|w_{l_0^*,1}\partial^{\alpha-\alpha_1}f\right|_\nu\left|w_{l_0^*,1}\partial^{\alpha}f\right|_\nu dx}_{H_{4,2}}\\
&&+\underbrace{\sum_{|\alpha_1|=N_0-2 \ or\  N_0-1 ,m\leq2}\int_{\mathbb{R}^3}\left|w_{l_0^*,1}\partial^{\alpha_1}f\right|_\nu
\left|\nabla^m_v\left(\mu^{\delta}\partial^{\alpha-\alpha_1}f\right)\right|\left|w_{l_0^*,1}\partial^{\alpha}f\right|_\nu dx}_{H_{4,3}}\\
&&+\underbrace{\sum_{\alpha_1=\alpha,m\leq2}\int_{\mathbb{R}^3}\left|w_{l_0^*,1}\partial^{\alpha}f\right|_\nu
\left|\nabla^m_v\left(\mu^{\delta}f\right)\right|\left|w_{l_0^*,1}\partial^{\alpha}f\right|_\nu dx}_{H_{4,4}}\\
&&+\underbrace{\sum_{|\alpha_1|= 0}\int_{\mathbb{R}^3}\left|w_{l_0^*,1}f\right|
\left|w_{l_0^*,1}\partial^{\alpha}f\right|_\nu\left|w_{l_0^*,1}\partial^{\alpha}f\right|_\nu dx}_{H_{4,5}}\\
&&+\underbrace{\sum_{1\leq|\alpha_1|\leq N_0-1}\int_{\mathbb{R}^3}\left|w_{l_0^*,1}\partial^{\alpha_1}f\right|
\left|w_{l_0^*,1}\partial^{\alpha-\alpha_1}f\right|_\nu\left|w_{l_0^*,1}\partial^{\alpha}f\right|_\nu dx}_{H_{4,6}}\\
&&+\underbrace{\sum_{\alpha_1= \alpha}\int_{\mathbb{R}^3}\left|w_{l_0^*,1}f\right|
\left|w_{l_0^*,1}\partial^{\alpha}f\right|_\nu\left|w_{l_0^*,1}\partial^{\alpha}f\right|_\nu dx}_{H_{4,7}}.
\end{eqnarray*}
By using $L^2-L^\infty-L^2$ or $L^\infty-L^2-L^2$ type inequalities with respect to space derivative $x$, one has
\begin{eqnarray*}
H_{4,1}+H_{4,4}+H_{4,5}+H_{4,7}&\lesssim&
\left\{\mathcal{E}_{N_0,0}(t)+\left\|w_{l_0^*,1}f\right\|_{L^2_vL^\infty_x}^2\right\}\left\|w_{l_0^*,1}\partial^{\alpha}f\right\|^2_\nu
+\varepsilon\left\|w_{l_0^*,1}\partial^{\alpha}f\right\|^2_\nu,
\end{eqnarray*}
while employing $L^3-L^6-L^2$ or $L^6-L^3-L^2$ type inequalities gives
\begin{eqnarray*}
H_{4,2}+H_{4,3}+H_{4,6}
&\lesssim&
\sum_{1\leq|\alpha_1|\leq N_0-3,m\leq2}\left\|\nabla^m_v\left(\mu^{\delta}\partial^{\alpha_1}f\right)\right\|_{L^3_x}
\left\|w_{l_0^*,1}\partial^{\alpha-\alpha_1}f\right\|^2_{L^2_\nu L^6_x}\\
&&+\sum_{|\alpha_1|=N_0-2,m\leq2}\left\|w_{l_0^*,1}\partial^{\alpha_1}f\right\|^2_{L^2_\nu L^\infty_x}
\left\|\nabla^m_v\left(\mu^{\delta}\partial^{\alpha-\alpha_1}f\right)\right\|^2\\
&&+\sum_{|\alpha_1|=N_0-1,m\leq2}\left\|w_{l_0^*,1}\partial^{\alpha_1}f\right\|^2_{L^2_\nu L^6_x}
\left\|\nabla^m_v\left(\mu^{\delta}\partial^{\alpha-\alpha_1}f\right)\right\|^2_{L^2_vL^3_x}\\
&&+\sum_{1\leq|\alpha_1|\leq N_0-1}\left\|w_{l_0^*,1}\partial^{\alpha_1}f\right\|^2_{L^2_vL^3_x}
\left\|w_{l_0^*,1}\partial^{\alpha-\alpha_1}f\right\|^2_{L^2_\nu L^6_x}+\varepsilon\left\|w_{l_0^*,1}\partial^{\alpha}f\right\|^2_\nu.
\end{eqnarray*}
Consequently
\begin{eqnarray*}
H_4&\lesssim& \left\{\mathcal{E}_{N_0,0}(t)+\left\|w_{l_0^*,1}f\right\|_{L^2_vL^\infty_x}^2\right\}
\sum_{1\leq|\alpha_1|\leq N_0}\left\|w_{l_0^*,1}\partial^{\alpha_1}f\right\|^2_\nu\\
&&
+\sum_{1\leq|\alpha_1|\leq N_0-1}\left\|w_{l_0^*,1}\partial^{\alpha_1}f\right\|^2_{L^3_x}
\left\|w_{l_0^*,1}\partial^{\alpha-\alpha_1}f\right\|^2_{L^2_\nu L^6_x}+\varepsilon\left\|w_{l_0^*,1}\partial^{\alpha}f\right\|^2_\nu.
\end{eqnarray*}
Recalling \eqref{Fnj} with $n=N_0$, $j=0$ for $F^{N_0}_{tri,0}(t)$,
it follows by inserting the estimates on $H_1\sim H_4$ into (\ref{H-1}) that
\begin{eqnarray*}
&&\frac{d}{dt}\sum_{|\alpha|=N_0}\left\|w_{l_0^*,1}\partial^\alpha f\right\|^2
+\sum_{|\alpha|=N_0}\left\|w_{l_0^*,1}\partial^\alpha f\right\|_\nu^2+\sum_{|\alpha|=N_0}(1+t)^{-1-\vartheta}
\left\|w_{l_0^*,1}\partial^\alpha f\langle v\rangle\right\|^2\\ \nonumber
&\lesssim&\sum_{|\alpha|=N_0}\left\|\partial^\alpha f\right\|_\nu^2
+F^{N_0}_{tri,0}(t)+\|\nabla^{N_0}E\|^2
+\mathcal{E}_{N_0}(t)\mathcal{D}_{N_0}(t).\nonumber
\end{eqnarray*}
When $|\alpha|+|\beta|=N_0,|\beta|=1$, one has
\begin{eqnarray}\label{H-2}
&&\frac{d}{dt}\sum_{|\alpha|+|\beta|=N_0,|\beta|=1}\left\|w_{l_0^*-1,1}\partial_\beta^\alpha\{{\bf I-P}\} f\right\|^2
+\sum_{|\alpha|+|\beta|=N_0,|\beta|=1}\left\|w_{l_0^*-1,1}\partial_\beta^\alpha\{{\bf I-P}\} f\right\|^2_\nu
\\ \nonumber
&&+(1+t)^{-1-\vartheta}\sum_{|\alpha|+|\beta|=N_0,|\beta|=1}\left\|w_{l_0^*-1,1}\partial_\beta^\alpha\{{\bf I-P}\} f\langle v\rangle\right\|^2\\ \nonumber
&\lesssim&\sum_{|\alpha|+|\beta|=N,|\beta|=1}\left\{\eta\left\|w_{l_0^*,1}\partial^\alpha\{{\bf I-P}\} f\right\|^2_\nu
+\left\|\partial^{\alpha} \{{\bf I-P}\} f\right\|_\nu^2\right\}\\ \nonumber
&&+\underbrace{\sum_{|\alpha|+|\beta|=N_0,|\beta|=1}\left(\partial_\beta^\alpha(v\cdot \{{\bf I-P}\} f),w^2_{l_0^*-1,1}\partial_\beta^\alpha\{{\bf I-P}\} f\right)}_{H_5}\\ \nonumber
&&+\underbrace{\sum_{|\alpha|+|\beta|=N_0,|\beta|=1}\left(\partial_\beta^\alpha((v\times B)\cdot\nabla_v \{{\bf I-P}\} f),w^2_{l_0^*-1,1}\partial_\beta^\alpha\{{\bf I-P}\} f\right)}_{H_6}
\\ \nonumber&&+\underbrace{\sum_{|\alpha|+|\beta|=N_0,|\beta|=1}\left(\partial_\beta^\alpha(E\cdot\nabla_v \{{\bf I-P}\} f),w^2_{l_0^*-1,1}\partial_\beta^\alpha\{{\bf I-P}\} f\right)}_{H_7}\\ \nonumber
&&+\underbrace{\sum_{|\alpha|+|\beta|=N_0,|\beta|=1}\left(\partial_\beta^\alpha(v\cdot E \{{\bf I-P}\} f),w^2_{l_0^*-1,1}\partial_\beta^\alpha\{{\bf I-P}\} f\right)}_{H_8}
\\ \nonumber &&+\underbrace{\sum_{|\alpha|+|\beta|=N_0,|\beta|=1}\left(\partial_\beta^\alpha\Gamma(f,f),w^2_{l_0^*-1,1}\partial_\beta^\alpha\{{\bf I-P}\} f\right)}_{H_9}
\\ \nonumber
&&+\underbrace{\sum_{|\alpha|+|\beta|=N_0,|\beta|=1}\left(\partial_\beta^\alpha I_{mac}(t),w^2_{l_0^*-1,1}\partial_\beta^\alpha\{{\bf I-P}\} f\right)}_{H_{10}},
\end{eqnarray}
{where $I_{mac}(t)$ is given in \eqref{I-mac}.}
$H_5$ can be dominated by
\begin{eqnarray*}
H_5&\lesssim&\sum_{|\alpha|+|\beta|=N_0,|\beta|=1}\left(\partial^{\alpha+e_i} \{{\bf I-P}\} f)
,w^2_{l_0^*-1,1}\partial_\beta^\alpha\{{\bf I-P}\} f\right)\\
&\lesssim&\sum_{|\alpha|+|\beta|=N_0,|\beta|=1}
\left\|w_{l_0^*,1}\partial^{\alpha+e_i} \{{\bf I-P}\} f\langle v\rangle^{-\frac\gamma2-1}\right\|
\left\|w_{l_0^*-1,1}\partial_\beta^\alpha\{{\bf I-P}\} f\langle v\rangle^{\frac\gamma2}\right\|\\
&\lesssim&\sum_{|\alpha|+|\beta|=N_0,|\beta|=1}
\left\|w_{l_0^*,1}\partial^{\alpha+e_i} \{{\bf I-P}\} f\langle v\rangle^{-\frac\gamma2-1}\right\|
^2+\varepsilon\sum_{|\alpha|+|\beta|=N_0,|\beta|=1}
\left\|w_{l_0^*-1,1}\partial_\beta^\alpha\{{\bf I-P}\} f\right\|^2_\nu.
\end{eqnarray*}
Applying the same trick as $H_1\sim H_4$ suggests that
\begin{eqnarray*}
\sum_{i=6}^{10}H_i&\lesssim& F^{N_0}_{tri,1}(t)+\varepsilon(1+t)^{-1-\vartheta}\sum_{|\alpha|+|\beta|=N_0,|\beta|=1}
\left\|w_{l_0^*-1,1}\partial_\beta^\alpha\{{\bf I-P}\} f\langle v\rangle\right\|^2\\[3mm]
&&+\left\|\nabla^{|\alpha|}E\right\|^2+\left\|\nabla^{|\alpha|+1}f\right\|^2_\nu+\varepsilon\sum_{|\alpha|+|\beta|=N_0,|\beta|=1}
\left\|w_{l_0^*-1,1}\partial_\beta^\alpha\{{\bf I-P}\} f\right\|^2_\nu,
\end{eqnarray*}
where $F^{N_0}_{tri,1}(t)$ is given in \eqref{Fnj} with $n=N_0$ and $j=1$.
Thus plugging the estimates on $H_5\sim H_{10}$ into (\ref{H-2}) yields
\begin{eqnarray*}
&&\frac{d}{dt}\sum_{|\alpha|+|\beta|=N_0,|\beta|=1}\left\|w_{l_0^*-1,1}\partial_\beta^\alpha\{{\bf I-P}\} f\right\|^2
+\sum_{|\alpha|+|\beta|=N_0,|\beta|=1}\left\|w_{l_0^*-1,1}\partial_\beta^\alpha\{{\bf I-P}\} f\right\|^2_\nu
\\ \nonumber
&&+(1+t)^{-1-\vartheta}\sum_{|\alpha|+|\beta|=N_0,|\beta|=1}\left\|w_{l_0^*-1,1}\partial_\beta^\alpha\{{\bf I-P}\} f\langle v\rangle\right\|^2\\ \nonumber
&\lesssim&{\sum_{|\alpha|+|\beta|=N_0,|\beta|=1}
\left\|w_{l_0^*,1}\partial^{\alpha+e_i} \{{\bf I-P}\} f\langle v\rangle^{-\frac\gamma2-1}\right\|
^2+\left\|\nabla^{|\alpha|+1}f\right\|^2_\nu+\left\|\nabla^{|\alpha|}E\right\|^2+F^{N_0}_{tri,1}(t)
}\\
&&+{\mathcal{E}_{N_0}(t)\mathcal{D}_{N_0}(t)+\sum_{|\alpha|+|\beta|=N_0,|\beta|=1}\left\{\eta\left\|w_{l_0^*,1}\partial^\alpha\{{\bf I-P}\} f\right\|^2_\nu
+\left\|\partial^{\alpha} \{{\bf I-P}\} f\right\|_\nu^2\right\}}.
\end{eqnarray*}
Similarly, we can get for $2\leq j\leq N_0$ that
\begin{eqnarray*}
&&\frac{d}{dt}\sum_{|\alpha|+|\beta|=N_0,|\beta|=j}\left\|w_{l_0^*-j,1}\partial_\beta^\alpha\{{\bf I-P}\} f\right\|^2
+\sum_{|\alpha|+|\beta|=N_0,|\beta|=j}\left\|w_{l_0^*-1,1}\partial_\beta^\alpha\{{\bf I-P}\} f\right\|^2_\nu
\\ \nonumber
&&+(1+t)^{-1-\vartheta}\sum_{|\alpha|+|\beta|=N_0,|\beta|=j}\left\|w_{l_0^*-1,1}\partial_\beta^\alpha\{{\bf I-P}\} f\langle v\rangle\right\|^2\\ \nonumber
&\lesssim&{\sum_{|\alpha|+|\beta|=N_0,|\beta|=j}
\left\|w_{l_0^*-j+1,1}\partial^{\alpha+e_i}_{\beta-e_i} \{{\bf I-P}\} f\langle v\rangle^{-\frac\gamma2-1}\right\|
^2+\left\|\nabla^{|\alpha|+1}f\right\|^2_\nu
+\left\|\nabla^{|\alpha|}E\right\|^2+F^{N_0}_{tri,j}(t)}\\
&&+{\mathcal{E}_{N_0}(t)\mathcal{D}_{N_0}(t)+\sum_{|\alpha|+|\beta|=N_0,\atop
{|\beta|=j,|\beta'|<j}}\left\{\eta\left\|w_{l_0^*-|\beta'|,1}\partial^\alpha_{\beta'}\{{\bf I-P}\} f\right\|^2_\nu
+\left\|\partial^{\alpha} \{{\bf I-P}\} f\right\|_\nu^2\right\}},
\end{eqnarray*}
where $F^{N_0}_{tri,j}(t)$ is given in \eqref{Fnj} with $n=N_0$.
Taking summation over $0\leq j\leq N_0$, one deduces
\begin{eqnarray*}
&&\frac{d}{dt}\left\{\sum_{|\alpha|+|\beta|=N_0,\atop|\beta|=j,1\leq j\leq N_0}(1+t)^{-\sigma_{N_0,j}}
\left\|w_{l_0^*-j,1}\partial_\beta^\alpha\{{\bf I-P}\} f\right\|^2+\sum_{|\alpha|=N_0}(1+t)^{-\sigma_{N_0,0}}
\left\|w_{l_0^*,1}\partial^\alpha f\right\|^2\right\}\\ \nonumber
&&
+\sum_{|\alpha|+|\beta|=N_0,\atop|\beta|=j,1\leq j\leq N_0}(1+t)^{-\sigma_{N_0,j}}
\left\|w_{l_0^*-j,1}\partial_\beta^\alpha\{{\bf I-P}\} f\right\|^2_\nu
+\sum_{|\alpha|=N_0}(1+t)^{-\sigma_{N,0}}\left\|w_{l_0^*,1}\partial^\alpha f\right\|^2_\nu
\\ \nonumber
&&+\sum_{|\alpha|+|\beta|=N_0,\atop|\beta|=j,1\leq j\leq N_0}(1+t)^{-1-\vartheta-\sigma_{N_0,j}}
\left\|w_{l_0^*-j,1}\partial_\beta^\alpha\{{\bf I-P}\} f\langle v\rangle\right\|^2+\sum_{|\alpha|=N_0}(1+t)^{-1-\vartheta-\sigma_{N_0,0}}
\left\|w_{l_0^*,1}\partial^\alpha f\langle v\rangle\right\|^2\\ \nonumber
&\lesssim&{\sum_{|\alpha|\leq N_0-1}\left\{\left\|\nabla^{|\alpha|+1}f\right\|_\nu^2
+\left\| \{{\bf I-P}\} f\right\|_\nu^2+\left\|\nabla^{|\alpha|}E\right\|^2+\left\|\nabla^{N_0}E\right\|^2\right\}
+\mathcal{E}_{N_0}(t)\mathcal{D}_{N_0}(t)}\\ \nonumber
&&{+{\sum_{0\leq j\leq {N_0}}}(1+t)^{-\sigma_{N_0,j}}F^{N_0}_{tri,j}(t)+\eta\sum_{|\alpha|+|\beta|=N_0,\atop|\beta|=j,1\leq j\leq N_0,|\beta'|<j}(1+t)^{-\sigma_{N_0,j}}
\left\|w_{l_0^*-|\beta'|,1}\partial^\alpha_{\beta'}\{{\bf I-P}\} f\right\|^2_\nu,}
\end{eqnarray*}
where we have used the facts that
$$
\sigma_{N_0,j}-\sigma_{N_0,j-1}=\frac{2(1+\gamma)}{\gamma-2}(1+\vartheta)
$$
and {in a similar way as \eqref{sigma-d},
\begin{eqnarray*}
&&\sum_{|\alpha|+|\beta|=N_0,\atop|\beta|=j,1\leq j\leq N_0}(1+t)^{-\sigma_{N_0,j}}
\left\|w_{l_0^*-j+1,1}\partial^{\alpha+e_i}_{\beta-e_i} \{{\bf I-P}\} f\langle v\rangle^{-\frac\gamma2-1}\right\|
^2\\ \nonumber
&\lesssim&\sum_{|\alpha|+|\beta|=N_0,\atop|\beta|=j,1\leq j\leq N_0}\left\{(1+t)^{-\sigma_{N_0,j-1}-1-\vartheta}
\left\|w_{l_0^*-j+1,1}\partial^{\alpha+e_i}_{\beta-e_i} \{{\bf I-P}\} f\langle v\rangle\right\|^{2}\right.\\
&&\qquad\qquad\qquad\qquad\qquad\qquad\left.+(1+t)^{-\sigma_{N_0,j-1}}\left\|w_{l_0^*-j+1,1}\partial^{\alpha+e_i}_{\beta-e_i} \{{\bf I-P}\} f\right\|^2_\nu\right\}.
\end{eqnarray*}}
By exploiting the same argument as before, we can get for $1\leq n\leq N_0-1$ that
\begin{eqnarray*}
&&\frac{d}{dt}\Bigg\{\sum_{|\alpha|+|\beta|=n,\atop|\beta|=j,1\leq j\leq n}(1+t)^{-\sigma_{n,j}}
\left\|w_{l_0^*-j,1}\partial_\beta^\alpha\{{\bf I-P}\} f\right\|^2+\sum_{|\alpha|=n}(1+t)^{-\sigma_{n,0}}
\left\|w_{l_0^*,1}\partial^\alpha f\right\|^2\\ \nonumber
&&+(1+t)^{-\sigma_{0,0}}\left\|w_{l_0^*,1}\{{\bf I-P}\} f\right\|^2\Bigg\}
+\sum_{|\alpha|+|\beta|=n,\atop|\beta|=j,1\leq j\leq n}(1+t)^{-\sigma_{n,j}}
\left\|w_{l_0^*-j,1}\partial_\beta^\alpha\{{\bf I-P}\} f\right\|^2_\nu\\ \nonumber
&&+\sum_{|\alpha|=n}(1+t)^{-\sigma_{n,0}}\left\|w_{l_0^*,1}\partial^\alpha f\right\|_\nu^2
+(1+t)^{-\sigma_{0,0}}\left\|w_{l_0^*,1}\{{\bf I-P}\} f\right\|^2_\nu
\\ \nonumber
&&+\sum_{|\alpha|+|\beta|=n,\atop|\beta|=j,1\leq j\leq n}(1+t)^{-1-\vartheta-\sigma_{n,j}}
\left\|w_{l_0^*-j,1}\partial_\beta^\alpha\{{\bf I-P}\} f\langle v\rangle\right\|^2
\\ \nonumber
&&+\sum_{|\alpha|=n}(1+t)^{-1-\vartheta-\sigma_{n,0}}\left\|w_{l_0^*,1}\partial^\alpha f\langle v\rangle\right\|^2+(1+t)^{-1-\vartheta-\sigma_{0,0}}\left\|w_{l_0^*,1}\{{\bf I-P}\} f\langle v\rangle\right\|^2\\ \nonumber
&\lesssim&{\sum_{|\alpha|\leq n-1}\left\{\left\|\nabla^{|\alpha|+1}f\right\|_\nu^2+\left\|\{{\bf I-P}\} f\right\|_\nu^2
+\left\|\nabla^{|\alpha|}E\right\|^2+\left\|\nabla^{n}E\right\|^2\right\}+\mathcal{E}_{N_0}(t)\mathcal{D}_{N_0}(t)}
\\ \nonumber
&&+{{\sum_{0\leq j\leq n}}(1+t)^{-\sigma_{n,j}}F^n_{tri,j}(t)+\eta{\sum_{|\alpha|+|\beta|=n,\atop|\beta|=j,1\leq j\leq n,|\beta'|<j}}(1+t)^{-\sigma_{n,j}}
\left\|w_{l_0^*-|\beta'|,1}\partial^\alpha_{\beta'}\{{\bf I-P}\} f\right\|^2_\nu,}
\end{eqnarray*}
where  $F^n_{tri,j}(t)$ is given in \eqref{Fnj}.
With the above estimates in hand, \eqref{lemma8-1} follows by taking summation over $1\leq n\leq N_0$ and by using the energy estimates on $\|w_{l_0^*,1}\{{\bf I-P}\} f\|^2$. Thus we have completed the proof of Lemma \ref{lemma8}.\qed

\bigskip
\noindent {\bf Acknowledgements:}
RJD was supported by the General Research Fund (Project No. 400511) from RGC of Hong Kong. TY was supported by the General Research Fund of Hong Kong, CityU No.103412. HJZ was support by two grants from the National Natural Science Foundation of China under contracts 10925103 and 11271160 respectively. This work was also supported by a grant from the National Natural Science Foundation of China under contract 11261160485 and the Fundamental Research Funds for the Central Universities.

\medskip


\begin{thebibliography}{99}\small
\bibitem{R. Adams} Adams R., {\it Sobolev Spaces}. Academic Press, New York, 1985.




\bibitem{AMUXY-JFA-2012} Alexandre R., Morimoto Y., Ukai S., Xu C.-J. and Yang T., The Boltzmann equation without angular cutoff in the whole space: I, Global existence for soft potential. {\it J. Funct. Anal.} {\bf 263} (2012), no. 3, 915--1010.



\bibitem{Chapman-Cowling-1970} Chapman S. and Cowling T.G., \emph{The Mathematical Theory of Non-uniform Gases. An account of the Kinetic Theory of Viscosity, Thermal Conduction and Diffusion in Gases.} Third edition, prepared in co-operation with D. Burnett. Cambridge University Press, London 1970, xxiv+423 pp.


\bibitem{Duan-VML} Duan R.-J., Global smooth dynamics of a fully ionized plasma with long-range collisions. {\it  Ann. Inst. H. Poincar¨¦ Anal. Non Lin¨¦aire} {\bf 31} (2014), no. 4, 751--778.

\bibitem{Duan_Liu-2012} Duan R.-J. and Liu S.-Q., The Vlasov-Poisson-Boltzmann System without angular cutoff. {\it Comm. Math. Phys.} {\bf 324} (2013), no. 1, 1--45.

\bibitem{Duan_Liu-Yang_Zhao-VMB-2013} Duan R.-J., Liu S.-Q., Yang  T., and Zhao H.-J. , Stabilty of the nonrelativistic  Vlasov-Maxwell-Boltzmann system for angular non-cutoff potentials. {\it Kinetic and Related Models}.  {\bf 6} (2013), no. 1, 159--204.



\bibitem{DS-CPAM-11} Duan R.-J. and Strain R. M., Optimal large-time behavior of the Vlasov-Maxwell-Boltzmann system in the whole space. \textit{Comm. Pure. Appl. Math.} \textbf{24} (2011), no. 11, 1497--1546.



\bibitem{Duan_Yang_Zhao-MMMA-2012}Duan R.-J., Yang T., and Zhao H.-J., The Vlasov-Poisson-Boltzmann system for soft potentials. {\it Math. Methods Models Appl. Sci.} {\bf23} (2013), no. 6, 979--1028.



\bibitem{Grad-1963} Grad H., Asymptotic theory of the Boltzmann equation II. Rarefied Gas Dynamics (Laurmann, J.A. Ed.) Vol. 1, Academic Press, New York, pp. 26--59, 1963.

\bibitem{Gressman_Strain-JAMS-2011} Gressman P. T. and Strain R. M., Global classical solutions of the Boltzmann equation without angular cut-off. {\it J. Amer. Math. Soc.} {\bf24} (2011), no. 3, 771--847.

\bibitem{Guo-CMP-02} Guo Y., The Landau equation in a periodic box. {\it Comm. Math. Phys.} \textbf{231} (2002), 391--434.



\bibitem{Guo-ARMA-03} Guo Y.,  Classical solutions to the Boltzmann equation formolecules with an angular cutoff. \textit{Arch. Rational Mech. Anal.} \textbf{169} (2003), 305--353.

\bibitem{Guo-Invent-03} Guo Y., The Vlasov-Maxwell-Boltzmann system near Maxwellians. \textit{Invent. Math.} \textbf{153} (2003), no. 3, 593--630.

\bibitem{Guo-IUMJ-04} Guo Y., The Boltzmann equation in the whole space. \textit{Indiana Univ. Math. J.} \textbf{53}(2004), 1081--1094.

\bibitem{Guo-JAMS-11} Guo Y., The Vlasov-Poisson-Laudau system in a periodic box. \textit{J. Amer. Math. Soc.} \textbf{25} (2012), 759--812.

\bibitem{GS-12}
Guo Y., Strain R.
Momentum regularity and stability of the relativistic Vlasov-Maxwell-Boltzmann system. {\it Comm. Math. Phys.} {\bf 310} (2012), no. 3, 649--673.

\bibitem{Guo-CPDE-12} Guo Y., Wang Y.J, Decay of dissipative equation and negative sobolev spaces. \textit{Comm.Partial Differential Equations} \textbf{37} (2012), 2165--2208.



\bibitem{Hosono-Kawashima-2006} Hosono T. and Kawashima S., Decay property of regularity-loss type and application to some nonlinear hyperbolic-elliptic system. {\it Math. Models Methods Appl. Sci.} {\bf 16} (2006), no. 11, 1839--1859.

\bibitem{Jang-ARMA-2009} Jang, J., Vlasov-Maxwell-Boltzmann diffusive limit. {\it Arch. Ration. Mech. Anal.} {\bf 194} (2009), no. 2, 531--584.

\bibitem{Krall-Trivelpiece} Krall N. A. and Trivelpiece A. W., {\it Principles of Plasma Physics}. McGraw-Hill, 1973.





\bibitem{Lei-Zhao-JFA-2014} Lei Y.-J. and Zhao H.-J., Negative Sobolev spaces and the two-species Vlasov–Maxwell–Landau system in the whole space. {\it  J. Funct. Anal.} {\bf 267} (2014), no. 10, 3710--3757.


\bibitem{Liu-Yang-Yu} Liu T.-P., Yang T., and Yu S.-H., Energy method for the Boltzmann equation. {\it Physica D} {\bf 188} (2004), 178--192.

\bibitem{Liu-Yu} Liu T.-P. and Yu S.-H., Boltzmann equation: Micro-macro decompositions and positivity of shock profiles. {\it Commun. Math. Phys.} {\bf 246} (2004), 133--179.



\bibitem{Strain-CMP-2006} Strain R. M., The Vlasov-Maxwell-Boltzmann system in the whole space. {\it Comm. Math. Phys.} {\bf 268} (2006), no. 2, 543--567.


\bibitem{S-G-04} Strain R. M. and Guo Y., Stability of the relativistic Maxwellian in a collisional plasma. {\it Comm. Math. Phys.} {\bf 251} (2004), no. 2, 263--320.

\bibitem{S-G-08} Strain R. M. and Guo Y., Exponential decay for soft potentials near Maxwellian. \textit{Arch. Ration. Mech. Anal.} \textbf{187}(2008), no. 2, 287--339.

\bibitem{S-Z-12} Strain R. M. and Zhu K.-Y., The Vlasov-Poisson-Landau System in ${\mathbb{R}}^3_x$. {\it Arch. Ration. Mech. Anal.} {\bf 210} (2013), no. 2, 615--671.

\bibitem{Villani-02} Villani C., A review of mathematical topics in collisional kinetic theory. North-Holland, Amsterdam, Handbook of mathematical fluid dynamics, Vol. I, 2002, pp. 71--305.

\bibitem{Wang-12} Wang Y.-J., Golobal solution and time decay of the Vlasov-Poisson-Landau System in ${\mathbb{R}}^3_x$. {\it SIAM J. Math. Anal.} \textbf{44} (2012), no. 5, 3281--3323.


\bibitem{Xiao-Xiong-Zhao-JDE-2013} Xiao Q.-H., Xiong L.-J., and Zhao H.-J., The Vlasov-Posson-Boltzmann system with angular cutoff for soft potential.  {\it J. Differential Equations} {\bf 255} (2013), 6, 1196--1232.

\bibitem{Xiao-Xiong-Zhao-2014} Xiao Q.-H., Xiong L.-J., and Zhao H.-J., The Vlasov-Posson-Boltzmann system for the whole range of cutoff soft potentials. Preprint 2014.





\end{thebibliography}
\end{document}